


\documentclass{cpamart1}     
\volume{000}
\startingpage{1}                      


\authorheadline{T.\ Berry and D.\ Giannakis}
\titleheadline{Spectral Exterior Calculus}



\usepackage{amssymb}
\usepackage{graphicx}
\usepackage{bm}
\usepackage{tabularx}
\usepackage{tikz-cd}
\usepackage{enumerate}
\usepackage{rotating}

\usepackage{mathptmx}



\newtheorem{theorem}{Theorem}[section]
\newtheorem{lemma}[theorem]{Lemma}
\newtheorem{corollary}[theorem]{Corollary}
\newtheorem{proposition}[theorem]{Proposition}
\newtheorem{conj}[theorem]{Conjecture}
\theoremstyle{definition}
\newtheorem{definition}[theorem]{Definition}
\newtheorem{exmp}[theorem]{Example}
\theoremstyle{remark}
\newtheorem{remark}[theorem]{Remark}



\DeclareMathOperator{\grad}{grad}
\DeclareMathOperator{\wgrad}{\overline{grad}}
\DeclareMathOperator{\divr}{div}
\DeclareMathOperator{\wdivr}{\overline{div}}

\DeclareMathOperator{\spn}{span}
\DeclareMathOperator{\rank}{rank}

\DeclareMathOperator{\ran}{ran}
\DeclareMathOperator{\diag}{diag}
\DeclareMathOperator{\tr}{tr}
\DeclareMathOperator*{\esssup}{ess\,sup}

\newcommand{\widecheck}{\check}

\allowdisplaybreaks


\begin{document}                        


\title{Spectral Exterior Calculus}

\author{Tyrus Berry}{Department of Mathematics, George Mason University}
\author{Dimitrios Giannakis}{Courant Institute of Mathematical Sciences, New York University}





\begin{abstract}
    A spectral approach to building the exterior calculus in manifold learning problems is developed.  The spectral approach is shown to converge to the true exterior calculus in the limit of large data.  Simultaneously, the spectral approach decouples the memory requirements from the amount of data points and ambient space dimension.  To achieve this, the exterior calculus is reformulated entirely in terms of the eigenvalues and eigenfunctions of the Laplacian operator on functions.  The exterior derivatives of these eigenfunctions (and their wedge products) are shown to form a frame (a type of spanning set) for appropriate $L^2$ spaces of $k$-forms, as well as higher-order Sobolev spaces.  Formulas are derived to express the Laplace-de Rham operators on forms in terms of the eigenfunctions and eigenvalues of the Laplacian on functions.  By representing the Laplace-de Rham operators in this frame, spectral convergence results are obtained via Galerkin approximation techniques.  Numerical examples demonstrate accurate recovery of eigenvalues and eigenforms of the Laplace-de Rham operator on 1-forms.  The correct Betti numbers are obtained from the kernel of this operator approximated from data sampled on several orientable and non-orientable manifolds, and the eigenforms are visualized via their corresponding vector fields.  These vector fields form a natural orthonormal basis for the space of square-integrable vector fields, and are ordered by a Dirichlet energy functional which measures oscillatory behavior. The spectral framework also shows promising results on a non-smooth example (the Lorenz 63 attractor), suggesting that a spectral formulation of exterior calculus may be feasible in spaces with no differentiable structure.  
\end{abstract}

\maketitle   



 \tableofcontents



\section{Introduction}

The field of manifold learning has focused significant attention recently on consistently estimating the Laplacian operator on a manifold, 
\begin{equation}
    \label{0Laplacian} \Delta = -\divr  \grad = \delta d 
\end{equation}
(in this paper we use the positive definite Laplacian, which we also refer to as the 0-Laplacian) \cite{BelkinNiyogi03,Singer06,CoifmanLafon06,VonLuxburgEtAl08,BerrySauer16b,HeinEtAl05,BerryHarlim16,belkin2007convergence,shi2015convergence,trillos2018error,trillos2018variational,BerrySauer16}.  Given data $\{x_i\} $ sampled from a manifold $\mathcal{M} \subset \mathbb{R}^n$, these methods build a graph with weights given by a kernel function $k(x_i,x_j)$, and then approximate the Laplacian operator with the graph Laplacian 
\begin{equation}
    \label{graphLaplacian} 
    \bm L= \bm D - \bm K, 
\end{equation}
where $\bm K$ and $\bm D$ are the kernel and degree matrices associated with $k $, respectively. In Table~\ref{fig0}, we briefly summarize the current state-of-the-art results.

\begin{table}
\caption{\label{fig0} Summary of results on manifold learning}
\hrulefill
\begin{enumerate}
    \item For uniform sampling density (with respect to the Riemannian volume measure) on a compact manifold, the Gaussian kernel provides a consistent pointwise estimator of the Laplace-Beltrami operator \cite{BelkinNiyogi03}.
    \item For nonuniform sampling density on a compact manifold, any isotropic kernel with exponential decay can be normalized to give a consistent pointwise estimator \cite{CoifmanLafon06}.
    \item For nonuniform sampling density on a compact manifold, any symmetric kernel with super-polynomial decay can be normalized to give a consistent pointwise estimator with respect to a geometry determined by the kernel function \cite{BerrySauer16b}.
    \item The bias-variance tradeoff implies error which is exponential in the dimension of the manifold \cite{Singer06,BerryHarlim16}.
    \item The above results can be generalized to non-compact manifolds by assuming appropriate lower bounds on the injectivity radius and either the curvature or the ratio between the intrinsic and extrinsic distances \cite{HeinEtAl05,BerryHarlim16,BerrySauer16}.    
    \item For data sampled on a compact subset of $\mathbb{R}^n$, not necessarily with manifold structure, the normalized and (under additional conditions) the unnormalized graph Laplacians converge spectrally to operators on continuous functions in the infinite-data limit \cite{VonLuxburgEtAl08}.
    \item For smooth compact manifolds without boundary and uniform sampling density, the graph Laplacian associated with Gaussian kernels converges spectrally to the manifold Laplacian along a decreasing sequence of kernel bandwidth parameters as the number of samples increases \cite{BelkinNiyogi07}. More recently, these results have been extended to allow spectral approximation of more general self-adjoint elliptic operators on bounded open subsets of $\mathbb{R}^n$, with specified relationships between the bandwidth and number of points, including error estimates  \cite{shi2015convergence,trillos2018error,trillos2018variational}.
    \item The bias and variance of the spectral estimator were computed in \cite{BerrySauer16}, who showed that the variance is dominated by two terms, one proportional to the eigenvalue $\lambda$ (linear) and another proportional to $\lambda^2$ (quadratic), explaining why an initial part of the spectrum (close to zero) can be significantly more accurate than larger eigenvalues.  For this initial part of the spectrum, the optimal bias-variance tradeoff results in a much smaller bandwidth than is optimal for pointwise estimation.
    \item A separate construction (closely related to kernel estimators) uses local estimators of the tangent space and orthogonal matrices that estimate the covariant derivative in order to construct an estimator of the connection Laplacian, which is closely related to the Hodge Laplacian on 1-forms \cite{singer2012vector,el2016graph,singer2016spectral}. 
\end{enumerate}
\hrulefill
\end{table}

The Laplacian-based approach to manifold learning is justified by the fact that the Laplace-Beltrami operator encodes all the geometric information about a Riemannian manifold.  A simple demonstration of this fact arises from the product formula for the Laplacian 
\begin{equation}
    \label{productrule} \Delta(fh) = f\Delta h + h\Delta f - 2\grad f \cdot \grad h, 
\end{equation}
where the dot-product above is actually the Riemannian inner product $g_x : T_x\mathcal{M} \times T_x\mathcal{M} \to \mathbb{R}$,
\begin{align}
    \nonumber g_x(\grad f(x),\grad h(x)) &= ( \grad f \cdot \grad h )(x ) \\
    \label{SECmetric} &= \frac{1}{2}(f(x)\Delta h(x) + h(x) \Delta f(x) - \Delta(fh)(x)). 
\end{align}
Specifically, given any vectors $v,w \in T_x\mathcal{M}$, there must exist functions $f,h$ with $\grad f(x)=v$ and $\grad h(x) = w$, and then the inner product 
\begin{displaymath}
    g_x(v,w) = g_x(\grad f(x),\grad h(x)) 
\end{displaymath}
can be computed as above.
 
Since the geometry of a Riemannian manifold is completely determined by the Riemannian metric, the above formulas show that metric is completely recoverable from Laplacian, so learning the Laplacian is sufficient for manifold learning.    Of course, this is a theoretical rather than pragmatic notion of sufficiency.  If one asks certain geometric questions, such as ``What is the 0-homology of the manifold?" (i.e., the number of connected components) this can be easily answered as the dimension of the kernel (nullspace) of the Laplacian.  However, if one asks for the higher homology of the manifold, or the harmonic vector fields, or the closed or exact forms, the above formulas do not suggest any practical approach.  What is needed is not merely the Laplacian, but a consistent representation of the entire exterior calculus on the manifold.

In this paper, we introduce the Spectral Exterior Calculus (SEC) as a consistent representation of the exterior calculus based entirely on the eigenfunctions and eigenvalues of the Laplacian on functions.  In essence, we will follow through on the above analysis and reformulate the entire exterior calculus in terms of these eigenfunctions and eigenvalues.  

Discrete formulations of the exterior calculus, utilizing a finite number of sampled points on or near the manifold, have been introduced at least as early as the mid 1970s with the work of Dodziuk \cite{Dodziuk76} and Dodziuk and Patodi \cite{DodziukPatodi76}. They constructed a combinatorial Laplacian on simplicial cochains of a smooth triangulated manifold, and showed that, under refinement of the triangulation, the combinatorial Laplacian on $k$-cochains converges in spectrum to the Laplace-de Rham operator on $ k$-forms (or $k$-Laplacian, denoted $\Delta_k$). A key element of this construction was the use of Whitney interpolating forms \cite{Whitney57}, mapping  $k$-cochains to $k$-forms on the manifold. More recently, two alternative methods of discretely representing the exterior calculus have been developed, namely the Discrete Exterior Calculus (DEC) by Hirani \cite{Hirani03} and Desbrun et al.\ \cite{DesbrunEtAl05}, and the Finite element Exterior Calculus (FEC) by Arnold et al.\ \cite{arnold2006finite,arnold2010finite}. Among these, the FEC includes the techniques of Dodziuk and Patodi, as well as subsequent generalizations by Baker \cite{Baker83} utilizing Sullivan-Whitney piecewise-polynomial forms \cite{Sullivan75}, as special cases. In Table~\ref{fig1}, we compare the features of the SEC to the DEC and FEC.  For manifold learning applications, we focus on three requirements: consistency, applicability to raw data, and amount of data and memory required.  

\begin{table}
    \caption{\label{fig1} Comparison of the Spectral Exterior Calculus (SEC) introduced in this manuscript to the Discrete Exterior Calculus (DEC) and the Finite element Exterior Calculus (FEC).}
\begin{center}
\begin{tabular*}{.75\linewidth}{@{\extracolsep{\fill}}lccc}
\hline
Feature & DEC & FEC & SEC \\
\hline\hline
Pointwise  consistency & Yes & Yes & Yes \\
Spectral  consistency & Unknown & Yes & Yes \\
Works on raw data & No & No & Yes \\
Decouples memory from data & No & N/A & Yes \\
Exterior Calculus structure & Partial & Partial & Partial \\
\hline
\end{tabular*}
\end{center}
\end{table}

The first requirement is that the method should be consistent, meaning that discrete analogs of objects from the exterior calculus should converge to their continuous counterparts in the limit of large data.  In this paper we will focus on the pointwise and spectral consistency, in appropriate Hilbert spaces, of representations of vector fields and the Laplace-de Rham operators on $k$-forms.  We chose to focus on Laplace-de Rham operators because their eigenforms form a natural ordered basis for the space of $k$-forms.  Moreover, in the case of the eigenforms of the $1$-Laplacian, the Riemannian duals form a natural basis for smooth vector fields.  While the DEC formulates a discrete analog to $\Delta_k$, currently it has not been proven to be pointwise consistent, except for $\Delta_0$ using the cotangent formula for the special case of surfaces in $\mathbb{R}^3$. A recent preprint by Schulz and Tsogtgerel \cite{schulz2016convergence} shows consistency of the DEC when used to solve Poisson problems for the $1$-Laplacian, but spectral convergence is not addressed.  A recent thesis  of Rufat \cite{SPEX} considers a collocation-based variant of the DEC, also called  ``Spectral Exterior Calculus" but  abbreviated SPEX, and shows numerical examples suggesting consistency of the kernel of the $1$-Laplacian. The FEC has convergence results for estimating Laplace-de Rham and related operators, including eigenvalue problems.  

In Section, \ref{secGalerkin}, we prove spectral convergence results for the SEC-approximated 1-Laplacian using a Galerkin technique. More generally, many of the operators encountered in exterior calculus, including the $k$-Laplacians, are unbounded, and the requirement of consistency must necessarily address domain issues for such operators. One of the advantageous aspects of the SEC is that the Sobolev regularity appropriate for differential operators such as $k$-Laplacians can be naturally enforced using the eigenvalues of the 0-Laplacian. This allows us to construct spectrally convergent Galerkin schemes using classical results from the spectral approximation theory for linear operators. This approach generalizes Galerkin approximation schemes for a class of unbounded operators on functions (generators of measure-preserving dynamical systems) \cite{GiannakisEtAl15,Giannakis19,DasGiannakis19} to operators acting on vector fields and higher-order $k$-forms.  

Our second requirement is that the method should only require raw data, as the assumption of an auxiliary structure such as a simplicial complex is unrealistic for many data science applications.  The FEC makes strong use of the known structure of the manifold to build their finite element constructions, which makes the FEC inappropriate for manifold learning. Indeed, it is instead targeted at solving PDEs on manifolds where the manifold structure is given explicitly.  Based on this requirement we will not consider further comparison to the FEC.  The DEC also makes strong use of a simplicial complex in their formulation and in the consistency results.  It is conceivable that one could apply the DEC to an abstract simplicial complex based on an $\epsilon$-ball or $k$-nearest neighbor construction, however there are no consistency results for such constructions.  

Our third requirement is that the memory requirements should be decoupled from the data requirements, since data sets may be very large, rendering any method requiring memory that is even quadratic in the data impractical.  In the DEC, discrete $k$-forms are encoded as weights on all combinations of $k$-neighbors of each data point.  For a data set with $N$ data points, each having $\ell$ neighbors, functions would be represented as $N\times 1$ vectors, $1$-forms as $N\times \ell$ matrices, and general $k$-forms as $N\times \ell^k$ matrices.  Thus, operators such as the $k$-Laplacian are represented as $N\ell^k \times N\ell^k$ matrices.  The SEC provides an alternative which is much more memory efficient.  

It has been shown that the error in the leading eigenfunctions of the $0$-Laplacian is proportional to the eigenvalues \cite{BerrySauer16}, which by Weyl's law grow according to $\lambda_j \propto n_j^{2/d}$ where $d$ is the dimension of the manifold.  Moreover, for larger eigenvalues and eigenfunctions the error ultimately becomes quadratic in the eigenvalue.  The idea of the SEC is to formulate the exterior calculus entirely in terms of the eigenfunctions of the $0$-Laplacian, approximated through graph-theoretic kernel methods, and to discretize the exterior calculus by projecting onto the first $M \ll N$ eigenfunctions.  Thus, functions would be represented as $M\times 1$ vectors, $1$-forms as $M \times J$ matrices, and $k$-forms as $M\times J^k$ matrices.  As we will explain in Sections \ref{vectorOverview} and \ref{secFrameDef}, $J$ is the number of eigenfunctions required to form an embedding of the manifold in $\mathbb{R}^J$.  Notice that highly redundant data sets may introduce extremely large values of $N$, but since $M$ is decoupled from $N$ this would not present a problem for the SEC. Also, for high-dimensional manifolds which require a large data set $N$ to obtain a small number $M$ of accurate eigenfunctions, the SEC could proceed using only these accurate eigenfunctions potentially yielding very efficient representations of higher-dimensional manifolds. Another advantageous aspect of SEC representations is that their memory cost is independent of the ambient data space dimension $n $ (which can be very large in real-world applications). In fact, the only parts of the SEC framework  with an $n$-dependent memory and computation cost are the initial graph-Laplacian construction and the spectral representation of the pushforward maps on vector fields, all of which depend linearly on $ n $. In contrast, the cost of building simplicial complexes and other auxiliary constructs required by DEC and FEC approaches can be very high in large ambient space dimensions. 

It is also desirable that a data-based exterior calculus should capture as much as possible of the structure of the exterior calculus from differential geometry, meaning that discrete analogs of continuous theorems should hold.  While no method captures discrete analogs of all the continuous theorems, each method has some partial results.  For example, the DEC beautifully captures a discrete analog of Stokes' theorem and the Leibniz rule holds exactly for closed forms, however the product rule for the Laplacian fails.  In the SEC, the product rule for the Laplacian will hold exactly, however this leads inevitably to the failure of the Leibniz rule as shown in \ref{productRules}.  

Finally, even though here we do not explicitly address this issue, an important consideration in data-driven techniques is robustness to noise. The simplicial complexes employed in DEC become increasingly sensitive to noise with increasing simplex dimension. On the other hand, the noise robustness of the SEC is limited by the noise robustness of the graph Laplacian algorithm employed to approximate the eigenfunctions of the 0-Laplacian. The latter problem has been studied from different perspectives in the literature \cite{ElKaroui10,MeyerShen14,YanSarkar16}, and it has been shown \cite{ElKaroui10} that for certain classes of kernels and i.i.d.\ noises (including Gaussian noise of arbitrary variance) the graph Laplacian computed from noisy data converges spectrally to noise-free case in the infinite-data limit.

The central challenge of the SEC approach is obtaining the representation of the exterior calculus in the spectral basis of eigenfunctions of the $0$-Laplacian.  In the next section we overview how vector fields, $k$-forms, and the central operators of the exterior calculus can all be represented spectrally. Since the gradients of these functions do not span the set of vector fields (otherwise every vector field would be a gradient field), we instead build a \emph{frame} (overcomplete set) \cite{DuffinSchaeffer52,Mallat09} consisting of products of Laplacian eigenfunctions and their gradient, and we represent vector fields in this frame. We proceed analogously for $ k $-forms, using products of Laplacian eigenfunctions and $ k $-fold wedge products of their differentials to construct frames. 

The plan of this paper is as follows. We begin in Section \ref{secOverview} with an overview of the SEC, including the fundamental idea of our approach and tables which overview key formulas.  Computation of the more complex formulas can be found in \ref{derivations}.  In Section \ref{secPrelim}, we briefly review the necessary background material and introduce our key definitions.  Our central contribution to the theory of the exterior calculus is proving that our construction yields frames for $L^2$ and Sobolev spaces of vector fields and $k$-forms in Section \ref{secSECSmooth}. In Section~\ref{secSECRep}, we discuss aspects of this frame representation for bounded vector fields, as well as associated representations of vector fields as operators on functions and the convergence properties of finite-rank approximations. Then, in Section \ref{secGalerkin}, we employ this frame representation to construct a Galerkin approximation scheme for the eigenvalue problem of the 1-Laplacian, which is shown to converge spectrally.   Section \ref{secDataDriven} establishes the consistency of the data-driven SEC representation of the exterior calculus.  In Section \ref{secNumerics}, we present numerical results demonstrating the consistency of the SEC on a suite of numerical examples involving orientable and non-orientable smooth manifolds, as well as the fractal attractor of the Lorenz~63 system. We conclude with a summary discussion and future perspectives in Section~\ref{secDiscussion}. A Matlab code reproducing the results in Section~\ref{secNumerics} is included as supplementary material. 

\section{\label{secOverview}Overview of the Spectral Exterior Calculus (SEC)}

As mentioned above, many manifold learning techniques are based on the ability to approximate the Laplacian operator on a manifold \eqref{0Laplacian} via a graph Laplacian \eqref{graphLaplacian}, defined on a graph of discrete data points sampled from the manifold.  When this convergence is spectral, we may use the eigendecomposition of an appropriately constructed graph Laplacian.  In Section \ref{dm} below we briefly summarize the Diffusion Maps approach to the construction \cite{CoifmanLafon06}.  The eigendecomposition from Diffusion Maps, or a comparable algorithm with spectral convergence guarantees (e.g., \cite{trillos2018variational}), is the only input required to generate the entire SEC construction.

The SEC represents vector fields and differential forms using Laplacian eigenfunctions, and then reformulates the exterior calculus of Riemannian geometry in terms of these representations.  This reformulation is described in Sections~\ref{functionOverview}--\ref{laplacianOverview}, and will be made rigorous in Sections \ref{secPrelim} and \ref{secSECSmooth}.  The motivation for this reformulation is that it allows us to define an exterior calculus using only the eigendecomposition of the 0-Laplacian.  In other words, in a manifold learning scenario, the eigendecomposition of the graph Laplacian provides all the necessary inputs to formulas which generate the entire exterior calculus formalism.  Of course, this implies a ``low-pass'' or truncated representation, and in Sections \ref{secGalerkin} and \ref{secDataDriven} we will prove that these truncated representations converge in the limit, as the number of eigenvectors increases.  

\subsection{The Diffusion Maps Algorithm for the Construction of the 0-Laplacian}\label{dm}

Following the Diffusion Maps approach, we define a kernel matrix 
\begin{displaymath}
    K_{ij} = k_\epsilon(x_i,x_j) :=  \exp\left(-\frac{\lVert x_i-x_j \rVert^2}{4\epsilon} \right),
\end{displaymath} 
where $\{x_i\}_{i=1}^N \subset \mathcal{M} \subset \mathbb{R}^n$ is a data set sampled from the embedded manifold $\mathcal{M}$, under a measure with a smooth, fully supported density relative to the volume measure associated with the Riemannian metric, $g$, induced by the embedding. We then normalize $\bm K = [ K_{ij} ] $ to a new kernel matrix $ \hat{ \bm K } $  to remove the sampling bias,

\[  \hat { \bm K } = \bm Q^{-1} \bm K \bm Q^{-1}, \quad \bm Q = \diag[ Q_{ii} ], \quad Q_{ii} = \sum_{j=1}^N K_{ij},   \]
and finally normalize $\hat{ \bm K }$ into a Markov matrix $ \bm P $,  
\begin{equation}
    \label{eqPDM}
    \bm P = \bm D^{-1}\hat{ \bm K }, \quad \bm D = \diag[ D_{ii} ], \quad D_{ii} = \sum_{j=1}^N \hat K_{ij},
\end{equation}
which approximates the heat semigroup $e^{-\epsilon \Delta_0}$; see \cite{CoifmanLafon06} for details on this procedure. The normalized kernel matrix has the generalized eigendecomposition
\[ \hat {\bm K} = \bm D \bm \Phi \bm \Lambda \bm \Phi^{\top}, \]
which can be computed by solving the fully symmetric generalized eigenvalue problem $\hat{ \bm K } \vec \phi = \Lambda \bm D \vec \phi$.  The eigenvalues satisfy $\Lambda = e^{-\epsilon\hat\lambda}$ where the values $\hat \lambda \geq 0 $ approximate the eigenvalues of the Laplacian, so we define $\hat \lambda = -\log(\Lambda)/\epsilon$  An asymptotically equivalent (in the limit $N\to \infty$ and $\epsilon \to 0$) approach is to form the graph Laplacian $\bm L = \bm D - \hat{ \bm K } ) $, and compute the generalized eigendecomposition $\bm L = \bm D \bm \Phi \tilde{ \bm\Lambda}\bm \Phi^\top$.  In either approach, we approximate the eigenfunctions $\Delta_0 \phi_i = \lambda_i \phi_i$ of the Laplacian operator, and sort the columns of $\bm\Phi$ so that the eigenvalues are increasing.  The diagonal matrix $\bm D$ represents the Riemannian $L^2$ inner product on the manifold, in the sense that if $\vec f_i = f(x_i)$ and $\vec h_i = h(x_i)$ are vector representations of (complex-valued) continuous functions, then
\[ {\vec f}^\dag \bm D \vec h \approx \int_{\mathcal{M}} f^*(x)h(x) \, d\mu(x) = \langle f,g\rangle _{L^2} \]
up to a constant proportionality factor, where $ \dag $ denotes complex-conjugate transpose, and $ \mu $ is the Riemannian measure of the manifold.  Thus, we can compute the generalized Fourier transform of the function $f$ by 
\[ \hat f = \bm \Phi^\top \bm D \vec f,  \qquad \hat f_j = \sum_{i=1}^N \phi_j(x_i)D_{ii}f(x_i) \approx \langle \phi_j,f\rangle _{L^2}. \]  
We can then reconstruct the values of the function $f$ on the data set by $\vec f = \bm \Phi \hat f$, which holds exactly since $\bm \Phi \bm \Phi^\top = \bm D^{-1}$. If a smaller number of eigenvectors are used, then $\bm \Phi \bm \Phi^\top$ is not full rank, and the result is a low-pass filter.

Note that in applications (including those presented in this paper), one is frequently interested in real-valued functions and self-adjoint operators, so complex conjugation is not included in $ L^2$ inner products as above. However, applications with complex-valued functions can also be of interest (e.g., in dynamical systems theory \cite{Giannakis19}), so in what follows we work with complex-valued functions to maintain generality. 
\subsection{Functions, multiplication, and the Riemannian metric}\label{functionOverview}

In Table~\ref{fig2}, we show the basic elements of the exterior calculus and their SEC formulations.  For example, complex-valued functions are represented in the SEC by their generalized Fourier transform $\hat f_i = \langle \phi_i,f\rangle _{L^2}$, which is justified since the Hodge theorem shows that the eigenfunctions $\phi_i$ form a smooth orthonormal basis for square-integrable functions on the manifold. It should be noted that, as with all $ L^2 $ expansions, $ f $ may differ from the reconstructed function $ \sum_i \hat f_i \phi_i $ on sets of measure zero. Similarly, all frame representations of vector fields and $ k $-forms in Table~\ref{fig2} and the ensuing discussion should be interpreted in an $ L^2 $ sense. 

\begin{table}
    \caption{\label{fig2} The SEC reformulation of the basic elements of the exterior calculus from Riemannian geometry.}
    \small
\begin{center}
{\renewcommand{\arraystretch}{1.5}
\begin{tabular*}{\linewidth}{@{\extracolsep{\fill}}lcc}
\hline
Object & Symbolic & Spectral  \\
\hline\hline
Function 				&	$f$ 					& $\hat f_i = \langle \phi_i,f\rangle _{L^2}$ \\
\hline
Laplacian 				&	$\Delta f$ 	& $\langle \phi_k, \Delta f\rangle _{L^2} = \lambda_k \hat f_k$ \\
\hline 
$L^2$ Inner Product & $\langle f,h\rangle _{L^2}$ & $\sum_i \hat f_i^*  \hat h_i $  \\
\hline
Dirichlet Energy 							& $\langle f,\Delta f\rangle _{L^2} = \int_\mathcal{M} \lVert \grad f \rVert^2 \,d\mu$ & $\sum_i \lambda_i \lvert \hat f_i\rvert^2 $  \\
\hline
Multiplication  	& $\phi_i \phi_j$ & $c_{ijk} = \langle \phi_i \phi_j, \phi_k\rangle _{L^2}$  \\
\hline
Function Product 	& $f h$ 				& $\sum_{ij} c_{kij}\hat f_i \hat h_j$ 	 \\
\hline
Riemannian Metric			& $\grad \phi_i \cdot \grad \phi_j$ & 
\begin{minipage}{.3\linewidth}
    \begin{displaymath}
        \begin{aligned}
        g_{kij}  & \equiv \langle  \grad \phi_i \cdot \grad \phi_j,\phi_k\rangle _{L^2} \\ 
        & = (\lambda_i + \lambda_j - \lambda_k)c_{kij} / 2
    \end{aligned}
    \end{displaymath}
\end{minipage} \\
\hline
Gradient Field			& $\grad f(h)=\grad f^* \cdot \grad h$ & $\langle \phi_k, \grad f( h ) \rangle _{L^2}= \sum_{ij} g_{kij}\hat f_i \hat h_j$ \\
\hline 
Exterior Derivative 		& $df(\grad h)=df^*\cdot dh$ & $\sum_{ij} g_{kij} \hat f_i \hat h_j$    \\
\hline
Vector Field	(basis)		&$v(f) = v^* \cdot \grad f$		& $\sum_j v_{ij}\hat f_j$	 \\
\hline
Divergence 		& $\divr v$ 	& $\langle \phi_i, \divr v\rangle _{L^2} = -v_{0i}$  \\
\hline
Frame Elements & $b_{ij}(\phi_l) = \phi_i \grad \phi_j(\phi_l)$ &
\begin{minipage}{.3\linewidth}
    \begin{displaymath}
    \begin{aligned}
        G_{ijkl} & \equiv \langle b_{ij}(\phi_l),\phi_k\rangle _{L^2} \\
        &=\mbox{$\sum_{m}$}c_{mik}g_{mjl}  
    \end{aligned}
    \end{displaymath}
\end{minipage}\\
\hline
Vector Field (frame) & $v(f) = \sum_{ij}v^{ij}b_{ij}(f)$ & $\langle \phi_k,v(f)\rangle _{L^2} = \sum_{ijl}G_{ijkl}v^{ij}\hat f_l$ \\
\hline
Frame Elements & $b^{ij}(v)=b^i\, db^j(v)$ & $ \langle \phi_k,b^{ij}(v)\rangle _{L^2} =\sum_{nlm}c_{kmi}G_{nlmj}v^{nl}$  \\
\hline
1-Forms (frame) & $\omega = \sum_{ij}\omega_{ij}b^{ij}$ & $\langle \phi_k,\omega(v)\rangle _{L^2} =\sum_{ij}\omega_{ij}\langle \phi_k,b^{ij}(v)\rangle _{L^2}$  \\
\hline
\end{tabular*}
}
\end{center}
\end{table}

The two key elements of Table~\ref{fig2} are the representation of function multiplication and the Riemannian metric. 

First, function multiplication will be represented by the fully symmetric three-index tensor 
\begin{equation}
    \label{multiplication} 
    c_{ijk} = \langle \phi_i \phi_j,\phi_k\rangle _{L^2}, 
\end{equation}
which will be a key building block of the SEC. Note that here we use the term ``tensor'' to represent a general multi-index object such as $ c_{ijk} $ derived from inner products of Laplacian eigenfunctions. While these objects are not geometrical tensors on the manifold, they nevertheless transform via familiar tensor laws under changes of $ L^2 $ basis preserving the Laplacian eigenspaces.  

Next, the Riemannian metric is represented based on the product formula \eqref{productrule}, and is given by \eqref{SECmetric}. The power of the SEC is that we will only need to represent the metric for gradients of (real) eigenfunctions $\grad \phi_i$ and $\grad \phi_j$, where we find that
\begin{equation}
    \label{eqGPointwise}
    g(\grad \phi_i,\grad \phi_j) = \frac{1}{2}(\phi_i \Delta \phi_j + \phi_j \Delta \phi_i - \Delta(\phi_i \phi_j)) = \frac{1}{2}((\lambda_i+\lambda_j)\phi_i\phi_j - \Delta(\phi_i \phi_j)).
\end{equation}
We can further reduce this by writing the product $\phi_i \phi_j = \sum_{k} c_{ijk} \phi_k$, so that
    \[ g(\grad \phi_i,\grad \phi_j) = \frac{1}{2} \sum_k (\lambda_i+\lambda_j - \lambda_k)c_{ijk}\phi_k, \]
meaning that the $k$-th Fourier coefficient of the Riemannian metric is 
\begin{equation}
    \label{eqGFourier}
    g_{kij} \equiv \langle g(\grad \phi_i,\grad \phi_j),\phi_k\rangle _{L^2} = \frac{1}{2}(\lambda_i +\lambda_j - \lambda_k)c_{ijk}.
\end{equation}
Notice that $g_{kij}$ is symmetric in $i$ and $j$ but not in $k$.  These first two simple formulas are the key to the SEC.

\subsection{Vector fields}\label{vectorOverview}

We will need two different ways of representing vector fields.  The first method is called the operator representation and is based on the interpretation of a vector field as a map from smooth functions to smooth functions, defined by 
\[ v(f) = v^* \cdot \grad f = g(v^*,\grad f), \]
where $v^* $ denotes the complex-conjugate vector field to $v$. Note that, as with functions, in SEC we consider vector fields, differential forms, and other tensors to be complex. This is because, ultimately, we will be concerned with spectral approximation of operators on these objects, and the complex formulation will allow us to take advantage of the full range of spectral approximation techniques for operators on Hilbert spaces over the complex numbers. Throughout, our convention will be that Riemannian inner products on complex tensors are conjugate-symmetric in their first argument, e.g., $g(f v, w ) = f^* g( v, w )$ for vector fields $v,w $ and function $f$. Since we have a smooth basis $\{\phi_i\}$ for functions, we can represent any vector field $v$ in this basis by an operator with matrix elements
\[ v_{ij} = \langle \phi_i,v(\phi_j) \rangle _{L^2} = \langle \phi_i,v^*\cdot \grad \phi_j\rangle _{L^2} = \langle \phi_i \grad \phi_j, v \rangle _{L^2_\mathfrak{X}} = \langle  b_{ij}, v \rangle _{L^2_\mathfrak{X}}, \]
where the first two inner products appearing above are the $L^2$ inner products on functions, the last two inner products are the Hodge inner products induced on vector fields, 
\[ \langle v,w\rangle _{L^2_{\mathfrak X}} = \int_{\mathcal{M}} g(v,w) \,d\mu,  \] 
and $ b_{ij} = \phi_i \grad \phi_j $ are smooth vector fields. Note that the Hodge inner product defines the space of square-integrable vector fields, denoted $L^2_{\mathfrak X}$.

The second method of representing a vector field will be as a linear combination of the vector fields $b_{ij} $ just introduced, with coefficients $v^{ij}$ so that
\[ v = \sum_{ij} v^{ij}b_{ij}. \]
As we will show in Section \ref{secSECSmooth}, the vector fields $\{b_{ij}\}$ where $i=1, \ldots,\infty$ and $j=1,\ldots,J < \infty $ spans $L^2_{\mathfrak{X}}$.  However, instead of a basis, this set is only a \emph{frame} for this space, i.e., a spanning set satisfying appropriate upper and lower bounds  for the $\ell^2$ norms of the sequence $ \langle b_{ij}, v \rangle_{L^2_\mathfrak{X}}$ for every $v \in L^2_\mathfrak{X}$ \cite{christensen2008frames}.  Since $\{b_{ij}\}$ is not a basis, this representation will generally not be unique, although frame theory ensures that there is a unique choice of coefficients $v^{ij}$ which minimizes the $\ell^2$ norm \cite[Lemma 5.3.6]{christensen2008frames}.

As we will see in the Section \ref{laplacianOverview}, there is a natural choice of basis for $L^2_{\mathfrak X}$, and constructing this basis will be a central goal of the SEC approach, however doing so requires using the frame $\{b_{ij}\}$.  To motivate this choice of frame elements, note that given a fixed point $x$ on the manifold and a sufficient (finite) number of eigenfunctions $ \phi_j$, the gradients of these eigenfunctions $\grad \phi_j(x)$ will span the tangent space $T_x\mathcal{M}$ (see Section \ref{secPrelim} for details).  In fact, for a $d$-dimensional compact manifold, we should be able to find $d$ eigenfunctions whose gradients form a basis for $T_x\mathcal{M}$ for a fixed $x$.  However, in general any choice of $d$ eigenfunctions will not span $T_x\mathcal{M}$ for every $x$ simultaneously, meaning that the choice of eigenfunctions which span depends on $x$.  This is easily demonstrated by the example of the sphere $S^2$. On $ S^2 $, every smooth vector field vanishes at some point $x$, so at that point the collection of $d$ gradient fields will at most span a $(d-1)$-dimensional subspace of $T_xS^2$.  Intuitively, given a collection of sufficiently many gradients of eigenfunctions $\{\grad \phi_j\}_{j=1}^J$, and if the manifold is not too ``large'' (i.e., it is compact), we can span all the tangent spaces simultaneously with $J<\infty$, but of course we no longer have a basis.  Given an arbitrary smooth vector field $v$, we can then represent $v$ at each point $x \in \mathcal{M}$ as a linear combination of gradients of eigenfunctions,  
\[ v_x = \sum_{j=1}^J c_{v,j}(x) \grad \phi_j(x). \]
If we can choose the coefficients $c_{v,j}(x)$ in this linear combination to be smooth functions on the manifold, then these functions can be represented in the basis $\{\phi_i\}$ of eigenfunctions, so that
\[ c_{v,j}(x) = \sum_{i=0}^{\infty} \langle  \phi_i, c_{v,j} \rangle _{L^2}\phi_i(x), \]
which means that we can represent the vector field $v$ as
\[ v = \sum_{j=1}^J \sum_{i=0}^{\infty} \langle \phi_i, c_{v,j} \rangle _{L^2}\phi_i \grad \phi_j. \]

We now consider how to move between the operator representation and frame representation of a vector field.  Substituting the frame representation into the operator representation, we find that
\begin{equation}
    \label{eqVOpRep}
    v_{ij} = \langle b_{ij}, v\rangle _{L^2_{\mathfrak{X}}} = \sum_{kl} v^{kl} \langle b_{kl},b_{ij}\rangle _{L^2_\mathfrak{X}} = \sum_{kl} G_{ijkl}v^{kl}, 
\end{equation}
where $G_{ijkl} = \langle b_{ij},b_{kl}\rangle _{L^2_{\mathfrak{X}}}$ is the Grammian matrix of the frame elements with respect to the Hodge inner product.  Thus, we see that the Hodge Grammian is the linear transformation which maps from the frame representation $v^{kl}$ to the matrix representation $v_{ij}$. Crucially, the quantities $G_{ijkl}$ can be computed in closed form from the spectral representation of the pointwise inner products in~\eqref{eqGPointwise}, viz.
    \begin{equation}
        \label{eqGIJKL_Hodge}
        G_{ijkl} = \langle b_{ij}, b_{kl} \rangle_{L^2_{\mathfrak{X}}} = \langle \phi_i \phi_k, \grad \phi_j \cdot \grad \phi_l \rangle_{L^2} = \frac{1}{2} \sum_{m=0}^\infty c_{ikm} c_{jlm} (\lambda_j + \lambda_l - \lambda_m).
    \end{equation}
Since the frame is overcomplete, the matrix $\bm G$ is necessarily rank deficient and thus there is no unique inverse transformation.  However, if we also specify the minimum $\ell^2$ norm then we can map from the matrix representation $v_{ij}$ to the frame coefficients (with minimum norm) $v^{kl}$ via the pseudo-inverse $\bm G^+$ of the Hodge Grammian.

\subsection{Differential forms}\label{formsOverview}

In order to build a formulation of the exterior calculus we need to first move from vector fields to differential $k$-forms.  First, $0$-forms are equivalent to $C^{\infty}$ functions defined on the manifold, which we represent in the basis of eigenfunctions of the Laplacian $\phi_i$.  Since each eigenfunction can also be thought of as a $0$-form, we will sometimes also denote the eigenfunctions by
\begin{displaymath} 
    b^i = \phi_i, 
\end{displaymath}
since the superscript notation is oftentimes used for basis elements of spaces of differential forms. Our primary focus in this paper will be $1$-forms, which are are duals to vector fields. That is, a 1-form takes in a vector field as its argument and returns a function. On a Riemannian manifold, we can move back and forth between vector fields and 1-forms with the $\sharp$ (sharp) and $\flat$ (flat) operators. Locally, these operators map 1-forms and vector fields, respectively, to their Riesz representatives with respect to the Riemannian inner product. In particular, if $\omega$ is a 1-form and $v$ is a vector field, then
\[ \omega(v) = g^{-1}(\omega^*,v^{\flat}) = g(v^*,\omega^\sharp), \] 
where $g^{-1}$ is the ``inverse'' metric on dual vectors. A fundamental operator on differential forms is the exterior derivative, $d$, which maps $k$-forms to $(k+1)$-forms, so that the exterior derivative of a 0-form $f$ is defined by the 1-form $df$, acting on a vector field $v$ by 
\[ df(v) = v(f) = g(v^*,\grad f). \]
We will sometimes use the notation $d_k$ to explicitly exhibit the order of differential forms on which a given exterior derivative acts. 

Since 1-forms are dual to vector fields, we will use a similar frame representation to that in Section~\ref{vectorOverview}, based on the eigenfunctions, $\{b^{ij} = b^i \, db^j\}$.  As we will show in Section \ref{secSECSmooth}, these 1-forms span the space $L^2_1 $ of square-integrable 1-forms.  We also note that the Riemannian metric lifts to $k$-forms (see Section \ref{secPrelim} for details), and takes two $k$-forms and returns a function. Integrating the Riemannian inner product of two $k$-forms, 
\[ \langle \psi,\omega\rangle _{L^2_k}  = \int_{\mathcal{M}} g^{-1}(\psi,\omega) \,d\mu, \]
defines the Hodge inner product, which then defines the Hilbert space of square integrable $k$-forms. Finally, since $df^\sharp = \grad f^*$, $\grad f^\flat = df^*$, and the $b_{ij}$ are real,  we  have $b_{ij}^\flat = b^{ij}$ and $(b^{ij})^\sharp=b_{ij}$, so the coefficients of a vector field in the frame representation can also be used to represent the corresponding 1-form and vice versa.
\subsection{The Laplacian on forms} \label{laplacianOverview}

The Laplacian on $k$-forms is defined via the exterior derivative  $ d $ and its Hodge dual, the codifferential $\delta$, by 
\[ \Delta_k = d_{k-1}\delta_k + \delta_{k+1} d_k \]
In order to represent the eigenvalue problem for the operator $\Delta_1$ in the frame $\{b^{ij}\}$, we need to compute the inner products
\begin{equation}
    \label{eqGE}
    G_{ijkl} = \langle b^{ij},b^{kl}\rangle _{L^2_1}, \qquad  E_{ijkl} = \langle b^{ij},\Delta_1 b^{kl}\rangle _{L^2_1} = \langle d b^{ij}, d b^{kl} \rangle_{L^2_2} + \langle \delta b^{ij}, \delta b^{kl} \rangle_{L^2}, 
\end{equation}
representing the Gramm matrix of Hodge inner products (which we will call the Hodge Grammian) and Dirichlet form matrix, respectively.  We derive the expressions for these tensors in \ref{derivations}  below, and the formulas are summarized in Table~\ref{fig3}.  Note that both $\bm G$ and $ \bm E$ can be written as symmetric matrices by numbering the frame elements.  Moreover, we can easily represent the Gramm matrix with respect to the Sobolev $H^1$ inner product on 1-forms, 
 \begin{displaymath}
     \langle \psi, \omega \rangle_{H^1_1} = \langle \psi, \omega \rangle_{L^2_1} + E_{1,1}( \psi, \omega ), \quad E_{1,1}( \psi, \omega ) = \langle d\omega, d\nu \rangle_{L^2_2} + \langle \delta \omega, \delta \nu \rangle_{L^2},
 \end{displaymath}
 as
 \[ \bm G^1 = \bm G + \bm E. \]
 The importance of the Sobolev Grammian $ \bm G^1 $ is that $H^1_1$ is a natural domain for weak (variational) formulations of the eigenvalue problem of the Laplacian on 1-forms.  
 
\begin{table}
    \caption{\label{fig3} The SEC representation of the Laplacian on 1-forms and the Dirichlet and Sobolev energy forms. Derivations can be found in \ref{derivations}. Pairs of integers in parentheses indicate symmetries under permutations of tensor indices; e.g., $(1,2) $ in $ c^n_{ijkl} $ indicates that $ c^n_{ijkl} = c^n_{jikl}$.}
    \small
\begin{center}
{\renewcommand{\arraystretch}{1.5}
\begin{tabular*}{\linewidth}{@{\extracolsep{\fill}}lcc}
\hline
Operator & Tensor	& Symmetries \\
\hline\hline
Quadruple Product		& 
\begin{minipage}{.4\linewidth}
    \begin{displaymath}
        \begin{aligned}
            c_{ijkl}^0 &= \langle \phi_i \phi_j,\phi_k \phi_l \rangle _{L^2} \\
            &= \mbox{$\sum_s$} c_{ijs}c_{skl}
        \end{aligned}
    \end{displaymath}
\end{minipage}
& Fully symmetric \\
\hline
Product Energy	& 
\begin{minipage}{.4\linewidth}
    \begin{displaymath}
        \begin{aligned}
            c_{ijkl}^{p} &= \langle \Delta^p(\phi_i \phi_j),\phi_k\phi_l \rangle _{L^2} \\
            &= \mbox{$\sum_s$} \lambda_s^p c_{ijs}c_{skl}
        \end{aligned}
    \end{displaymath}
\end{minipage}
& \begin{minipage}{.2\linewidth}
    \centering
    (1,2), (3,4), \\
    (1,3), (2,4) 
\end{minipage} \\
\hline
Hodge Grammian  & 
\begin{minipage}{.4\linewidth}
    \begin{displaymath}
        \begin{aligned}
            G_{ijkl} &= \langle b^{ij},b^{kl}\rangle_{L^2_1} \\
            &= [(\lambda_j + \lambda_l)c^0_{ijkl} - c_{ijkl}^1]/2 
        \end{aligned}
    \end{displaymath}
\end{minipage}
& (1,3), (2,4) \\
\hline
Antisymmetric & 
\begin{minipage}{.4\linewidth}
    \begin{displaymath}
        \begin{aligned}
            \hat G_{ijkl} & = \langle\hat b^{ij},\hat b^{kl}\rangle_{L^2_1} \\
            &= G_{ijkl}+G_{jilk}-G_{jikl}-G_{ijlk}
        \end{aligned}
    \end{displaymath}
\end{minipage}
& (1,3), (2,4) \\
\hline
Dirichlet Energy 
& 
\begin{minipage}{.4\linewidth}
    \begin{displaymath}
        \begin{aligned}
            E_{ijkl} &= E_{ijkl}=\langle b^{ij},\Delta_1(b^{kl})\rangle _{L^2_1} \\
            &= [(\lambda_i+\lambda_j+\lambda_k+\lambda_l)(c_{iljk}^1-c_{ikjl}^1) \\
            & \quad +(\lambda_j+\lambda_l-\lambda_i-\lambda_k)c^1_{ijkl} \\
            & \quad + (c^2_{ijkl}+c^2_{ikjl}-c^2_{iljk}) ]/4
        \end{aligned}
    \end{displaymath}
\end{minipage}
& (1,3), (2,4) \\
\hline
Antisymmetric & 
\begin{minipage}{.4\linewidth}
    \begin{displaymath}
        \begin{aligned}
            \hat E_{ijkl} & = \langle \hat b^{ij},\Delta_1 \hat b^{kl}\rangle_{L^2_1} \\
            &=(\lambda_i+\lambda_j+\lambda_k+\lambda_l)(c_{iljk}^{1} - c_{ikjl}^{1}) \\         
            & + (c_{ikjl}^{2}- c_{iljk}^{2})
         \end{aligned}
   \end{displaymath}
\end{minipage}
& (1,3), (2,4) \\
\hline
Sobolev $H^1$ Grammian & 
\begin{minipage}{.4\linewidth}
    \begin{displaymath}
        \begin{aligned}
            G^1_{ijkl} &= E_{ijkl} + G_{ijkl} \\
            \hat G^1_{ijkl} &=\hat E_{ijkl} + \hat G_{ijkl} 
        \end{aligned}
    \end{displaymath}
\end{minipage}
& (1,3), (2,4) \\
\hline
\end{tabular*}
}
\end{center}
\end{table}

As we will show in Section \ref{secGalerkin}, the key to solving the eigenproblem of the Laplacian on 1-forms is to first express the problem in a weak sense, i.e., replace $ \Delta_1 \varphi = \nu \varphi $ by
\begin{displaymath}
    E_{1,1}( \psi, \varphi ) = \nu \langle \psi, \varphi \rangle_{L^2_1}, \quad \forall \psi \in H^1_1,
\end{displaymath}
which is equivalent to the minimization problem
\[  \nu = \min_{\varphi \in H^1_1 \setminus \{ 0 \}} \left\{\frac{E_{1,1}( \varphi, \varphi )}{\langle \varphi,\varphi\rangle _{L^2_1}} \right\}. \]
Intuitively, the ratio $E_{1,1}(\varphi,\varphi) / \langle \varphi, \varphi \rangle_{H^1_1}$ is a measure of ``roughness'', or oscillatory behavior, of a given eigenform $ \varphi $, much like the eigenvalues of the 0-Laplacian measure the roughness of the corresponding eigenfunctions. Thus, ordering eigenforms in order of increasing eigenvalue, as we will always do by convention, is tantamount to ordering them in order of increasing complexity that they exhibit on the manifold. As with functions, given finite amounts of data, the approximation error for eigenforms increases with the corresponding eigenvalue.

In SEC, we represent the eigenform in the frame, $\varphi =\sum_{ij} \varphi_{ij} b^{ij}$. The above variational problem can then in principle be written in matrix form as 
\[ E_{1,1}( \psi, \varphi ) = \nu \langle \psi, \varphi \rangle_{L^2_1}, \quad \forall \psi \in H^1_1, \qquad \implies \qquad \bm E\vec \varphi = \nu \bm G \vec \varphi. \]
However,  the above eigenvector problem is not well-conditioned because $\bm G$ is not full-rank in general (since the frame is overcomplete, meaning there can be multiple representations of the same 1-form).  In order to find an appropriate basis, we first diagonalize the Sobolev Grammian,
\[ \bm G^1 = \bm U \bm H \bm U^\top, \qquad \bm H = \diag [ h_{ii} ], \]
and select the columns $\tilde{ \bm U }$ of the orthogonal matrix $\bm U$ corresponding to the largest eigenvalues.  For example, in our implementation we choose $h_{ii}>h_{11} \times 10^{-3}$.  Notice that the columns of $\tilde{\bm U}$ contain the frame coefficients of unique orthogonal 1-forms.  In other words, the matrix $\tilde{\bm U }$ is a choice of basis for $H^1_1$ represented in the frame.  Thus, we can project the eigenvalue problem onto this basis by writing
\begin{equation}
    \label{eqGEVIntro}
    \bm L \vec a  = \nu \bm B \vec a, \qquad \bm L = \tilde{ \bm U }^\top \bm E\tilde{ \bm U }, \qquad  \bm B = \tilde{\bm U }^\top \bm G \tilde {\bm U }. 
\end{equation}
An eigenvector $ \vec a $ of this generalized eigenvalue problem contains the coefficients of a frame representation for an eigenform $\varphi$ of $\Delta_1$.  

We should note that in practice we found somewhat better results using the antisymmetric elements  $\hat b^{ij} = b^i\,db^j - b^j\,db^i$, likely due to the fact that these forms are less redundant.  All of the formulas for the antisymmetric formulation of the $1$-Laplacian are given in Table~\ref{fig3}.  The one change in the antisymmetric formulation is that in order to move from the frame representation to the operator representation, we need the additional tensor
\begin{align*}
 H_{ijkl} &= \langle \phi_k,\hat b^{ij}(\grad \phi_l)\rangle _{L^2} = \langle \phi_k,b^i \,db^j(\grad \phi_l)\rangle _{L^2}-\langle \phi_k,b^j\,db^i(\grad \phi_l)\rangle _{L^2}\\
 &= G_{ijkl} - G_{jikl}.
 \end{align*}
With this tensor, given the frame representation of a $1$-form $\varphi = \sum_{ij} \hat \varphi_{ij}\hat b^{ij}$, the operator representation of the corresponding vector field $v = \varphi^{\sharp}$ becomes $v_{kl} = \sum_{ij}H_{ijkl}\varphi_{ij}$. 

In order to visualize an eigenform $\varphi$, we will visualize the corresponding vector field, $v = \varphi^{\sharp}$, which has the same frame coefficients as shown in Section~\ref{formsOverview}. In particular, it follows from~\eqref{eqVOpRep} that simply multiplying  $\vec a$ by the matrix $\bm G$, leads to $\bm G\vec a$, which contains the operator representation of $v$.  By reshaping $\bm G\vec a$ into a $M\times M$ matrix $\bm V$, we have $V_{ij} = \langle \phi_i,v(\phi_j)\rangle _{L^2}$.  To visualize this vector field, we need to map it back into the original data coordinates. This ``pushforward'' operation on vector fields can also be represented spectrally \cite[Proposition~6]{Giannakis19}.  In particular, let $\bm X$ be the $n \times N$ matrix of $N$ data points in $\mathbb{R}^n$.  We first compute the Fourier transform of these coordinates by computing the $\bm D$ inner product with the $N \times M$ matrix $\bm \Phi$ (see Section~\ref{dm}).  Thus, $\hat{ \bm X } = \bm X \bm D \bm \Phi$ is the $n \times M$ matrix containing the $M$ Fourier coefficients of each of the $n$ coordinate functions.  We can now apply the vector field to each of these functions by multiplying $\hat{ \bm X}\bm V^\top$, which now contains the Fourier coefficients of the pushforward of the coordinate functions.  Finally, we can reconstruct the coordinates of the arrows by computing the inverse Fourier transform $\tilde{ \bm V } = \hat { \bm X } \bm V^\top \bm \Phi^\top$, which is a $n\times N$ matrix containing the $n$-dimensional vectors which can plotted at each data point.  This method is used to visualize the SEC eigenforms in Section~\ref{secNumerics}.

\section{\label{secPrelim}Hilbert spaces and operators in the exterior calculus}

Consider a closed (compact and without boundary), smooth, orientable, $d$-dimensional manifold $ \mathcal{M} $, equipped with a smooth Riemannian metric $ g $.  Without loss of generality, we assume that $ g $ is normalized such that its associated Riemannian measure, $ \mu $, satisfies $ \mu( \mathcal{M} ) =1 $. As stated in Section~2, we will work with vectors in the complexified tangent spaces, $T_x^{\mathbb{C}} \mathcal{M} = T_x \mathcal{M} \otimes_{\mathbb{R}} \mathbb{C}$, $x \in \mathcal{M}$, treating by convention $g(\cdot, \cdot)$ as conjugate-linear in its first argument. We denote the associated metric tensor on dual vectors by $ \eta = g^{-1} $, and use the notation $ v^\flat = g( v, \cdot ) $, $ v \in T^{\mathbb{C}}_x \mathcal{M} $, and $ \alpha^\sharp = \eta( \alpha, \cdot ) $, $ \alpha \in T_x^{\mathbb{C}*} \mathcal{M} $, for the Riemannian duals of tangent vectors and dual vectors. In what follows, we introduce the spaces of functions, vector fields, and differential forms that will be employed in the SEC framework.  

\subsection{\label{secSpacesFunctions}Function spaces}

Let $\Delta : C^\infty(\mathcal{M}) \to C^\infty(\mathcal{M}) $ be the (positive-semidefinite) Laplace-Beltrami operator on smooth, complex-valued functions associated with the Riemannian metric $ g $. It is a fundamental result in analysis on closed Riemannian manifolds (e.g., \cite{Strichartz83,AbrahamEtAl88,Rosenberg97} that $ \Delta $ extends to a unique self-adjoint operator $ \bar \Delta : D( \bar \Delta ) \to L^2( \mathcal{M}, \mu ) $ with a dense domain $ D( \bar \Delta ) \subset L^2( \mathcal{M}, \mu ) $ in the $ L^2 $ space associated with the Riemannian measure, and a pure point spectrum of eigenvalues $ 0 = \lambda_0 < \lambda_1 \leq \lambda_2 \leq \cdots $ with no accumulation points, corresponding to a smooth orthonormal basis $ \{ \phi_j \}_{j=0}^\infty $ of eigenfunctions. By smoothness of the Laplace-Beltrami eigenfunctions, the products $ \phi_i \phi_j $ lie in $ L^2( \mathcal{M}, \mu )$; thus, we have 
\begin{equation}
  \label{eqCIJK}
  \phi_i \phi_j = \sum_{k=0}^\infty c_{ijk} \phi_k, \quad c_{ijk} = \langle \phi_k, \phi_i \phi_j \rangle_{L^2(\mathcal{M},\mu)}, 
\end{equation}
where the limit in the first equation is taken in the $ L^2 $ sense. As discussed in Section~\ref{secOverview}, our objective is to build a framework for tensor calculus on $ \mathcal{M} $ that is defined entirely through the spectral properties of the Laplacian on functions, encoded in the eigenvalues $ \lambda_j $, the corresponding eigenfunctions $ \phi_j $, and the coefficients $ c_{ijk} $ representing the algebraic relationships between the eigenfunctions. 

We use the notation $ L^p $, $ 1 \leq p \leq \infty $, to represent the standard Banach spaces of complex-valued functions on $ \mathcal{M} $ associated with the Riemannian measure $ \mu $, equipped with the standard norms, $ \lVert \cdot \rVert_{L^p} $. In the case $ p = 2 $, we use the shorthand notation $ H = L^2 $, and denote the corresponding Hilbert space inner product $ \langle \cdot, \cdot \rangle_H $, which we take to be conjugate-linear on its first argument. We also consider Sobolev spaces of higher regularity, defined for $ p \geq 0 $ by
\begin{equation}
  \label{eqHP}
  H^p = \left \{ \sum_{j=0}^\infty c_j \phi_j \in L^2 : \sum_{j=0}^\infty \lambda_j^p \lvert c_j \rvert^2 < \infty \right \}.
\end{equation}
These spaces are closed with respect to the norms $ \lVert f \rVert_{H^p} = \langle f, f \rangle_{H^p}^{1/2} $ associated with the inner products
\begin{equation}
  \label{eqHPInnerProd}
  \langle f, h \rangle_{H^p} = \sum_{q=0}^p \sum_{j=0}^{\infty}\lambda_j^q \hat f_j^* \hat h_j, \quad \hat f_j = \langle \phi_j, f \rangle_{L^2}, \quad \hat h_j = \langle \phi_j, h \rangle_{L^2}.
\end{equation}
Among these, the space $H^2$ is precisely the domain of the self-adjoint Laplacian $\bar \Delta$.

We equip each $ H^p $ space with a Dirichlet form $E_p : H^p \times H^p \to \mathbb{C}$, defined as the bounded sesquilinear form 
\begin{equation}
  \label{eqEP}
  E_p( f, h ) = \sum_{j=0}^\infty \lambda_j^p \hat f^*_j \hat h_j,
\end{equation}
with $ f $ and $h$ as in~\eqref{eqHPInnerProd}. This form induces a positive-semidefinite  Dirichlet energy functional $E_p(f)=E_p(f,f)$. Given $ f \in H^p \cap H $, the quantity $ E_p(f)/\lVert f \rVert_{H}^2$ can be thought of as a measure of roughness of $f$. If $f$ and $h $ are smooth, $E_p( f, h ) $ can be expressed in terms of the Laplace-Beltrami operator as $E_p(f,h) = \langle f, \Delta^p h \rangle_H$. Evidently, $ E_p( f ) = \langle f, \Delta^p f \rangle_{H} $ can  be arbitrarily large for highly oscillatory functions. 

In general, the $\{ \phi_j \} $ orthonormal basis of $H $ is not a Riesz  basis of $ H^p $, $ p\geq 1$; that is, it is not the case that given any $\ell^2$ sequence of expansion coefficients $c_j $, the vectors $f_l = \sum_{j=0}^{l-1} c_j \phi_j $ converge as $l\to \infty $ in $H^p$ norm. This issue is manifested from the fact that the Dirichlet energies $E_p(\phi_j) = \lambda_j^p$ of the basis elements are unbounded in $j$, making $ \{ \phi_j \} $ a poorly conditioned basis of $H^p $ for numerical calculations. On the other hand, the normalized  eigenfunctions $\phi^{(p)}_j $, defined by 
\begin{equation}
    \phi^{(p)}_j = \frac{ \phi_j }{ \lVert \phi_j \rVert_{H^p} },  
  \label{eqPhiHP}
\end{equation}
where $\lVert \phi^{(p)}_j\rVert_{H^p} = 1 $ by construction, form orthonormal bases of the respective $H^p$ spaces. 

A related, but stronger, notion of regularity of functions on $\mathcal{M}$ to that associated with the $H^p$ Sobolev spaces is provided by reproducing kernel Hilbert spaces (RKHSs) associated with the heat kernel on $(\mathcal{M},g)$. In particular, let  $\mathcal{H}$ be the RKHS of complex-valued functions on $\mathcal{M}$ associated with the time 1 heat kernel. We denote the inner product and norm by $\langle \cdot, \cdot \rangle_{\mathcal{H}}$ and $\lVert \cdot \rVert_{\mathcal{H}}$, respectively. A natural orthonormal basis of $\mathcal{H} $ consists of the exponentially scaled Laplace-Beltrami eigenfunctions (cf.~\eqref{eqPhiHP})
\begin{equation}
  \label{eqPhiRKHS}
  \tilde \phi_j = \frac{ \phi_j }{\rVert \phi_j \rVert_{\mathcal{H}} }= e^{-\lambda_j/2} \phi_j, \quad j \in \{ 0, 1, \ldots \}.
\end{equation}
After inclusion (which can be shown to be compact), $\mathcal{H} $ is a dense subspace of $H$ consisting of all equivalence classes of functions $\sum_{j=0}^\infty c_j \phi_j$ satisfying the inequality $ \sum_{j=0}^\infty e^{\lambda_j} \lvert c_j \rvert^2 < \infty$. In addition, $\mathcal{H} $ can be compactly embedded into every Sobolev space $ H^p $, $ p > 0 $. $ \mathcal{H}  $ is also a dense subspace of $ C^k(\mathcal{M}) $ for all $ k \geq 0 $ \cite{FerreiraMenegatto13}.   

In what follows, we will also be interested in spaces of bounded operators on the function spaces introduced above. Given two Banach spaces $V_1 $ and $ V_2$, $\mathcal{B}(V_1,V_2) $ will denote the Banach space of bounded operators mapping $V_1 $ to $V_2$, equipped with the operator norm, $ \lVert \cdot \rVert $. If $V_1 $ and $V_2$ are Hilbert spaces, $\mathcal{B}_2(V_1,V_2) \subseteq \mathcal{B}(V_1,V_2)$ will denote the Hilbert space of Hilbert-Schmidt operators from $V_1$ to $ V_2$, equipped with the inner product $ \langle A, B \rangle_\text{HS} = \tr( A^* B )$ and the corresponding norm, $\lVert A \rVert_\text{HS} = \sqrt{\langle A, A \rangle_\text{HS}} $. We will also use the abbreviations $ \mathcal{B}(V_1) = \mathcal{B}(V_1,V_1)$ and $ \mathcal{B}_2(V_1) = \mathcal{B}_2(V_1,V_1)$.

\subsection{Spaces of vector fields}

We consider the space $ \mathfrak{X} $ of $ C^\infty $ complex vector fields on $ \mathcal{M} $ (that is, the space of derivations on the ring of smooth, complex-valued functions on $\mathcal{M}$, or, equivalently, the space of smooth sections of $T^{\mathbb{C}} \mathcal{M}$), where we recall that $ \mathfrak{ X} $ can be viewed either as a vector space over the field of complex numbers, or as a $C^\infty(\mathcal{M})$-module. In the former case, it can be endowed with the structure of a Lie algebra with the vector field commutator, $ [ \cdot, \cdot ] : \mathfrak{ X } \times \mathfrak{ X} \to \mathfrak{ X} $ acting as the algebraic product. We denote the gradient and divergence operators associated with $ g $ by $ \grad : C^\infty( \mathcal{M} ) \to \mathfrak{ X} $ and $ \divr : \mathfrak{ X } \to C^\infty( \mathcal{M} ) $. Note that these operators are related to the positive-semidefinite Laplacian via $ \Delta = - \divr \circ \grad $. As with functions, we consider the standard Banach spaces $ L^p_\mathfrak{X} $, $ 1 \leq p \leq \infty $, of vector fields associated with the norms
\begin{displaymath}
  \lVert v \rVert_{L^p_\mathfrak{X}} = \left( \int_\mathcal{M} ( g( v, v ) )^{p/2} \, d\mu \right)^{1/p}, \quad 1 \leq p < \infty, \quad \text{and} \quad \lVert v \rVert_{L^\infty_\mathfrak{X}} = \esssup_{x\in \mathcal{M}} \sqrt{ g( v, v )_x }. 
\end{displaymath} 
In the case $ p = 2 $, we set $ H_\mathfrak{X} = L_\mathfrak{X}^2 $ and use the notation $ \langle v, w \rangle_{H_\mathfrak{X}} = \int_\mathcal{M} g( v^*, w ) \, d\mu $ for the Hodge inner product inducing the $ L_\mathfrak{X}^2 $ norm.  

With these definitions, the closure $ \wgrad : D( \wgrad ) \to H_\mathfrak{X} $ of the gradient operator has domain $ D( \wgrad ) = H^1 $, and is bounded as an operator from $ H^1 $ to $ H_\mathfrak{X} $. Similarly, the closure $ \wdivr : D( \wdivr ) \to H_\mathfrak{X} $ of the divergence operator has as its domain  $ D( \wdivr )   $ the Sobolev space  $ H_{\mathfrak{X},\divr}  \subset H_\mathfrak{X}$, defined as the closure of $ \mathfrak{X} $ with respect to the norm $ \lVert v \rVert_{H_{\mathfrak{X},\divr}} = \langle v, v \rangle_{H_{\mathfrak{X},\divr}}^{1/2}$  induced by the inner product 
\begin{displaymath}
    \langle v, w \rangle_{H_\mathfrak{X},\divr} = \langle v, w \rangle_{H_\mathfrak{X}} + \langle \divr v, \divr w \rangle_{H}.
\end{displaymath} 
Also, for $ p \geq 0 $, we introduce the Sobolev spaces 
\begin{displaymath}
  H^p_{\mathfrak X,\divr} = \left \{ v \in H_\mathfrak{X} : \wdivr v \in H^p \right \},
\end{displaymath} 
which are equipped with the inner products
\begin{displaymath}
  \langle v, w \rangle_{H^p_{\mathfrak{X},\divr}} = \langle v, w \rangle_{H_\mathfrak{X}} + \langle \wdivr v, \wdivr w \rangle_{H^p},
\end{displaymath} 
and the corresponding norms $\lVert v \rVert_{H^p_{\mathfrak{X},\divr}} = \langle v, v \rangle_{H^p_{\mathfrak{X},\divr}}^{1/2}$. As in the case of functions, we define the Dirichlet forms $E_{p,\mathfrak{X},\divr} : H^p_{\mathfrak{X},\divr} \times H^p_{\mathfrak{X},\divr} \to \mathbb{C}$ by
\begin{displaymath}
  E_{p,\mathfrak{X,\divr}}(u,v) = E_p(\wdivr u, \wdivr v).
\end{displaymath}
The corresponding energy functionals, $E_{p,\mathfrak{X},\divr}(f) = E_{p,\mathfrak{X,\divr}}(f,f)$ assign measures of roughness to vector fields in $H^p_{\mathfrak{X},\divr} \cap H_{\mathfrak{X}}$ by $E_{p,\mathfrak{X},\divr}(f)/\lVert f \rVert^2_{H_\mathfrak{X}}$. 

An important subspace of $H_\mathfrak{X} $ is the closed subspace of gradient vector fields, $ H_{\mathfrak{X},\grad} = \overline{\ran( \grad ) }$. This leads to the orthogonal decomposition $ H_\mathfrak{ X } = H_{\mathfrak{X},\grad} \oplus H^\perp_{\mathfrak{X},\grad}$, and it can be readily checked that any vector field in $  H^\perp_{\mathfrak{X},\grad} \cap H_{\mathfrak{X},\divr} $ has vanishing divergence. A natural smooth orthonormal basis $ \{ u_j \}_{j=1}^\infty $ for $ H_{\mathfrak{X},\grad} $ is given by the normalized gradients of the Laplace-Beltrami eigenfunctions, 
\begin{equation}
  \label{eqEJ}
  u_j = \grad \phi_j / \lambda_j^{1/2}.
\end{equation} 

The following two lemmas characterize the behavior of  vector fields as operators on functions.  

\begin{lemma}[vector fields as conjugate-antisymmetric operators] \label{lemmaAV} To every vector field $ v \in H^1_{\mathfrak{X},\divr} $ there corresponds a unique operator $ A_v : C^\infty( \mathcal{M} ) \to H $ with the property 
  \begin{equation}
    \label{eqAVSym}
    \langle f, A_v h \rangle_{H} = - \langle A_{v^*} f, h \rangle_{H}, \quad \forall f, h \in C^\infty(\mathcal{M}).
  \end{equation}
  This operator is given by $ A_v = v + D_v $, where $ D_v : C^\infty( \mathcal{M} ) \to H $ is defined as $ D_v f = \divr( f v ) $, and we also have  
  \begin{equation}
    \label{eqVA}
    v( f ) = \frac{ A_v f  - f A_v 1 }{ 2 }.
  \end{equation}   
\end{lemma}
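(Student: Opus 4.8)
The plan is to prove existence by writing the operator down explicitly and verifying the two asserted identities directly, and then to obtain uniqueness from a density argument. The only ingredient that is not pure algebra is the Leibniz rule for the divergence together with the divergence theorem on the closed manifold $\mathcal{M}$, promoted from smooth vector fields to the Sobolev class $D(\wdivr) = H_{\mathfrak{X},\divr}$ by graph-norm approximation.

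\emph{Existence and well-definedness.} Set $A_v := v + D_v$ with $D_v f := \divr(fv)$. Since $v \in H^1_{\mathfrak{X},\divr} \subset H_{\mathfrak{X},\divr} = D(\wdivr)$ and $f \in C^\infty(\mathcal{M})$ (so that $\grad f$ is a bounded smooth vector field), I would approximate $v$ in the graph norm of the closed operator $\wdivr$ by smooth vector fields $v_n$; then $fv_n \to fv$ in $H_\mathfrak{X}$ while $\divr(fv_n) = v_n(f) + f\divr v_n \to g(v^*,\grad f) + f\wdivr v$ in $H$. By closedness of $\wdivr$ this yields $fv \in D(\wdivr)$ together with the Leibniz identity
\[ \wdivr(fv) = v(f) + f\wdivr v, \qquad f \in C^\infty(\mathcal{M}),\ v \in D(\wdivr), \]
so in particular $D_v f = \wdivr(fv) \in H$ and hence $A_v : C^\infty(\mathcal{M}) \to H$. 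Combining, $A_v f = 2v(f) + f\wdivr v$; note the leading part is the \emph{unbounded} operator $2v$, which is exactly why the factor $\tfrac{1}{2}$ appears in~\eqref{eqVA}.

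\emph{The identities~\eqref{eqAVSym} and~\eqref{eqVA}.} Using $A_v h = 2v(h) + h\wdivr v$ and the analogous formula for $A_{v^*}$, together with the conjugation identities $\overline{v^*(f)} = v(\bar f)$ (immediate from $v(f) = g(v^*,\grad f)$ and the conjugate-linearity of $g$ in its first slot) and $\overline{\wdivr v^*} = \wdivr v$ (by the same closure/approximation argument, since $\divr$ commutes with conjugation and $(v^*)^* = v$), a short computation gives
\[ \langle f, A_v h\rangle_H + \langle A_{v^*}f, h\rangle_H = 2\int_\mathcal{M}\big(\bar f\,v(h) + h\,v(\bar f) + \bar f h\,\wdivr v\big)\,d\mu = 2\int_\mathcal{M}\wdivr(\bar f h v)\,d\mu, \]
where the last step uses the Leibniz rule once more with the product $\bar f h \in C^\infty(\mathcal{M})$. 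Since $\bar f h v \in D(\wdivr)$ by the well-definedness step, the right-hand side vanishes by the divergence theorem on the closed manifold $\mathcal{M}$, extended to $D(\wdivr)$ by density; this is~\eqref{eqAVSym}. For~\eqref{eqVA}, since $v(1) = 0$ we have $A_v 1 = \wdivr v$, whence $A_v f - f A_v 1 = 2v(f) + f\wdivr v - f\wdivr v = 2v(f)$. Uniqueness is then immediate: if $B : C^\infty(\mathcal{M}) \to H$ is any operator with $\langle f, Bh\rangle_H = -\langle A_{v^*}f, h\rangle_H$ for all $f,h \in C^\infty(\mathcal{M})$, then for each fixed $h$ both $Bh$ and $A_v h$ represent the same conjugate-linear functional $f \mapsto -\langle A_{v^*}f, h\rangle_H$ on the dense subspace $C^\infty(\mathcal{M}) \subset H$, so $Bh = A_v h$ and hence $B = A_v$.

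\emph{Main obstacle.} The lemma has no deep step, but none of the formal manipulations above can be carried out ``at the level of $H$'': because the leading part of $A_v$ is unbounded, the Leibniz rule $\wdivr(fv) = v(f) + f\wdivr v$, the identity $\overline{\wdivr v^*} = \wdivr v$, and the divergence theorem $\int_\mathcal{M}\wdivr w\,d\mu = 0$ must each first be proved for smooth $v$ and then transported to $D(\wdivr)$ by approximating $v$ (and the relevant products) in the graph norm and passing to the limit. Isolating the Leibniz rule as the key lemma — it is where the Sobolev hypothesis on $v$ enters, although $v \in H_{\mathfrak{X},\divr}$ already suffices here — is the bulk of the work; the algebra and the uniqueness argument are then routine, provided one keeps careful track of the conventions that $g$ is conjugate-linear in its first argument, that $v^*$ is the complex-conjugate vector field, and that $v(f) = g(v^*,\grad f)$.
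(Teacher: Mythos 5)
Your proof is correct and takes essentially the same route as the paper's: the Leibniz rule for the divergence together with the vanishing of $\int_\mathcal{M} \divr w \, d\mu$ on the closed manifold, with the added (and welcome) care of transporting these identities from smooth $v$ to $D(\wdivr)$ by graph-norm approximation and closedness, which the paper leaves implicit. The only caveat concerns the uniqueness clause: the paper's own proof reads ``unique'' as injectivity of the correspondence $v \mapsto A_v$ (showing $u \neq v \implies A_u \neq A_v$ by splitting into the cases $\divr u = \divr v$ and $\divr u \neq \divr v$), whereas your uniqueness argument is the tautological observation that an operator satisfying the adjoint identity against the already-fixed $A_{v^*}$ is determined; this is harmless, since the intended injectivity follows at once from your~\eqref{eqVA} — if $A_u = A_v$ then $u(f) = v(f)$ for every smooth $f$, hence $u = v$.
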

\begin{proof}
  The Leibniz rule for the divergence, $ \divr( f v ) = v( f ) + f \divr v $, and the fact that $ \int_\mathcal{M} \divr( f v ) \, d\mu $ vanishes on closed manifolds lead to 
  \begin{displaymath}
      \langle f, v( h ) \rangle_{H} = - \langle D_{ v^*} f, h \rangle_{H}.
  \end{displaymath}
  The claim in~\eqref{eqAVSym} follows from the definition of $ A_v $ and the last equation. Note that the restriction $ v \in H^1_{\mathfrak{X},\divr} $ is important in order for $ D_v $ and $ A_v $ to be well defined. To show that $ A_v $ is unique, suppose that $ u \in H^1_{\mathfrak{X},\divr}$, $ u \neq  v $, and consider $  ( A_u  - A_v ) f = 2 ( u - v ) f  + f  \divr( u - v )  $. If $ \divr u = \divr v $, then $ ( A_u - A_v) f = 2 ( u - v ) f $, which is non-vanishing for some $ f \in C^\infty(\mathcal{M})$. On the other hand, if $ \divr u \neq \divr v $, we have $ ( A_u - A_v ) 1 = \divr( u - v ) $, which is again non-vanishing. Equation~\eqref{eqVA} follows from the definition of $ A_v $ and the fact that $ A_v 1 =  \divr v $.     
\end{proof}

\begin{lemma}[vector fields as bounded operators] \label{lemmaVBound}Let $ v $ be a bounded vector field in $L^\infty_\mathfrak{X} $.  Then:
  \begin{enumerate}[(i)]
    \item $v$ extends uniquely to a bounded operator $  L_v \in \mathcal{B}(H^1,H) $ with operator norm $ \lVert L_v \rVert \leq \lVert v \rVert_{L^\infty_\mathfrak{X}} $.  
    \item The restriction of $v$ to $ \mathcal{H} $ is a Hilbert-Schmidt operator $ \tilde L_v \in \mathcal{B}_2(\mathcal{H}, H) $ with norm $ \lVert \tilde L_v \rVert_\text{HS} \leq C \lVert v \rVert_{L^\infty_\mathfrak{X}} $, where $C $ is a constant independent of  $v $.  
    \end{enumerate}
    As a result, the maps $ \iota : L^\infty_\mathfrak{X} \to \mathcal{B}(H^1,H)$ and $ \iota_2 : L^\infty_\mathfrak{X} \to \mathcal{B}_2(\mathcal{H},H)$ with $\iota v = L_v$ and $\iota_2 v = \tilde L_v$ are continuous embeddings.
\end{lemma}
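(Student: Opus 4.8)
The plan is to prove (i) by a pointwise Cauchy--Schwarz estimate on the tangent spaces followed by a density argument, and (ii) by computing the Hilbert--Schmidt norm of the restriction directly on the orthonormal basis $\{\tilde\phi_j\}$ of $\mathcal{H}$ from~\eqref{eqPhiRKHS} and recognizing the resulting series as a convergent heat trace. Throughout, for $f \in C^\infty(\mathcal{M})$ the action of $v$ is $v(f) = g(v^*,\grad f)$, which for a merely bounded measurable $v$ we read as the pointwise $\mu$-a.e.\ defined function in $L^\infty(\mathcal{M},\mu) \subset H$.

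\emph{Part (i).} Fix $f \in C^\infty(\mathcal{M})$. Applying Cauchy--Schwarz to the (positive-definite Hermitian) form $g_x$ on $T_x^{\mathbb{C}}\mathcal{M}$ gives, for $\mu$-a.e.\ $x$, $\lvert v(f)(x) \rvert \le \sqrt{g(v,v)_x}\,\sqrt{g(\grad f,\grad f)_x}$. Squaring, integrating, and bounding the first factor by $\lVert v \rVert_{L^\infty_\mathfrak{X}}^2$ yields $\lVert v(f) \rVert_H^2 \le \lVert v \rVert_{L^\infty_\mathfrak{X}}^2 \langle \grad f, \grad f \rangle_{H_\mathfrak{X}}$. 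By Green's identity on the closed manifold $\mathcal{M}$, $\langle \grad f, \grad f \rangle_{H_\mathfrak{X}} = \langle f, \Delta f \rangle_H = E_1(f) = \sum_{j} \lambda_j \lvert \hat f_j \rvert^2 \le \lVert f \rVert_{H^1}^2$, using~\eqref{eqEP} and~\eqref{eqHPInnerProd}. Hence $\lVert v(f) \rVert_H \le \lVert v \rVert_{L^\infty_\mathfrak{X}} \lVert f \rVert_{H^1}$ on $C^\infty(\mathcal{M})$. Since the finite linear combinations of the $\phi_j$ are smooth and dense in $H^1$, $v$ extends uniquely to $L_v \in \mathcal{B}(H^1,H)$ with $\lVert L_v \rVert \le \lVert v \rVert_{L^\infty_\mathfrak{X}}$.

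\emph{Part (ii).} Since $\mathcal{H}$ embeds continuously in $H^1$, the restriction $\tilde L_v := L_v|_{\mathcal{H}}$ lies in $\mathcal{B}(\mathcal{H},H)$; it remains to estimate its Hilbert--Schmidt norm. On the orthonormal basis $\tilde\phi_j = e^{-\lambda_j/2}\phi_j$ of $\mathcal{H}$ we have $\tilde L_v \tilde\phi_j = e^{-\lambda_j/2} v(\phi_j)$, so the bound from Part (i) applied to $f = \phi_j$ (where $E_1(\phi_j) = \lambda_j$) gives
\[
  \lVert \tilde L_v \rVert_\text{HS}^2 = \sum_{j=0}^\infty e^{-\lambda_j}\lVert v(\phi_j) \rVert_H^2 \le \lVert v \rVert_{L^\infty_\mathfrak{X}}^2 \sum_{j=0}^\infty \lambda_j e^{-\lambda_j}.
\]
The last series is finite: by Weyl's law $\lambda_j$ grows polynomially in $j$, so $\lambda_j e^{-\lambda_j}$ decays super-polynomially; equivalently, for any $t \in (0,1)$ one has $\lambda e^{-\lambda} \le \frac{1}{(1-t)e}\,e^{-t\lambda}$ and $\sum_j e^{-t\lambda_j} = \tr e^{-t\bar\Delta} < \infty$ on a closed manifold. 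Taking $C = (\sum_j \lambda_j e^{-\lambda_j})^{1/2}$, which depends only on the spectrum of $\bar\Delta$, gives $\lVert \tilde L_v \rVert_\text{HS} \le C \lVert v \rVert_{L^\infty_\mathfrak{X}}$, and hence $\tilde L_v \in \mathcal{B}_2(\mathcal{H},H)$.

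\emph{Embeddings and main obstacle.} The maps $\iota(v) = L_v$ and $\iota_2(v) = \tilde L_v$ are linear (the conjugate-linearity of $g$ in its first argument cancels the conjugation in $v^*$) and bounded by (i) and (ii); injectivity follows since $L_v = 0$ forces $v(\phi_j) = 0$ in $H$ for all $j$, hence $v = 0$ $\mu$-a.e.\ (using that the $\grad\phi_j(x)$ span $T_x\mathcal{M}$, or that $\langle b_{ij}, v\rangle_{H_\mathfrak{X}} = \langle \phi_i, v(\phi_j)\rangle_H$ and $\{b_{ij}\}$ is a frame for $L^2_\mathfrak{X}$). Thus $\iota$ and $\iota_2$ are continuous embeddings. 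The argument is largely bookkeeping once the intermediate estimate $\lVert v(\phi_j) \rVert_H \le \lVert v \rVert_{L^\infty_\mathfrak{X}} \lambda_j^{1/2}$ is in place; the only genuinely external ingredient is the finiteness of $\sum_j \lambda_j e^{-\lambda_j}$ (Weyl asymptotics / finiteness of the heat trace), and the one point that needs care is making the action of a bounded-measurable $v$ on smooth functions, and its extension by density, rigorous.
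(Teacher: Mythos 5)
Your proposal is correct and follows essentially the same route as the paper: a pointwise Cauchy--Schwarz bound on $g(v,\grad f)$ plus density of $C^\infty(\mathcal{M})$ in $H^1$ for part (i), and computation of the Hilbert--Schmidt norm on the exponentially rescaled eigenbasis $\{\tilde\phi_j\}$ with finiteness of the resulting series via Weyl asymptotics for part (ii). The only (immaterial) difference is that you use the sharper intermediate bound $\lVert v(\phi_j)\rVert_H^2 \le \lVert v\rVert^2_{L^\infty_\mathfrak{X}}\lambda_j$, giving the constant $C^2 = \sum_j \lambda_j e^{-\lambda_j}$ in place of the paper's $\sum_j (1+\lambda_j)e^{-\lambda_j}$.
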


\begin{proof}
  (i) Consider a vector field $ v \in L^\infty_\mathfrak{X} $, and let $ f $ be a $ C^\infty $ function. Then, we have 
\begin{align*}
  \lVert v( f ) \rVert^2_H &= \int_\mathcal{M} \lvert g( v, \grad f ) \rvert^2  \, d\mu \leq \int_\mathcal{M} g( v, v ) g( \grad f, \grad f ) \, d\mu \\
  & \leq \lVert v \rVert^2_{L^\infty_\mathfrak{X}} \lVert \grad f \rVert^2_{H_\mathfrak{X}} \leq \lVert v \rVert^2_{L^\infty_\mathfrak{X}} \lVert f \rVert^2_{H^1},
\end{align*} 
where the first inequality in the above follows from the Cauchy-Schwarz inequality. Thus, $ v $ is a densely-defined, bounded operator from $ C^\infty(\mathcal{M}) \cap H^1 $ to $ H$, and can be uniquely extended to $ L_v \in \mathcal{ B}(H^1,H) $ by the bounded linear transformation theorem. The fact  that $ \lVert L_v \rVert \leq \lVert v \rVert_{L^\infty_\mathfrak{X}} $ follows from the inequality $ \lVert v( f ) \rVert_H / \lVert f \rVert_{H^1} \leq \lVert v \rVert_{L^\infty_\mathfrak{X}}$.

(ii) Since $ \mathcal{H} \subset C^\infty(\mathcal{M}) $, proceeding as above we find that for any $f \in \mathcal{H}$,  $ \lVert \tilde L_v f \rVert_H \leq \lVert v \rVert_{L^\infty_\mathfrak{X}} \lVert f \rVert_{H^1} $. Moreover, since $ \mathcal{H} $ continuously embeds into $H^1 $, there exists a constant $ \tilde C $, independent of $ f$, such that $ \lVert f \rVert_{H^1} \leq \tilde C \lVert f \rVert_\mathcal{H} $. This shows that $ \tilde L_v $ lies in $ \mathcal{B}(\mathcal{H}, H )$. To establish that $ \tilde L_v $ lies in $ \mathcal{B}_2(\mathcal{H}, H )$, we compute
\begin{align*}
  \sum_{j=0}^\infty \langle \tilde \phi_j, \tilde L_v^* \tilde L_v \tilde \phi_j \rangle_\mathcal{H} & = \sum_{j=0}^\infty \langle \tilde L_v \tilde \phi_j, \tilde L_v \tilde \phi_j \rangle_{H} =  \sum_{j=0}^\infty \lVert \tilde L_v \tilde \phi_j \rVert_H^2  \leq \lVert v \rVert_{L^\infty_\mathfrak{X}}^2 \sum_{j=0}^\infty \lVert \tilde \phi_j \rVert^2_{H^1} \\
  &= \lVert v \rVert^2_{L^\infty_\mathfrak{X}} \sum_{j=0}^\infty e^{-\lambda_j} ( 1 + \lambda_j ),   
\end{align*}
where $ \{ \tilde \phi_j \}_{j=0}^\infty $ is the orthonormal basis of $\mathcal{H}$ from~\eqref{eqPhiRKHS}. It then follows from the Weyl estimate for Laplacian eigenvalues (see~\eqref{eqWeyl} ahead and the proofs of Theorems~\ref{thmFrame} and~\ref{thmFrameSob} in Section~\ref{secFrameProof}) that the quantity $ C^2 = \sum_{j=0}^\infty e^{-\lambda_j} ( 1 + \lambda_j )  $
is finite, and we conclude that 
\begin{displaymath}
  \lVert \tilde L_v \rVert_\text{HS}^2 = \tr( \tilde L_v^* \tilde L_v ) =  \sum_{j=0}^\infty \langle \tilde \phi_j, \tilde L_v^*  \tilde L_v \tilde \phi_j \rangle_\mathcal{H} \leq C^2 \lVert v \rVert^2_{L^\infty_\mathfrak{X}}, 
\end{displaymath}
as claimed. 
\end{proof}

An implication of Lemma~\ref{lemmaVBound} is that for any $ v \in L^\infty_\mathfrak{X} $ and every sequence $ f_n \in C^\infty(\mathcal{M}) $, converging to $ f $ in $H^1$ norm, $ v( f ) = \lim_{n\to\infty} v( f_n ) $ even though $ v $ is unbounded (and therefore discontinuous) on $C^\infty(\mathcal{M})$. It also follows from Lemma~\ref{lemmaVBound} that the operator $ A_v $ in Lemma~\ref{lemmaAV} associated with $ v \in H^1_{\mathfrak{X},\divr}  \cap L^\infty_\mathfrak{X} $ also extends uniquely to a bounded operator $ \tilde A_v : H^1 \to H $. 

Next, as discussed in Section~\ref{functionOverview}, we introduce a spectral representation of pointwise Riemannian inner products between gradient vector fields. For that, we first consider the product rule for the positive-definite Laplacian on smooth functions,
\begin{equation}
  \label{eqLapProduct}
  \Delta(fh) = (\Delta f) h + f (\Delta h) - 2 g(\grad f^*, \grad h), \quad f, h \in C^\infty(\mathcal{M}). 
\end{equation}
It follows by definition of the $H^2$ norms that the self-adjoint Laplacian $ \bar \Delta $ is bounded as an operator from $ H^2$ to $H$. As a result, given a sequence $ f_j \in  C^\infty(\mathcal{M}) $ converging to $ f $ in $ H^2 $ norm, we have
\begin{equation}
  \label{eqFCauchy}
  \bar \Delta f = \bar \Delta\left( \lim_{j\to\infty} f_j \right) = \lim_{j\to\infty} \Delta f_j.
\end{equation}
Now, the fact that the Laplace-Beltrami eigenfunctions $ \phi_j $ are smooth implies that given any $f \in C^\infty(\mathcal{M})$, the sequence $ f_0, f_1, \ldots $ with 
\begin{displaymath}
  f_j = \sum_{i=0}^j \hat f_i \phi_i, \quad \hat f_i = \langle \phi_i, f \rangle_H
\end{displaymath}
is Cauchy in $ H^p $ for all $ p \geq 0 $, and hence~\eqref{eqFCauchy} holds. As a result, we can use~\eqref{eqLapProduct} in conjunction with~\eqref{eqCIJK} to obtain:

\begin{lemma}[spectral representation of Riemannian inner products]
  The Riemannian inner product between the gradient vector fields associated with two  functions $f,h\in C^\infty(\mathcal{M})$ can be expressed as
  \begin{displaymath}
      g(\grad f, \grad h) = \frac{ 1 }{ 2 } \sum_{j,k,l=0}^\infty \hat f_j^* \hat h_k (\lambda_j + \lambda_k - \lambda_l) c_{jkl} \phi_l, \quad \hat f_j = \langle \phi_j, f \rangle_H,  
  \end{displaymath}
  where $\hat h_k = \langle \phi_k, h \rangle_H $, and the sum over $l$ in the right-hand side converges in $H$ norm.
  \label{lemmaInnerProd}
\end{lemma}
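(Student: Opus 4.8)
The plan is to derive the formula from the classical product rule for the Laplacian, \eqref{eqLapProduct}, applied with the pair $(f^*, h)$ in place of $(f, h)$. Since $(f^*)^* = f$, this yields the pointwise identity
\[
  g(\grad f, \grad h) = \tfrac{1}{2}\big[ (\Delta f^*) h + f^* (\Delta h) - \Delta(f^* h) \big],
\]
which holds classically for smooth $f, h$ and hence as an identity in $H$, since every term is a smooth function on the closed manifold $\mathcal{M}$ and therefore square-integrable (in particular, $g(\grad f, \grad h) \in H$). It then remains only to expand each of the three terms on the right in the orthonormal basis $\{\phi_l\}$ and collect the coefficient of $\phi_l$.

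For the expansion I would use the fact recorded in the discussion around \eqref{eqFCauchy}: the Fourier partial sums $f_n = \sum_{i=0}^n \hat f_i \phi_i$ of a smooth function converge in $H^p$ for every $p \ge 0$, hence, by Sobolev embedding, in the $C^\infty$ topology, and likewise for $f^*$ and $h$. Since $\Delta$ is continuous on $C^\infty(\mathcal{M})$, we get $\Delta f^* = \sum_j \lambda_j \hat f_j^* \phi_j$ and $\Delta h = \sum_k \lambda_k \hat h_k \phi_k$ with convergence in $C^\infty$; since $C^\infty(\mathcal{M})$ is a topological algebra, the products $(\Delta f^*) h$, $f^*(\Delta h)$, and $f^* h = \sum_{j,k,l} \hat f_j^* \hat h_k c_{jkl} \phi_l$ (using \eqref{eqCIJK}) are the $C^\infty$-limits of the square partial sums of the corresponding double series, and $\Delta(f^* h)$ has $l$-th Fourier coefficient $\lambda_l \sum_{j,k} \hat f_j^* \hat h_k c_{jkl}$ (equivalently, one may instead apply the product rule to the truncations $f_n, h_n$ and pass to the limit, using continuity of $\grad$ and $\Delta$ in the $C^\infty$ topology). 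Applying $\langle \phi_l, \cdot \rangle_H$, which is continuous on $H$, term by term and using $\langle \phi_l, \phi_j \phi_k \rangle_H = c_{jkl}$ from \eqref{eqCIJK}, I obtain
\[
  \langle \phi_l, g(\grad f, \grad h) \rangle_H = \tfrac{1}{2} \sum_{j,k=0}^\infty (\lambda_j + \lambda_k - \lambda_l)\, \hat f_j^*\, \hat h_k\, c_{jkl}.
\]
Because $g(\grad f, \grad h) \in H$ and $\{\phi_l\}$ is a complete orthonormal system, the Fourier series $\sum_l \langle \phi_l, g(\grad f, \grad h)\rangle_H\, \phi_l$ converges to it in $H$; this is exactly the asserted identity together with the claimed $H$-convergence of the sum over $l$.

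The step I expect to be the main obstacle is the bookkeeping of the nested summations: for the $\phi_l$-coefficient above to be meaningful one should check that, for each fixed $l$, the double sum over $(j,k)$ converges absolutely, so that its value is independent of the order of summation and coincides with the limit of the square partial sums produced by the $C^\infty$-algebra argument, and one should likewise confirm the legitimacy of multiplying two $C^\infty$-convergent Fourier expansions. Absolute convergence follows from the rapid decay of Fourier coefficients of smooth functions — integrating by parts against powers of $\Delta$ gives $\lvert \hat f_j \rvert \le C_N (1+\lambda_j)^{-N}$ and $\lvert \hat h_k \rvert \le C_N (1+\lambda_k)^{-N}$ for every $N$ — together with polynomial-in-$\lambda$ bounds $\lvert c_{jkl}\rvert \le \lVert \phi_j \rVert_{L^4}\lVert \phi_k \rVert_{L^4} \le C(1+\lambda_j)^\alpha(1+\lambda_k)^\alpha$ coming from the $L^4$ Sobolev estimate $\lVert \phi_j \rVert_{L^4} \lesssim \lVert \phi_j \rVert_{H^s} \asymp (1+\lambda_j)^{s/2}$, and Weyl's law, which guarantees $\sum_j (1+\lambda_j)^{-M} < \infty$ for $M$ large; choosing $N$ large enough makes the bound factor into convergent single sums over $j$ and over $k$. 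Everything else is soft: the only geometric inputs are the product rule \eqref{eqLapProduct} and the standard spectral theory of the self-adjoint Laplacian on a closed manifold (pure point spectrum, smooth eigenfunctions, $H^2 = D(\bar\Delta)$, Weyl's law), all already available in the excerpt.
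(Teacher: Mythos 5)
Your proposal is correct and follows essentially the same route as the paper: the paper obtains Lemma~\ref{lemmaInnerProd} by combining the product rule~\eqref{eqLapProduct} with the expansion~\eqref{eqCIJK}, using the $H^p$-convergence of the Fourier truncations of smooth functions recorded in~\eqref{eqFCauchy} to justify passing to the limit. Your version simply fills in the details the paper leaves implicit (the conjugation bookkeeping and the absolute-convergence check for the double sum via rapid decay of Fourier coefficients, polynomial bounds on $c_{jkl}$, and Weyl's law), all of which are sound.
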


Note that Lemma~3.3 can be extended to  $ f, h \in H^1 $, which follows from the fact that the map $ ( f, h ) \mapsto g( \grad f, \grad h ) $ is a  bounded linear map  with a dense domain $ C^\infty(\mathcal{M}) \times C^\infty(\mathcal{M}) \subset H^1 \times H^1 $.

\subsection{\label{secSpacesForms}Spaces of differential forms}

We will use the symbols $ \Lambda^k_x \mathcal{M} $, $ \Lambda^k \mathcal{M}$, and $ \Omega^k $ to represent the vector space of complex $ k $-forms at $x \in  \mathcal{M} $, the associated $k $-form bundle, and the space of smooth $ k $-form fields on $ \mathcal{M} $ (totally antisymmetric, $k$-multilinear maps on $\mathfrak{X}^k$, taking values in $C^\infty(\mathcal{M})$, or, equivalently smooth sections of $\Lambda^k \mathcal{M}$). As with vector fields, the spaces $\Omega^k$ can be viewed either as vector spaces over $\mathbb{C}$, or as $C^\infty(\mathcal{M})$-modules. As usual, we identify $ \Omega^0 $ with $C^\infty(\mathcal{M})$.  We also let $ \eta^k_x : \Lambda^k_x \mathcal{M} \times \Lambda^k_x \mathcal{M} \to \mathbb{ R } $ be the canonical metric tensor on $ \Lambda^k_x $, satisfying 
\begin{displaymath}
    \eta^k_x ( \alpha^1 \wedge \cdots \wedge \alpha^k, \beta^1 \wedge \cdots \wedge \beta^k ) = \det [ \eta_x( \alpha^i, \beta^j ) ]_{ij}, \quad \forall \alpha^i, \beta^j \in T^{\mathbb{C}*}_x\mathcal{M}. 
\end{displaymath} 
The metric induces a Hodge star operator $ \star : \Lambda_x^k \mathcal{M} \to \Lambda_x^{d-k} \mathcal{M}$, defined uniquely through the requirement that
\begin{displaymath}
  \alpha \wedge \star \beta = \eta^k_x( \alpha, \beta ) \mu, \quad \forall \alpha, \beta \in \Lambda_x^k \mathcal{M}.
\end{displaymath} 
The Hodge star has the useful property
\begin{equation}
    \star \star \alpha = (-1)^{k(\dim\mathcal{M}-k)} \alpha, \quad \forall \alpha \in \Lambda_x^k \mathcal{M}.
  \label{eqHodgeStar2}
\end{equation}

As in the case of vector fields, we introduce the Banach spaces $ L^p_k $, $ 1 \leq p \leq \infty $, defined as the completion of $ \Omega^k $ with respect to the norms  
\begin{gather*}
  \lVert \alpha \rVert_{L^p_k} = \left( \int_\mathcal{M} \eta^k( \alpha, \alpha )^{p/2} \, d\mu \right)^{1/p}, \quad 1 \leq p \leq \infty, \\
  \lVert \alpha \rVert_{L^\infty_k} = \esssup_{x\in \mathcal{M}} \sqrt{ \eta^k( \alpha, \alpha)_x }.
\end{gather*}
The case $ p = 2 $ is a Hilbert space, $ H_k = L^2_k $, with norm $ \lVert \cdot \rVert_{H_k} = \lVert \cdot \rVert_{L^2_k}$ induced from the inner product 
\begin{displaymath}
    \langle \alpha, \beta \rangle_{H_k} = \int_\mathcal{M} \alpha \wedge \star \beta = \int_\mathcal{M} \eta^k  (\alpha, \beta ) \, d\mu.
\end{displaymath}

A fundamental aspect of the spaces $ \Omega^k $ is that they are linked by the exterior derivative and codifferential operators, $ d_k : \Omega^k \to \Omega^{k+1} $ and $ \delta_k : \Omega^k \to \Omega^{k-1} $, respectively. We recall that $ d_0, \ldots, d_{d-1} $ are the  unique linear maps with the properties:
\begin{enumerate}
  \item $ d_0 $ is the differential of functions. 
  \item $ d_{k+1} d_k = 0 $.
  \item The Leibniz rule,
    \begin{equation}
      \label{eqLeibniz}
      d_{k+l}( \alpha \wedge \beta ) = d_k \alpha \wedge \beta + (-1)^k \alpha \wedge d_l \beta,
    \end{equation}
    holds for all $ \alpha \in \Omega^k $ and $ \beta \in \Omega^l $. 
\end{enumerate}
The codifferential operators are defined uniquely through the requirement that 
\begin{displaymath}
  \langle \alpha, \delta_k \beta \rangle_{H_{k-1}} = \langle d_{k-1} \alpha, \beta \rangle_{H_k}; 
\end{displaymath}  
i.e., $ \delta_k $ is a formal adjoint of $ d_{k-1} $. This definition of $ \delta_k $ is equivalent to 
\begin{equation}
  \label{eqCoDiff}
  \delta_k \alpha = ( -1 )^{\dim\mathcal{M}(k+1)+1} \star d_{d-k} \star \alpha,
\end{equation}
and it also implies $ \delta_k \delta_{k-1} = 0 $. In the case $k=1$, the codifferential operator is related with the divergence on vector fields via $ \divr v = - \delta_1 v^\flat $, $ v \in \mathfrak{X} $. Note that despite its relationship with the exterior derivative in~\eqref{eqCoDiff}, the codifferential does not satisfy a Leibniz rule. 

Another important class of operators on differential forms are the interior product and Lie derivative associated with vector fields. Given a vector field $v \in \mathfrak{X}$, these are defined as the maps $\iota_v : \Omega^k \to \Omega^{k-1}$ and $\mathcal{L}_v : \Omega^k \to \Omega^k$, respectively, such that
\begin{displaymath}
    \iota_v \alpha = \alpha( v, \cdot ), \quad \mathcal{L}_v = d \iota_v + \iota_v d. 
\end{displaymath}
Both $ \iota_v $ and $\mathcal{L}_v$ satisfy Leibniz rules,
\begin{displaymath}
  \iota_v(\alpha \wedge \beta ) = (\iota_v \alpha) \wedge \beta + (-1)^k \alpha \wedge (\iota_v \beta), \quad \mathcal{L}_v( \alpha \wedge \beta ) = ( \mathcal{L}_v \alpha ) \wedge \beta + \alpha \wedge (\mathcal{L}_v),  
\end{displaymath}
for all $\alpha \in \Omega^k $ and $ \beta \in \Omega^l $. Moreover, they have the properties
\begin{equation}
  \mathcal{L}_v d = d \mathcal{L}_v, \quad v^\flat \wedge \star \alpha = (-1)^k \star \iota_v \alpha.
  \label{eqCartan}
\end{equation}
 The following lemma can be viewed as a generalization of Lemma~\ref{lemmaVBound} to spaces of differential forms.
\begin{lemma}
    For every $v \in \mathfrak{X}$, the operators $ \iota_v d $, $ d \iota_v $, and $ \mathcal{L}_v $ extend to unique bounded operators $D_v : H^1_k \to H_k$, $\tilde D_v : H^1_k \to H_k$, and $L_v : H^1_k \to H_k$, respectively.
  \label{lemmaVBoundK}
\end{lemma}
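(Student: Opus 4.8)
The plan is to reduce each of $\iota_v d$, $d\iota_v$, and $\mathcal{L}_v$ to a first-order differential operator on $\Omega^k$ whose coefficients are built from $v$ and its covariant derivative, hence uniformly bounded on the closed manifold $\mathcal{M}$, and then to extend by density, exactly as in the $k=0$ prototype of Lemma~\ref{lemmaVBound}. Throughout I would use that $\Omega^k$ is dense in $H^1_k$ (which makes every bounded extension unique), together with two elementary bounds: the operator-norm bound for contraction, $\eta^{k-1}_x(\iota_v\beta,\iota_v\beta) \leq g_x(v,v)\,\eta^k_x(\beta,\beta)$, which gives $\iota_v \in \mathcal{B}(H_k,H_{k-1})$ with $\lVert \iota_v \rVert \leq \lVert v \rVert_{L^\infty_\mathfrak{X}}$; and the boundedness of $d_k : H^1_k \to H_{k+1}$, immediate from the definition of the $H^1_k$ norm.

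The operator $\iota_v d$ is then easy: for $\alpha \in \Omega^k$, $\lVert \iota_v d\alpha \rVert_{H_k} \leq \lVert v \rVert_{L^\infty_\mathfrak{X}} \lVert d\alpha \rVert_{H_{k+1}} \leq \lVert v \rVert_{L^\infty_\mathfrak{X}} \lVert \alpha \rVert_{H^1_k}$, so the bounded linear transformation theorem produces the unique extension $D_v \in \mathcal{B}(H^1_k,H_k)$.

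The substantive step is $d\iota_v$, which, unlike $\iota_v d$, is a genuine first-order operator in $\alpha$ whose principal part is not built from $d$ alone, so the $H^1_k$ norm does not control it directly. Here I would invoke the Gaffney--Friedrichs inequality on closed manifolds: for each $j$, the Hodge--Sobolev norm $\lVert \beta \rVert^2_{H_j} + \lVert d\beta \rVert^2_{H_{j+1}} + \lVert \delta\beta \rVert^2_{H_{j-1}}$ defining $H^1_j$ is equivalent to the full covariant Sobolev norm $\lVert \beta \rVert^2_{H_j} + \lVert \nabla\beta \rVert^2_{L^2}$. Since $\iota_v$ is a smooth bundle homomorphism $\Lambda^k\mathcal{M} \to \Lambda^{k-1}\mathcal{M}$, the Leibniz rule for $\nabla$ gives $\lVert \nabla(\iota_v\alpha) \rVert_{L^2} \leq C_v(\lVert \alpha \rVert_{L^2} + \lVert \nabla\alpha \rVert_{L^2})$ with $C_v$ depending only on $\lVert v \rVert_{L^\infty_\mathfrak{X}}$ and the sup-norm of $\nabla v$ (both finite because $v$ is smooth and $\mathcal{M}$ is closed). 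Chaining these estimates for $\alpha \in \Omega^k$ yields $\lVert d\iota_v\alpha \rVert_{H_k} \leq \lVert \iota_v\alpha \rVert_{H^1_{k-1}} \leq C_1 \lVert \iota_v\alpha \rVert_{W^{1,2}} \leq C_1 C_v \lVert \alpha \rVert_{W^{1,2}} \leq C_1 C_v C_2 \lVert \alpha \rVert_{H^1_k}$, where $C_1,C_2$ are the Gaffney constants for $(k-1)$- and $k$-forms; hence the unique extension $\tilde D_v \in \mathcal{B}(H^1_k,H_k)$.

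Finally, Cartan's formula $\mathcal{L}_v = d\iota_v + \iota_v d$ holds on $\Omega^k$, so $L_v := \tilde D_v + D_v$ is the unique bounded extension of $\mathcal{L}_v$ in $\mathcal{B}(H^1_k,H_k)$, and the identity $L_v = \tilde D_v + D_v$ persists on all of $H^1_k$ by continuity. The one non-elementary ingredient, and the main obstacle, is the $d\iota_v$ bound: passing from what the $H^1_k$ norm controls (namely $d\alpha$ and $\delta\alpha$) to control of the full first covariant derivative of $\alpha$ requires the Gaffney inequality; everything else is the bounded-linear-transformation argument together with pointwise Cauchy--Schwarz.
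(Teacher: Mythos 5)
Your proposal is correct, and for $\iota_v d$ it coincides with the paper's argument: the pointwise bound $\lvert \iota_v \beta \rvert \leq \lvert v \rvert\,\lvert \beta\rvert$ (local Cauchy--Schwarz) combined with $\lVert d\alpha \rVert_{H_{k+1}} \leq \lVert \alpha \rVert_{H^1_k}$, which is immediate from \eqref{eqDirichletK}, followed by the bounded linear transformation theorem. Where you genuinely diverge is on $d\iota_v$. The paper disposes of $\tilde D_v$ and $L_v$ with ``the results follow similarly,'' but as you correctly observe, a literal repetition of the argument fails there: $d\iota_v\alpha$ involves the full first covariant derivative of $\alpha$, whereas the $H^1_k$ norm directly controls only $d\alpha$ and $\delta\alpha$. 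Your fix --- invoking the Gaffney--Friedrichs/Weitzenb\"ock equivalence between the Hodge--Sobolev norm $\lVert\alpha\rVert^2_{H_k}+\lVert d\alpha\rVert^2_{H_{k+1}}+\lVert\delta\alpha\rVert^2_{H_{k-1}}$ and the covariant norm $\lVert\alpha\rVert^2_{L^2}+\lVert\nabla\alpha\rVert^2_{L^2}$ on a closed manifold, then applying the Leibniz rule to the bundle homomorphism $\iota_v$ --- is the standard and, as far as I can see, necessary ingredient; it is exactly the elliptic-regularity input that the paper's ``similarly'' sweeps under the rug. Your treatment of $\mathcal{L}_v$ via Cartan's formula $L_v = \tilde D_v + D_v$ then closes the argument cleanly. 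In short: your proof is more complete than the paper's on the substantive half of the lemma, at the cost of importing one nontrivial external result (Gaffney's inequality, equivalently the Weitzenb\"ock formula plus compactness of $\mathcal{M}$); the paper's version is shorter only because it does not actually justify the $d\iota_v$ bound.
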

\begin{proof}
  First, establish that $\lVert \iota_v d \omega \rVert^2_{H_k} / \lVert \omega \rVert_{H^1_k}^2$ is bounded using local Cauchy-Schwartz inequalities as in the proof of Lemma~\ref{lemmaVBound}. This, in conjunction with the bounded linear transformation theorem implies the existence and uniqueness of $D_v$, as claimed. The results for $\tilde D_v $ and $L_v$ follow similarly.
\end{proof}

The exterior derivative and codifferential operators lead to the Hodge Laplacian $ \Delta_k : \Omega^k \to \Omega^k $ on $ k $-forms, defined as
\begin{displaymath}
  \Delta_k = \delta_{k+1} d_k + d_{k-1} \delta_k. 
\end{displaymath} 
As with the Laplacian $ \Delta = \Delta_0 $ on functions, the Laplacian on $ k $-forms on closed manifolds has a unique self-adjoint extension $ \bar \Delta_k : D( \bar \Delta_k ) \to H_{k} $, with a pure point spectrum of eigenvalues $ 0 = \lambda_{0,k} \leq \lambda_{1,k} \leq \cdots $ with no accumulation points and an associated smooth orthonormal basis $ \{ \phi_{j,k} \}_{j=0}^{\infty} $ of eigenforms \cite{Strichartz83,Rosenberg97}. 

A central result in exterior calculus on manifolds is the Hodge decomposition theorem, which states that $ \Omega^k $ admits the decomposition
\begin{equation}
  \label{eqHodge}
  \Omega^k = \ran d_{k-1} \oplus \ran \delta_{k+1} \oplus \mathcal{ H }_0^k,  \quad \mathcal{ H }_0^k = \ker \Delta_k,
\end{equation}   
into subspaces of closed ($ \ran d_{k-1} $), coclosed ($\ran \delta_{k+1}$), and harmonic ($\mathcal{H}^k$) forms, all of which are invariant under $ \Delta_k$.  On a a compact manifold, $ \mathcal{ H }_0^k = \ker d_k \cap \ker \delta_k $,  and the dimension of this space is finite. The Hodge decomposition in~\eqref{eqHodge} has an $ L^2 $ extension, 
\begin{displaymath}
  H_{k} = \mathcal{ H }^k_d \oplus \mathcal{ H }^k_\delta \oplus \mathcal{ H }_0^k,  \quad \mathcal{ H }^k_d = \overline{\ran d_{k-1} }, \quad \mathcal{ H }^k_\delta =  \overline{\ran \delta_{k+1} },   
\end{displaymath} 
where the closed spaces $ \mathcal{ H }^k_d $, $ \mathcal{ H }_\delta^k $, and $ \mathcal{ H }^k $ are mutually orthogonal. 

It follows directly from the definition of $ \Delta_k $ that $ d_k \Delta_k = \Delta_{k+1} d_k $ and $ \delta_k \Delta_k = \Delta_{k-1} \delta_k $. This implies that every eigenform of $\Delta_k$ can be chosen to lie in one of the $ \mathcal{ H }^k_d $, $ \mathcal{ H }^k_\delta $, or $ \mathcal{ H }^k_0 $ subspaces. Moreover, for every $ k $-eigenform $ \psi\in \mathcal{ H }^k_d $ there exists a $ ( k - 1 ) $-eigenform $ \varphi \in \mathcal{ H }^k_\delta $ such that $\psi = d_{k-1} \varphi $, and similarly for every $ \psi \in \mathcal{ H }^k_\delta $  there exists as $(k+1)$-eigenform $ \omega \in \mathcal{ H }^{k+1}_d $ such that $\psi = \delta_{k+1} \omega $.     

Besides providing orthonormal bases for the invariant subspaces in the Hodge decomposition of  $ H_k $, the eigenfunctions of $ \Delta_k $ and the corresponding eigenvalues are also useful for constructing Sobolev spaces analogous to the $ H^p $ function spaces in~\eqref{eqHP}. Given $ p \geq 0 $, we define
\begin{displaymath}
  H^p_k = \left \{ \sum_{j=0}^\infty c_j \phi_{j,k} \in H_k : \sum_{j=0}^\infty   \lambda^p_{k,j} \lvert c_j \rvert^2 < \infty \right \}.
\end{displaymath}
These spaces are Hilbert spaces with inner products
\begin{equation}
  \label{eqHPKInnerProduct}
  \langle \alpha, \beta \rangle_{H^p_k} = \sum_{q=0}^p\sum_{j=0}^\infty \lambda_{j,k}^q \hat \alpha^*_j \hat \beta_j, \quad \hat \alpha_j = \langle \phi_{j,k}, \alpha \rangle_{L^2_k},  \quad \hat \beta_j = \langle \phi_{j,k}, \beta \rangle_{L^2_k}
\end{equation}
and norms $ \lVert \alpha \rVert_{H^p_k} = \langle \alpha, \alpha \rangle^{1/2}_{H^p_k} $. As in the case of functions and vector fields, we equip these spaces with positive-semidefinite Dirichlet forms $E_{p,k} : H^p_k \times H^p_k \to \mathbb{C} $, given by
\begin{displaymath}
  E_{p,k} = \sum_{j=0}^\infty \lambda_{j,k}^p \hat \alpha^*_j \beta_j
\end{displaymath}
for $\alpha, \beta $ from~\eqref{eqHPKInnerProduct}. Note that $E_{p,k}$ can be equivalently expressed using the exterior derivative and codifferential operators; in particular, 
\begin{equation}
  E_{1,k}(\alpha, \beta) = \langle \bar d_k \alpha, \bar d_k \beta \rangle_{H_{k+1}} + \langle \bar \delta_k \alpha, \bar \delta_k \beta \rangle_{H_{k-1}},
  \label{eqDirichletK}
\end{equation}
where overbars denote operator closures. Moreover, if $\beta $ is smooth, $E_{p,k}(\alpha,\beta)$ can be expressed in terms of the $k$-Laplacian via  
\begin{displaymath}
  E_{p,k}(\alpha,\beta) = \langle \alpha, \Delta^p_k \beta \rangle_{H_{k}},
\end{displaymath}
with an analogous expression holding if $\alpha \in \Omega^k$. The Dirichlet forms defined above induce the energy functionals $E_{p,k}(f) = E_{p,k}(f,f)$ measuring the roughness of forms in $H^p_k\cap H_k$ through $E_{p,k}(f)/ \lVert f \rVert_{H_k}^2$. 
 
For notational simplicity, henceforth we will drop the overbars from our notation for the closed differential, codifferential, and Laplacian on $k$-forms. We will also drop $k$ superscripts and subscripts from $ \eta^k $, $d_k$, $\delta_k$, and $\Delta_k$.

\section{\label{secSECSmooth}Spectral exterior calculus (SEC) on smooth manifolds}

In this section, we introduce our representation of vector fields and forms using frames constructed from Laplace-Beltrami eigenfunctions and their derivatives.  Besides rigorously satisfying the appropriate frame conditions for a number of $L^2$ and Sobolev spaces of interest in exterior calculus, an advantage of this representation is that it is fully spectral, and thus can also be applied in the discrete case with little modification.

\subsection{\label{secFrameDef}Frame representation of vector fields and forms}

We begin by recalling the definition of a frame of a Hilbert space \cite{Mallat09}.

\begin{definition}[frame of a Hilbert space] 
    Let $ ( V, \langle \cdot, \cdot \rangle_V ) $ be a Hilbert space over the complex numbers and $ u_0, u_1, \ldots $  a sequence of elements $ u_k \in V $. We say that the set $\{ u_k \} $ is a frame if there exist positive constants $ C_1 $ and $ C_2 $ such that the following frame conditions hold for all $ v \in V $: 
  \begin{equation}
    \label{eqFrameCond}
    C_1 \lVert v \rVert^2_V \leq \sum_k \lvert \langle u_k, v \rangle_V \rvert^2 \leq C_2 \lVert v \rVert^2_V.
  \end{equation}
  \label{defFrame}
\end{definition}

The frame $ \{ u_k \} $ induces a linear operator $ T : V \to \ell^2 $, called analysis operator,  such that $ T v = \hat v' = ( \hat v'_k )_k  $ with $ \hat v'_k = \langle u_k, f \rangle_V $. This operator is bounded above and below via the same constants as in~\eqref{eqFrameCond}; that is,
\begin{displaymath}
  C_1 \lVert v \rVert^2_V \leq \lVert T v \rVert_{\ell^2}^2 \leq C_2 \lVert v \rVert_V^2.
\end{displaymath}
The adjoint, $ T^* : \ell^2 \to V $, is called synthesis operator.

The analysis and synthesis operators induce a positive-definite, self-adjoint, bounded operator $ S : V \to V $ with bounded inverse, called frame operator, which is given by $ S = T^* T $. This operator satisfies the bounds
\begin{displaymath}
  C_1 \lVert v \rVert^2_V \leq \langle v, S v \rangle_V \leq C_2 \lVert v \rVert_V^2.
\end{displaymath}
The fact that $S$ has bounded inverse implies that the set $ \{ u'_k \} $ with $ u'_k = S^{-1} u_k $ is also a frame, called dual frame. This frame has the important property
\begin{equation}
  \label{eqVFrameDual}
  v = \sum_k \langle u'_k, v \rangle_V u_k = \sum_k \langle u_k, v \rangle_V u'_k, \quad \forall v \in V.
\end{equation}   
This means that the inner products $ \langle u'_k, v \rangle_V $ between $ v$ and the dual frame elements correspond to expansion coefficients in the original frame that reconstruct $ v $, and conversely, the coefficients $ \langle u_k, v \rangle_V $ reconstruct $ v $ in the dual frame. Denoting the analysis operator associated with the dual frame by $ T' : V \to \ell^2$, we have $T' = T S^{-1}$, $T^{\prime *} = S^{-1} T^* $, and \eqref{eqVFrameDual} can be equivalently expressed as
\begin{equation}
  \label{eqVFrameDual2}
  v = T^* T' v = T^{\prime*} T v, \quad \forall v \in V.
\end{equation}
Moreover, the dual frame operator $S' : V \to V $, $ S'= T^{\prime *} T'$, is equal to $S^{-1} $. 

Another class of operators of interest in frame theory are the Gramm operators $G: \ell^2 \to \ell^2$, $G = T T^*$, and $ G' : \ell^2 \to \ell^2 $, $G' = T' T^{\prime *}$, associated with the frame and dual frame, respectively. While $G $ and $ G'$ are both bounded, unlike $S$ and $S'$, they are non-invertible if the frame has linearly dependent elements. Nevertheless, it follows from~\eqref{eqVFrameDual2} that
\begin{equation}
  \label{eqVFrameDualGramm}
  T v = G T' v, \quad T'v = G' Tv, \quad \forall v \in V,
\end{equation}
which implies that $ G $ (resp.\ $G'$) is invertible on the range of $ T' $ (resp.\ $T$), and its inverse is given by $ G' $ (resp.\ $G$) . Denoting the canonical orthonormal basis of $\ell^2$ by $\{ e_k \}_{k=0}^\infty$, we have 
    \begin{displaymath}
        G_{ij} := \langle e_i, G e_j \rangle_{\ell^2} = \langle T^* e_i, T^* e_j \rangle_{V} = \langle u_i, u_j \rangle_{V},
    \end{displaymath}
and similarly $ G'_{ij} = \langle e_i, G' e_j \rangle_{\ell^2} = \langle u'_i, u'_j \rangle_{V} $. Thus, the matrix elements of the Gramm operators $ G $ and $ G' $ in the $\{ e_k \}_{k=0}^\infty $ basis are equal to the pairwise inner products between the frame and dual frame elements, respectively. By boundedness of these operators, for any $ \hat f = \sum_{j=0}^\infty \hat f_j e_j \in \ell^2 $, we have $G \hat f = \sum_{i,j=0}^\infty e_i G_{ij} \hat f_j$ and  $G' \hat f = \sum_{i,j=0}^\infty e_i G'_{ij} \hat f_j$.        

Since the analysis operator $ T $ is bounded below, it follows by the closed range theorem that $ T $ and $ T^* $ have closed range, and as a result the ranges of $ S $ and $ G $ are also closed. Similarly, all of $ T' $, $ T^{\prime*} $, $ S' $, and $ G' $ have closed range. An important consequence of these properties is that all of these operators have well-defined pseudoinverses. In particular, it can be shown \cite{Christensen95} that the pseudoinverse $(T^{*})^+ $ of $ T^* $ is equal to the dual analysis operator, $ (T^{*})^+ = T' $, and similarly $ (T^{\prime*})^+ = T $. These relationships imply in turn that $G^+ = G'$ and $ G^{\prime+} = G$. 

Clearly, in a separable Hilbert space, every Riesz basis is also a frame. For example, in the case $ V = H = L^2(\mathcal{M},\mu)$, a natural frame is provided by the Laplace-Beltrami eigenfunction basis $ \{ \phi_k \}_{k=0}^\infty $. In that case, the analysis  operator $T $ is unitary, $ T^*T = S = I_H$,  $T T^* = G  = I_{\ell^2} $, by orthonormality of the basis. In the setting of vector fields on manifolds, a natural orthonormal set of smooth fields in $ H_\mathfrak{X} $ is given by the normalized gradient fields $ u_j $ from~\eqref{eqEJ}. However, this only provides a basis for the space of gradient fields, $ H_{\mathfrak{X},\grad} $. To construct a representation of arbitrary vector fields in  $H_\mathfrak{X} $, we can take advantage of the $ C^\infty(\mathcal{M}) $-module structure of smooth vector fields to augment this set by multiplication of gradient fields by smooth functions. Doing so will result in an overcomplete spanning set of $ H_\mathfrak{X}$, which will turn out to meet the frame conditions in Definition~\ref{defFrame}. We will follow a similar approach to construct frames for the $H_k$ spaces of differential forms, where we will also construct frames for higher-order Sobolev spaces through eigenvalue-dependent normalizations of the frame elements as in~\eqref{eqPhiHP}. 

\begin{remark}
    As alluded to in Section~\ref{secOverview}, a key property of the frames for spaces of vector fields and differential forms introduced below is that the matrix elements $G_{ij}$ of the corresponding Gramm operators can be evaluated via closed form expressions that depend only on the eigenvalues $ \lambda_i $ of the Laplacian on functions and the corresponding coefficients $c_{ijk}$ from~\eqref{eqCIJK}. This allows in turn the SEC to be built entirely from the spectral properties of the Laplacian on functions.
\end{remark}

We begin by introducing the vector fields and forms which will be employed in our frame construction and Galerkin schemes below. In the case of vector fields, we define 
\begin{equation}
  b_{ij} = \phi_i \grad \phi_j, \quad 
  \widetilde b_{ij} = e^{-\lambda_j/2} b_{ij}, \quad b_{ij}^{(p)} = b_{ij} / \lVert \phi_i \rVert_{H^p}, \quad \quad \widetilde b_{ij}^{(p)} = \widetilde b_{ij} / \lVert \phi_i \rVert_{H^p}, 
\end{equation}
and
\begin{equation}
  \label{eqFrameVecA}
  \widehat b_{ij} = b_{ij} - b_{ji}, \quad \widecheck b_{ij} = e^{-(\lambda_i + \lambda_j)/2}\widehat b_{ij}, 
\end{equation}
all of which are smooth vector fields in $\mathfrak{X}$. We also define the smooth forms 
\begin{equation}
  \label{eqFrameForm}
  \begin{gathered}
      b^i = \phi_i \in \Omega^0, \\  
      b^{ij_1 \cdots j_k} = b^i \, db^{j_1} \wedge \cdots \wedge db^{j_k} \in \Omega^k, \quad \widetilde b^{ij_1 \cdots j_k} = e^{-( \lambda_{j_1} + \ldots + \lambda_{j_k})/2} b^{ij_1\cdots j_k},  \\ 
      b^{ij_1\cdots j_k}_{p} = b^{ij_1\cdots j_k} / \lVert \phi_i \rVert_{H^p}, \quad  \widetilde  b^{ij_1\cdots j_k}_{p} = \widetilde b^{ij_1\cdots j_k} / \lVert \phi_i \rVert_{H^p}, 
    \end{gathered}
\end{equation}
and
\begin{equation}
  \label{eqFrameFormA}
  \widehat b^{ij_1\cdots j_k} = b^{[ij_1 \cdots j_k]}, \quad \widecheck b^{ij_1 \cdots j_k} = e^{-(\lambda_i + \lambda_{j_1} + \ldots + \lambda_{j_k})}\widehat b^{[i j_1 \cdots j_k]}, 
\end{equation}
where the square brackets $[ij_1 \cdots j_k]$ denote total antisymmetrization with respect to the enclosed indices; e.g., 
\begin{displaymath}
   b^{[i j_1 j_2]} = b^{i j_1 j_2} - b^{i j_2 j_1} + b^{j_1 j_2 i} - b^{j_1 i j_2} + b^{j_2 i j_1} - b^{j_2 j_1 i}.
\end{displaymath}
With these definitions, our main results on frames for Hilbert spaces of vector fields and forms are as follows.

\begin{theorem}[frames for $L^2$ spaces of vector fields and forms]\label{thmFrame}
There exists an integer $J_0 \geq d = \dim \mathcal{M}$, such that for every integer $ J \geq J_0 $, the sets
  \begin{displaymath}
    B_{\mathfrak{X}}^J = \{ b_{ij} : i \in \{ 0, 1, \ldots \}, \; j \in \{ 1, \ldots, J \} \}, \quad \widetilde B_{\mathfrak{X}} = \{ \widetilde b_{ij} : i \in \{ 0,1, \ldots \}, \; j \in \{ 1, 2, \ldots \} \},
  \end{displaymath}
  are frames for $H_\mathfrak{X}$. Moreover, the sets 
  \begin{align*}
    B^J_k &= \{ b^{i j_1 \cdots j_k} : i \in \{ 0, 1, \ldots, \}, \; j_1, \ldots, j_k \in \{ 1, \ldots, J \} \}, \\
    \widetilde B_k &= \{ \widetilde b^{ij_1 \cdots j_k} : i \in \{ 0, 1, \ldots \}, \; j_1, \ldots, j_k \in \{ 1,2, \ldots \} \},
  \end{align*}
  with $k \in \{ 1, \ldots, m \} $, are frames for $H_k$. 
\end{theorem}

\begin{theorem}[frames for order-1 Sobolev spaces of 1-forms]
  \label{thmFrameSob}
  For the same integer $J$ as in Theorem~\ref{thmFrame}, the sets 
  \begin{displaymath}
    B^J_{1,1} = \{ b^{i j}_1 : i \in \{ 0, 1, \ldots \}, \; j \in \{ 1, \ldots, J \} \}, \quad \widetilde B_{1,1} = \{ \widetilde b^{i j}_1 : i \in \{ 0, 1, \ldots \}, \; j \in \{ 1, 2, \ldots \} \}.
  \end{displaymath}
  are frames for $H^1_1$.
\end{theorem}
We will prove Theorems~\ref{thmFrame} and~\ref{thmFrameSob} in Sections~\ref{secFrameProof} and~\ref{secFrameSobProof}, respectively. In addition, while we have not explicitly verified this, it should be possible to show via a similar approach to that in Section~\ref{secFrameSobProof} that frames for $H^p_k$, $ k,p > 1 $, can be constructed using $b^{i j_1 \cdots j_k}_p$ or $\widetilde b^{i j_1 \cdots j_k}_p$. It may also be possible to establish such results inductively with respect to the Sobolev order $p$, using the results in \ref{derivations}. Based on these considerations, we conjecture the following: 

\begin{conj}[frames for Sobolev spaces of forms]
    \label{conjFrameSob}
  For the same integer $J$ as in Theorem~\ref{thmFrame}, the following sets are frames for $H^1_k$, $k \in \{ 1, \ldots, m \}$, $ p \geq 0 $:
  \begin{align*}
    B^J_{k,p} &= \{ b^{i j_1 \cdots j_k}_p : i \in \{ 0, 1, \ldots \}, \; j_1, \ldots, j_k \in \{ 1, \ldots, J \} \}, \\
    \widetilde B_{k,p} &= \{ \widetilde b^{i j_1 \cdots j_k}_1 : i \in \{ 0, 1, \ldots \}, \; j_1, \ldots, j_k \in \{ 1, 2, \ldots \} \}.
  \end{align*}
\end{conj}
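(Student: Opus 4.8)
The plan is to reduce the conjecture, for each degree $k$ and order $p$, to the single pointwise fact already built into Theorem~\ref{thmFrame}: for $J \geq J_0$ the forms $d\phi_{j_1}(x) \wedge \cdots \wedge d\phi_{j_k}(x)$ with $1 \leq j_l \leq J$ span $\Lambda^k_x \mathcal{M}$, with spanning constants bounded above and below uniformly in $x \in \mathcal{M}$. (I take the statement to concern $H^p_k$, with frame elements $\widetilde b^{ij_1\cdots j_k}_p$ in the exponentially scaled case, since that is the only reading under which the normalization by $\lVert\phi_i\rVert_{H^p}$ is the appropriate one for all $p$.)

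First I would collapse the index $i$. Put $\gamma_{j_1\cdots j_k} = d\phi_{j_1}\wedge\cdots\wedge d\phi_{j_k} \in \Omega^k$, so $b^{ij_1\cdots j_k}_p = \phi^{(p)}_i \gamma_{j_1\cdots j_k}$ with $\phi^{(p)}_i = \phi_i/\lVert\phi_i\rVert_{H^p}$ from~\eqref{eqPhiHP}. Multiplication of a function by a fixed smooth $k$-form is bounded from $H^p$ to $H^p_k$ — checked by iterating the Leibniz rule~\eqref{eqLeibniz} together with $\delta(f\beta) = f\delta\beta - \iota_{\grad f}\beta$, or by invoking the equivalence of the Laplacian-defined norms~\eqref{eqHPKInnerProduct} with the usual chart-based Sobolev norms — so the adjoint map $\Xi_{j_1\cdots j_k} : H^p_k \to H^p$, defined via Riesz representation by $\langle f\gamma_{j_1\cdots j_k}, \omega\rangle_{H^p_k} = \langle f, \Xi_{j_1\cdots j_k}\omega\rangle_{H^p}$ for all $f \in H^p$, is well defined and bounded. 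Since $\{ \phi^{(p)}_i \}_{i\geq 0}$ is an orthonormal basis of $H^p$, Parseval yields
\[ \sum_{i=0}^\infty \lvert\langle b^{ij_1\cdots j_k}_p, \omega\rangle_{H^p_k}\rvert^2 = \sum_{i=0}^\infty \lvert\langle\phi^{(p)}_i, \Xi_{j_1\cdots j_k}\omega\rangle_{H^p}\rvert^2 = \lVert\Xi_{j_1\cdots j_k}\omega\rVert^2_{H^p}, \]
and similarly with the extra factor $e^{-(\lambda_{j_1}+\cdots+\lambda_{j_k})}$ for $\widetilde b$. The frame conditions for $B^J_{k,p}$ and $\widetilde B_{k,p}$ thus become $\sum_{j_l} w_{j_1\cdots j_k}\lVert\Xi_{j_1\cdots j_k}\omega\rVert^2_{H^p} \asymp \lVert\omega\rVert^2_{H^p_k}$, with $w \equiv 1$ and $j_l \in \{1,\ldots,J\}$ in the first case, $w = e^{-(\lambda_{j_1}+\cdots+\lambda_{j_k})}$ and $j_l \in \{1,2,\ldots\}$ in the second.

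The upper (Bessel) bound is routine: $\lVert\Xi_{j_1\cdots j_k}\omega\rVert_{H^p}$ is controlled by the norm of multiplication by $\gamma_{j_1\cdots j_k}$ on $H^p$, itself bounded by a power of $\lVert\phi_{j_1}\rVert_{C^N}\cdots\lVert\phi_{j_k}\rVert_{C^N}$; in the exponentially scaled case the resulting weighted sum over $j_1,\ldots,j_k$ converges by polynomial-in-$\lambda$ bounds on eigenfunction $C^N$ norms together with Weyl's law, exactly as in the proof of Theorem~\ref{thmFrame}. The lower bound is the substantive step. The adjoint of $\Xi_{j_1\cdots j_k}$ is multiplication by $\gamma_{j_1\cdots j_k}$, so the finite-$J$ lower bound is equivalent to surjectivity of the synthesis operator $M : \bigoplus_{1\leq j_l\leq J} H^p \to H^p_k$, $(f_{j_1\cdots j_k}) \mapsto \sum_{j_l} f_{j_1\cdots j_k}\gamma_{j_1\cdots j_k}$, and I would establish surjectivity by exhibiting a bounded right inverse. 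Let $W(x) : \mathbb{C}^{J^k} \to \Lambda^k_x\mathcal{M}$ send the $(j_1,\ldots,j_k)$th basis vector to $\gamma_{j_1\cdots j_k}(x)$; by the uniform spanning recalled above, $W(x)$ has constant rank $\binom{d}{k}$ for $J \geq J_0$, hence a smooth Moore--Penrose pseudoinverse $W(x)^+$ with uniformly bounded covariant derivatives, and $f_{j_1\cdots j_k} := (W(\cdot)^+\omega(\cdot))_{j_1\cdots j_k}$ satisfies $\sum_{j_l} f_{j_1\cdots j_k}\gamma_{j_1\cdots j_k} = \omega$ while lying in $H^p$ with $\lVert f_{j_1\cdots j_k}\rVert_{H^p} \lesssim \lVert\omega\rVert_{H^p_k}$ (it is the contraction of $\omega$ against a fixed smooth $k$-vector field). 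Finally, the lower bound for $\widetilde B_{k,p}$ follows from the finite-$J$ one by keeping only the terms with $j_l \leq J$ and using $e^{-(\lambda_{j_1}+\cdots+\lambda_{j_k})} \geq e^{-k\lambda_J}$ on those terms.

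The main obstacle is the lower bound, and within it the construction of this smooth right inverse: this is where the full force of the embedding constant $J_0$ is used (guaranteeing the constant-rank, hence $C^\infty$, property of $W(x)^+$), and where one must check carefully that contraction against the bounded smooth components of $W(x)^+$ maps $H^p_k$ boundedly into $H^p$ — the same technical point as the boundedness of multiplication by a smooth form discussed above, which is cleanest via the identification of the Laplacian-defined Sobolev spaces of Section~\ref{secPrelim} with the classical ones. A byproduct of this argument is that the analogous statement should also hold for the Sobolev spaces $H^p_{\mathfrak{X},\divr}$ of vector fields, with $\Xi_{j_1\cdots j_k}$ replaced by contraction against the gradient fields $b_{ij}$.
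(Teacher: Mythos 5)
First, a point of reference: the paper does not prove this statement --- it is stated as Conjecture~\ref{conjFrameSob} precisely because the authors only establish the cases $p=0$ (Theorem~\ref{thmFrame}) and $k=p=1$ (Theorem~\ref{thmFrameSob}), and they indicate that a general proof would proceed ``via a similar approach to that in Section~\ref{secFrameSobProof},'' i.e.\ by explicit Hodge-decomposition estimates on $db^{ij_1\cdots j_k}$ and $\delta b^{ij_1\cdots j_k}$, for which they admit they lack closed-form expressions when $k>1$. Your route is genuinely different and, in my view, better suited to the general case. Collapsing the $i$-sum by Parseval in $H^p$ through the adjoint $\Xi_{j_1\cdots j_k}$ of multiplication by $\gamma_{j_1\cdots j_k}$ replaces the paper's term-by-term Dirichlet-form estimates (the $U_J$, $V_J$ bounds of Lemma~\ref{lemmaDBIJ}) with a single abstract identity, and your reduction of the lower bound to surjectivity of the synthesis map, with a right inverse built from the smooth constant-rank pseudoinverse $W(x)^+$, is the natural $H^p$ upgrade of the $\bm\Psi^+$ argument in Section~\ref{secFrameProof}. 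Your reading of the statement as concerning $H^p_k$ rather than $H^1_k$ is also the only one consistent with the $\lVert\phi_i\rVert_{H^p}$ normalization, and the exponential-rescaling steps (Weyl plus polynomial eigenfunction bounds for the upper bound; restriction to $j_l\le J$ for the lower bound) mirror the paper's own treatment of $\widetilde B_{\mathfrak{X}}$ and $\widetilde B_{1,1}$.

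The one load-bearing ingredient you should not leave at the level of a remark is the pair of boundedness claims in the \emph{spectrally defined} norms: that $f\mapsto f\gamma$ is bounded $H^p\to H^p_k$ and that contraction against the smooth components of $W(x)^+$ is bounded $H^p_k\to H^p$. Everything else in your argument is formal once these hold. The Leibniz-iteration sketch does not obviously close on its own, because $\lVert\cdot\rVert_{H^p_k}$ is built from powers of $\Delta_k=d\delta+\delta d$, the codifferential satisfies no Leibniz rule (only $\delta(f\beta)=f\delta\beta-\iota_{\grad f}\beta$), and for odd Sobolev orders one must control the square root of the Dirichlet form rather than integer powers of $\Delta_k$; the cross terms proliferate with $p$. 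The clean resolution, as you say, is the uniform equivalence of the paper's eigenvalue-defined $H^p_k$ norms with classical chart-based Sobolev norms of $k$-forms, which follows from ellipticity of the Hodge Laplacian (G\aa rding plus elliptic regularity) on a closed manifold --- standard, but nowhere stated or used in the paper, so it must be recorded as an explicit lemma for the argument to stand. With that lemma in place I see no gap; without it, the two boundedness claims are exactly where a referee would push back.
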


\begin{remark}
    It should also be noted that we have not established frame conditions for the antisymmetric elements, $\widehat b_{ij}$ and $ \widehat b^{ij_1\cdots j_k}$, or their rescaled counterparts, $\widecheck b_{ij}$ and $\widecheck b^{ij_1\cdots j_k}$. In fact, to fully span $H_\mathfrak{X}$ and $ H_k$ using $\widehat b_{ij}$ and $\widehat b^{ij_1\cdots j_k}$, respectively, one would have to use infinitely many $i$ and $j$ indices, leading to violations of the upper frame condition. Nevertheless, as demonstrated by the formulas derived in \ref{derivations} and listed in Table~\ref{fig3}, due to cancellation of terms by antisymmetrization, $\widehat b_{ij}$ and $\widehat b^{ij_1 \cdots j_k}$ can sometimes lead to considerable simplification of the representation of operators of interest in exterior calculus (e.g., the 1-Laplacian). Moreover, in the applications presented in Section~\ref{secNumerics} with available analytical results for the eigenvalues and eigenforms of the 1-Laplacian (i.e., the circle and flat torus), we found that SEC formulations based on the antisymmetric elements actually exhibit a moderate performance increase over those based on the non-symmetric elements. These facts motivate further exploration of the construction of frames based on antisymmetric elements. For example, the exponentially scaled $\widecheck b^{ij_1\cdots j_k} $ might provide frames for reproducing kernel Hilbert spaces associated with the heat kernel on $k$-forms.     
\end{remark}

For the remainder of this section, we discuss the basic properties of the vector fields and forms just defined. 
We begin by establishing that, while they may not form a basis, finitely many gradient vector fields of Laplace-Beltrami eigenfunctions are sufficient to generate arbitrary smooth vector fields on closed manifolds. 

\begin{lemma}
    \label{lemmaSpanning}There exists an integer $ J \geq \dim \mathcal{M} $ such that $ \{ \grad \phi_1 \rvert_x, \ldots, \grad \phi_J \rvert_x \} $ is a spanning set of $ T_x \mathcal{M} $, and thus $T_x^{\mathbb{C}}\mathcal{M}$, at every $ x \in \mathcal{M} $.
\end{lemma}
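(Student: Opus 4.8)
The plan is to show that the map $x \mapsto (\grad\phi_1|_x, \ldots, \grad\phi_J|_x)$ spans $T_x\mathcal{M}$ for all $x$ once $J$ is large enough, by first establishing pointwise spanning for each fixed $x$ and then upgrading to a uniform statement via a compactness argument. For the pointwise step, fix $x \in \mathcal{M}$ and suppose, for contradiction, that $\operatorname{span}\{\grad\phi_j|_x : j \geq 1\}$ is a proper subspace $W \subsetneq T_x^{\mathbb{C}}\mathcal{M}$. Pick $0 \neq v \in W^\perp$ (with respect to $g_x$). Since $\{\phi_j\}_{j=0}^\infty$ is an orthonormal basis of $H = L^2(\mathcal{M},\mu)$ consisting of smooth functions, it separates points and, more to the point, the differentials $\{d\phi_j|_x\}$ must span $T_x^*\mathcal{M}$: if not, there is a nonzero tangent vector annihilated by every $d\phi_j$, i.e.\ $v(\phi_j) = g_x(v^*,\grad\phi_j|_x) = 0$ for all $j$ for some $0 \neq v \in T_x^{\mathbb C}\mathcal M$; but then for any smooth $f = \sum_j \hat f_j \phi_j$ (convergent in every $H^p$, hence in $C^1$ near $x$ by Sobolev embedding) we would get $v(f)(x) = 0$, contradicting the fact that $d f|_x$ can be taken to be an arbitrary covector (e.g.\ use a coordinate chart and the density of finite Laplace--Beltrami expansions in $C^1$). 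Thus $\{\grad\phi_j|_x\}_{j\geq 1}$ spans $T_x^{\mathbb C}\mathcal{M}$, and since this is a finite-dimensional space there exist finitely many indices $j_1(x),\ldots,j_d(x)$ whose gradients already form a basis of $T_x\mathcal{M}$.

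Next I would make this uniform in $x$. For each $x \in \mathcal{M}$, having chosen indices $j_1(x), \ldots, j_d(x)$ as above, the function $y \mapsto \det\big[\text{Gram matrix of } \grad\phi_{j_1(x)}|_y, \ldots, \grad\phi_{j_d(x)}|_y \text{ w.r.t. } g_y\big]$ is continuous (indeed smooth) and nonzero at $y = x$, hence nonzero on an open neighborhood $U_x \ni x$. On $U_x$ the gradients $\grad\phi_{j_1(x)}, \ldots, \grad\phi_{j_d(x)}$ therefore span every tangent space. By compactness of $\mathcal{M}$, finitely many such neighborhoods $U_{x_1}, \ldots, U_{x_m}$ cover $\mathcal{M}$. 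Let $J_0$ be the maximum index appearing among all the chosen $j_\ell(x_r)$, $r = 1, \ldots, m$. Then for any $J \geq J_0$ and any $x \in \mathcal{M}$, picking $r$ with $x \in U_{x_r}$, the set $\{\grad\phi_1|_x, \ldots, \grad\phi_J|_x\}$ contains the spanning subset $\{\grad\phi_{j_\ell(x_r)}|_x\}_{\ell=1}^d$, so it spans $T_x\mathcal{M}$, and since $T_x^{\mathbb C}\mathcal M = T_x\mathcal M \otimes_{\mathbb R}\mathbb C$, it spans $T_x^{\mathbb C}\mathcal{M}$ as well. Finally $J_0 \geq d$ is automatic since $d$ linearly independent vectors are needed to span a $d$-dimensional space.

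I expect the main obstacle to be the pointwise spanning step --- specifically, justifying cleanly that the differentials of Laplace--Beltrami eigenfunctions span the cotangent space at each point. The cleanest route is probably to invoke that finite sums $\sum_{j \leq N} \hat f_j \phi_j$ of a smooth $f$ converge to $f$ in $H^p$ for every $p$ (as already noted in the discussion preceding Lemma~\ref{lemmaInnerProd}), hence in $C^1$ by the Sobolev embedding theorem for sufficiently large $p$; thus if all $d\phi_j|_x$ annihilated some nonzero $v \in T_x^{\mathbb C}\mathcal M$, then $df|_x$ would too for every smooth $f$, which is absurd. Alternatively, one could use the local parametrization property of the eigenfunction map (that finitely many eigenfunctions give an embedding of $\mathcal{M}$) established in the spectral geometry literature, but the $C^1$-density argument is more self-contained and avoids appealing to embedding results not stated in the excerpt.
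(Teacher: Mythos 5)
Your proof is correct, but it takes a genuinely different route from the paper's. The paper proves the lemma by citing an external spectral-geometry result (Portegies, Theorem~4.5) asserting that the map $x \mapsto (\phi_1(x),\ldots,\phi_J(x))$ is an embedding of $\mathcal{M}$ into $\mathbb{R}^J$ for $J$ large enough; full rank of the pushforward of an embedding then immediately gives that the $d\times J$ matrix of eigenfunction derivatives has rank $d$ at every point, and applying the (invertible) dual metric converts this into spanning by the gradients. You instead argue in two self-contained steps: (i) at each fixed $x$, if some nonzero $v\in T_x\mathcal{M}$ were annihilated by every $d\phi_j|_x$, then the $C^1$-convergence of eigenfunction expansions of smooth functions (which follows from the $H^p$-convergence the paper already records before Lemma~\ref{lemmaInnerProd}, together with Sobolev embedding for $p > d/2+1$) would force $df|_x(v)=0$ for every smooth $f$, which is absurd; hence finitely many gradients already span $T_x\mathcal{M}$; and (ii) the nonvanishing of the corresponding Gram determinant on a neighborhood of $x$, plus compactness of $\mathcal{M}$, uniformizes the choice of indices. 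Both arguments are sound and yield the same non-constructive $J$. What the paper's route buys is brevity and the reuse of the embedding structure (the matrix $\bm\Xi(x)$ and its full rank reappear in the proof of the lower frame bound in Theorem~\ref{thmFrame}); what your route buys is self-containedness --- it avoids appealing to the eigenfunction-embedding theorem entirely and uses only facts already established or asserted in the text. The one point to state carefully if you write this up is the $C^1$-convergence claim: it requires the rapid decay of the Fourier coefficients of smooth functions (via $\hat f_j = \lambda_j^{-k}\langle \phi_j, \Delta^k f\rangle_H$ and the Weyl law) to get convergence in $H^p$ for $p$ large enough to apply the Sobolev embedding, but this is standard and consistent with what the paper asserts.
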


\begin{proof}
    The claim follows from the fact that the there exists an integer $ J $ such that the map $ \vec \phi_J : \mathcal{M} \to \mathbb{ R }^J $ with $ \vec \phi_J( x ) = ( \phi_1( x ), \ldots, \phi_J( x ) ) $ is an embedding of  $\mathcal{M} $, which is proved in \cite[Theorem~4.5]{Portegies14}. Note that $ B_J $ being an embedding implies that $ J \geq \dim \mathcal{M} $. To see why it implies that $ \{ \grad \phi_1 \rvert_x, \ldots, \grad \phi_J \rvert_x \} $ is a spanning set, fix a point $ x \in \mathcal{M} $, and consider the tangent vector pushforward map $ \vec \phi_{J*} \rvert_x : T_x \mathcal{M} \to T_{\vec \phi_J(x)} \mathbb{ R}^J $. Since $ T_{\vec \phi_J(x)} \mathbb{ R}^J  $ is canonically isomorphic to  $ \mathbb{ R }^J $, in a coordinate chart $ u : N_x \to \mathbb{R }^d $ defined on a neighborhood $ N_x $ of $ x $, the pushforward map is represented by a $ d \times J $ matrix $ \bm \Xi(x) $, $ d = \dim \mathcal{M} $, with elements $ \Xi_{ij} = \left. \frac{\partial \phi_j}{\partial u_i}\right \rvert_x $, and because $ \vec\phi_J $ is an embedding, that matrix has full rank, $ \rank \bm \Xi(x) = d $. In this coordinate basis, the components $ \beta_{ij} $ of $ \grad \phi_j \rvert_x  = \sum_{i=1}^d \beta_{ij} \left. \frac{ \partial \ }{ \partial u_i } \right \rvert_x $ are given by $ \beta_{ij} = \sum_{k=1}^d \eta_{ik} \Xi_{kj} $, where $ \eta_{ik} $ are the components of the dual metric $ \eta_x : T^* \mathcal{M}_x \times T^*\mathcal{M}_x \to \mathbb{ R } $. The $ \eta_{ik} $ form a $ d \times d $ invertible matrix, and thus  the $ d \times J $ matrix with elements $ \beta_{ij} $ has rank $ d $. This implies that $ \spn  \{ \grad \phi_1 \rvert_x, \ldots, \grad \phi_J \rvert_x \} = T_x \mathcal{M} $.
\end{proof}

\begin{corollary} \label{corGen}The set $ \{ \grad \phi_1, \ldots, \grad \phi_J \} $ is a generating set for $ \mathfrak{ X } $ viewed as a $ C^\infty(\mathcal{M}) $-module. That is, for every smooth vector field $ v \in \mathfrak{ X} $, there exist (not necessarily unique) smooth functions $ f_1, \ldots, f_J $ such that $ v = \sum_{j=1}^J f_j \grad \phi_j $. 
\end{corollary}  
  
\begin{corollary} \label{corGenK}The collection of $k$-form fields $d\phi_{j_1} \wedge \cdots \wedge d\phi_{j_k}$ with $j_1, \ldots, j_k \in \{1, \ldots, J\}$ spans $\Lambda^k_x$ at every $x\in \mathcal{M}$. As a result, this set is a generating set for $\Omega^k$, which means that for every $\omega \in \Omega^k$ there exist smooth functions $f_{j_1 \cdots j_k} $ such that $\omega = \sum_{j_1,\ldots,j_k=1}^J f_{j_1 \cdots j_k} \, d\phi_{j_1} \wedge \cdots \wedge d\phi_{j_k}$. 

\end{corollary}

It follows from Corollary~\ref{corGen} and the fact that $ \mathfrak{ X } $ is dense in $ H_\mathfrak{X} $ that for every $ v \in H_\mathfrak{X} $ there exist functions $ f_1, \ldots, f_J \in H $ such that $ v = \sum_{j=1}^J f_j \grad \phi_j $. Expanding these functions as $ f_j = \sum_{i=0}^\infty c_{ij} \phi_j $ with $ c_{ij} = \langle \phi_i, f_j \rangle_H $, we conclude that every vector field $ v \in H_\mathfrak{X} $ is expressible in the form 
\begin{equation}
  \label{eqVExpansion}
  v = \sum_{i=0}^\infty \sum_{j=1}^J c_{ij} b_{ij}, 
\end{equation}    
for some (not necessarily unique) constants $ c_{ij} \in \mathbb{C}$. Similarly, Corollary~\ref{corGenK} implies that every $k$-form field in $H_k$ can be expanded as  
\begin{equation}
  \label{eqWExpansion}
  \omega = \sum_{i=0}^\infty \sum_{j_1, \ldots, j_k=1}^J c_{ij_1\cdots j_k} b^{ij_1\cdots j_k}, \quad c_{ij_1\cdots j_k} \in \mathbb{C}. 
\end{equation}    

\begin{exmp}
As a simple example illustrating that $ B^J_{\mathfrak{X}} $ may be a spanning set with linearly dependent elements (as opposed to a basis), suppose that $ ( \mathcal{M}, g ) $ is the circle equipped with the canonical arclength metric, normalized such that  $ \mu( \mathcal{M} ) = 1 $. Then, an orthonormal basis of $H$ consisting of Laplace-Beltrami eigenfunctions is given by 
\begin{displaymath}
  \phi_i( \theta ) = 
  \begin{cases}
    \cos( i \theta / 2 ), & \text{$i$ even}, \\
    \sin( ( i + 1 ) \theta / 2 ), & \text{$i$ odd},
    \end{cases}
\end{displaymath} 
where $ \theta \in ( 0, 2 \pi ) $ is a canonical angle coordinate. Note that the coordinate basis vector field $ \frac{ \partial\ }{\partial_\theta} $ extends to a globally defined harmonic vector field on $ \mathcal{M} $, satisfying $\Delta \frac{ \partial \ }{\partial_\theta}^\flat = 0$. In this coordinate system, the metric and dual metric are given by $ g = g_{11} \, d\theta \otimes d\theta$, $ \eta = g^{\prime 11} \frac{\partial\ }{\partial\theta} \otimes \frac{\partial\ }{\partial\theta} $, where $ g_{11} = 1 / g^{\prime 11} = 1 / ( 2 \pi )^2 $, and we have 
\begin{align*}
  \grad \phi_i &= \eta( d\phi_i, \cdot ) =  
  \begin{cases}
    - 2 i \pi^2  \phi_{i-1} \frac{\partial\ }{\partial\theta}, & \text{$i$ even,}\\
    2 ( i + 1 ) \pi^2  \phi_{i+1}\frac{\partial\ }{\partial\theta}, & \text{$i$ odd}, 
  \end{cases}
  \\ 
  b_{ij} &= 
  \begin{cases}
      - 2 j \pi^2  \phi_i \phi_{j-1} \frac{\partial\ }{\partial\theta}, & \text{$j$ even,}\\
      2 ( j + 1 ) \pi^2 \phi_i \phi_{j+1} \frac{\partial\ }{\partial\theta}, & \text{$j$ odd}.
  \end{cases}
\end{align*}
It therefore follows from standard trigonometric identities that for any odd $ i \geq 1 $,  
\begin{displaymath}
   \frac{ b_{i+1,i} - b_{i,i+1} }{ 2 ( i + 1 ) \pi^2 } = \frac{\partial\ }{\partial\theta},
\end{displaymath}   
with an analogous relationship holding for $ i $ even. This shows that for $ J \geq 2 $, $ B_J $ contains linearly dependent elements. On the other hand, for $ J = 1 $, $ B_1 $ fails to be a spanning set as the harmonic vector field $ \frac{\partial\ }{\partial\theta} $ does not lie in its span. 
\end{exmp}

\begin{remark}
    The circle example above might suggest that our representation of vector fields through linear combinations of elements of $ B_J $ is highly inefficient, since, after all, one could define $ \xi_j = \phi_j \partial_\theta $, and $ \{ \xi_j \}_{j=0}^\infty $ would be an orthonormal basis of $ H_\mathfrak{X} $. However, such a construction implicitly makes use of a special property of the circle, namely that it is a parallelizable manifold. Equivalently, as a $ C^\infty(\mathcal{M}) $-module, the space $ \mathfrak{X} $ of  smooth vector fields is free; that is, it contains a set $ \{ u_1, \ldots, u_d \} $ of $ d = \dim( \mathcal{M} ) $ nowhere-vanishing linearly independent elements. Any such set would be a basis of  $ \mathfrak{ X} $, meaning that for every $ v \in \mathfrak{ X}  $ there  would exist unique smooth function  $ f_1, \ldots, f_d \in C^\infty( \mathcal{M}) $ such that $ v = \sum_{j=1}^d f_j u_j $. In general, for non-parallelizable manifolds (e.g., the 2-sphere), $ \mathfrak X $ does not have a basis, so any spanning set of $ H_\mathfrak{ X} $, such as $ B_J $, that makes use of a generating set of $ \mathfrak{ X } $ will necessarily be overcomplete. 
\end{remark}
  
We continue by stating a number of useful properties of the $ b_{ij} $ fields and their antisymmetric analogs, $ \widehat b_{ij} $. Many of these properties are also listed in Tables~\ref{fig2} and \ref{fig3}. In what follows, all equalities involving infinite sums hold in an $L^2$ sense.
\begin{enumerate}
  \item \emph{Relationship between antisymmetric and nonsymmetric frame elements.} Using the Leibniz rule, we compute
    \begin{displaymath}
      \phi_i \grad \phi_j + \phi_j \grad \phi_i = \grad( \phi_i \phi_j ) = \sum_{k=0}^\infty c_{ijk} \grad \phi_k = \sum_{k=1}^\infty c_{ijk} \widehat b_{0k},
     \end{displaymath}
     where the last equality follows from the fact that $\phi_0$ is a constant equal to 1. It therefore follows that
    \begin{equation}
    \label{eqSymAntisym}
      b_{ij} = \frac{\phi_i \grad \phi_j + \phi_j \grad \phi_i}{2} + \frac{\phi_i \grad \phi_j - \phi_j \grad \phi_i}{2} = \sum_{k=1}^\infty c_{ijk} \widehat b_{0k} + \widehat b_{ij}.
    \end{equation}
\item \emph{Riemannian inner products.} Lemma~\ref{lemmaInnerProd}, \eqref{eqGFourier}, and \eqref{eqCIJK} lead to the following expressions for the Riemannian inner products between the frame elements:
      \begin{equation}
        \label{eqGIJKL_Riemm}
        g( b_{ij}, b_{kl} ) = \phi_i \phi_k g( \grad \phi_j, \grad \phi_l ) = \sum_{n=0}^\infty \phi_i \phi_k \phi_n g_{njl} =  \sum_{m,n,p=0}^\infty c_{ikm}c_{mnp} g_{njl} \phi_p.
        \end{equation}
    Using the above, we can also compute the Riemannian inner products between the antisymmetric vector fields, i.e.,
    \begin{equation}
        \label{eqGIJKL_Antisym}
        g( \widehat b_{ij}, \widehat b_{kl} )  = g(b_{ij}, b_{kl}) + g(b_{ji},b_{lk}) - g(b_{ij}, b_{lk} ) - g( b_{ji}, b_{kl} ).
    \end{equation}
The Riemannian inner products between the frame elements $\widetilde b_{ij} \in \widetilde B_{\mathfrak{X}}$ are given by eigenvalue-dependent rescalings of those in~\eqref{eqGIJKL_Riemm}.

\item \emph{Hodge inner products and matrix elements of the Gramm operators}. Using the spectral representation of the pointwise Riemannian inner products in~\eqref{eqGIJKL_Riemm} and the fact that $c_{mn0} = \delta_{mn}$, we can compute the Hodge inner products 
        \begin{displaymath}
            G_{ijkl} = \langle b_{ij}, b_{kl} \rangle_{H_\mathfrak{X}} = \langle \phi_0, g( b_{ij}, g_{kl} ) \rangle_{H} = \sum_{n=0}^\infty c_{ikn} g_{njl}, 
    \end{displaymath}
    as in~\eqref{eqGIJKL_Hodge}. In addition, we have
    \begin{displaymath}
        \hat G_{ijkl} \equiv \langle \widehat b_{ij}, \widehat b_{kl} \rangle_{H_\mathfrak{X}} = G_{ijkl} + G_{jilk} - G_{ijlk} - G_{jikl}. 
    \end{displaymath}
    Given now any ordering $ b_{i_0 j_0}, b_{i_1 j_1}, \ldots $ of the frame elements in $B^J_\mathfrak{X}$, where $ j_n \leq J $, the above can be used to compute the matrix elements of the corresponding Gramm operator $ G : \ell^2 \to \ell^2 $, viz. 
\begin{displaymath}
    G_{mn} = \langle e_m, G e_n \rangle_{\ell^2} = \langle b_{i_m j_m}, b_{i_n j_n} \rangle_{H_{\mathfrak{X}}} = G_{i_m j_m k_n l_n}.  
\end{displaymath}
The analogous expressions for the Hodge inner products and Gramm matrix elements associated with the $\widetilde B_\mathfrak{X}$ frame are given by appropriate rescalings of the $G_{ijkl}$. 

\end{enumerate}

Additional formulas for the SEC representation of vector fields are listed in Table~\ref{fig2}. The next few results are for the 1-form fields in Theorems~\ref{thmFrame} and~\ref{thmFrameSob}. They will be employed in our proof of Theorem~\ref{thmFrameSob} and in the construction of Galerkin schemes for the Laplacian on 1-forms in Sections~\ref{secFrameSobProof} and~\ref{secGalerkin} ahead, respectively.
\begin{enumerate}
  \item \emph{Exterior derivative and codifferential.} It follows from the Leibniz rule for the exterior derivative in~\eqref{eqLeibniz} and the definition of the codifferential in~\eqref{eqCoDiff} that
    \begin{equation}
      db^{ij} = db^i \wedge db^j, \quad \delta b^{ij} = - \eta(db^i, db^j) + \lambda_j b^i b^j.
      \label{eqDBIJ}
    \end{equation}
    Similarly, we have
    \begin{displaymath}
      d \widehat b^{ij} = 2 db^i \wedge db^j, \quad \delta d\widehat b^{ij} = (\lambda_j - \lambda_i) b^i b^j.
    \end{displaymath}
    Observe, in particular, that if $\lambda_i = \lambda_j$ (i.e., $b^i $ and $b^j$ lie in the same eigenspace of the Laplacian), $\widehat b^{ij}$ is co-closed, $\delta \widehat b^{ij} = 0$. For additional details on these formulas see~\eqref{eqAppCodiff}. 
\item \emph{Riemannian inner products.} Since  the $b^{ij}$ and $b_{ij}$ are Riemannian duals to each other, we have $\eta(b^{ij}, b^{kl}) = g(b_{ij}, b_{kl})$, and the latter can be determined from~\eqref{eqGIJKL_Riemm}. An alternative derivation of this result, directly utilizing the product rule for the Laplacian on functions, can be found in Appendix \ref{appLapl}. The Riemannian inner products between the antisymmetric 1-forms $\widehat b^{ij}$ can be computed analogously to~\eqref{eqGIJKL_Antisym}. The Riemannian inner products between the $k$-form frame elements $b^{ij_1\cdots j_k}$ in $B^J_k$, or the antisymmetric $k$-forms $\widehat b^{ij_1\cdots j_k}_k$, can be evaluated by computing determinants of $k\times k$ matrices of Riemannian inner products between $db^j$'s; see Appendix \ref{appKForms} for further details.

\item \emph{Riemannian inner products between exterior derivatives and codifferentials of the frame elements.} In order to perform operations on the frame elements $b^{ij}_1 \in B^J_{1,1} $, or $\widetilde b^{ij}_1 \in \widetilde B_{1,1}$, with the differential operators of the exterior calculus, we need expressions for Riemannian inner products between exterior derivatives and codifferentials such as $g(db^{ij}_1,db^{kl}_1)$ and $g(\delta b^{ij}_1,\delta b^{kl}_1)$. Closed-form expressions for such inner products based on the Laplacian eigenvalues $\lambda_i$ and the corresponding triple-product coefficients $c_{ijk}$ can be derived using~\eqref{eqDBIJ}; explicit results and derivations can be found in Table~\ref{fig3} and Appnedix \ref{appLapl}. Analogous inner product formulas can also be derived hierarchically for the higher-order frame elements (see Appendix \ref{appKForms}), although currently we do not have closed-form expressions for direct evaluation of pairwise inner products between the $\delta b^{ij_1\cdots j_k}$ at arbitrary $ k > 1 $. The inner product relationships between $b^{ij_1\cdots j_k}$ and their exterior derivatives and codifferentials would be needed to perform operations with the frames for Sobolev spaces of $ k$-forms, $ k > 1 $, in Conjecture~\ref{conjFrameSob}.
\item \emph{Hodge and Sobolev inner products.} These can be computed as described above for  vector fields, using the additional results on Riemannian inner products between exterior derivatives and codifferentials outlined above. Specific formulas can be found in Table~\ref{fig3} and Appendix \ref{appLapl}. Note that the Sobolev inner products between the frame elements for $H^1_1$ and the corresponding Dirichlet forms will be used in our Galerkin approximation scheme for the 1-Laplacian in Section~\ref{secGalerkin} ahead.
\end{enumerate}

\subsection{\label{secFrameProof}Proof of Theorem~\ref{thmFrame}}

We will prove the theorem by establishing the upper and lower frame conditions in Definition~\ref{defFrame} for $ B^J_\mathfrak{X} $ and $B^J_k$, assuming that $ J $ is large-enough so that Lemma~\ref{lemmaSpanning} holds. We begin from $B^J_\mathfrak{X}$.

Given any $ v \in H_\mathfrak{X} $, we have
\begin{align*}
  \sum_{i=0}^\infty \sum_{j=1}^J \lvert \langle b_{ij}, v \rangle_{H_\mathfrak{X}} \rvert^2 &=   \sum_{i=0}^\infty \sum_{j=1}^J \lvert \langle \phi_i \grad \phi_j, v \rangle_{H_\mathfrak{X}} \rvert^2 = \sum_{i=0}^\infty \sum_{j=1}^J \lvert \langle \phi_i, v( \phi_j ) \rangle_H \rvert^2 \\
  &= \sum_{j=1}^J \lVert v( \phi_j ) \rVert^2_H = \sum_{j=1}^J \lVert g( \grad \phi_j, v ) \rVert^2_H \\
  &\leq \sum_{j=1}^J \lVert \lvert g( \grad \phi_j, v ) \rvert \rVert^2_H  \leq \sum_{j=1}^J \left \lVert \sqrt{ g( \grad \phi_j, \grad \phi_j ) g( v, v ) } \right \rVert_H^2  \\
  & \leq  \sum_{j=1}^J \lVert \grad \phi_j \rVert^2_{L^\infty_\mathfrak{X}}  \left \lVert \sqrt{ g( v, v ) } \right \rVert_H^2 
  =  \sum_{j=1}^J \lVert \grad \phi_j \rVert^2_{L^\infty_\mathfrak{X}} \lVert v \rVert_{H_\mathfrak{X}}^2,
\end{align*}  
so that the upper frame condition holds with $ C_2 = \sum_{j=1}^J \lVert \grad \phi_j \rVert^2_{L^\infty_\mathfrak{X}} $. Note that the fact that $ J $ is finite is important in the derivation of this result. Next, to verify the lower frame condition, consider the $ J \times J $ Gramm matrix $\bm \Psi(x) $, for $x\in \mathcal M$, with elements
\begin{displaymath}
  \Psi_{ij}( x ) = g( \grad \phi_i, \grad \phi_j ) \rvert_x, 
\end{displaymath}
and note that because the $ \grad \phi_j \rvert_x $ span $ T_x \mathcal{M} $, that matrix has rank $ d $. Therefore, writing $ v = \sum_{j=1}^J f_j \grad \phi_j $, where the $ f_j $ are functions in $ H $ to be determined, the equation
\begin{displaymath}
  g( \grad \phi_i, v ) \rvert_x = \sum_{j=1}^J f_j( x ) \Psi_{ji}( x )
\end{displaymath}  
has a solution for $ \mu $-a.e.\ $ x \in X $ given by
\begin{displaymath}
  f_j( x ) = \sum_{k=1}^J \Psi_{jk}^+( x ) g( \grad \phi_k, v ) \rvert_x, 
\end{displaymath}
where $ \bm \Psi^+(x) = [\Psi^+_{jk}(x)] $ is the Moore-Penrose pseudoinverse of $ \bm \Psi(x) $. 

Observe now that $ \bm \Psi(x) $ can be expressed using a coordinate chart as $\bm \Psi(x) = \bm \Xi(x)^\top \bm g(x) \bm \Xi(x) $, where the matrices $ \bm \Xi(x) $ and $\bm g( x)$ are as in the proof of Lemma~\ref{lemmaSpanning}. Thus, since $\bm \Xi(x) $ has linearly independent rows and $\bm g(x)$ is invertible, we have
\begin{displaymath}
    \bm \Psi^+(x) = \bm \Xi^+(x) \bm g^{-1}(x) (\bm \Xi^+(x))^\top, \quad \bm \Xi^+(x ) = \bm \Xi( x )^\top (\bm \Xi(x) \bm \Xi(x)^\top)^{-1},
\end{displaymath}
where both $ \bm g^{-1}(x)$ and $ ( \bm \Xi( x ) \bm \Xi( x) )^{-1} $ depend smoothly on $x $ by compactness of $\mathcal{M}$ and smoothness of $ \bm g( x ) $ and $\bm \Xi(x)$, respectively. We therefore conclude that $\bm \Psi^+(x)$ depends smoothly on $x$, and thus that $ v $ admits an expansion of the form~\eqref{eqVExpansion} with
\begin{equation}
  \label{eqVExpansion2}
  c_{ij} = \langle \phi_i, f_j \rangle_H = \sum_{k=1}^J \langle \Psi_{jk}^+ b_{ik}, v \rangle_{H_\mathfrak{X}}. 
\end{equation}
We therefore obtain
\begin{align*}
  \lVert v \rVert^2_{H_\mathfrak{X}} &= \sum_{i=0}^\infty \sum_{j=1}^J \langle c_{ij} b_{ij}, v \rangle_{H_\mathfrak{X}} = \sum_{i=0}^\infty \sum_{j,k=1}^J \langle \Psi_{jk}^+ b_{ik}, v \rangle_{H_\mathfrak{X}}^* \langle b_{ij}, v \rangle_{H_\mathfrak{X}} \\
  &\leq \tilde C \sum_{i=0}^\infty \sum_{j,k=1}^J \lvert \langle b_{ik}, v \rangle_{H_\mathfrak{X}} \rvert \lvert \langle b_{ij}, v \rangle_{H_\mathfrak{X}} \rvert,
\end{align*}
where $ \tilde C = \max_{j,k \in \{ 1, \ldots, J \} } \lVert \Psi^+_{jk} \rVert_{L^\infty} $. Defining now the vectors $ \beta_i = ( \beta_{i1}, \ldots, \beta_{iJ} ) \in \mathbb{ R }^J $ with with $ \beta_{ij} = \lvert \langle b_{ij}, v \rangle_{H_\mathfrak{X}} \rvert $, it follows by equivalence of norms in finite-dimensional vector spaces that there exists a constant $ \hat C> 0$ such that  
\begin{displaymath}
  \sum_{j,k=1}^J \lvert \langle b_{ik}, v \rangle_{H_\mathfrak{X}} \rvert \lvert \langle b_{ij}, v \rangle_{H_\mathfrak{X}} \rvert = \lVert \beta_i \rVert^2_1 \leq \hat C \lVert \beta_i \rVert_2^2,
\end{displaymath} 
where $\lVert \cdot \rVert_p$ is the canonical $p$-norm on $\mathbb{R}^J$. 
This leads to
\begin{displaymath}
  \lVert v \rVert_\mathfrak{X}^2  \leq \tilde C \hat C \sum_{i=0}^\infty \lVert \beta_i \rVert^2_2 = \tilde C \hat C \sum_{i=0}^\infty \sum_{j=1}^J \lvert \langle b_{ij}, v \rangle_{H_\mathfrak{X}} \rvert^2,
\end{displaymath}
which proves the lower frame condition with $ C_1 = 1/ ( \tilde C \hat C ) $. We have thus established that $B_\mathfrak{X}^J$ is a frame of $H_\mathfrak{X}$, as claimed. 

Consider now the frame conditions for $\widetilde B_\mathfrak{X}$. Using Cauchy-Schwartz inequalities as above, we can conclude that
\begin{displaymath}
  \sum_{i=0}^\infty \sum_{j=1}^\infty \lvert \langle \widetilde b_{ij}, v \rangle_{H_\mathfrak{X}} \rvert^2 \leq \sum_{j=1}^\infty \frac{\lVert \grad \phi_j \rVert^2_{L^\infty_\mathfrak{X}}}{e^{\lambda_j}} \lVert v \rVert^2_{H_\mathfrak{X}}.
\end{displaymath}
To bound the infinite sum in the right-hand side, we use the following estimates for the $L^\infty$ norms of Laplace-Beltrami eigenfunctions and their gradients on smooth, closed Riemannian manifolds:
\begin{equation}
  \label{eqHormander}
  \lVert \phi_j \rVert_{L^\infty} \leq C \lambda_j^{(d-1)/4} \lVert \phi_j \rVert_{H}, \quad \lVert \grad \phi_j \rVert_{L^\infty_\mathfrak{X}} \leq \tilde C \lambda_j^{1/2} \lVert \phi_j \rVert_{L^\infty}, \quad C, \tilde C \geq 0, 
\end{equation}
which hold for $j \leq 1 $ and $j \geq 0 $, respectively. The former is a classical result due to H\"ormander \cite{Hormander68}; the latter was proved by Shi and Xu in \cite{ShiXu10}. Combining these results with the Weyl estimate for the asymptotic distribution of Laplace-Beltrami eigenvalues as $j \to \infty$,
\begin{equation}
  \label{eqWeyl}
  j = \hat C \lambda_j^{d/2} + o(\lambda_j^{(d-1)/2}), 
\end{equation}
we obtain
\begin{displaymath}
    \lVert \grad \phi_j \rVert_{L^\infty_{\mathfrak{X}}} \leq \check C \lambda_j^{(d+1)/(2d)} \lVert \phi_j \rVert_H,
\end{displaymath}
where $\hat C$ and $\check C$ in the last two equations are positive constants. Therefore, for any $l \geq 0 $ there exists a finite constant $C_l$ such that $e^{-\lambda_j}\lVert \grad \phi_j \rVert^2_{L^\infty_\mathfrak{X}} \leq C_l j^{-l}$. This implies that  $ C_2 = \sum_{j=0}^\infty e^{-\lambda_j} \lVert \grad \phi_j \rVert^2_{L^\infty_\mathfrak{X}}$ is finite, proving the upper frame bound. 

To verify the lower frame bound, start from any expansion of $v$ in the $B^J_\mathfrak{X}$ frame, 
\begin{displaymath}
  v = \sum_{i=0}^\infty \sum_{j=1}^{J} c_{ij} b_{ij},
\end{displaymath}
and compute 
\begin{displaymath}
  \lVert v \rVert^2_{\mathfrak{X}} \leq C \sum_{i=0}^\infty \sum_{j=1}^J \lvert \langle b_{ij}, v \rangle_{H_\mathfrak{X}} \rvert^2 \leq C e^{\lambda_J} \sum_{i=0}^\infty \sum_{j=1}^{J} \lvert \langle \widetilde b_{ij}, v \rangle_{H_\mathfrak{X}} \rvert^2 \leq C e^{\lambda_J} \sum_{i=0}^\infty \sum_{j=1}^\infty \lvert \langle \widetilde b_{ij}, v \rangle_{H_\mathfrak{X}} \rvert^2,
\end{displaymath}
where $C$ is a lower frame constant for $B^J_\mathfrak{X}$. This shows that the lower frame condition is satisfied for $C_1 = C e^{-\lambda_J}$, and we thus conclude that $\widetilde B_{\mathfrak{X}}$ is a frame. 

We now turn to the frame conditions for $B^J_k$ and $\widetilde B_k$. These conditions follow by similar arguments as those just made to establish the frame conditions for vector fields. 

First, we introduce for convenience an ordering $l \mapsto (j_1(l), \ldots, j_k(l) ) $ of the corresponding indices in $b^{ij_1\ldots j_k}$, where $ l $ is an integer ranging from 1 to $ J^k $, and define $ \alpha_l = d\phi^{j_1(l)} \wedge \cdots \wedge d\phi^{j_k(l)} $. Then, for any $ \omega \in H_k$, we have
\begin{align*}
  \sum_{i=0}^{\infty} \sum_{j_1,\ldots,j_k=1}^J \lvert \langle b^{ij_1\cdots j_k}, \omega \rangle_{H_k} \rvert^2 &= \sum_{i=0}^\infty \sum_{l=1}^{J^k} \lvert \langle \phi_i \alpha_l, \omega \rangle_{H_k} \rvert^2 \\   
  &= \sum_{i=0}^{\infty} \sum_{l=1}^{J^k} \lvert \langle \phi_i, \eta( \alpha_l, \omega ) \rangle_{H} \rvert^2 = \sum_{l=1}^{J^k} \lVert \eta(\alpha_l, \omega) \rVert^2_H \\
  & \leq \sum_{l=1}^{J^k} \left \lVert \sqrt{\eta(\alpha_l,\alpha_l)} \sqrt{\eta(\omega,\omega)} \right \rVert^2_H \leq \sum_{l=1}^{J^k} \lVert \alpha_l \lVert_{L^\infty_k}^2 \lVert \omega \rVert^2_{H_k},
\end{align*}
establishing the upper frame condition with $C_2=\sum_{l=1}^{J^k} \lVert \alpha_l \lVert_{L^\infty_k}^2 $. To verify the lower frame condition, we use \eqref{eqWExpansion} to expand $\omega = \sum_{i=0}^\infty \sum_{l=1}^{J^k}c_{il} \phi_i \alpha_l$, where the expansion coefficients $c_{il}$ can be chosen as (cf.~\eqref{eqVExpansion2})
\begin{displaymath}
c_{il} = \sum_{m=1}^{J^k} \langle \Psi_{lm}^+ \phi_i \alpha_m, \omega \rangle_{H_k}, 
\end{displaymath}
and in the above $\Psi_{lm}^+(x)$ are the elements of the pseudoinverse of the $J^k \times J^k$ Gramm matrix $\Psi_{lm}(x) = \eta(\alpha_l,\alpha_m) \rvert_x$ (these matrix elements depend smoothly on $x$ as in the case of the corresponding Gramm matrix for vector fields). The calculation to establish the lower frame bound for $\lVert \omega \rVert_{H_k}^2 = \sum_{i=0}^\infty \sum_{l=1}^{J^k} \langle c_{il} \phi_i \alpha_l, \omega \rangle_{H_k} $ then proceeds analogously to that in the case of vector fields, leading to the conclusion that $B^J_k$ is an $H_k$-frame. Similarly, that $\widetilde B_k$ is a frame follows analogously to the vector field case. This completes our proof of Theorem~\ref{thmFrame}.

\subsection{\label{secFrameSobProof}Proof of Theorem~\ref{thmFrameSob}}

Let $ \omega $ be an arbitrary 1-form field in $H^1_1$. We begin by stating two auxiliary results on the inner products between the exterior derivative (codifferential) of $\omega$ and the exterior derivative (codifferential) of the frame elements $ b^{ij}_1 $ and $\widetilde b^{ij}_1$. 

\begin{lemma}
    (i) There exist constants $U_J$ and $V_J$, independent of $\omega$, such that
  \begin{displaymath}
    \sum_{i=0}^\infty \sum_{j=1}^J \lvert \langle d b^{ij}_1, d \omega \rangle_{H_2} \rvert^2 \leq U_J \lVert \omega \rVert^2_{H^1_1}, \quad \sum_{i=0}^\infty \sum_{j=1}^J \lvert \langle \delta b^{ij}_1, \delta \omega \rangle_{H} \rvert^2 \leq  V_J \lVert \omega \rVert^2_{H^1_1}.
  \end{displaymath}
  Moreover, there exist a positive real number $\hat C $ and a positive integer $ q \geq 0 $, both independent of $J$, such that $ U_J $ and $ V_J $ are both bounded above by $\hat C J^q$. 

  (ii) There exist finite constants $\tilde U$ and $ \tilde V$, independent of $\omega$, such that
  \begin{displaymath}
    \sum_{i=0}^\infty \sum_{j=1}^\infty \lvert \langle d \widetilde b^{ij}_1, d \omega \rangle_{H_2} \rvert^2 \leq \widetilde U \lVert \omega \rVert^2_{H^1_1}, \quad \sum_{i=0}^\infty \sum_{j=1}^\infty \lvert \langle \widetilde \delta b^{ij}_1, \delta \omega \rangle_{H} \rvert^2 \leq  \tilde V \lVert \omega \rVert^2_{H^1_1}.
  \end{displaymath}
  \label{lemmaDBIJ}
\end{lemma}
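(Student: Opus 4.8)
The plan is to reduce both statements to estimates in which the index $i$ is summed out at a \emph{fixed} $j$, and then reassemble: over the finite set $j\in\{1,\dots,J\}$ in part~(i), and over all $j$ with the weights $e^{-\lambda_j}$ in part~(ii). Two elementary facts will carry the $i$ sum. First, by~\eqref{eqDirichletK} with $k=1$ one has $\lVert d\omega\rVert^2_{H_2}+\lVert\delta\omega\rVert^2_{H}=E_{1,1}(\omega)\le\lVert\omega\rVert^2_{H^1_1}$, so it suffices to bound the two sums by a constant multiple of $\lVert d\omega\rVert^2_{H_2}$ and of $\lVert\delta\omega\rVert^2_{H}$, respectively. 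Second, by~\eqref{eqEJ} and the fact that $\flat$ is an isometry of $H_\mathfrak{X}$ onto $H_1$, the family $\{d\phi_i/\lambda_i^{1/2}\}_{i\ge1}$ is orthonormal in $H_1$; since $\lVert\phi_i\rVert_{H^1}^2=1+\lambda_i\ge\lambda_i$ and $d\phi_0=0$, Bessel's inequality gives, for every $\psi\in H_1$,
\begin{displaymath}
  \sum_{i=0}^\infty \Big\lvert \big\langle d\phi_i/\lVert\phi_i\rVert_{H^1},\,\psi\big\rangle_{H_1}\Big\rvert^2 \le \lVert\psi\rVert_{H_1}^2,
\end{displaymath}
and likewise $\sum_{i=0}^\infty\lvert\langle\phi_i/\lVert\phi_i\rVert_{H^1},h\rangle_H\rvert^2\le\lVert h\rVert_H^2$ for $h\in H$ by Parseval.

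For the exterior-derivative sums I would use $db^{ij}=d\phi_i\wedge d\phi_j$ from~\eqref{eqDBIJ}, rewrite $d\phi_i\wedge d\phi_j=-d\phi_j\wedge d\phi_i$, and invoke the fact (a form of the second identity in~\eqref{eqCartan}) that the pointwise adjoint of $d\phi_j\wedge\,\cdot$ with respect to $\eta$ is the interior product $\iota_{\grad\phi_j}$. This yields, for $\omega\in H^1_1$,
\begin{displaymath}
  \langle db^{ij}_1,d\omega\rangle_{H_2} = -\big\langle d\phi_i/\lVert\phi_i\rVert_{H^1},\,\iota_{\grad\phi_j}d\omega\big\rangle_{H_1}.
\end{displaymath}
Since $\grad\phi_j$ is smooth and $d\omega\in H_2$, the $1$-form $\iota_{\grad\phi_j}d\omega$ lies in $H_1$, with $\lVert\iota_{\grad\phi_j}d\omega\rVert_{H_1}\le\lVert\grad\phi_j\rVert_{L^\infty_\mathfrak{X}}\lVert d\omega\rVert_{H_2}$ by the pointwise Cauchy--Schwarz bound for the interior product. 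Applying the Bessel estimate above with $\psi=\iota_{\grad\phi_j}d\omega$ and summing over $j$ gives the first bound in~(i) with $U_J=\sum_{j=1}^J\lVert\grad\phi_j\rVert^2_{L^\infty_\mathfrak{X}}$.

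For the codifferential sums I would use the second identity in~\eqref{eqDBIJ}, namely $\delta b^{ij}=\lambda_j\phi_i\phi_j-\eta(d\phi_i,d\phi_j)$ (a smooth function), and split $\langle\delta b^{ij}_1,\delta\omega\rangle_H$ into two pieces. For the $\lambda_j\phi_i\phi_j$ piece, write $\langle\phi_i\phi_j,\delta\omega\rangle_H=\langle\phi_i,\phi_j\,\delta\omega\rangle_H$ and apply Parseval in $i$, which costs $\lambda_j^2\lVert\phi_j\rVert^2_{L^\infty}\lVert\delta\omega\rVert_H^2$. For the $\eta(d\phi_i,d\phi_j)$ piece, write $\langle\eta(d\phi_i,d\phi_j),\delta\omega\rangle_H=\langle d\phi_i,(\delta\omega)\,d\phi_j\rangle_{H_1}$ and apply the Bessel estimate in $i$ with $\psi=(\delta\omega)\,d\phi_j\in H_1$, which costs $\lVert\grad\phi_j\rVert^2_{L^\infty_\mathfrak{X}}\lVert\delta\omega\rVert_H^2$. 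Summing over $j\le J$ gives the second bound in~(i) with $V_J=2\sum_{j=1}^J\big(\lambda_j^2\lVert\phi_j\rVert^2_{L^\infty}+\lVert\grad\phi_j\rVert^2_{L^\infty_\mathfrak{X}}\big)$. The polynomial estimate $U_J,V_J\le\hat CJ^q$ then follows verbatim as in the proof of Theorem~\ref{thmFrame}: the H\"ormander and Shi--Xu bounds~\eqref{eqHormander} combined with the Weyl law~\eqref{eqWeyl} bound each summand by a fixed power of $j$, so the partial sums are $O(J^q)$ for a suitable integer $q$ depending only on $d$. Part~(ii) is then immediate from the per-$j$ estimates above: replacing $b^{ij}_1$ by $\widetilde b^{ij}_1=e^{-\lambda_j/2}b^{ij}_1$ multiplies the $j$-th contribution by $e^{-\lambda_j}$, and since each contribution is now (polynomial in $j$)$\,\times\,e^{-\lambda_j}$ with $\lambda_j\to\infty$ by~\eqref{eqWeyl}, the series over all $j$ converge, yielding finite $\widetilde U,\widetilde V$.

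The only genuinely delicate point is the codifferential estimate, and the obstacle there is structural rather than computational: $\delta\omega$ is known to lie only in $H$, so no further derivative may fall on it. The decomposition of $\delta b^{ij}$ above is chosen precisely so that the dangerous index $i$ always enters through a Parseval or Bessel sum (which is free), while every factor that genuinely grows --- $\lambda_j$, $\lVert\phi_j\rVert_{L^\infty}$, $\lVert\grad\phi_j\rVert_{L^\infty_\mathfrak{X}}$ --- is attached only to $j$, which is either finite (part~(i)) or exponentially damped (part~(ii)). What remains to be checked carefully is that the wedge--interior-product adjunction and the scalar-multiplication identities used above, which are trivial for smooth arguments, persist for $\omega\in H^1_1$; this follows from the boundedness of the relevant operators on the Sobolev spaces already recorded in Section~\ref{secPrelim} (cf.\ Lemma~\ref{lemmaVBoundK} and the mapping properties of $d$ and $\delta$).
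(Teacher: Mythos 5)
Your proof is correct and follows the same overall strategy as the paper's: the same splitting $\delta b^{ij} = \lambda_j b^i b^j - \eta(db^i,db^j)$ from \eqref{eqDBIJ}, the same conversion of the wedge pairing into an interior-product pairing via \eqref{eqCartan}, the same pointwise Cauchy--Schwarz bounds that attach every growing factor ($\lVert \grad\phi_j\rVert_{L^\infty_\mathfrak{X}}$, $\lambda_j\lVert\phi_j\rVert_{L^\infty}$) only to the index $j$, and the same use of \eqref{eqHormander} and \eqref{eqWeyl} for the polynomial bound on $U_J,V_J$ and for the exponential damping in part~(ii). The one place you genuinely diverge is in how the sum over $i$ is closed: the paper invokes the Hodge decomposition of $\star(db^j\wedge\star d\omega)$ (resp.\ of $\delta\omega\,db^j$), observes that only the exact component pairs nontrivially with $db^i_1$, and identifies the resulting sum with a Dirichlet energy $E_1(f_j)$ bounded by the $H_1$ norm of the decomposed form; you instead apply Bessel's inequality directly to the orthonormal family $\{d\phi_i/\lambda_i^{1/2}\}_{i\geq 1}$ in $H_1$ (equivalently, the $\flat$-duals of the $u_i$ from \eqref{eqEJ}), using $\lVert\phi_i\rVert_{H^1}\geq\lambda_i^{1/2}$ to absorb the normalization. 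The two arguments are the same estimate in disguise --- Bessel for the exact orthonormal system is precisely the statement that the squared coefficients of the exact part are dominated by the total squared $H_1$ norm --- but your version is a little more economical, since it avoids introducing the auxiliary potentials $f_j$ and the coexact and harmonic components that the paper immediately discards. The only point that needs the care you flag at the end is that $\iota_{\grad\phi_j}d\omega$ and $(\delta\omega)\,d\phi_j$ genuinely lie in $H_1$ for $\omega\in H^1_1$; this holds because $\grad\phi_j$ and $d\phi_j$ are bounded, and is the same continuity issue the paper routes through Lemma~\ref{lemmaVBoundK}.
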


A proof of this lemma will be given below. Assuming, for now, that it is valid, it leads to the following corollary:

\begin{corollary}
  \label{corEBIJ}
  The frame elements $b^{ij}_1$ and $\widetilde b^{ij}_1$ satisfy the bounds
  \begin{displaymath}
    \sum_{i=0}^\infty \sum_{j=1}^J \lvert E_{1,1}( b^{ij}_1,  \omega ) \rvert^2 \leq C \lVert \omega \rVert_{H^1_1}, \quad \sum_{i=0}^\infty \sum_{j=1}^\infty \lvert E_{1,1}( \widetilde b^{ij}_1, \omega ) \rvert^2 \leq \tilde C \lVert \omega \rVert_{H^1_1},
  \end{displaymath}
  where $E_{1,1}$ is the Dirichlet form from~\eqref{eqDirichletK}, and $C$ and $\tilde C $ are constants independent of $\omega$. 
\end{corollary}

\begin{proof}
    We first verify the claim for $b^{ij}_1$. By Lemma~\ref{lemmaDBIJ}, the sequences $(i,j) \mapsto \lvert \langle d b^{ij}_1, d \omega \rangle_{H_2} \rvert^2$ and $(i,j) \mapsto \lvert \langle \delta b^{ij}_1, \delta \omega \rangle_H \rvert^2$ are in $\ell^2$. Therefore, using the Cauchy-Schwartz inequality for $\ell^2$ and Lemma~\ref{lemmaDBIJ}(i), we obtain, 
  \begin{align*}
\sum_{i=0}^\infty \sum_{j=1}^J \lvert E_{1,1}( b^{ij}_1,  \omega ) \rvert^2 &= \sum_{i=0}^\infty \sum_{j=1}^J \lvert \langle d b^{ij}_1, d \omega \rangle_{H_2} + \langle \delta b^{ij}_1, \delta \omega \rangle_{H} \rvert^2 \\
    &\leq \sum_{i=0}^\infty \sum_{j=1}^J \lvert \langle db^{ij}_1, d \omega \rangle_{H_2} \rvert^2 + \sum_{i=0}^\infty \sum_{j=1}^J \lvert \langle \delta b^{ij}_1, \delta \omega \rangle_H \rvert^2 \\
    &\quad + 2  \sum_{i=0}^\infty \sum_{j=1}^J \lvert \langle db^{ij}_1, d \omega \rangle_{H_2} \langle \delta b^{ij}_1, \delta \omega \rangle_{H} \rvert \\
    & \leq \sum_{i=0}^\infty \sum_{j=1}^J \lvert \langle db^{ij}_1, d \omega \rangle_{H_2} \rvert^2 + \sum_{i=0}^\infty \sum_{j=1}^J \lvert \langle \delta b^{ij}_1, \delta \omega \rangle_H \rvert^2 \\
    & \quad + 2 \left( \sum_{i=0}^\infty \sum_{j=1}^J \lvert \langle d b^{ij}_1, d \omega \rangle_{H_2} \rvert^2 \right)^{1/2} \\
    & \quad \times \left( \sum_{k=0}^\infty \sum_{l=1}^J \lvert \langle \delta b^{kl}_1, \delta \omega \rangle_H \rangle \rvert^2 \right)^{1/2}\\
    & \leq \left( U_J^{1/2} + V_J^{1/2} \right)^2 \lVert \omega \rVert_{H^1_1}^2,
  \end{align*}
  and the first claim of the corollary follows with $C=\left( U_J^{1/2} + V_J^{1/2}\right)^2$. To verify the second claim, we proceed as above using Lemma~\ref{lemmaDBIJ}(ii) to derive the bound
  \begin{displaymath}
      \sum_{i=0}^\infty \sum_{j=1}^\infty \lvert E_{1,1}( \widetilde b^{ij}_1, \omega ) \rvert^2 \leq \tilde C \lVert \omega \rVert_{H^1_1}^2, \quad \tilde C = \left(\tilde U^{1/2} + \tilde V^{1/2} \right)^2. \qedhere
  \end{displaymath}
\end{proof}

We now return to the proof of Theorem 4.4.  By Corollary~\ref{corEBIJ}, the upper frame condition for $B^J_{1,1}$ follows from
\begin{align*}
  \sum_{i=0}^\infty \sum_{j=1}^J \lvert \langle b^{ij}_1, \omega \rangle_{H^1_1} \rvert^2 &= \sum_{i=0}^\infty \sum_{j=1}^J \lvert \langle b^{ij}_1, \omega \rangle_{H_1} + E_{1,1}( b^{ij}_1, \omega ) \rvert^2 \\
  & \leq \sum_{i=0}^\infty \sum_{j=1}^J \lvert \langle b^{ij}_1, \omega \rangle_{H_1} \rvert^2 + \sum_{i=0}^\infty \sum_{j=1}^J \lvert E_{1,1}( b^{ij}_1, \omega ) \rvert^2 \\
  & \quad + 2 \left( \sum_{i=0}^\infty \sum_{j=1}^J \lvert \langle b^{ij}_1, \omega \rangle_{H_1} \rvert^2 \right)^{1/2} \left( \sum_{k=0}^\infty \sum_{l=1}^J \lvert E_{1,1}( b^{kl}_1, \omega )\rvert^2 \right)^{1/2} \\
  & = \sum_{i=0}^\infty \sum_{j=1}^J \frac{\lvert \langle b^{ij}, \omega \rangle_{H_1} \rvert^2}{\lVert b^i \rVert^2_{H^1}} + \sum_{i=0}^\infty \sum_{j=1}^J \lvert E_{1,1}( b^{ij}_1, \omega ) \rvert^2 \\
  & \quad + 2 \left( \sum_{i=0}^\infty \sum_{j=1}^J \frac{ \lvert \langle b^{ij}, \omega \rangle_{H_1} \rvert^2}{\lVert b^i \rVert^2_{H^1}} \right)^{1/2} \left( \sum_{k=0}^\infty \sum_{l=1}^J \lvert E_{1,1}( b^{kl}_1, \omega )\rvert^2 \right)^{1/2} \\
  & \leq \sum_{i=0}^\infty \sum_{j=1}^J \lvert \langle b^{ij}, \omega \rangle_{H_1} \rvert^2 + \sum_{i=0}^\infty \sum_{j=1}^J \lvert E_{1,1}( b^{ij}_1, \omega ) \rvert^2 \\
  & \quad + 2 \left( \sum_{i=0}^\infty \sum_{j=1}^J \lvert \langle b^{ij}, \omega \rangle_{H_1^1} \rvert^2 \right)^{1/2} \left( \sum_{k=0}^\infty \sum_{l=1}^J \lvert E_{1,1}( b^{kl}_1, \omega )\rvert^2 \right)^{1/2} \\
  & \leq \left( C_2^{1/2} + C^{1/2} \right)^2 \lVert \omega \rVert^2_{H^1_1},
\end{align*}
where $C_2$ is an upper frame constant for $B^J_1$ and $ C$ the constant in Corollary~\ref{corEBIJ}. The upper frame condition for $\widetilde B^J_{1,1}$ follows similarly. 

What remains in the proof of the upper frame conditions in Theorem 4.4 is to verify Lemma~\ref{lemmaDBIJ}. For that, first observe that
\begin{align*}
  \langle db^{ij}, dv \rangle_{H_2} &= \langle db^i \wedge db^j, dv \rangle_{H_2} = \int_\mathcal{M} db^i \wedge db^j \wedge \star dv \\
  &= \int_\mathcal{M} db^i \wedge \star \star (db^j \wedge \star dv) = \langle db^i, \star (db^j \wedge \star dv) \rangle_{H_1},
\end{align*}
where we have used~\eqref{eqDBIJ} and~\eqref{eqHodgeStar2} in the first and third equalities, respectively. By the Hodge decomposition theorem, there exists a unique function $f_j \in H^1$, a unique 2-form $ \alpha_j \in H^1_2$, and a unique harmonic 1-form $\chi_j \in \mathcal{H}^1_0$ such that
\begin{displaymath}
  \star (db^j \wedge \star d\omega ) = df_j + \delta \alpha_j + \chi_j;
\end{displaymath}
as a result,
\begin{align*}
  \sum_{i=0}^\infty \lvert \langle db^i_1, \star (db^j \wedge \star d\omega )\rangle_{H_1} \rvert^2 &= \sum_{i=0}^\infty \lvert \langle db^i_1, df_j + \delta \alpha_j + \chi_j \rangle_{H_1} \rvert^2 = \sum_{i=0}^\infty \lvert \langle \delta d b^i_1, f_j \rangle_H \rvert^2 \\
  &= \sum_{i=0}^\infty \lambda_i \lvert \langle b^i, f_j \rangle_H \rvert^2 = E_1(f_j).  
\end{align*}
We therefore have,
\begin{align*}
  \sum_{j=1}^J \sum_{i=0}^\infty \lvert \langle db^{ij}_1, d\omega \rangle_{H_2}\rvert^2 &=  \sum_{i=0}^\infty \lvert \langle db^i_1, \star (db^j \wedge \star d\omega )\rangle_{H_1} \rvert^2 \\
  &=  \sum_{j=1}^J E_1( f_j ) = \sum_{j=1}^J \langle df_j, df_j \rangle_{H_1} \\
  &\leq \sum_{j=1}^J \lVert \star ( db^j \wedge \star d\omega ) \rVert^2_{H_1} = \sum_{j=1}^J \lVert db^j \wedge \star d\omega \rVert^2_{H_{d-1}} \\
  &= \sum_{j=1}^J \lVert \star \iota_{\grad b^j} \star \star d\omega \lVert^2_{H_{d-1}} = \sum_{j=1}^J \lVert \iota_{\grad b^j} d\omega \rVert^2_{H_1},
\end{align*}
where we have used~\eqref{eqCartan} to obtain the first equality in the second line. It then follows from Lemma~\ref{lemmaVBoundK} that
\begin{displaymath}
    \sum_{j=1}^J \sum_{i=0}^\infty \lvert \langle db^{ij}_1, d\omega \rangle_{H_2} \rvert^2 = \sum_{j=1}^J \lVert D_{\grad b^j} \omega \rVert^2_{H_1} \leq U_J \lVert \omega \rVert_{H^1_1}^2, \quad U_J = \sum_{j=1}^J\lVert D_{\grad b^j } \rVert^2, 
\end{displaymath}
as claimed in part~(i) of the lemma. It can further be shown via local Cauchy-Schwartz inequalities as in the proof of Lemma~\ref{lemmaVBoundK} that the operator norms $ \lVert D_{\grad b^j} \rVert = \lVert D_{\grad \phi_j} \rVert $ can be bounded above by $ \tilde C \lVert \phi_j \rVert_{L^\infty}^{\tilde q}$ for some positive constants $\tilde C $ and $\tilde q $ that do not depend on $j$. This, in conjunction with the H\"ormander bound in~\eqref{eqHormander} implies that that there exists a positive constant $C_U$ and a positive integer $q_U$, both independent of $J$, such that
\begin{equation}
    U_J \leq C_U J^{q_U}.
    \label{eqCUBound}
\end{equation}

Moving on to the second claim of Lemma~\ref{lemmaDBIJ}(i), consider
\begin{displaymath}
  \sum_{j=1}^J\sum_{i=0}^\infty  \lvert \langle \delta b^{ij}_1, \delta \omega \rangle_{H} \rvert^2 = \sum_{i=0}^\infty \sum_{j=1}^J \lvert \langle - \eta(db^i_1, db^j) + \lambda_j b^i_1 b^j, \delta \omega \rangle_H \rvert^2,
\end{displaymath}
where we have used the expression for $\delta b^{ij}$ in~\eqref{eqDBIJ}. Expanding
\begin{displaymath}
  \delta \omega \, db^j = d f_j + \delta \alpha_j + \chi_j,
\end{displaymath}
where $ f_j \in H^1 $, $ \alpha_j \in H^1_2 $, and $ \chi_j \in \mathcal{ H }^1_0$ are unique, we compute,
\begin{align}
  \nonumber\sum_{j=1}^J \sum_{i=0}^\infty \lvert \langle \eta( db^i_1, db^j ), \delta \omega \rangle_H \rvert^2 &= \sum_{j=1}^J \sum_{i=0}^\infty \left\lvert \int_\mathcal{M} db^i_1 \wedge \star d b^j \, \delta \omega \right \rvert^2 \\
  \nonumber &= \sum_{j=1}^J \sum_{i=0}^\infty \lvert \langle db^i_1, \delta \omega \, db^j \rangle_{H_1} \rvert^2 = \sum_{j=1}^J E_1( f_j ) \\
  \nonumber& \leq \sum_{j=1}^J \lVert \delta \omega \, db^j \rVert^2_{H_1} = \sum_{j=1}^J \left \lVert \sqrt{\eta(db^j, db^j)} \delta \omega \right \rVert^2_{H} \\
  \label{eqDelV1} &\leq \sum_{j=1}^J \lVert \grad \phi_j \rVert_{L^\infty_\mathfrak{X}}^2 \lVert \delta \omega \rVert^2_H 
  \leq \sum_{j=1}^J \lVert \grad \phi_j \rVert_{L^\infty_\mathfrak{X}}^2 \lVert \omega \rVert^2_{H^1_1}.
\end{align}
Moreover, we have
\begin{align}
    \nonumber \sum_{j=1}^J \sum_{i=0}^\infty \lvert \langle \lambda_j b^i_1 b^j, \delta \omega \rangle_H \rvert^2 &= \sum_{j=1}^J \sum_{i=0}^\infty\frac{\lambda_j^2}{ \lVert b^i \rVert_{H^1}^2}  \lvert \langle  b^i b^j, \delta \omega \rangle_H \rvert^2 \\
    \nonumber & \leq \sum_{j=1}^J \sum_{i=0}^\infty \lambda_j^2 \lvert \langle b^i, b^j \, \delta \omega \rangle_H \rvert^2 = \sum_{j=1}^J \lambda_j^2 \lVert b^j \, \delta \omega \rVert^2_H \\
    & \leq \sum_{j=1}^J \lambda_j^2 \lVert b^j \lVert_{L^\infty}^2 \lVert \omega \rVert_{H^1_1}^2.
  \label{eqDelV2}
\end{align}
Equations~\eqref{eqDelV1} and~\eqref{eqDelV2} imply that the sequences $(i,j) \mapsto \lvert \langle \eta(db^i_1, db^j ), \delta \omega \rangle_H \rvert $ and $(i,j) \mapsto \lvert \langle \lambda_j b^i_1 b^j, \delta \omega \rangle_H \rvert$ are both in $\ell^2$. Therefore, by the Cauchy-Schwarz inequality for that space we can conclude that
\begin{align*}
  \sum_{j=1}^J \sum_{i=0}^\infty \lvert \langle \delta b^{ij}_1, \delta \omega \rangle_H \rvert^2 & = \sum_{i=0}^\infty \sum_{j=1}^J \lvert \langle - \eta(db^i_1, db^j) + \lambda_j b^i_1 b^j, \delta \omega \rangle_H \rvert^2 \\
  & \leq \sum_{j=1}^J \sum_{i=0}^\infty \lvert \langle \eta( db^i_1, db^j ), \delta \omega \rangle_H \rvert^2 + \sum_{j=1}^J \sum_{i=0}^\infty \lvert \langle \lambda_j b^i_1 b^j, \delta \omega \rangle_H \rvert^2 \\ 
  & \quad + 2 \sum_{j=1}^J \sum_{i=0}^{\infty} \lvert \langle \eta(db^i_1,db^j), \delta\omega \rangle_H \langle \lambda_j b_1^i b^j, \delta \omega \rangle_H \lvert \\
  & \leq \sum_{j=1}^J \sum_{i=0}^\infty \lvert \langle \eta( db^i_1, db^j ), \delta \omega \rangle_H \rvert^2 + \sum_{j=1}^J \sum_{i=0}^\infty \lvert \langle \lambda_j b^i_1 b^j, \delta \omega \rangle_H \rvert^2 \\ 
  & \quad + 2 \left( \sum_{j=1}^J \sum_{i=0}^\infty \lvert \langle \eta( db^i_1, db^j ), \delta \omega \rangle_H \rvert^2  \right)^{1/2} \\
  & \quad \times \left( \sum_{l=1}^J \sum_{k=0}^\infty \lvert \langle \lambda_l b^k_1 b^l, \delta \omega \rangle_H \rvert^2  \right)^{1/2}\\ 
  & \leq V_J \lVert \omega \rVert_{H^1_1}, 
\end{align*}
where 
\begin{displaymath}
    V_J^{1/2} =  \left( \sum_{j=1}^J \lVert \grad \phi_j \rVert^2_{L^\infty_\mathfrak{X}} \right)^{1/2} + \left(  \sum_{j=0}^J \lambda_j^2 \lVert \phi_j \rVert^2_{L^\infty} \right)^{1/2}.
\end{displaymath}
This establishes the existence of the $\omega$-independent constants $V_J$ claimed in the lemma. Invoking $L^\infty$ and Weyl bounds as in the case of $U_J$, we can also deduce that there exist a real number $C_V$ and a positive integer $q_V$ such that 
\begin{equation}
    \label{eqCVBound}
    V_J \leq C_V J^{q_V}.
\end{equation}
Combining~\eqref{eqCUBound} and~\eqref{eqCVBound} leads to $ U_J \leq \hat C J^q$ and $V_J \leq \hat C J^q$ with $ \hat C= \max\{ C_U, C_V\}$ and $ q = \max \{ q_U, q_V \}$. This completes our proof of Lemma~\ref{lemmaDBIJ}(i) and thus the upper frame condition for $B^J_{1,1}$. 

To prove Lemma~\ref{lemmaDBIJ}(ii), we proceed as above to establish that
\begin{displaymath}
    \sum_{j=1}^k \sum_{i=0}^\infty \lvert \langle d \widetilde b^{ij}_1, d \omega \rangle_{H_2} \rvert^2 \leq \tilde U_k \lVert \omega \rVert_{H^1_1}^2, \quad \tilde U_k = \sum_{j=1}^k e^{-\lambda_j} \lVert D_{\grad b^j} \rVert^2, 
\end{displaymath}
and
\begin{gather*}
    \sum_{j=1}^k \sum_{i=0}^\infty \lvert \langle \delta \widetilde b^{ij}_1, \delta \omega \rangle_H \rvert^2 \leq \tilde V_k \lVert \omega \rVert_{H^1_1}^2, \\ 
    \tilde V_k^{1/2} =  \left( \sum_{j=1}^k e^{-\lambda_j}\lVert \grad \phi_j \rVert^2_{L^\infty_\mathfrak{X}} \right)^{1/2} + \left(  \sum_{j=0}^k e^{-\lambda_j}\lambda_j^2 \lVert \phi_j \rVert^2_{L^\infty} \right)^{1/2}.
\end{gather*}
The $L^\infty$ and Weyl estimates in~\eqref{eqHormander} and~\eqref{eqWeyl}, respectively, then again imply that $ \tilde U = \lim_{k\to\infty} \tilde U_k $ and $\tilde V = \lim_{k\to\infty} \tilde V_k$ is finite, proving Lemma~\ref{lemmaDBIJ}(ii), and completing our proof of the upper frame condition for $\widetilde B_{1,1}$. 

Next, to verify the lower frame conditions, we express $ \omega $ as a linear combination $ \omega = \sum_{j=1}^J f_j \, d\phi_j $,
where $f_1, \ldots, f_J $ are $H^1$ functions satisfying
\begin{displaymath}
  f_j(x) = \sum_{k=1}^J \Psi_{jk}^+(x) \eta(d\phi_k, \omega)
\end{displaymath}
for $\mu$-a.e.\ $x\in \mathcal{M}$, and $\Psi_{jk}^+(x)$ are the elements of the Moore-Penrose pseudoinverse of the $J\times J$ Gramm matrix 
\begin{displaymath}
    \Psi_{jk}(x) = \eta(d\phi_i, d\phi_j) = g(\grad \phi_i, \grad \phi_j)
\end{displaymath}
from Section~\ref{secFrameProof}. Note that the existence of such an expansion for $\omega$ follows from the fact that $\{ d\phi_1, \ldots, d\phi_J \} $ is a generating set of the space of smooth 1-forms $\Omega^1$, and the latter is dense in $H^1_1$. The $f_j$  functions can be expanded in the $\{ \phi_i^{(1)} \}_{i=0}^\infty $ basis of $H^1 $ from~\eqref{eqPhiHP}, viz.
\begin{displaymath}
    f_j = \sum_{i=0}^\infty c_{ij} \phi_i^{(1)}, \quad c_{ij} = \langle \phi^{(1)}_i, f_j \rangle_{H^1} = \sum_{k=1}^J \langle \phi_i^{(1)}, \Psi^+_{jk} \eta(d\phi_k, \omega) \rangle_{H^1}.
\end{displaymath}
Therefore, setting $b^i = \phi_i$ and $b^i_1 = \phi_i^{(1)}$ per our notational convention for frame elements, we obtain
\begin{align}
  \nonumber \lVert \omega \rVert^2_{H^1_1} &= \langle \omega, \omega \rangle_{H^1_1} = \sum_{i=0}^\infty \sum_{j,k=1}^J \langle b^i_1, \Psi_{jk}^+ \eta(db^k, \omega ) \rangle_{H^1} \langle b^{ij}_1, \omega \rangle_{H^1_1} \\
  \nonumber &\leq \sum_{i=0}^\infty \sum_{j,k=1}^J \lvert \langle b^i_1, \Psi_{jk}^+, \eta(db^k, \omega) \rangle_{H^1} \langle b_1^{ij}, \omega \rangle_{H^1_1} \rvert \\
  \label{eqLBound1}& \leq \left( \sum_{i=0}^\infty \sum_{j,k=1}^J \lvert \langle b^i_1, \Psi_{jk}^+ \eta(db^k, \omega) \rangle_{H^1} \rvert^2 \right)^{1/2} \left( \sum_{n=0}^\infty \sum_{p=1}^J \lvert \langle b^{np}_1, \omega \rangle_{H^1_1} \lvert^2 \right)^{1/2}.
\end{align}
Note that to arrive at the inequality in the last line we used the $\ell^2$ Cauchy-Schwartz inequality on the sequences $(i,j) \mapsto \langle b_1^i, \Psi_{jk}^+ \eta(db^k,\omega) \rangle_{H^1}$ and $(i,j) \mapsto \langle b_1^{ij}, \omega \rangle_{H_1^1}$, both of which can be verified to indeed lie in that space. We now proceed to bound the first term in the last line.

First, since $\{ b_1^i \}_{i=0}^\infty$ is an orthonormal basis of $H^1$, we have
\begin{displaymath}
  \sum_{i=0}^\infty \sum_{j,k=1}^J \lvert \langle b_1^i, \Psi_{jk}^+ \eta(db^k,\omega) \rangle_{H^1} \rvert^2 = \sum_{j,k=1}^J \lVert \Psi_{jk}^+ \eta(db^k,\omega) \rVert^2_{H^1}.
\end{displaymath}
Moreover, observe that for any $f\in H^1 $ and $h \in C^\infty(\mathcal{M})$, $\lVert h f \rVert_{H^1}$ can be bounded above by $\tilde C \lVert f \rVert_{H^1}$, where $\tilde C^2 $ is a polynomial function of $\lVert h \rVert_{L^\infty}$ and $\lVert \grad h \rVert_{L^\infty_\mathfrak{X}}$. This implies that there exists a constant $C$ such that
\begin{align}
  \nonumber\sum_{j,k=1}^J \lVert \Psi_{jk}^+ \eta(db^k, \omega) \rVert^2_{H^1} &\leq C \sum_{k=1}^J \lVert \eta(db_k, \omega) \rVert^2_{H^1} \\
  \label{eqLBound2} &= C \sum_{k=1}^J \left( \lVert \eta(db^k, \omega) \rVert_H^2 + \lVert d\eta(db^k,\omega) \rVert^2_{H_1} \right).
\end{align}
In the above, the term $\lVert \eta(db^k,\omega) \rVert_H^2$ can be bounded above by $\lVert \grad b^k \rVert^2_{L^\infty_\mathfrak{X}} \lVert \omega \rVert_{H^1_1}$ using local Cauchy-Schwartz inequalities and the fact that $\lVert \cdot \rVert_{H_1} \leq \lVert \cdot \rVert_{H_1^1}$. To bound $\lVert d\eta(db^k,\omega) \rVert_{H_1}$, we use~\eqref{eqCartan} to write down
\begin{displaymath}
  d\eta(db^k,\omega) = d \iota_{\grad b^k} \omega.
\end{displaymath}
It follows from Lemma~\ref{lemmaVBoundK} that there exists a constant $\hat C$ such that
\begin{equation}
  \label{eqLBound3}
  \lVert d\eta(db^k,\omega) \rVert \leq \hat C \lVert \omega \rVert_{H^1_1}.
\end{equation}

Combining~\eqref{eqLBound1}--\eqref{eqLBound3}, we conclude that there exists a constant $\bar C$ such that
\begin{displaymath}
  \lVert \omega \rVert^2_{H^1_1} \leq \bar C \lVert \omega \rVert_{H^1_1} \left( \sum_{i=0}^\infty \sum_{j=1}^J \lvert \langle b^{ij}_1, \omega \rangle_{H^1_1} \rvert^2 \right)^{1/2},
\end{displaymath}
and thus the lower frame condition for $B^J_{1,1}$ holds with $C_1 = 1/ \bar C^2$. To verify the lower frame condition for $\widetilde B_{1,1}$, we use the results just established to compute
\begin{align*}
    \lVert \omega \rVert^2_{H^1_1} &\leq C_1 \sum_{i=0}^\infty \sum_{j=1}^J \lvert \langle b^{ij}_1, \omega \rangle_{H^1_1} \rvert^2 \leq C_1 e^{\lambda_J} \sum_{i=0}^\infty \sum_{j=1}^J \lvert \langle \widetilde b^{ij}_1, \omega \rangle_{H^1_1} \rvert^2\\
    & \leq C_1 e^{\lambda_J} \sum_{i=0}^\infty \sum_{j=1}^\infty \lvert \langle \widetilde b^{ij}_1, \omega \rangle_{H^1_1} \rvert^2.
\end{align*}
This proves the lower frame condition for $\widetilde B_{1,1}$, and completes our proof of Theorem~\ref{thmFrameSob}.

\section{\label{secSECRep}Frame and operator representations of vector fields}

As described in Section~\ref{vectorOverview}, the SEC is based on alternative representations of vector fields with respect to frames, or as operators on functions. In this section, we make these notions precise, and further examine the convergence properties of finite-rank analogs of these representations. 

Unless otherwise stated, throughout this section, $T: H_{\mathfrak{X}} \to \ell^2$, $T^*: \ell^2 \to H_{\mathfrak{X}}$, $S = T^* T : H_{\mathfrak{X}}\to H_{\mathfrak{X}} $, and $G = T T^*: \ell^2 \to \ell^2$ will be the analysis, synthesis, frame, and Gramm operators, respectively, associated with one of the frames for the $H_{\mathfrak{X}}= L^2_\mathfrak{X} $ space of vector fields from Theorem~\ref{thmFrame}. We also let  $T': H_{\mathfrak{X}} \to \ell^2$, $T^{\prime*}: \ell^2 \to H_{\mathfrak{X}}$, $S' = T^{\prime *} T' : H_{\mathfrak{X}}\to H_{\mathfrak{X}} $, and $G'= T' T^{\prime *}: \ell^2 \to \ell^2$ be the corresponding operators for the dual frame. For notational simplicity, we use the symbols $\alpha_k$ and $\alpha'_k$ with $k\in \{  1, 2,  \ldots \} $ to represent the frame and dual frame elements, respectively. For example, in the case of the $B^J_\mathfrak{X}$ frames from Theorem~\ref{thmFrame}, we set  $\alpha_k = b_{p_kq_k}$, where $k \mapsto ( p_k, q_k) $ is any ordering of the $(i,j)$ indices in $b_{ij}$ with $ k \in \{ 0, 1, \ldots \} $. A convenient choice of such ordering is a lexicographical ordering, i.e., $ ( p_0, q_0 ) = ( 0, 1 ) $, \ldots, $(p_{J-1}, q_{J-1}) = ( 0, J )$, $(p_{J}, q_{J}) = (1,0)$, \ldots. We use the notation $ ( i, j ) \mapsto r_{ij} $ to represent the inverse of this ordering map.  Note that for any choice of frame from Theorem~\ref{thmFrame} we have $ \alpha_k = s_k \phi_{p_k} \grad \phi_{q_k} $, where $s_k = 1 $ in the case of the $ B^J_\mathfrak{X}$ frames and $s_k = e^{-\lambda_{q_k}/2}$ in the case of the $\widetilde B_\mathfrak{X} $ frame. We also let $\{ e_k \}_{k=0}^\infty$ be the canonical orthonormal basis of $\ell^2$, and $ \pi_l : \ell^2 \to \ell^2 $ the orthogonal projection operators with range $\tilde W_l = \spn \{ e_0, \ldots, e_{l-1} \}$. Moreover, the set $ \{ e_{ij} \}_{i,j=0}^\infty $, $ e_{ij} \in \mathcal{ B }_2(\ell^2) $, will be the canonical orthonormal basis of $\mathcal{ B }_2( \ell^2 ) $ with $ e_{ij} = e_i \langle e_j, \cdot \rangle_{\ell^2} $.   

In addition to the various frame operators acting on vector fields, we will consider the unitary Fourier operators $U : H \to \ell^2$, $ U^{(1)} : H^1 \to \ell^2$, and $ \tilde U : \mathcal{H} \to \ell^2 $ associated with the $\{ \phi_k \}_{k=0}^\infty$, $ \{ \phi_k^{(1)} \}_{k=0}^\infty$, and $ \{ \tilde \phi_k \}_{k=0}^\infty $ orthonormal bases of $H$, $H^1$, and $ \mathcal{H}$, respectively, where $ U f = ( \langle \phi_k, f \rangle_{H})_k$, $ U^{(1)} f = (\langle \phi_k^{(1)}, f \rangle_{H^1})_k$, and $ \tilde U f = (\langle \tilde \phi_k, f \rangle_{\mathcal{H}})_k$. As noted in Section~\ref{secFrameDef}, $U$, $U^{(1)}$, and $ \tilde U $ are special cases of analysis operators. Together, $U$ and $U^{(1)}$ induce the linear isometry $V: \mathcal{B}(H^1,H) \to \mathcal{B}(\ell^2)$ with $V A = U A U^{(1)*}$, while $ U $ and $ \tilde U $ induce the unitary map $ \tilde V : \mathcal{B}_2(\mathcal{H},H) \to \mathcal{B}_2(\ell^2) $ with $\tilde V A = U A \tilde U^*$ . 

\subsection{\label{secSECRepInfDim}SEC representations of vector fields and their correspondence}

Let $v$ be an arbitrary bounded vector field in $ H_{\mathfrak{X}} \cap L^\infty_{\mathfrak{X}}$. The SEC is based on the following three representations of $v$:
\begin{enumerate}
    \item \emph{Frame representation}, given by the sequence $ \hat v = T' v \in \ell^2$, such that $v = \sum_{k=0}^\infty \hat v_k \alpha_k$.
    \item \emph{Dual frame representation}, given by the sequence $\hat v' = T v \in \ell^2$, such that $v = \sum_{k=0}^\infty \hat v'_k \alpha'_k$.
    \item \emph{Operator representation}, given by the bounded operator $ L = W v \in \mathcal{B}(\ell^2) $,  $ W = \iota \circ V $, or the Hilbert-Schmidt operator $ \tilde L = \tilde W v \in \mathcal{B}_2(\ell^2)$, $ \tilde W = \iota_2 \circ \tilde V $, where $ \iota : L^\infty_\mathfrak{X} \to \mathcal{B}(H^1,H) $ and $ \iota_2: L^\infty_\mathfrak{X} \to \mathcal{B}_2(\mathcal{H},H) $ are the embeddings from Lemma~\ref{lemmaVBound}. When we wish to distinguish between $ L $ and $ \tilde L $ we will refer to the former as the \emph{bounded operator representation} and the latter as the \emph{Hilbert-Schmidt operator representation} of $v$. 
\end{enumerate}
Among these, the frame and dual frame representations only make use of the inner-product-space  structure of $ H_{\mathfrak{X}} \cap L^\infty_{\mathfrak{X}}$.  The operator representations make use of the relationship between $L^\infty_{\mathfrak{X}}$ and  bounded or Hilbert-Schmidt operators on functions, which is special to vector fields.  

As one might expect, the need to pass between these representations arises in a number of cases. On the one hand, many of the numerical procedures involving vector fields that one can envision being formulated via SEC produce output in the frame representation (that is, as linear combinations of frame elements), and in order to act with these vector fields on functions an operator representation is needed. For instance, the Galerkin approximation scheme for the eigenforms of the 1-Laplacian in Section~\ref{secGalerkin} yields approximate eigenforms as linear combinations of 1-form frame elements, and in order to visualize these forms we compute the pushforwards of the their vector field duals into data space. The pushforward operation requires evaluation of the action of these vector fields on the embedding map of the manifold, which we carry out using the operator representation. Conversely, one may be given $v$ in the operator representation (e.g., from data sampled along integral curves of the flow generated by $ v $ \cite{Giannakis19}), and then seek a frame representation for denoising and/or further use in a numerical procedure. 

The correspondence between the frame and operator representations of vector fields in SEC is illustrated with a commutative diagram in Fig.~\ref{figRep}. For the remainder of this section, we discuss aspects of this correspondence in the infinite-dimensional setting. Then, in Section~\ref{secSECFiniteDimRep} we examine finite-rank representations and their convergence properties. 

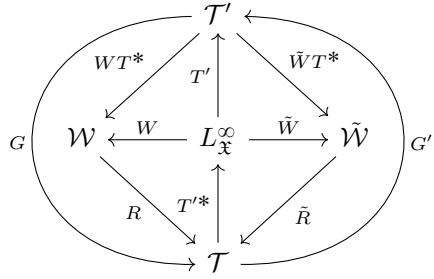
\begin{figure}
\begin{displaymath}
    \begin{tikzcd}[column sep=3em,row sep=3em]
        & \mathcal{T}' \arrow[swap]{dl}{WT^*} \arrow{dr}{\tilde WT^*} \arrow[bend right=90, looseness=2.2,swap]{dd}{G}\\
        \mathcal{W} \arrow[swap]{dr}{R} & L^\infty_\mathfrak{X} \arrow[swap]{l}{W} \arrow{r}{\tilde W} \arrow{u}{T'} & \tilde{\mathcal{W}} \arrow{dl}{\tilde R}\\
        & \mathcal{T}  \arrow{u}{T^{\prime *}} \arrow[bend right=90, looseness=2.2,swap]{uu}{G'}
    \end{tikzcd}
\end{displaymath}
    \caption{\label{figRep}Commutative diagram illustrating the frame, dual frame, bounded operator, and Hilbert-Schmidt operator representations of bounded vector fields in $L^\infty_\mathfrak{X}$. $\mathcal{T} \subset \ell^2 $ and $\mathcal{T}' \subset \ell^2 $ are the ranges of the analysis and dual analysis operators, $T$ and $T'$, respectively, restricted to $L^\infty_\mathfrak{X}$. $\mathcal{W} \subset \mathcal{B}(\ell^2) $ and $\tilde{\mathcal{W}} \subset \mathcal{B}_2(\ell^2) $ are the ranges of the operators $W$ and $\tilde W $ yielding the bounded operator and Hilbert-Schmidt operator representations vector fields in $L^\infty_\mathfrak{X}$, respectively. The operators $R$ and $\tilde R$ carry out the transformation from the bounded and Hilbert-Schmidt operator representations, respectively, to the dual frame representation. The Gramm and dual Gramm operators, $G $ and $ G'$, respectively, map between the frame and dual frame representations.}
\end{figure}

We first consider how to pass from the frame representation $ \hat v = T' v \in \ell^2 $ of a bounded vector field $ v $ to the corresponding operator representation $ L = W v \in B(\ell^2)$. Let $ \mathcal{T}' = T' L^\infty_\mathfrak{X} \subseteq \ell^2 $ be the range of the dual analysis operator restricted to $ L^\infty_\mathfrak{X} $. Then, the inverse of $ T' \rvert_{L^\infty_\mathfrak{X}} $ is given by $ T^* \rvert_{\mathcal{T}'} $, and we have 
\begin{equation}
  \label{eqFrame2Op}
  L = W T^* \hat v.
\end{equation}
Since every bounded operator $ A \in \mathcal{B}(\ell^2) $ is uniquely characterized by the coefficients $ A_{ij} = \langle e_i, A e_j \rangle_{\ell^2} $ (its ``matrix elements''), it suffices to compute
\begin{align}
    \nonumber L_{ij} &= \langle e_i, L e_j \rangle_{\ell^2} = \langle e_i, U\iota(v) U^{(1)*} e_j \rangle_{\ell^2} = \langle U^* e_i, \iota( v ) U^{(1)*} e_j \rangle_{H}  \\
    \label{eqLIJ}&= \langle \phi_i, \iota(v ) \phi^{(1)}_j \rangle_H = \frac{1}{\sqrt{1+\lambda_j}}\langle \phi_i, v( \phi_j ) \rangle_H, 
\end{align} 
 given the sequence $ \hat v = T' v \in \ell^2$. Substituting $ v = T^* T' v = \sum_{m=0}^\infty \hat v_m \alpha_m $, we obtain
\begin{displaymath}
  L_{ij} = \frac{1}{\sqrt{1+\lambda_j}} \sum_{m=1}^\infty \hat v_m \langle \phi_i, \alpha_m \phi_j \rangle_H,
\end{displaymath}
and since, for the frame elements in Theorem~\ref{thmFrame}, $ \alpha_m = s_m \phi_{p_m} \grad \phi_{q_m} $, it follows that
\begin{equation}
    \label{eqLIJVHat}
    L_{ij} = \sum_{m=0}^\infty \frac{ s_m }{ \sqrt{1+\lambda_j} }  \langle \phi_{p_m}, \grad \phi_{q_m}( \phi_j ) \rangle_{H_\mathfrak{X}} = \sum_{m=0}^\infty \frac{ s_m }{ \sqrt{1+\lambda_j} } G_{ijp_mq_m} \hat v_m,
\end{equation}
where $ G_{ijp_mq_m} $ are the Hodge inner products from~\eqref{eqGIJKL_Hodge}. The above expression fully characterizes the operator $WT^* $ in~\eqref{eqFrame2Op}. 

Next, we consider how to pass from the operator representation to the \emph{dual} frame representation. Defining $ \mathcal{W} = W L^\infty_\mathfrak{X}$, this procedure amounts to computing the $ \ell^2 $ sequence $ \hat v' = T v $ given $ L \in \mathcal{W} $, where $ v $ is the unique $ L^\infty_\mathfrak{X} $ vector field with the operator representation $ L = W v $. In this case, the matrix elements $ L_{ij} = \langle e_i, L e_j \rangle_{\ell^2} $ are known, and we have
\begin{equation}
  \label{eqVHatLIJ}
  \hat v'_k = \langle \alpha_k, v \rangle_{H_\mathfrak{X}} = s_k \langle \phi_{p_k} \grad \phi_{q_k}, v \rangle_{H_\mathfrak{X}} = s_k \langle \phi_{p_k}, v( \phi_{q_k} ) \rangle_H = L_{p_kq_k}. 
\end{equation} 
The above expression defines a linear map $ R: \mathcal{W} \to \mathcal{T } = T L^\infty_\mathfrak{X} $, that carries out the transformation from the operator representation to the dual frame representation. In particular, it follows from~\eqref{eqVHatLIJ} that the components $ \hat v'_k $ of the dual frame representation are equal to a subset of the matrix elements $L_{ij} $ of the operator representation, as appropriate for the frame from Theorem~\ref{thmFrame} used. In the special case of the $ \widetilde B_\mathfrak{X} $ frame, where all combinations of $ \phi_i \grad \phi_j $ are used as frame elements, the mapping $ k \mapsto (p_k,q_k) \in \mathbb{N}^2 $ is bijective, and the components of the dual frame representation are in one-to-one correspondence with the operator frame elements. 

What remains to complete the portion of the commutative diagram in Fig.~\ref{figRep} involving the bounded operator representation is a map from the dual frame representation $ \hat v' = T v \in \mathcal{T}' $ to the frame representation $  \hat v = T' v  \in \mathcal{T}' $ of $ v \in L^\infty_\mathfrak{X} $. This is accomplished by means of the dual Gramm operator $ G' $ (see~\eqref{eqVFrameDualGramm}), i.e., 
\begin{equation}
    \label{eqVHatVHatPrime}
    \hat v = T' v = T' T^{\prime*}\hat v' = G' \hat v' = G^+ \hat v'.
\end{equation} 
Note that, unlike $WT^*$ and $ R $, we do not have a closed-form expression for the action of $ G' $ on sequences. Nevertheless, in Section~\ref{secSECFiniteDimRep} we will see that this operator can be approximated by a strongly convergent sequence of finite-rank operators associated with finite collections of frame elements, whose action can be explicitly evaluated. 

We now turn attention to the task of passing between the frame and Hilbert-Schmidt operator representations. In this case, given $ v \in L^\infty_\mathfrak{X} $, we have 
\begin{displaymath}
    \tilde L = \tilde W v = \tilde W T^* \hat v \in \mathcal{B}_2(\ell^2),
\end{displaymath}
and we can expand $ \tilde L $ in the Hilbert-Schmidt operator basis $ \{ e_{ij} \} $, viz.
\begin{displaymath}
    \tilde L = \sum_{i,j=0}^\infty \tilde L_{ij} e_{ij}.
\end{displaymath}
By construction of the $ e_{ij} $, the expansion coefficients $ \tilde L_{ij} $ are equal to the matrix elements of $ \tilde L $ in the $ \{ e_i \} $ basis of $ \ell^2 $; that is (cf.~\eqref{eqLIJ}), 
\begin{align*}
    \tilde L_{ij} &= \langle e_{ij}, \tilde L \rangle_\text{HS} = \langle e_i, \tilde L e_j \rangle_{\ell^2} = \langle e_i, U\iota_2(v) \tilde U^* e_j \rangle_{\ell^2} \\
     & = \langle U^* e_i, \iota_2(v) \tilde U^* e_j \rangle_{H} = \langle \phi_i, \iota_2(v) \tilde \phi_j \rangle_H = e^{-\lambda_j/2} \langle \phi_i, v(\phi_j) \rangle_H.  
\end{align*}
Using this result and proceeding as in~\eqref{eqLIJVHat}, we find
\begin{displaymath}
    L_{ij} = \sum_{n=0}^\infty e^{-\lambda_j/2} s_n G_{ijp_nq_n} \hat v_n,
\end{displaymath}
which fully characterizes the operator $ \tilde W T^* $. Observe now that in the special case of the $ \widetilde B_\mathfrak{X} $ frame, we have
\begin{displaymath}
    e^{-\lambda_j/2} s_n G_{ijp_nq_n} = \frac{\langle b_{ij}, b_{p_nq_n} \rangle_{H_\mathfrak{X}}}{e^{(\lambda_j+\lambda_{q_n})/2}} = \langle \widetilde b_{ij}, \widetilde b_{p_nq_n} \rangle_{H_\mathfrak{X}} = \langle \alpha_{r_{ij}}, \alpha_n \rangle_{H_\mathfrak{X}} = G_{r_{ij}n}. 
\end{displaymath}
That is, for this frame, the matrix elements of the operator $ \tilde WT^* $ are in one-to-one correspondence with the matrix elements of the Gramm operator. 

The remaining transformations to pass from the Hilbert-Schmidt operator representation to the dual frame representation, and from the dual frame representation to the frame representation are completely analogous to those in~\eqref{eqVHatLIJ} and \eqref{eqVHatVHatPrime}, respectively, so we do not discuss them further here.

\subsection{\label{secSECFiniteDimRep}Finite-dimensional representations and their convergence properties}

We now consider how to construct finite-rank analogs of the representations of vector fields introduced in Section~\ref{secSECRepInfDim}. We begin by introducing the finite-rank (hence, compact) analysis and synthesis operators, $ T_l = \pi_l T $ and $T^*_l = T^* \pi_l$, respectively, where
\begin{displaymath}
    T_lv  = ( \langle \alpha_0, v \rangle_{H_\mathfrak{X}}, \ldots, \langle \alpha_{l-1}, v \rangle_{H_\mathfrak{X}}, 0, 0, \ldots ), \quad T^*_l( c ) = \sum_{k=0}^{l-1} c_k \alpha_k, 
\end{displaymath}
for $ v \in H_\mathfrak{X}$ and $c = ( c_0, c_1, \ldots ) \in \ell^2 $. We also define finite-rank analysis and synthesis operators for the dual frame, $ T'_l = \pi_l T' $ and $ T_l^{\prime*} = T^{\prime*} \pi_l $,  and the finite-rank Gramm operators $G_l = T_l T_l^* $, $ G'_l = T_l' T_l^{\prime *} $. As in the case of the full frame and its dual (see Section~\ref{secFrameDef}), we can relate the finite-rank operators associated with the frame and dual frames by pseudoinverses.  In particular, since $ \pi_l $ is an orthogonal projection, we have $ \pi_l^+ = \pi_l $, and therefore
\begin{displaymath}
  (T^{*}_l)^+ = ( T^* \pi_l )^+ = \pi_l^+ ( T^{*} )^+ = \pi_l T' = T'_l.
\end{displaymath}
Similarly, we have $ ( T^{\prime*}_l )^+ = T_l $ and $ G_l^+ = G_l' $. It is important to note that by virtue of its relationship with $ G_l^+ $, it is possible to compute the nonzero matrix elements $ \langle e_i, G_l' e_j \rangle_{\ell^2}$ of $ G'_l $ numerically by computing the pseudoinverse of the $ l \times l $ Gramm matrix $\bm G_l = [ \langle e_i, G_l e_j \rangle ]_{i,j=1}^{l}$, whose elements are known analytically. Thus, we can consider $ G'_l $ to be available to us in applications, albeit not in closed form.  

In addition to finite-rank operators for the frame and dual frame, we will need finite-rank Fourier operators $ U_l = \pi_l U $, $ U^{(1)}_l = \pi_l U^{(1)} $, and $ \tilde U_l = \pi_l \tilde U $ for the $ \{ \phi_j \}  $, $ \{ \phi_j^{(1)} \} $, and $ \{ \tilde \phi_j \} $ bases of $ H $, $ H^1 $, and $ \mathcal{H} $, respectively. These operators lead to finite-rank analogs  $ V_l : \mathcal{B}(H^1,H) \to \mathcal{B}(\ell^2) $ and $ \tilde V_l : \mathcal{B}_2( \mathcal{H}, H ) \to \mathcal{B}_2( \ell^2 )$ of $ V $ and $ \tilde V $, respectively, such that $ V_l A = U_l A U^{(1)}_l $ and $ \tilde V_l = U_l A \tilde U_l $. In addition, $W_l : L^\infty_\mathfrak{X} \to \mathcal{B}(\ell^2)$ and $ \tilde W_l : L^\infty_\mathfrak{X} \to \mathcal{B}_2(\ell^2)$ with $ W_l = \iota \circ V_l $ and $\tilde W_l = \iota_2 \circ \tilde V_l $ are finite-rank analogs of $W$ and $\tilde W$, respectively. As with $ G_l $ and $ G_l' $, the operators $ W_l $ and $\tilde W_l$ can be represented by $l \times l $ matrices.

With the above definitions, we can construct finite-rank analogs of the frame, dual frame, and operator representations for vector fields introduced in Section~\ref{secSECRepInfDim}. In particular, $ \hat v_l = T'_l v $, $ \hat v_l' = T_l v  $, $ L_l = W_l v $, and $ \tilde L_l = \tilde W_l v $ are respectively finite-rank frame, dual frame, bounded operator, and Hilbert-Schmidt operator representations of a vector field $ v \in L^\infty_\mathfrak{X} $. To examine the convergence properties of these representations, note that since $ \{ e_k \} $ is a basis, the projection operators $ \pi_l $ converge strongly to the identity as $l \to \infty$ (i.e., $\pi_l c \to c $ for any $ c \in \ell^2 $). This implies that $T_l \to T $, $ T'_l \to T $, $ U_l \to U $, $ U^{(1)}_l \to U^{(1)} $, $ \tilde U_l \to \tilde U $, and as a result  $ G_l \to G $, $G'_l \to G'$, $ W_l \to W $, $\tilde W_l \to \tilde W$, where all limits are taken in the respective strong operator topologies. Therefore, $\hat v_l $ and $\hat v_l'$ converge to $ \hat v = T v $ and $ \hat v' = T' v $, respectively, in $\ell^2$ norm, and $ L_l$ and $\tilde L_l$ converge to $L = W v $ and $ \tilde L = \tilde W v $, respectively, strongly. In fact, since $\tilde L_l = \sum_{i,j=0}^{l-1} \tilde L_{ij} e_{ij}$, $\tilde L_{ij} = \langle e_i, \tilde L e_j \rangle_{\ell^2}$, it follows that  $\tilde L_l $ converges to $ \tilde L = \sum_{i,j=0}^\infty \tilde L_{ij} e_{ij} $ in Hilbert-Schmidt operator norm. This implies convergence in $\mathcal{B}(\mathcal{H},H)$ operator norm, which implies in turn the strong convergence just stated. In effect, by restricting vector fields to act on the RKHS $\mathcal{H}$ containing functions of higher regularity than $H^1$, the Hilbert-Schmidt operator representation allows for a stronger mode of convergence than the bounded operator representation. 

Formulas for passing between the finite-rank frame, dual frame, and operator representations can be constructed analogously to those described in Section~\ref{secSECRepInfDim}. As an application of the operator representation of vector fields, which was already mentioned in Sections~\ref{laplacianOverview} and \ref{secSECRepInfDim}, and will be employed in Section~\ref{secNumerics}, we note that the pushforward map $F_* : \mathfrak{X} \to \Gamma \mathbb{R}^n$ on vector fields associated with an embedding $F: \mathcal{M} \to \mathbb{R}^n$ (here, $\Gamma \mathbb{R}^n \simeq \mathbb{R}^n$ is the space of smooth vector fields on $\mathbb{R}^n$) is given by $ F_* v = v( F ) $, where $ v \in \mathfrak{X} $ acts  on $F$ componentwise. As a result, we can consistently approximate $F_*v $ by  $L_l F$, where $ L_l = W_l v $ is the operator representation of $v$. The result of this operation is an ``arrow plot'', consisting of approximate tangent vectors to the image  $F(\mathcal{M}) \subset \mathbb{R}^n$ of the manifold under the embedding.

\section{\label{secGalerkin}Galerkin method for the 1-Laplacian}

We now apply the framework developed in Section~\ref{secSECSmooth} to construct a Galerkin approximation scheme for the eigenvalues and eigenforms of the 1-Laplacian, $ \Delta_1 $. Eigenvalue problems for other differential operators of interest in exterior calculus can be formulated analogously. For notational simplicity, in this section we will use the symbols $ (\nu_k, \varphi_k) $ to denote a general eigenvalue-eigenfunction pair of $ \Delta_1 $, as opposed to the multi-index notation $ (\lambda_{j,1}, \phi_{j,1} ) $ from Section~\ref{secSpacesForms}. 

\subsection{Variational eigenvalue problem for the 1-Laplacian and its Galerkin approximation}

We begin by stating the eigenvalue problem for the 1-Laplacian in strong form. This amounts to finding $ \varphi_k\in \Omega^1 $ and $ \nu_k \in \mathbb{ C } $ such that 
\begin{equation}
  \label{eqEigStrong}
  \Delta_1 \varphi_k = \nu_k \varphi_k.
\end{equation}
As is well known \cite{Strichartz83,Rosenberg97}, for the class of smooth closed Riemannian manifolds studied here, $ \Delta_1 $ has a unique self-adjoint extension $ \bar\Delta_1 : D( \bar \Delta_1 ) \to H_1 $, with a dense domain $ D( \bar \Delta_1 ) \simeq H^2_1 \subset H_1 $ and a compact resolvent. As a result, we can obtain weak solutions to~\eqref{eqEigStrong} by passing to a variational formulation, with an associated well posed Galerkin approximation scheme \cite{BabuskaOsborn91}. To construct this variational eigenvalue problem, we introduce the sesquilinear forms $ L_\theta : H^1_1 \times H^1_1 \to \mathbb{C} $, $\theta > 0 $,  and $ B : H^1_1 \times H^1_1  \to \mathbb{C}$, defined by
\begin{displaymath}
    L_\theta( \psi, \omega ) = E_{1,1}( \psi, \omega ) + \theta \langle \psi, \omega  \rangle_{H_1}, \quad B( \psi, \omega ) = \langle \psi, \omega \rangle_{H_1}, 
\end{displaymath}
where $ E_{1,1} $ is the Dirichlet form on $H_1$ from~\eqref{eqDirichletK}. Note that $E_{1,1}(\psi,\omega)$ can be formally obtained by performing integration by parts on the expression $\langle \psi,\Delta_1\omega \rangle_{H_1} $, taking $ \psi $ and $ \omega $ to be smooth 1-form fields. The term in $L_\theta(\psi,\omega)$ proportional to $ \theta $ is a regularization term, ensuring that $ L_\theta $ has a coercivity property important to the well-posedness of our Galerkin scheme. Specifically, we seek to solve the following variational eigenvalue problem:
\begin{definition}[eigenvalue problem for the 1-Laplacian, weak form] 
    \label{defnEigWeak}We say that $\nu_k\in \mathbb{C}$ and $ \varphi_k  \in H^1_1 $, solve the weak eigenvalue problem for $L_\theta$ if the equality
    \begin{displaymath}
        L_\theta( \psi, \varphi_k ) = \nu_k B( \psi, \varphi_k )
    \end{displaymath}
    holds for all $ \psi \in H^1_1 $.
\end{definition}

We refer to the solutions $(\nu_k, \varphi_k)$ of the problem in Definition~\ref{defnEigWeak} as weak eigenvalues and eigenfunctions of the 1-Laplacian, respectively. Clearly, every classical eigenvalue (eigenfunction) from~\eqref{eqEigStrong} is also a weak eigenvalue (eigenfunction). 

We now discuss the well-posedness of the weak eigenvalue just formulated, and establish its Galerkin approximation using our frames for $ H^1_1$. A data-driven analog of this Galerkin method, utilizing frame elements of $H^1_1$ approximated from data, will be presented in Section~\ref{secDataDriven}. 

\begin{lemma}
    The sesquilinear forms $L_\theta$ and $B$ obey the bounds
    \begin{gather*}
        \lvert L_\theta( \psi, \omega ) \rvert \leq ( 1 + \theta ) \lVert \psi \rVert_{H^1_1} \lVert \omega \rVert_{H^1_1}, \quad L_\theta( \omega, \omega ) \geq \min\{ \theta, 1 \} \lVert \omega \rVert^2_{H^1_1}, \\
        \lvert B( \psi, \omega ) \rvert \leq \lVert \psi \rVert_{H^1_1} \lVert \omega \rVert_{H^1_1}, 
    \end{gather*}
    for all $ \psi, \omega \in H^1_1$. 
    \label{lemmaSesquiBounds}
\end{lemma}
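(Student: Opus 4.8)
The plan is to derive all three bounds from the Cauchy--Schwarz inequality together with the elementary identity $\lVert \alpha \rVert_{H^1_1}^2 = \lVert \alpha \rVert_{H_1}^2 + E_{1,1}(\alpha)$, which follows immediately from the definitions of the $H^1_1$ inner product and the Dirichlet form recalled in Section~\ref{secSpacesForms} (see~\eqref{eqHPKInnerProduct} and~\eqref{eqDirichletK}). No further structural input is required; in particular, neither coercivity of the full 1-Laplacian nor any elliptic regularity theory is needed.

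For the boundedness of $L_\theta$, I would first split $L_\theta(\psi,\omega) = E_{1,1}(\psi,\omega) + \theta \langle \psi, \omega \rangle_{H_1}$ and estimate each term separately. Writing $E_{1,1}(\psi,\omega) = \langle d\psi, d\omega \rangle_{H_2} + \langle \delta\psi, \delta\omega \rangle_H$ and applying Cauchy--Schwarz on $H_2$ and on $H$, followed by Cauchy--Schwarz on the resulting two-term sum, gives $\lvert E_{1,1}(\psi,\omega) \rvert \le E_{1,1}(\psi)^{1/2} E_{1,1}(\omega)^{1/2} \le \lVert \psi \rVert_{H^1_1} \lVert \omega \rVert_{H^1_1}$, the last step using $E_{1,1}(\alpha) \le \lVert \alpha \rVert_{H^1_1}^2$. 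Likewise $\lvert \langle \psi, \omega \rangle_{H_1} \rvert \le \lVert \psi \rVert_{H_1} \lVert \omega \rVert_{H_1} \le \lVert \psi \rVert_{H^1_1} \lVert \omega \rVert_{H^1_1}$; summing yields the constant $1 + \theta$. The bound on $B$ is precisely this last chain of inequalities, so it needs no separate argument.

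For the coercivity bound I would use $L_\theta(\omega,\omega) = E_{1,1}(\omega) + \theta \lVert \omega \rVert_{H_1}^2$ together with $\lVert \omega \rVert_{H^1_1}^2 = \lVert \omega \rVert_{H_1}^2 + E_{1,1}(\omega)$, and split into two cases. If $\theta \ge 1$, I keep $E_{1,1}(\omega)$ and bound $\theta \lVert \omega \rVert_{H_1}^2 \ge \lVert \omega \rVert_{H_1}^2$; if $0 < \theta < 1$, I bound $E_{1,1}(\omega) \ge \theta E_{1,1}(\omega)$ and keep $\theta \lVert \omega \rVert_{H_1}^2$. In both cases the right-hand side equals $\min\{\theta,1\} \lVert \omega \rVert_{H^1_1}^2$. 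An equivalent route, should a purely algebraic presentation be preferred, is to expand $\psi$ and $\omega$ in the orthonormal eigenbasis $\{\phi_{j,1}\}$ of $\bar\Delta_1$, in which $L_\theta(\psi,\omega) = \sum_j (\lambda_{j,1} + \theta)\hat\psi_j^*\hat\omega_j$ and $\lVert \psi \rVert_{H^1_1}^2 = \sum_j (1 + \lambda_{j,1}) \lvert \hat\psi_j \rvert^2$; the three claims then follow from the scalar inequalities $\lambda_{j,1} + \theta \le (1+\theta)(1+\lambda_{j,1})$ and $\min\{\theta,1\}(1+\lambda_{j,1}) \le \lambda_{j,1} + \theta$, combined with Cauchy--Schwarz in $\ell^2$ with weights $1+\lambda_{j,1}$.

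I do not anticipate a genuine obstacle: the lemma merely repackages Cauchy--Schwarz and the definition of the $H^1_1$ norm. The only point meriting care is that all term-by-term manipulations be justified on the full space $H^1_1$ rather than on the dense subspace $\Omega^1$ of smooth forms; this is guaranteed by the spectral (series) construction of $E_{1,k}$ and $H^p_k$ recalled in Section~\ref{secSpacesForms}, which makes $E_{1,1}$ a bounded, positive-semidefinite sesquilinear form on all of $H^1_1$, so that the splitting, the Cauchy--Schwarz steps, and the case analysis above are legitimate for arbitrary $\psi,\omega \in H^1_1$.
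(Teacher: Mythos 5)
Your proposal is correct and follows essentially the same route as the paper's proof: Cauchy--Schwarz (applied to $E_{1,1}$ as a positive-semidefinite form and to $\langle\cdot,\cdot\rangle_{H_1}$) for the two upper bounds, and the identity $\lVert \omega \rVert_{H^1_1}^2 = \lVert \omega \rVert_{H_1}^2 + E_{1,1}(\omega,\omega)$ with the same case split on $\theta \lessgtr 1$ for coercivity. The spectral reformulation you sketch at the end is a harmless equivalent restatement, not a different argument.
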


\begin{proof}
    The upper bounds on $\lvert L_\theta(\psi,\omega) \rvert $ and $ \lvert B(\psi,\omega) $ follow directly from application of Cauchy-Schwartz inequalities. To verify the lower (coercivity) bound on $ L_\theta(\omega,\omega)$, note that if $0< \theta \leq 1 $, we have
    \begin{displaymath}
        L_\theta(\omega,\omega) = \theta ( \theta^{-1} E_{1,1}(\omega,\omega) + \lVert \omega \rVert^2_{H_1}) \geq \theta ( E_{1,1} (\omega,\omega) + \lVert \omega \rVert^2_{H_1}) = \theta \lVert \omega \rVert^2_{H^1_1}. 
    \end{displaymath}
    The claim for $\theta > 1 $ can be verified similarly. 
\end{proof}

According to classical results in spectral approximation theory \cite{BabuskaOsborn91}, Lemma~\ref{lemmaSesquiBounds} implies that there exists a compact operator $ A_\theta : H^1_1 \to H^1_1 $ such that 
\begin{equation}
    \label{eqAOp}
    L_\theta(\psi,A_\theta \omega) = B( \psi, \omega ), \quad \forall \psi,\omega \in H^1_1.
\end{equation}
This implies in turn that $ ( \nu_k, \varphi_k) $, $ \nu_k \neq 0 $, is a weak eigenvalue-eigenvector pair if and only if
\begin{displaymath}
    A_\theta \varphi_k = \nu_k^{-1} \varphi_k.
\end{displaymath}
Due to the above, $A_\theta$ can be thought of as a ``solution operator'' for the variational eigenvalue problem in Definition~\ref{defnEigWeak}. In particular, the properties of spectral approximations to the solutions of that problem can be analyzed in terms of approximations of the eigenvalue problem for $A_\theta$. This approach leads to a Galerkin approximation scheme, as follows.

Let $ \Pi_1, \Pi_2, \ldots $ be a family of finite-rank projection operators on $H^1_1$, converging pointwise to the identity; that is, $ \Pi_l^2 = \Pi_l $ and $ \lim_{l\to\infty} \Pi_l \omega = \omega $ for every $ \omega \in H^1_1 $. Let also $ W_l $ be the closed subspaces of $ H^1_1 $ defined as $ W_l = \ran \Pi_l $. These spaces, which will be constructed explicitly below, will be our Galerkin approximation spaces. In particular, we will solve: 

\begin{definition}[eigenvalue problem for the 1-Laplacian, Galerkin approximation] 
    \label{defnEigGalerkin}We say that $\nu_{k,l} \in \mathbb{C}$ and $ \varphi_{k,l} \in W_l $, solve the Galerkin truncated eigenvalue problem for $L_\theta$ if the equality
    \begin{displaymath}
        L_\theta( \psi, \varphi_{k,l}) = \nu_{k,l} B( \psi, \varphi_{k,l} )
    \end{displaymath}
    holds for all $ \psi \in W_l $.
\end{definition}

Given a basis $ \{ v_{0,l}, \ldots, v_{w_l-1,l}\} $ of $ W_l $, where $ w_l = \dim W_l $, the eigenvalue problem in Definition~\ref{defnEigGalerkin} is equivalent to a generalized matrix eigenvalue problem. That is, $ ( \nu_{k,l}, \varphi_{k,l} ) $ with $ \varphi_{k,l} = \sum_{j=0}^{w_l-1} c_{j} v_{j,l} $ is a solution if and only if 
\begin{equation}
    \label{eqGEV}
    \bm{L}_l \vec a_{k,l} = \nu_{k,l} \bm{B}_l \vec a_{k,l},
\end{equation}
where $ \bm{L}_l $ and $ \bm B_l $ are the $ w_l \times w_l $ matrices with elements $ [ \bm{L}_l ]_{ij} = L_\theta( v_{i,l}, v_{j,l} ) $ and $[ \bm B_l ]_{ij} = B( v_{i,l}, v_{j,l})$, respectively, and $ \vec a_{k,l} = ( a_0, \ldots, a_{w_l-1})^\top \in \mathbb{R}^l$ is the $l$-dimensional column vector containing the expansion coefficients of $ \varphi_{k,l} $ in the $ \{ v_{j,l} \} $ basis of $W_l $. In addition, one can verify that $ ( \nu_{k,l}, \varphi_{k,l} ) $ solves the eigenvalue problem in Definition~\ref{defnEigGalerkin} if and only if 
\begin{displaymath}
    A_{\theta,l} \varphi_{k,l} = \nu^{-1}_{k,l} \varphi_{k,l},
\end{displaymath}
where $ A_{\theta,l} : H^1_1 \to H^1_1 $ is the finite-rank operator given by $ A_{\theta, l} = \Pi_l A_\theta $. Note that this operator satisfies (cf.~\eqref{eqAOp})
\begin{displaymath}
    L_\theta( \psi, A_{\theta,l} \omega ) = B( \psi, \omega ), \quad \forall \psi \in W_l, \quad \forall \omega \in H^1_1.
\end{displaymath}

Now, because $ A_\theta $ is compact, the fact that $ \Pi_l $ converges pointwise to the identity implies that $ A_{\theta,l} $ converges to $ A_\theta $ in norm. This implies in turn that for every eigenvalue $ \nu_k $ of $ A_\theta $ (which is nonzero by coercivity of $L_\theta$, and thus isolated and with finite geometric multiplicity by compactness of $A_\theta$) there exists a sequence $ \nu_{k,l} $ of eigenvalues of $ A_{l,\theta} $ converging as $ l\to \infty $ to $\nu_k $. Moreover, for every eigenfunction $ \varphi_k $ in the eigenspace of $ A_\theta $ at eigenvalue $ \nu_k $, there exists a sequence $ \varphi_{k,l} $ of eigenfunctions of $ A_{\theta,l} $ at eigenvalue $ \nu_{k,l} $ converging in $ H^1_1 $ norm to $ \varphi_k $. This establishes convergence of the solutions of the  the Galerkin scheme in Definition~\ref{defnEigGalerkin} to those of the eigenvalue problem in Definition~\ref{defnEigWeak}.

\subsection{Construction of the Galerkin approximation spaces}

What remains is to construct the projection operators $ \Pi_l $ and the associated subspaces $ W_l $. Here, we will construct these operators making use of the result established in Theorem~\ref{thmFrameSob} that $ \{ b^{ij}_1 \} $ with $ i \in \{ 0,1, \ldots \} $ and $ j \in \{ 1, \ldots, J \} $ is a frame of $ H^1_1 $. As in Section~\ref{secSECRep}, for notational simplicity, we set $\alpha_k = b^{p_kq_k}_1$, where $k \mapsto ( p_k, q_k) $ is any ordering of the $(i,j)$ indices in $b^{ij}_1$ with $ k \in \{ 0, 1, \ldots \} $. We also let  $T: H^1_1 \to \ell^2$, $T^*: \ell^2 \to H^1_1$, and $S = T^* T : \ell^2 \to \ell^2$ be the corresponding analysis, synthesis, and frame operators, respectively. We also consider the finite-rank operators $ T_l = \pi_l T $, $T^*_l = T^* \pi_l$, $ S_l = T_l^* T_l $, and $G_l = T_l T_l^* $ , associated with the canonical orthogonal projection operators $ \pi_l: \ell^2 \to \ell^2 $. All of these operators converge strongly to their infinite-rank counterparts as $ l \to \infty$; see Section~\ref{secSECFiniteDimRep}.

By construction, at each $ l $, the operator $ G_l $ is a positive-semidefinite, self-adjoint, compact  operator on $\ell^2$. As a result, there exists an orthonormal basis $ \{ u_{k,l} \}_{k=1}^\infty$ of $\ell^2$ consisting of eigenvectors of $ G_l$. We denote the corresponding eigenvalues by $\eta_{k,l}$, and order the eigenpairs $(\eta_{k,l}, u_{k,l})$ in order of decreasing $ \eta_{k,l} $. We let $ w_l $ be the number of nonzero eigenvalues, where $ w_l = \rank T_l \leq l $. The following lemma, whose proof is left to the reader, summarizes certain properties of the $(\eta_{k,l}, u_{k,l})$ eigenpairs. 

\begin{lemma}
    The eigenpairs $(\eta_{k,l},u_{k,l})$ have the following properties:

    (i) $ \eta_{k,l} $ is bounded above by the operator norm of $G$.
    
    (ii) $\eta_{k,l} $ is a nonzero eigenvalue of $G_l$ if and only if it is a nonzero eigenvalue of $S_l$. Moreover, the vectors
    \begin{displaymath}
        v_{k,l} = \frac{1}{\eta_{k,l}^{1/2}} T^* u_{k,l}, \quad 0 \leq k \leq w_l - 1,
    \end{displaymath}
    are orthonormal eigenvectors of $S_l$ corresponding to the same eigenvalues as $u_{k,l}$.
    \label{lemmaEigSL}
\end{lemma}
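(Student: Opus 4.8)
\textbf{Proof plan for Lemma~\ref{lemmaEigSL}.}

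The plan is to treat the two claims separately, both of which are standard facts relating the analysis and synthesis operators of a frame, specialized to the finite-rank truncations $T_l$ and $T_l^*$.

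For part~(i), I would first recall that $G_l = T_l T_l^* = \pi_l T T^* \pi_l = \pi_l G \pi_l$, since $T_l = \pi_l T$ and $T_l^* = T^* \pi_l$ with $\pi_l$ self-adjoint. For any unit vector $c \in \ell^2$ one then has $\langle c, G_l c\rangle_{\ell^2} = \langle \pi_l c, G \pi_l c\rangle_{\ell^2} \leq \lVert G\rVert\, \lVert \pi_l c\rVert_{\ell^2}^2 \leq \lVert G\rVert$, because $\pi_l$ is a contraction. Since $G_l$ is self-adjoint and positive-semidefinite, its operator norm equals $\sup_{\lVert c\rVert=1}\langle c, G_l c\rangle$, so every eigenvalue $\eta_{k,l}$ is bounded above by $\lVert G_l\rVert \leq \lVert G\rVert$. (Recall $G$ is bounded because $T$ is bounded by the upper frame condition, so $\lVert G\rVert$ is finite.)

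For part~(ii), I would use the elementary operator identity that $T_l^* T_l$ and $T_l T_l^*$ have the same nonzero spectrum. Concretely: if $\eta_{k,l} \neq 0$ and $G_l u_{k,l} = T_l T_l^* u_{k,l} = \eta_{k,l} u_{k,l}$, set $v_{k,l} = \eta_{k,l}^{-1/2} T_l^* u_{k,l}$ — noting $T_l^* u_{k,l} = T^*\pi_l u_{k,l}$; one should be slightly careful here since the lemma writes $T^* u_{k,l}$, but this agrees with $T_l^* u_{k,l}$ on the relevant vectors, or one simply works with $T_l^*$ throughout. Then $S_l v_{k,l} = T_l^* T_l T_l^* u_{k,l} \,\eta_{k,l}^{-1/2} = \eta_{k,l}^{-1/2} T_l^*(\eta_{k,l} u_{k,l}) = \eta_{k,l} v_{k,l}$, so $v_{k,l}$ is an eigenvector of $S_l$ with eigenvalue $\eta_{k,l}$. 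For orthonormality, compute $\langle v_{k,l}, v_{k',l}\rangle_{\ell^2} = (\eta_{k,l}\eta_{k',l})^{-1/2}\langle T_l^* u_{k,l}, T_l^* u_{k',l}\rangle = (\eta_{k,l}\eta_{k',l})^{-1/2}\langle u_{k,l}, T_l T_l^* u_{k',l}\rangle = (\eta_{k,l}\eta_{k',l})^{-1/2}\eta_{k',l}\langle u_{k,l}, u_{k',l}\rangle = \delta_{kk'}$, using that the $u_{k,l}$ are orthonormal. Conversely, the same argument with the roles of $T_l$ and $T_l^*$ interchanged shows any nonzero eigenvalue of $S_l$ is one of $G_l$, establishing the "if and only if". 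Finally, $w_l = \rank T_l = \rank S_l = \rank G_l$ counts the nonzero eigenvalues consistently.

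There is no real obstacle here — the lemma is a packaging of the standard singular-value/polar-decomposition relationship between $T_l$ and $T_l^*$. The only point requiring mild care is the discrepancy between $T^*$ and $T_l^*$ in the displayed formula for $v_{k,l}$: one should verify (or simply observe) that since $u_{k,l} \in \ran \pi_l$ up to the kernel contributions, $T^* u_{k,l}$ and $T_l^* u_{k,l}$ differ only by a vector that does not affect the eigenvector relation, or restrict attention from the outset to $u_{k,l}$ supported on $\tilde W_l = \ran\pi_l$, which one may do without loss of generality since $G_l$ annihilates $\tilde W_l^\perp$.
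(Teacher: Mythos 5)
The paper leaves this proof to the reader, so there is no authorial argument to compare against; your proof is the standard one and is correct. The one point you flag — the discrepancy between $T^* u_{k,l}$ in the displayed formula and $T_l^* u_{k,l}$ in the computation — is resolved most cleanly by observing that any eigenvector of $G_l = \pi_l G \pi_l$ at a nonzero eigenvalue satisfies $u_{k,l} = \eta_{k,l}^{-1} G_l u_{k,l} \in \ran G_l \subseteq \ran \pi_l$, so $\pi_l u_{k,l} = u_{k,l}$ and $T^* u_{k,l} = T^* \pi_l u_{k,l} = T_l^* u_{k,l}$ exactly; also note that the orthonormality inner product $\langle v_{k,l}, v_{k',l} \rangle$ should be taken in the Hilbert space on which $S_l$ acts (e.g.\ $H^1_1$ in the Galerkin setting), not in $\ell^2$, though the computation is unchanged.
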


By Lemma~\eqref{lemmaEigSL}(ii), we can approach the problem of constructing orthonormal sets in $H^1_1$, consisting of eigenvectors of $S_l$ corresponding to nonzero eigenvalues, through the eigenvalue problem of $G_l$. This is advantageous, since the eigenvectors of $G_l $ corresponding to nonzero eigenvalues can be computed via the eigenvalue problem of the $l \times l $ Grammian matrix for $H^1_1$ introduced in Section~\ref{laplacianOverview}, which we denote here by $ \bm G^1_l$ to make its dependence on $l = M  J $ explicit. In particular, observe that given any $ c = ( c_0, c_1, \ldots ) \in \ell^2 $, we have $ d = G_l c = ( d_0, \ldots, d_{l-1}, 0, 0, \ldots ) $, where the first $l$ elements of $ d $ are given by $d_j = \sum_{j=0}^{l-1} G_{ij} c_j$, with
\begin{displaymath}
    G_{ij} = \langle e_i, G e_j \rangle_{\ell^2} = \langle T^* e_i, T^* e_j \rangle_{H^1_1} = \langle \alpha_i, \alpha_j \rangle_{H^1_1},
\end{displaymath}
and the inner products $\langle \alpha_i, \alpha_j \rangle_{H^1_1}$ can be computed in closed form via the formulas in Table~\ref{fig3}. The above implies that the nonzero eigenvalues $ \eta_{k,l} $ of $ G_l $ are equal to the nonzero eigenvalues of  $ \bm G_l^1 $,  and the corresponding eigenvectors $ \vec u_{k,l} = ( u_{0,k,l}, \ldots, u_{l-1,k,l} )^\top $ of that matrix yield $ u_{k,l} = ( u_{0,k,l}, \ldots, u_{l-1,k,l}, 0, 0, \ldots ) $. We thus obtain
\begin{displaymath}
    v_{k,l} = \frac{ 1 }{ \eta_{k,l}^{1/2} } \sum_{j=0}^{l-1}  u_{j,k,l} \alpha_j.
\end{displaymath}

Based on these considerations, we define our Galerkin approximation spaces as 
\begin{equation}
    \label{eqWL}
    W_l = \spn\{ v_{1,l}, \ldots, v_{w_l,l} \},
\end{equation}
and the projection operators $ \Pi_l : H^1_1 \to H^1_1 $ as orthogonal projectors onto those subspaces. 

\begin{lemma}
    The sequence $ \Pi_l $ of projection operators converges pointwise to the identity; that is, for any $ \omega \in H^1_1 $, $ \lim_{l \to \infty } \Pi_l \omega = \omega $. 
    \label{lemmaProj}
\end{lemma}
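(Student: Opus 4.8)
The plan is to show that the union $\bigcup_{l} W_l$ is dense in $H^1_1$, and then to combine this with the fact that each $\Pi_l$ is an orthogonal projection. First I would observe that, since $\Pi_l$ is the orthogonal projector onto $W_l$, for any $\omega \in H^1_1$ we have $\lVert \omega - \Pi_l \omega \rVert_{H^1_1} = \inf_{\psi \in W_l} \lVert \omega - \psi \rVert_{H^1_1}$. Thus it suffices to produce, for each $\omega$ and each $\epsilon > 0$, an index $l$ and an element $\psi \in W_l$ with $\lVert \omega - \psi \rVert_{H^1_1} < \epsilon$. Moreover, because the spaces $W_l$ are nested in the sense that is relevant here (any finite linear combination of frame elements $\alpha_1,\dots,\alpha_N$ eventually lies in the span used to build $W_l$ for $l$ large — see below), and because $\lVert \omega - \Pi_l\omega\rVert$ is nonincreasing along such a cofinal subsequence, pointwise convergence of $\Pi_l$ will follow once density of $\bigcup_l W_l$ is established.

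Next I would identify $W_l$ with a subspace of $H^1_1$ expressed through the frame. By Lemma~\ref{lemmaEigSL}(ii) and the construction in~\eqref{eqWL}, $W_l = \spn\{ v_{1,l}, \ldots, v_{w_l,l} \}$ where each $v_{k,l} = \eta_{k,l}^{-1/2} T^* u_{k,l}$ with $u_{k,l} \in \ran \pi_l$. Hence $W_l \subseteq T^* \pi_l \ell^2 = \spn\{\alpha_1,\dots,\alpha_l\}$. Conversely I claim $W_l = T^*_l \ell^2 = \spn\{\alpha_1,\dots,\alpha_l\}$: indeed $T^*_l = T^* \pi_l$ has range spanned by $\{\alpha_1,\dots,\alpha_l\}$, and since $\{u_{k,l}\}_{k=1}^{w_l}$ is an orthonormal basis for $\ran \pi_l \ominus \ker T^*_l = (\ker G_l)^\perp \cap \ran\pi_l$, applying $T^*$ to these vectors spans exactly $\ran T^*_l$. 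Therefore $W_l = \spn\{\alpha_1,\dots,\alpha_l\}$, the span of the first $l$ frame elements for $H^1_1$. Since $\{\alpha_k\} = \{b^{ij}_1\}$ is a frame of $H^1_1$ by Theorem~\ref{thmFrameSob}, its (closed) linear span is all of $H^1_1$, so $\bigcup_l W_l = \spn\{\alpha_k : k \geq 1\}$ is dense in $H^1_1$.

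Finally I would assemble these facts: given $\omega \in H^1_1$ and $\epsilon>0$, density of $\bigcup_l W_l$ gives an $N$ and a $\psi \in W_N$ with $\lVert \omega - \psi\rVert_{H^1_1} < \epsilon$; since $W_N \subseteq W_l$ for all $l \geq N$, the minimization characterization of the orthogonal projection yields $\lVert \omega - \Pi_l \omega \rVert_{H^1_1} \leq \lVert \omega - \psi \rVert_{H^1_1} < \epsilon$ for all $l \geq N$, which is exactly pointwise convergence $\Pi_l \omega \to \omega$. The one technical point that requires care — and I expect it to be the main obstacle — is the identification $W_l = \spn\{\alpha_1,\dots,\alpha_l\}$, i.e., verifying that the construction via the eigenvectors of $G_l$ really recovers the \emph{entire} range of the finite synthesis operator $T^*_l$ rather than some proper subspace; this hinges on the relationship $(\ker T^*_l)^\perp \cap \ran\pi_l = \overline{\ran T_l} = \overline{\ran G_l^{1/2}}$ together with $\ran T^* T^*_l$-computations, and on the precise spectral correspondence in Lemma~\ref{lemmaEigSL}(ii) between nonzero eigenvalues of $G_l$ and $S_l$. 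Once that identification is in hand, everything else is routine Hilbert-space bookkeeping.
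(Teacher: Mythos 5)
Your argument is correct, and it takes a genuinely different route from the paper's. The paper exploits the bounded invertibility of the frame operator $S = T^*T$ on $H^1_1$: it reduces the claim to showing $S\Pi_l\omega \to S\omega$, establishes the operator identity $T\Pi_l\omega = \pi_l T\omega$ (whence $S\Pi_l\omega = T^*\pi_l T\omega = S_l\omega$), and then invokes the strong convergence $S_l \to S$ already recorded in Section~\ref{secSECFiniteDimRep}. You instead work directly at the level of subspaces: you identify $W_l$ with $\ran T^*_l = \spn\{\alpha_1,\dots,\alpha_l\}$, note that these spaces are nested with union dense in $H^1_1$ (density following from the lower frame bound in Theorem~\ref{thmFrameSob}, which forces the closed span of the frame to be all of $H^1_1$), and conclude via the best-approximation property of orthogonal projections. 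The one step you flag as delicate --- that the $v_{k,l}$ built from the eigenvectors of $G_l$ at \emph{nonzero} eigenvalues span all of $\ran T^*_l$ rather than a proper subspace --- is indeed the crux, and your sketch of it is sound: the $u_{k,l}$ with $\eta_{k,l}\neq 0$ span $(\ker G_l)^\perp = \bigl(\ker T^*_l \cap \ran\pi_l\bigr)^\perp \cap \ran\pi_l$, and $T^*$ annihilates the complementary summand of $\ran\pi_l$, so $\spn\{T^*u_{k,l}\}_{k\le w_l} = T^*(\ran\pi_l) = \ran T^*_l$. Your route buys a more elementary and arguably more robust proof (it yields monotone decrease of $\lVert \omega - \Pi_l\omega\rVert_{H^1_1}$ and would apply verbatim to any nested family of closed subspaces with dense union, without needing the identity $T\Pi_l\omega=\pi_l T\omega$ or the invertibility of $S$); the paper's route is shorter given that the strong convergence $S_l\to S$ has already been set up as part of the general frame machinery.
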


\begin{proof}
    Since the frame operator $ S $ has a bounded inverse, it suffices to show that $ S \Pi_l \omega $ converges to $ S \omega$ for any $\omega \in H^1_1$. To verify this, observe first that
    \begin{align*}
        T \Pi_l \omega &= \sum_{k=0}^{w_l-1} \langle v_{k,l}, \omega \rangle_{H^1_1} T v_{k,l} = \sum_{k=0}^{w_l-1} \langle u_{k,l}, T\omega \rangle_{\ell^2} u_{k,l} = \pi_l \sum_{k=0}^{w_l-1} \langle u_{k,l}, T \omega \rangle_{\ell^2} u_{k,l} = \\
        & = \pi_l \sum_{k=0}^\infty \langle u_{k,l}, T \omega \rangle_{\ell^2} u_{k,l} = \pi_l T \omega. 
    \end{align*}
    We therefore have
    \begin{displaymath}
        S \Pi_l \omega = T^* T \Pi_l \omega = T^* \pi_l T \omega = S_l \omega, 
    \end{displaymath}
    which converges to $ S \omega $ by the pointwise convergence of $ S_l $ to $S$.
\end{proof}

Lemma~\ref{lemmaProj} implies that with the choice of approximation spaces in~\eqref{eqWL}, the Galerkin scheme in Definition~\ref{defnEigGalerkin} converges. Moreover, all of the matrix elements of the associated sesquilinear forms can be evaluated using Lemma~\ref{lemmaEigSL} in conjunction with the formulas listed in Section~\ref{secFrameDef} and \ref{derivations}. Explicitly, the $w_l \times w_l $ matrices appearing in the generalized eigenvalue problem in~\eqref{eqGEV} are given by
\begin{equation}
    \bm L_l = \bm H_l^{-1/2} \bm U_l^\dag ( \bm E_l - \theta \bm G^1_l ) \bm U_l \bm H_l^{-1/2}, \quad \bm B_l = \bm H_l^{-1/2} \bm U_l^\dag \bm G_l \bm U_l \bm H_l^{-1/2},
    \label{eqGEVMat}
\end{equation}
where $\bm H_l$ is a $w_l \times w_l$ diagonal matrix with $[\bm H_l]_{ii} = \eta_{i,l}$, $\bm U_l$ an $ l \times w_l$ matrix with $[\bm U_l]_{ij} = u_{i,j,l}$, $\bm E_l$ an $l \times l$ matrix with $[\bm E_l]_{ij} = E_{1,1}(\alpha_i, \alpha_j)$ determined from Table~\ref{fig3}, and $\bm G_l$ the $l \times l$ Grammian matrix for $H_1 $ with $[\bm G_l]_{ij} = \langle \alpha_i, \alpha_j \rangle_{H_1}$ determined from Table~\ref{fig3}. Note that the matrices $\bm L_l$ and $\bm B_l$ in~\eqref{eqGEVMat} differ from the corresponding matrices appearing in the generalized eigenvalue problem in~\eqref{eqGEVIntro} in that they include  $ \bm H_l$- and $ \theta $-dependent terms, which do not appear in~\eqref{eqGEVIntro}. Due to the absence of these terms, \eqref{eqGEVIntro} represents the problem in Definition~\ref{defnEigGalerkin} in a basis of $W_l$ that exhibits unbounded growth of $H^1_1$ norm with $l $ (controlled by the $ \bm H_l $ terms in~\eqref{eqGEVMat}), and is also not compatible with the coercivity condition in Lemma~\ref{lemmaSesquiBounds} (enforced by the $\theta$-dependent terms). While both of these issues could potentially affect the numerical conditioning of~\eqref{eqGEVIntro}, especially at large spectral orders $l $, in the examples studied in Section~\ref{secNumerics} we found that \eqref{eqGEVIntro} and~\eqref{eqGEVMat} perform comparably.     

\section{\label{secDataDriven}Data-driven approximation}

All of the schemes in Sections~\ref{secSECSmooth} and~\ref{secGalerkin} can be implemented given knowledge of the eigenvalues and eigenfunctions of the Laplacian on functions. The problem of approximating these objects from finite sets of points in a purely data-driven manner (that is, without requiring explicit knowledge of the manifold $\mathcal{M}$ and/or its embedding in data space) has been studied extensively in recent years \cite{BelkinNiyogi03,CoifmanLafon06,Singer06,VonLuxburgEtAl08,BerryHarlim16,BerrySauer16b,BerrySauer16,belkin2007convergence,shi2015convergence,trillos2018error,trillos2018variational,HeinEtAl05}, leading to the development of approximation techniques with well-established pointwise and spectral convergence guarantees. In this section, we summarize the main properties of one such techniques, namely the diffusion maps algorithm \cite{CoifmanLafon06}, and describe the analogs of the methods of Sections~\ref{secSECSmooth} and~\ref{secGalerkin} in a data-driven, discrete setting. It should be noted that, while generally expected on the basis of results for related techniques \cite{belkin2007convergence,shi2015convergence,trillos2018error,trillos2018variational}, to our knowledge, the spectral convergence result for diffusion maps in Theorem~\ref{thmHeatConv} below has not been stated elsewhere in the literature, so we have included here a self-contained proof for completeness.

\subsection{Assumptions for data-driven approximation}

We consider that the Riemannian manifold $(\mathcal{M},g)$ is embedded in $n$-dimensional Euclidean space by means of a smooth, isometric embedding $F: \mathcal{M} \to \mathbb{R}^n $. Using the notation $ y \cdot z = \sum_{i=1}^n y^i z^i $ to represent the canonical Euclidean inner product between two vectors $ y = ( y^1, \ldots, y^n ) $ and $ z = ( z^1, \ldots, z^n ) $ in $\mathbb{R}^n$, we thus have $ g_x( u, v ) = F_{*,x} u \cdot F_{*,x} v $ for any point $ x \in \mathcal{M} $ and tangent vectors $ u, v \in T_x \mathcal{M} $, where $ F_{*,x} : T_x \mathcal{M} \to T_{F(x)} \mathbb{R}^n $ is the pushforward map on tangent vectors associated with $F$, and we have used the canonical isomorphism $ T_{F(x)} \mathbb{R}^n \simeq \mathbb{R}^n $. If the embedding $F $ is not isometric, then the method described below can be modified via the techniques developed in \cite{BerrySauer16} to yield approximations of Laplacian eigenvalues and eigenfunctions with respect to any (known) metric $ g $. 

We also assume that we have access to a dataset consisting of $ N $ samples $ y_1, \ldots, y_{N} $ in $ \mathbb{ R}^n $ with $ y_j = F( x_j ) $ taken on a sequence of distinct points $ x_1, x_2, \ldots $ in $ \mathcal{M} $, which is equidistributed with respect to a smooth sampling measure $ \sigma $ supported on $\mathcal{M}$. By that, we mean that given any continuous function $ f : \mathcal{M} \to \mathbb{C} $, the result
\begin{equation}
	\label{eqErgodicLimit}
	\lim_{N\to\infty} \frac{1}{N} \sum_{j=1}^{N} f( x_j ) = \int_\mathcal{M} f \, d\sigma
\end{equation}  
holds, and moreover $ \sigma $   has a smooth density $ \rho = d\sigma/d\mu $ with respect to the Riemannian measure $ \mu $, bounded away from zero. Such an equidistributed sequence can be provided, e.g., by i.i.d.\ points on $\mathcal{M}$ (as is commonly assumed in machine learning applications), or by an orbit of an ergodic dynamical system (in which case, the $x_j$ are not independent). The requirement in~\eqref{eqErgodicLimit} is equivalent to assuming that the sequence of sampling measures $ \sigma_N = N^{-1} \sum_{n=0}^{N-1} \delta_{x_j} $ weak-converges to $ \sigma $; that is, 
\begin{displaymath}
	\lim_{N\to\infty} \int_\mathcal{M} f \, d\sigma_N = \lim_{N\to\infty} \frac{1}{N} \sum_{j=1}^{N} f( x_j ) = \int_\mathcal{M} f \, d\sigma, \quad \forall f \in C(\mathcal{M}).
\end{displaymath}   

In this data-driven setting, we will be working with the $N$-dimensional Hilbert space $L^2(\mathcal{M},\sigma_N)$ associated with the discrete sampling measure $ \sigma_N$, equipped with the inner product 
\begin{displaymath}
    \langle f, h \rangle_{L^2(\mathcal{M},\sigma_N)} = \int_\mathcal{M} f^* h \, d\sigma_N = \frac{1}{N} \sum_{j=1}^{N} f^*(x_j) h(x_j). 
\end{displaymath}   
Note that $ L^2(\mathcal{M},\sigma_N) $ consists of equivalence classes of functions on $\mathcal{M} $ which are equal up to sets of zero $ \sigma_N $ measure; that is, two functions $ f: \mathcal{M} \to \mathbb{C}$ and $ h: \mathcal{M} \to \mathbb{C} $ satisfying $ f( x_j ) = h( x_j ) $ for all $ j \in \{ 1, \ldots, N \} $, but taking arbitrarily different values at other points, lie in the same $L^2(\mathcal{M},\sigma_N) $ equivalence class. Because the points $x_1, \ldots, x_N$ are all distinct, $ L^2(\mathcal{M},\sigma_N) $ is isomorphic as a Hilbert space to $ \mathbb{C}^N$ equipped with the normalized dot product $ f \cdot  g / N $, but here we prefer to work with $ L^2(\mathcal{M},\sigma_N) $ to emphasize the fact that our data-driven approximation spaces contain equivalence classes of functions on the same underlying manifold as the equivalence classes comprising $L^2(\mathcal{M},\mu)$.   

\subsection{\label{secKernelApprox}Kernel method for the eigenvalues and eigenfunctions of the Laplacian on functions}

Following the approach introduced in the diffusion maps algorithm \cite{CoifmanLafon06}, and further generalized in \cite{BerrySauer16b}, we compute data-driven approximations of the eigenvalues and eigenfunctions of the Laplacian on functions through the eigenvalues and eigenfunctions of a kernel integral operator approximating, in a suitable sense, the heat operator $  e^{- \tau \bar \Delta}$, $ \tau \geq 0 $, on $ L^2(\mathcal{M},\mu) $, where $ \bar \Delta $ is the self-adjoint Laplacian (see Section~\ref{secSpacesFunctions}).  This kernel integral operator is constructed from a smooth, exponentially decaying kernel $ k_\epsilon : \mathcal{M} \times \mathcal{M} \to \mathbb{R}_+ $, bounded away from zero. Here, as a concrete example, we work with a Gaussian kernel,
\begin{displaymath}
	k_\epsilon(x,x') = \exp\left( - \frac{ \lVert F( x ) - F( x' ) \rVert^2_{\mathbb{R}^n} }{ \epsilon } \right),
\end{displaymath} 
where $ \epsilon $ is a positive bandwidth parameter. Approximation techniques based on other classes of kernels, including kernels with variable bandwidth functions \cite{BerryHarlim16,BerrySauer16}, have equivalent asymptotic properties while generally achieving higher performance in terms of approximation accuracy and noise robustness, particularly in applications with large variations in the sampling density $\rho$.  

Having specified an appropriate kernel, we introduce the associated kernel integral operators $ \hat K_\epsilon : L^2( \mathcal{M}, \mu ) \to C( \mathcal{M} ) $ and $ \hat K_{\epsilon,N} : L^2( \mathcal{M}, \sigma_N ) \to C( \mathcal{M} ) $, where
\begin{displaymath}
	\hat K_\epsilon f = \int_\mathcal{M} k_\epsilon( \cdot, x ) f( x ) \rho(x) \, d\mu(x), \quad \hat K_{\epsilon,N} h = \int_\mathcal{M} k_\epsilon( \cdot, x ) h( x ) \, d\sigma_N( x ). 
\end{displaymath}    
Composing $ \hat K_\epsilon $ with the canonical inclusion operator $ \iota : C( \mathcal{M}) \to L^2(\mathcal{M},\mu ) $, we also define $ \tilde K_\epsilon : L^2(\mathcal{M},\mu) \to L^2(\mathcal{M},\mu )$ and $ K_\epsilon : C( \mathcal{M} ) \to C( \mathcal{M}) $, where $ \tilde K_\epsilon = \iota \hat K_\epsilon $ and $  K_\epsilon = \hat K_\epsilon \iota $. Similarly, we define $\tilde K_{\epsilon,N} : L^2(\mathcal{M},\sigma_N) \to L^2(\mathcal{M},\sigma_N) $ and $  K_{\epsilon,N} : C(\mathcal{M}) \to C(\mathcal{M}) $, where $ \tilde K_\epsilon = \iota_N \hat K_{\epsilon,N} $, $  K_{\epsilon,N} = \hat K_{\epsilon,N} \iota_N $, and $ \iota_N : C(\mathcal{M}) \to L^2(\mathcal{M},\sigma_N) $ is the canonical restriction operator from $C(\mathcal{M}) $ to $ L^2(\mathcal{M},\sigma_N) $.  

\begin{proposition}
    \label{propK}
The operators $ \tilde K_{\epsilon} $, $ K_{\epsilon} $, $ \tilde K_{\epsilon,N} $, and $ K_{\epsilon,N} $ have the following properties.

(i) They are all compact. 

(ii) As $ N \to \infty $, $ K_{\epsilon, N} $ converges pointwise to $ K_\epsilon $; that is, for any $ f \in C(\mathcal{M}) $, we have $ \lim_{N\to\infty}  K_{\epsilon, N} f = K_\epsilon f $ in uniform norm. 
\end{proposition}

\begin{proof}
    (i) That $\tilde K_{\epsilon,N} $ and $ K_{\epsilon,N}$ are compact follows immediately from the fact that they have finite rank. The compactness of $\tilde K_\epsilon$ follows from the facts that $ k_\epsilon $ is a Hilbert-Schmidt kernel on $L^2(\mathcal{M}\times \mathcal{M}, \sigma \times \sigma)$ (i.e., $ \int_\mathcal{M} \int_\mathcal{M} k_\epsilon( x, x' ) \, d\sigma(x)\, d\sigma(x') < \infty$), and $ L^2(\mathcal{M},\mu) $ and $L^2(\mathcal{M},\sigma) $ are isomorphic Hilbert spaces (by smoothness of $ \rho $ and compactness of $\mathcal{M}$). The compactness of $ K_\epsilon $ can be verified using the Arzel\`a-Ascoli theorem in conjunction with the continuity of $ k_\epsilon$; see, e.g., \cite{DasGiannakis19}. 

    (ii) The claim is a direct consequence of~\eqref{eqErgodicLimit} and the fact that $\int_\mathcal{M} f \rho \, d\mu = \int_\mathcal{M} f \, d\sigma$ for any $ f\in L^2(\mathcal{M},\mu) $. 

\end{proof}  

Proposition~\ref{propK}(ii) shows that, on $C(\mathcal{M})$, we can approximate kernel integral operators with respect to the Riemannian measure by kernel integral operators with respect to the sampling measure. However, the pointwise convergence established there does not, in general, imply spectral convergence for these operators. Moreover, in applications we work in the finite-dimensional Hilbert space $L^2(\mathcal{M},\sigma_N)$, as opposed to the infinite-dimensional Banach space $C(\mathcal{M})$, which necessitates establishing connections between the spectral properties of $\tilde K_{\epsilon,N}$ and $K_{\epsilon,N}$. Another issue that must be addressed is that of approximating the heat operator $ e^{-\tau \bar \Delta}$ by a suitable modification of $\tilde K_\epsilon $. 

Following \cite{CoifmanLafon06,BerrySauer16b}, we proceed by normalizing $ k_\epsilon $ to construct a smooth ergodic Markov kernel $ p_\epsilon : \mathcal{M} \times \mathcal{M} \to \mathbb{R}_+$, satisfying $ \int_\mathcal{M} p_\epsilon(x,\cdot ) \, d\sigma = 1 $ for all $x\in\mathcal{M}$. For that, we introduce the normalization functions $ r_\epsilon = \hat K_\epsilon 1 $ and $ l_\epsilon = \hat K_\epsilon( 1 / r_\epsilon ) $, which are both smooth, positive, and bounded away from zero, and define
\begin{displaymath}
    p_\epsilon(x,x') = \frac{k_\epsilon(x,x')}{l_\epsilon(x) r_\epsilon(x')}.
\end{displaymath}
The Markov property of $ p_\epsilon $ then follows by construction, and its ergodicity follows from the fact that it is bounded below. In the data-driven case, we define
\begin{equation}
    \label{eqDMNorm}
    r_{\epsilon,N} = \hat K_{\epsilon,N} 1, \quad l_{\epsilon,N} = \hat K_{\epsilon,N}( 1 / r_{\epsilon,N} ), \quad p_{\epsilon,N}(x,x') = \frac{k_\epsilon(x,x')}{l_{\epsilon,N}(x) r_{\epsilon,N}(x')}, 
\end{equation}
and $ p_{\epsilon,N}$ is a smooth Markov kernel satisfying $\int_\mathcal{M} p_{\epsilon,N}(\cdot, x ) \, d \sigma_N(x) = 1 $. As in the case of the kernel $ k_\epsilon $, we define the kernel integral operators $ \hat P_\epsilon : L^2(\mathcal{M},\mu ) \to C(\mathcal{M}) $ and $\hat P_{\epsilon,N} : L^2(\mathcal{M}, \sigma_N) \to C(\mathcal{M})$ via
\begin{displaymath}
    \hat P_\epsilon f = \int_\mathcal{M} p_\epsilon(\cdot, x) f(x)  \rho(x) \, d\mu(x), \quad \hat P_{\epsilon, N} h = \int_\mathcal{M} p_{\epsilon,N}(\cdot, x) h(x) \, d\sigma_N(x),
\end{displaymath}
and also introduce the operators $ \tilde P_{\epsilon} : L^2(\mathcal{M},\mu) \to L^2(\mathcal{M},\mu) $, $ P_{\epsilon} : C(\mathcal{M}) \to C(\mathcal{M}) $, $ \tilde P_{\epsilon,N} : L^2(\mathcal{M},\sigma_N) \to L^2(\mathcal{M},\sigma_N)$, and $ P_{\epsilon,N}: C(\mathcal{M}) \to C(\mathcal{M})$, where $\tilde P_\epsilon = \iota \hat P_\epsilon$, $ P_\epsilon = \hat P_\epsilon \iota $, $\tilde P_{\epsilon,N} = \iota_N \hat P_{\epsilon,N} $, and $ P_{\epsilon,N} = \hat P_{\epsilon,N} \iota_N$. These operators have the analogous properties to those stated in Proposition~\ref{propK}. Among them, $ \tilde P_{\epsilon,N} $ is represented by the Markov matrix $ \bm P $ from~\eqref{eqPDM} (note that, due to cancellation of terms, the normalization of $k_\epsilon(x,x')$ by $r_{\epsilon,N}$ and $l_{\epsilon,N}$ to construct $p_{\epsilon,N}$ is equivalent to the normalization procedure used to construct $\bm P$; see \cite{BerrySauer16b} for details). The following theorem summarizes how the nonzero eigenvalues and corresponding eigenvectors of $\tilde P_\epsilon$ can be approximated by the corresponding eigenvalues and eigenvectors of $\tilde P_{\epsilon,N}$, which are accessible from data as described in Section~\ref{dm}. 

\begin{theorem}
    \label{thmSpecConv}
    The following hold:

(i) $ \Lambda_{j,\epsilon} $ is a nonzero eigenvalue of $\tilde P_\epsilon$ if and only if it is a nonzero eigenvalue of $ P_\epsilon $. Similarly, $ \Lambda_{j,\epsilon,N}$ is a nonzero eigenvalue of $\tilde P_{_\epsilon,N}$ if and only if it is a nonzero eigenvalue of $P_{\epsilon,N}$. Moreover, the $\Lambda_{j,\epsilon,N}$ and $\Lambda_{j,\epsilon,N}$ are all real, and thus lie in the interval $[-1,1]$ by the Markov property of $p_\epsilon$ and $p_{\epsilon,N}$.
    
    (ii) If $\tilde \phi_{j,\epsilon} \in L^2(\mathcal{M},\mu)$ and $\vec \phi_{j,\epsilon,N} \in L^2(\mathcal{M},\sigma_N)$ are eigenvectors of $P_\epsilon$ and $P_{\epsilon,N}$ at nonzero eigenvalues $\Lambda_{j,\epsilon}$ and $\Lambda_{j,\epsilon,N}$, respectively, then the smooth functions 
    \begin{displaymath}
        \phi_{j,\epsilon} = \frac{1}{\Lambda_{j,\epsilon}} \hat P_\epsilon \tilde \phi_{j,\epsilon}, \quad \phi_{j,\epsilon,N} = \frac{1}{\Lambda_{j,\epsilon,N}} \hat P_{\epsilon,N} \vec \phi_{j,\epsilon,N}
    \end{displaymath}
    are eigenvectors of $ P_\epsilon$ and $ P_{\epsilon,N}$, respectively, at the same eigenvalues. 

    (iii) For every nonzero eigenvalue $\Lambda_{j,\epsilon}$ of $ P_\epsilon $, the sequence of eigenvalues $\Lambda_{j,\epsilon,N} $ of $ P_{\epsilon,N} $ converges as $ N \to \infty $ to $ \Lambda_{j,\epsilon } $. Moreover, if $ \phi_{j,\epsilon} $ is an eigenvector of $ P_{\epsilon}$ corresponding to eigenvalue $\Lambda_{j,\epsilon}$, then there exists a sequence of eigenvectors $\phi_{j,\epsilon,N}$ of $P_{\epsilon,N}$ at eigenvalues $ \Lambda_{j,\epsilon,N} $, converging as $ N \to \infty $ to $  \phi_{j,\epsilon} $ in uniform norm.
\end{theorem}

\begin{proof}
    (i,ii) The claims on the relationships between the eigenvalues and eigenvectors of $\tilde P_\epsilon$ and $ P_\epsilon$ (and those of $\tilde P_{\epsilon,N}$ and $ P_{\epsilon,N}$) can be verified from the definition of these operators. In addition, it can be verified that $\tilde P_\epsilon$ and $\tilde P_{\epsilon,N}$ are related to self-adjoint operators by similarity transformations, which implies that their eigenvalues are real. 

    (iii) The convergence of the eigenvalues follows by showing that the operators $ P_{\epsilon,N}$ converge compactly to $ P_\epsilon $ (a stronger notion of convergence than pointwise convergence, but weaker than convergence in operator norm); see \cite{VonLuxburgEtAl08} for additional details. A proof of the convergence of the eigenvectors can be found in \cite{DasGiannakis19}. We also note that \cite{VonLuxburgEtAl08} establishes pointwise convergence of projection operators onto the corresponding eigenspaces. 
\end{proof}

That we can approximate eigenvalues and eigenvectors of $\tilde P_\epsilon$ through eigenvalues and eigenvectors of $\tilde P_{\epsilon,N} $ is important since the eigenvalue problem for the latter operator is equivalent to the numerically solvable $N \times N $ matrix eigenvalue problem for $ \bm P $. Note that by ergodicity and the Markov property, the eigenvalues $\Lambda_{j,\epsilon}$ and $\Lambda_{j,\epsilon,N}$ can be ordered as $1 = \Lambda_{0,\epsilon} > \Lambda_{1,\epsilon} \geq \Lambda_{2,\epsilon} \geq \cdots$ and $1 = \Lambda_{0,\epsilon,N} > \Lambda_{1,\epsilon,N} \geq \Lambda_{2,\epsilon,N} \geq \cdots \Lambda_{N-1,\epsilon,N}$, respectively. We will adopt these orderings for the remainder of the paper. What remains is to establish how the eigenvalues and eigenvectors of $\tilde P_\epsilon$ approximate in turn eigenvalues and eigenvectors of the Laplacian.  

\begin{theorem}
    \label{thmHeatConv}
    For every $j \in \mathbb{N}_0$ and as $\epsilon \to 0^+$, the quantities $\lambda_{j,\epsilon} = (1 - \Lambda_{j,\epsilon})/\epsilon$ converge to the Laplace-Beltrami eigenvalue $\lambda_j$. Moreover, for each Laplace-Beltrami eigenfunction $\phi_j$ corresponding to $\lambda_j$, there exist eigenfunctions $\tilde \phi_{j,\epsilon}$ of $ \tilde P_\epsilon$ corresponding to eigenvalue $\lambda_{j,\epsilon}$ converging, as $\epsilon \to 0^+$, to $ \phi_j$ in $L^2(\mathcal{M},\mu)$ norm.
\end{theorem}

\begin{proof}
    By \cite[Proposition~10]{CoifmanLafon06}, as $\epsilon \to 0^+ $, the family of operators $ \tilde \Delta_\epsilon := (I - \tilde P_\epsilon)/\epsilon$ converges to $ \bar \Delta $, pointwise on $C^\infty(\mathcal{M}) \cap L^2(\mathcal{M},\mu)$. Moreover, as can be directly verified from the normalization procedure in~\eqref{eqDMNorm}, $\tilde P_\epsilon$ is related to a self-adjoint operator $ \bar P_\epsilon : L^2(\mathcal{M},\mu) \to L^2(\mathcal{M},\mu)$ by a similarity transformation by a bounded multiplication operator with a bounded inverse; specifically, $ \tilde P_\epsilon = D_\epsilon^{1/2} \bar P_\epsilon D_\epsilon^{-1/2} $, where $D_\epsilon : L^2(\mathcal{M},\mu) \to L^2(\mathcal{M},\mu) $ is the multiplication operator by the smooth, strictly positive function $ d_\epsilon = r_\epsilon / ( l_\epsilon \rho )$. It can also be shown through small-$\epsilon$ expansions that, as $\epsilon \to 0^+$, $ d_\epsilon$ converges to a constant in $H^2$ norm. By virtue of the above, $ \bar \Delta_\epsilon = ( I - \bar P_\epsilon ) / \epsilon $ is a family of self-adjoint, compact operators, with the same eigenvalues $ \lambda_{j,\epsilon} $ as $ \tilde \Delta_\epsilon $, converging pointwise to $ \bar \Delta $ on $C^\infty(\mathcal{M}) \cap L^2(\mathcal{M},\mu)$. Now, observe that $C^\infty(\mathcal{M})$ is a \emph{core} for $\bar \Delta $ (i.e., $ \bar \Delta $ is equal to the closure of its restriction on $C^\infty(\mathcal{M})$, which follows immediately from the fact that, on manifolds without boundary, the Laplacian $ \Delta$ on $C^\infty(\mathcal{M})$ functions is essentially self-adjoint). By results from spectral approximation theory of self-adjoint operators \cite[Proposition~10.1.18]{Oliveira2008}, this actually implies that the eigenvalues $\lambda_{j,\epsilon}$ converge to $ \lambda_j$, as claimed. Moreover, by related results \cite[Chapter X.7, Corollary~3]{DunSchII1988}, the orthogonal projections to the eigenspaces of $ \bar \Delta_\epsilon $ corresponding to $ \lambda_{j,\epsilon}$ converge pointwise to the projectors onto the eigenspaces of $\bar \Delta$ at eigenvalue $\lambda_j$. The latter, in conjunction with the fact that $D_\epsilon$ converges to a multiplication operator by a constant function, leads to the claim on the convergence of the eigenfunctions of $\tilde P_\epsilon$ to Laplace-Beltrami eigenfunctions as $\epsilon \to 0^+$.      
\end{proof}

In summary, we can conclude from Theorems~\ref{thmSpecConv} and~\ref{thmHeatConv} that the quantities
\begin{equation}
    \lambda_{j,\epsilon} = \frac{1 - \Lambda_{j,\epsilon}}{\epsilon}, \quad \lambda_{j,\epsilon,N} = \frac{1-\Lambda_{j,\epsilon,N}}{\epsilon}, 
    \label{eqEigApprox}
\end{equation}
converge to the eigenvalues of the Laplacian, i.e.,
\begin{displaymath}
    \lim_{\epsilon \to 0^+} \lim_{N\to\infty} \lambda_{j,\epsilon,N} = \lim_{\epsilon \to 0^+} \lambda_{j,\epsilon} =  \lambda_j. 
\end{displaymath}
Moreover, for any eigenfunction $ \phi_j \in C^\infty(\mathcal{M}) $ of $ \Delta $ at eigenvalue $\lambda_j$,  there exist eigenfunctions $\tilde \phi_{j,\epsilon}$ and  $ \tilde \phi_{j,\epsilon,N} $ of $ P_{\epsilon} $ and $P_{\epsilon,N}$, respectively, such that the smooth functions
\begin{equation}
    \phi_{j,\epsilon} = \frac{1}{\Lambda_{j,\epsilon}} \hat P_{\epsilon}\tilde  \phi_{j,\epsilon}, \quad \phi_{j,\epsilon,N} = \frac{1}{\Lambda_{j,\epsilon,N}} \hat P_{\epsilon,N}\vec \phi_{j,\epsilon,N} 
    \label{eqEigApproxPhi}
\end{equation}
satisfy
\begin{displaymath}
    \lim_{\epsilon\to 0^+} \lim_{N\to\infty} \phi_{j,\epsilon,N} = \lim_{\epsilon\to 0^+}  \phi_{j,\epsilon} = \phi_j,
\end{displaymath}
where the limits are taken with respect to uniform norm. 

\begin{remark}
    In addition to the convergence results stated above as iterated ($\epsilon \to 0^+$ after $N \to \infty$) limits, in applications it is clearly important to have convergence results for limits where $N\to \infty$ and $\epsilon \to 0^+$ simultaneously.  In particular, note that at fixed $N \in \mathbb{N} $, the eigenvalues $\lambda_{j,\epsilon,N}$ degenerate as $\epsilon \to 0^+$, and fail to provide a good approximation of $\lambda_j$. This necessitates taking $ N \to \infty $ limits along a sequence $ \epsilon(N)$ decreasing towards zero at a sufficiently slow rate. In the literature, this problem has mainly been studied in the context of i.i.d.\ samples \cite{belkin2007convergence,shi2015convergence,trillos2018error,trillos2018variational,BerrySauer16}. For example, \cite{trillos2018variational} shows that in dimension  $ d = \dim \mathcal{M} \geq 3$ it suffices to take $\epsilon(N)$ such that $\lim_{N\to\infty} [ ( \log N )^{1/d} / ( N^{1/d} \epsilon(N) ) ] = 0$. Here, we have opted to state spectral convergence results in terms of iterated limits, as they are valid for arbitrary sampling scenarios satisfying the weak convergence property in~\eqref{eqErgodicLimit}. As previously noted, a common scenario with non-i.i.d.\ sampling is that of time-ordered data taken along orbits of ergodic dynamical systems.
\end{remark}

\subsection{Data-driven frame elements and approximation of sesquilinear forms}

Using the approximate eigenvalues and eigenfunctions from~\eqref{eqEigApprox} and~\eqref{eqEigApproxPhi}, we can construct data-driven analogs of the various basis and frame elements for functions, vector fields, and forms introduced in Sections~\ref{secPrelim} and~\ref{secSECSmooth}. For example, 
\begin{displaymath}
    \phi_{j,\epsilon,N}^{(p)} = \frac{ \phi_{j,\epsilon,N}}{\lambda_{j,\epsilon,N}^{p/2}}, \quad b_{ij,\epsilon,N}^{(p)} = \frac{ \phi_{i,\epsilon,N} \grad \phi_{j,\epsilon,N}}{\lambda_{i,\epsilon,N}^{p/2}, },  \quad  b^{i j_1 \cdots j_k}_{p,\epsilon,N} = \frac{d \phi_{j_1,\epsilon,N} \wedge \cdots \wedge d\phi_{j_k,\epsilon,N}}{\lambda_{i,\epsilon,N}^{p/2}}  
\end{displaymath}
are data-driven analogs of the basis functions $\phi^{(p)}_j$ in~\eqref{eqPhiHP}, the frame elements $b_{ij}^{(p)}$ in~\eqref{eqFrameVecA} for vector fields, and the frame elements $b^{ij_1\cdots j_k}_p$ in~\eqref{eqFrameForm} for forms, respectively. All of these objects are ``concrete'', i.e., the take values pointwise on $\mathcal{M}$ as opposed to being defined up to null sets. Moreover, their pointwise evaluation in practice relies on the ability to compute derivatives of the kernel $k_\epsilon$. 

In SEC, however, it oftentimes suffices to consider quantities that can be computed using only the ``weak'' counterparts $\tilde \phi_{j,\epsilon,N}$ of $ \phi_{j,\epsilon,N}$ lying in $L^2(\mathcal{M},\sigma_N)$. As a concrete example, consider the Galerkin method for the 1-Laplacian in Definition~\ref{defnEigGalerkin}. To construct a data-driven analog of this scheme we compute the following quantities, using the shorthand notation $ \alpha_{k,\epsilon,N} = b_{1,\epsilon,N}^{i_k j_k} $ as in Section~\ref{secGalerkin}:
\begin{enumerate}
    \item Triple products $c_{ijk,\epsilon,N} = \langle \vec \phi_{i,\epsilon,N}, \vec \phi_{j,\epsilon,N} \vec \phi_{k,\epsilon,N} \rangle_{L^2(\mathcal{M},\sigma_N)} $.
    \item Approximations $G_{ij,\epsilon,N}$ and $E_{ij,\epsilon,N}$ of the $H_1$ inner products $\langle \alpha_{i,\epsilon,N}, \alpha_{j,\epsilon,N} \rangle_{H_1}$ and Dirichlet energies $E_{1,1}(\alpha_{i,\epsilon,N}, \alpha_{j,\epsilon,N})$, computed via the formulas in Table~\ref{fig3}, with the eigenvalues $ \lambda_i $ and triple products $c_{ijk}$ replaced by $\lambda_{i,\epsilon,N}$ and $c_{ijk,\epsilon,N}$, respectively. 
    \item Approximations $u_{k,l,\epsilon,N} \in \ell^2$ of the eigenvectors of the operator $G_l$, computed by solving the eigenvalue problem of the $l \times l $ matrix $\bm G^1_{l,\epsilon,N} $ with elements $[\bm G_{l,\epsilon,N}]_{ij} = G_{ij,\epsilon,N} + E_{ij,\epsilon,N}$. 
    \item Approximation of the $w_l \times w_l $ matrices $\bm L_l$ and $\bm B_l$ by $ w_{l,\epsilon,N} \times w_{l,\epsilon,N}$ matrices $\bm L_{l,\epsilon,N}$ and $\bm B_{l,\epsilon,N}$, respectively, where $ w_{l,\epsilon,N} $ is the number of nonzero eigenvalues of $\bm G_{l,\epsilon,N}$, and $\bm L_{l,\epsilon,N}$ and $ \bm B_{l,\epsilon,N}$ are computed via~\eqref{eqGEVMat}, using the results of Steps 2 and 3 above as appropriate.
\end{enumerate}

By Theorems~\ref{thmSpecConv} and~\ref{thmHeatConv}, 
\begin{displaymath}
    \lim_{\epsilon\to 0^+} \lim_{N\to\infty} \bm L_{l,\epsilon,N} = \bm L_l, \quad \lim_{\epsilon \to 0^+} \lim_{N\to\infty} \bm B_{l,\epsilon,N} = \bm B_l, 
\end{displaymath}
in any matrix norm. Thus, for any solution $(\nu_{k,l}, \vec a_{k,l})$ of the generalized eigenvalue problem in~\eqref{eqGEV} there exist solutions $(\nu_{k,l,\epsilon,N}, \vec a_{k,l,\epsilon,N})$ of the generalized eigenvalue problems
\begin{displaymath}
    \bm L_{l,\epsilon,N} \vec a_{k,l,\epsilon,N} = \nu_{k,l,\epsilon,N} \bm B_{l,\epsilon,N} \vec a_{k,l,\epsilon,N}, 
\end{displaymath}
such that $\lim_{\epsilon\to 0^+} \lim_{N\to\infty} \nu_{k,l,\epsilon,N} = \nu_{k,l}$ and $\lim_{\epsilon \to 0^+} \lim_{N\to\infty} \vec a_{k,l,\epsilon,N} = \vec a_{k,l}$. The latter, in conjunction with the convergence of the Laplacian eigenfunctions in Theorem~\ref{thmSpecConv}(iii), implies that the reconstructed 1-forms
\begin{displaymath}
    \varphi_{k,l,\epsilon,N} = \sum_{j=0}^{w_{l,\epsilon,N}-1} [\vec a_{k,l,\epsilon,N}]_j \alpha_{j,\epsilon,N}
\end{displaymath}
converge, in $H^1_1$ norm, to the reconstructed form $\varphi_{k,l}$ associated with $\vec a_{k,l}$. By convergence of the Galerkin scheme in Definition~\ref{defnEigGalerkin}, we therefore conclude that
\begin{displaymath}
    \lim_{l\to\infty}\lim_{\epsilon\to 0^+} \lim_{N\to\infty} \nu_{k,l,\epsilon,N} = \nu_k, \quad \lim_{l\to\infty} \lim_{\epsilon\to 0^+} \lim_{N\to\infty} \varphi_{k,l,\epsilon,N} = \varphi_{k,l},
\end{displaymath}
where $(\nu_k,\varphi_k)$ is a weak eigenvalue-eigenvector pair of the 1-Laplacian, solving the variational eigenvalue problem in Definition~\ref{defnEigWeak}.

\section{Numerical examples}\label{secNumerics}

In this section, we apply the SEC to several smooth manifolds and a fractal set to verify and demonstrate the utility of our approach.  In each example, we constructed the $1$-Laplacian and its eigenvalue and eigenforms using the same procedure, which we describe here.  First, we applied the diffusion maps algorithm described in Section \ref{dm} to the data in order to estimate the eigenvalues and eigenfunctions of the $0$-Laplacian.  Setting $M=20$ eigenfunctions, we used 100 eigenfunctions to compute the $M\times M\times M$ tensor $c$ which is the Fourier representation of function multiplication (see Table~\ref{fig2} in Section \ref{functionOverview}).  Using the formulas in Table~\ref{fig3} in Section \ref{laplacianOverview}, we constructed the $M^2 \times M^2$ energy matrix $\hat{\bm E}$ and Hodge Grammian $\hat{ \bm G } $ for the anti-symmetric formulation of the SEC.  Following the procedure in Section \ref{laplacianOverview}, we then projected the eigenvalue problem onto the appropriate Sobolev $H^1_1$ basis, and computed the eigenvalues and eigenforms of the $1$-Laplacian.  Finally, we visualized the vector fields corresponding to the eigenforms by computing their operator representation, and pushing forward these vector fields into the original data space as described in Sections \ref{laplacianOverview} and~\ref{secSECFiniteDimRep}.  

In the online supplementary material we have included Matlab code which implements the SEC $1$-Laplacian construction along with a Diffusion Maps implementation. We also include code that generates all the data sets shown below and a simple ``DEMO.m'' file to replicate our results. 

\subsection{Validation of the SEC 1-Laplacian spectra}\label{validation}

In this section we apply the SEC based construction of the 1-Laplacian to two examples where the spectrum of the 1-Laplacian can easily be worked out analytically.  

\begin{figure}
\includegraphics[width=0.32\textwidth]{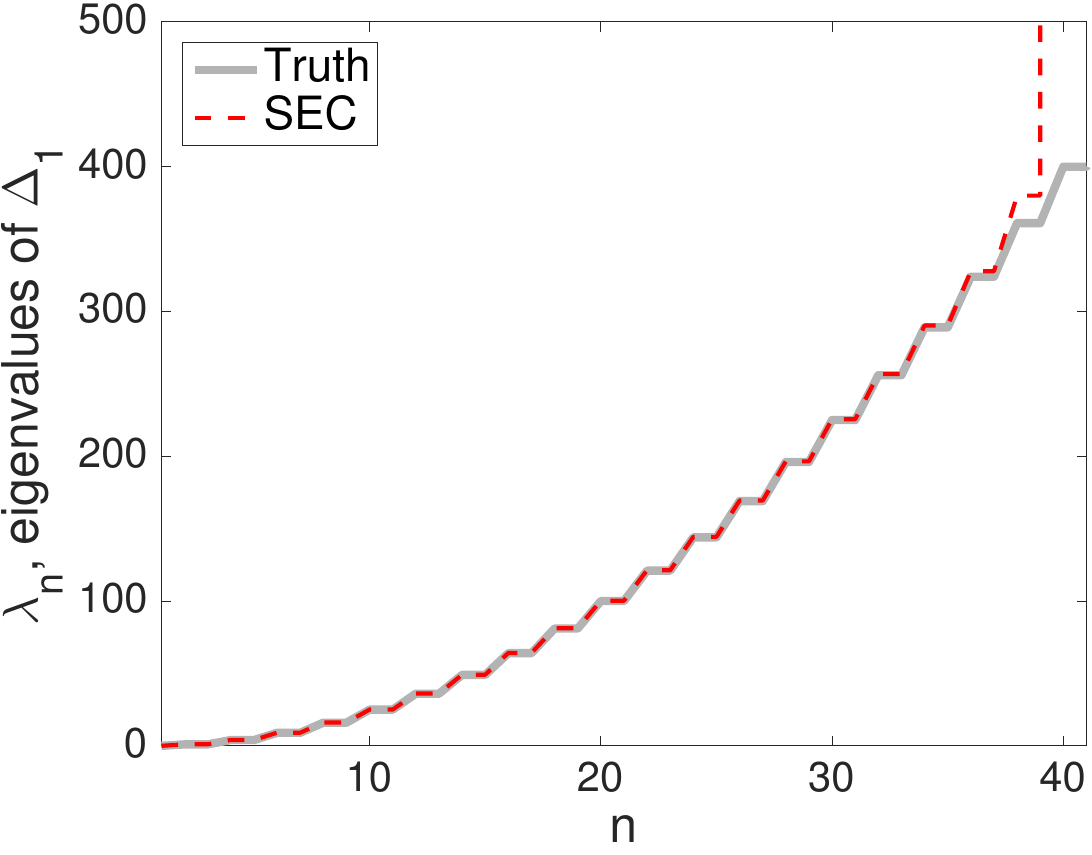}
\includegraphics[width=0.32\textwidth]{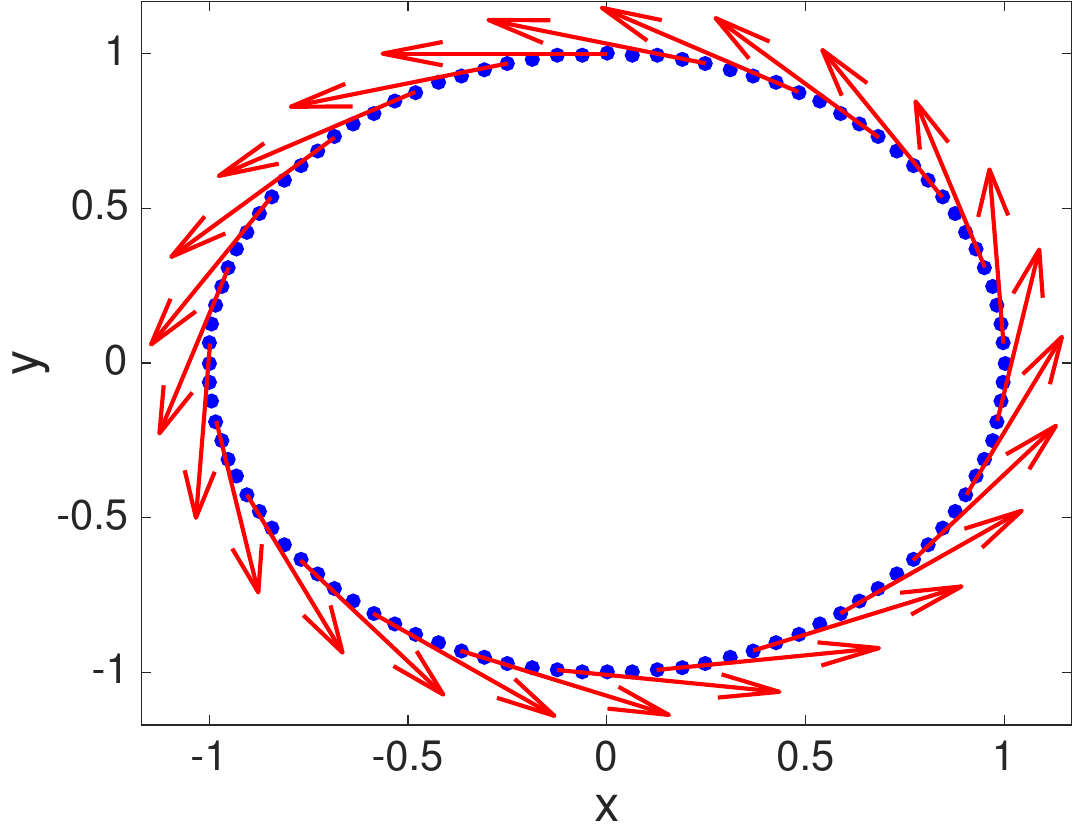}
\includegraphics[width=0.32\textwidth]{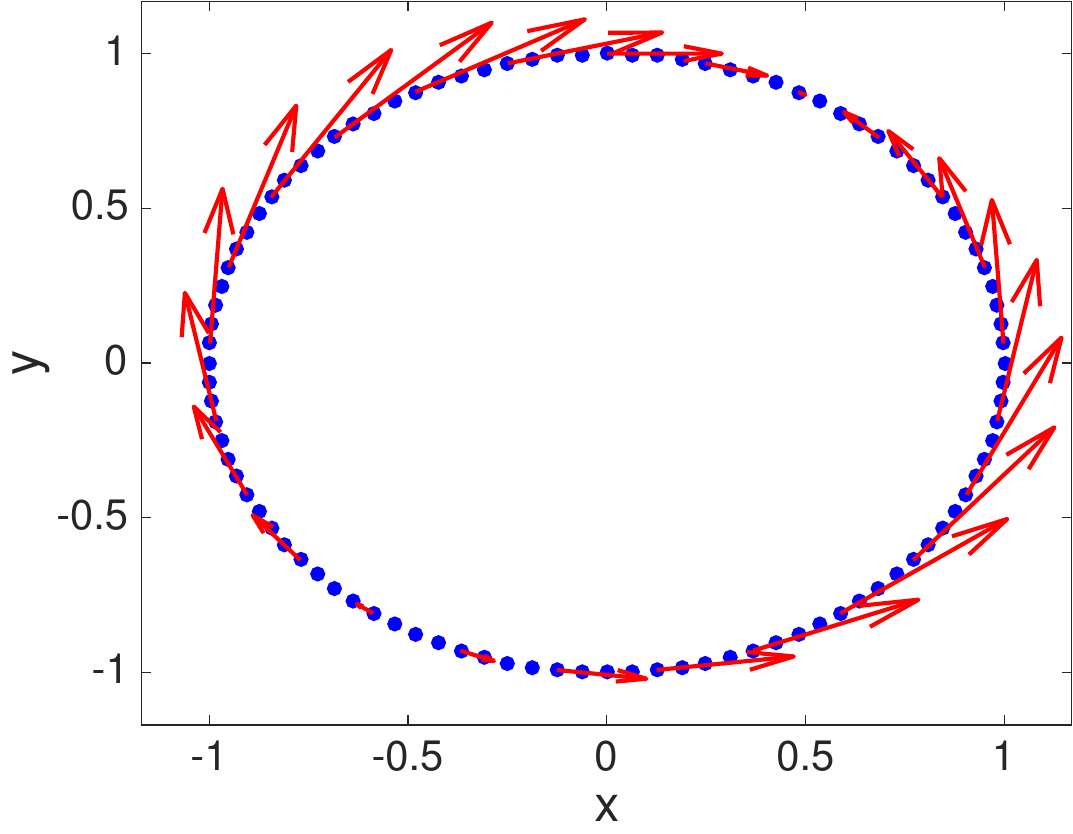}\\
\includegraphics[width=0.32\textwidth]{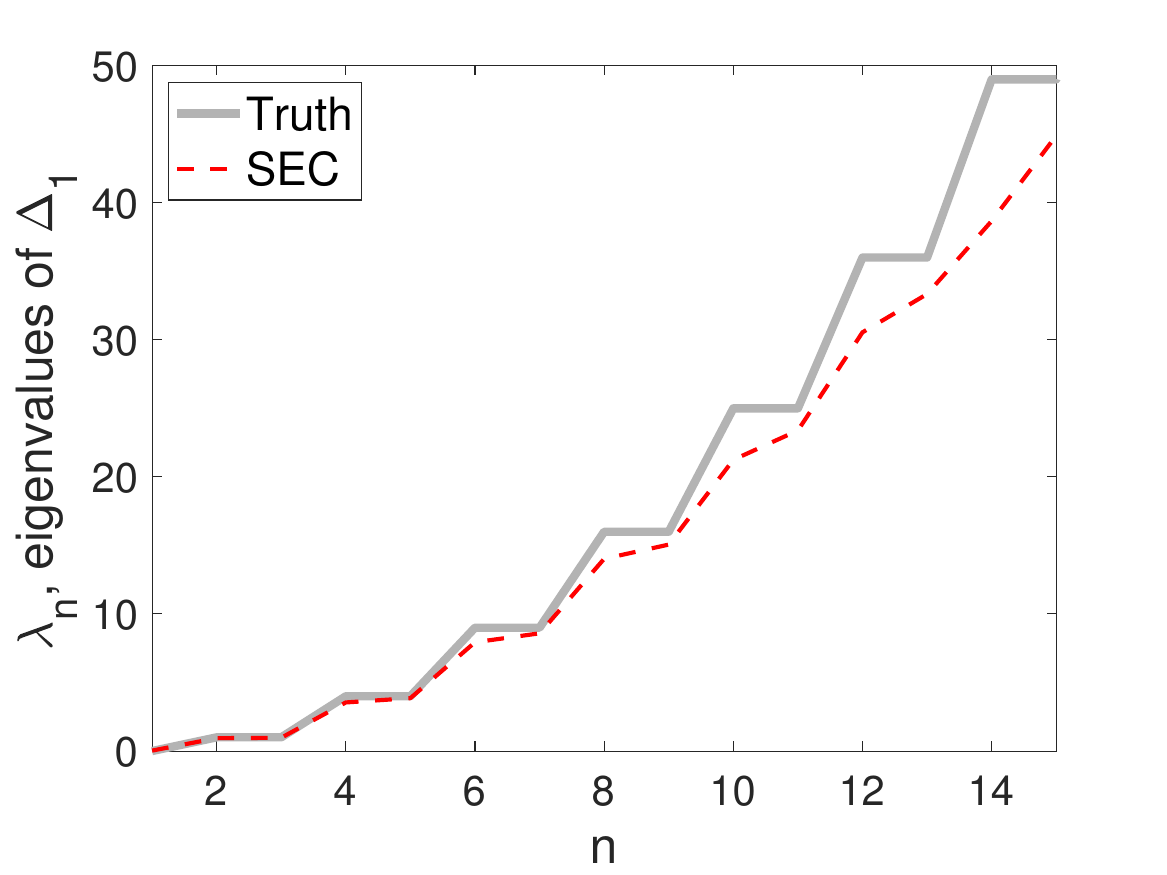}
\includegraphics[width=0.32\textwidth]{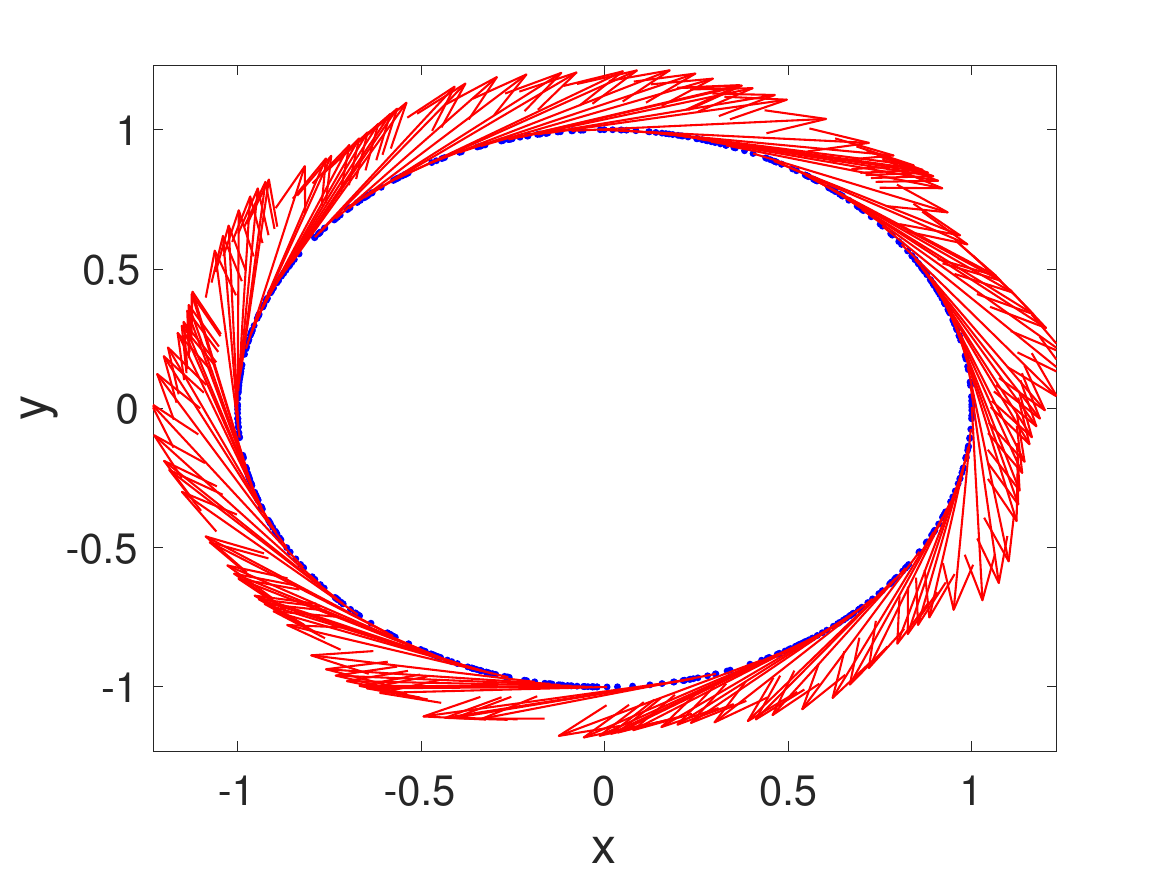}
\includegraphics[width=0.32\textwidth]{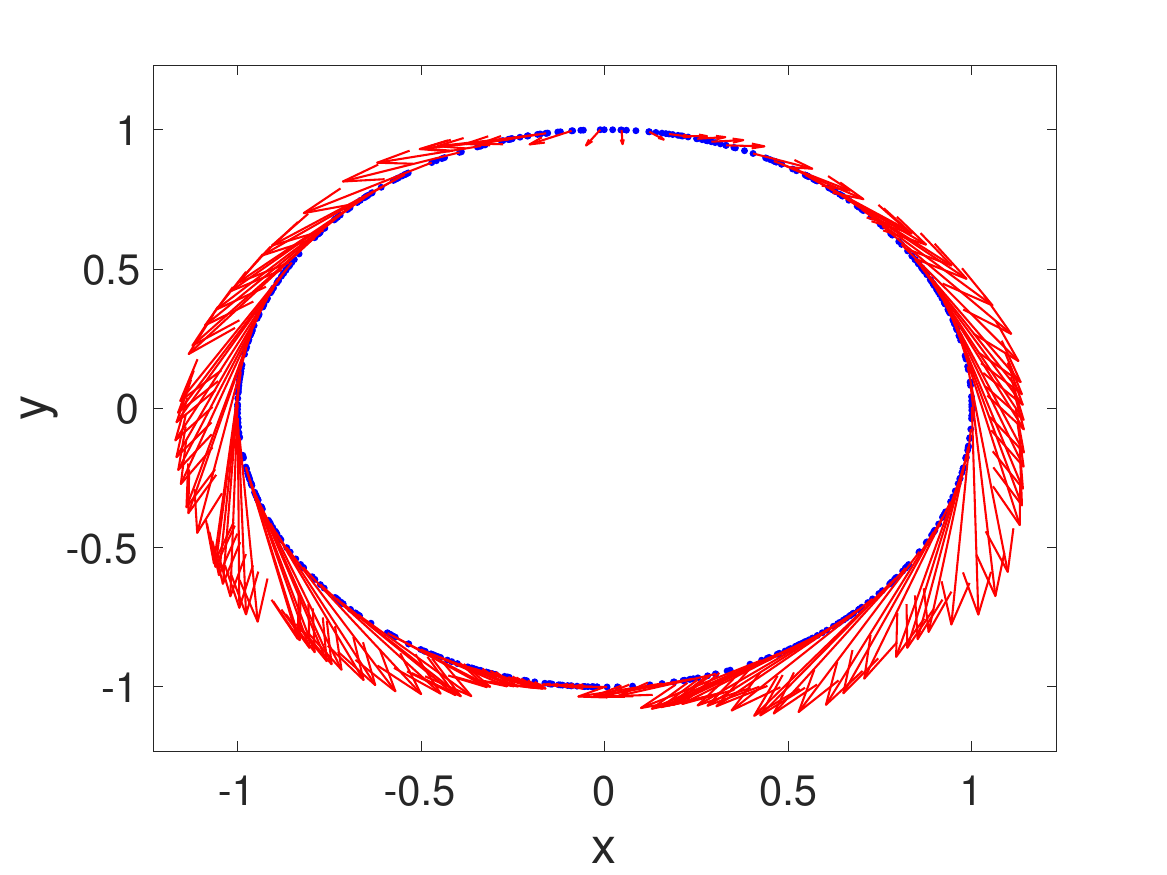}
\caption{\label{S1plots} Top Row, Left: Eigenvalues of the Laplacian on 1-forms estimated using the SEC (red, dashed) compared to the analytical spectrum (gray, solid) for a unit circle.  Middle: The vector field representation of the 1-form corresponding to the smallest eigenvalue ($\nu_1\approx2.46 \times 10^{-6}$) clearly corresponds to the harmonic form $d\theta$. Right: The eigenform of the second smallest eigenvalue ($\nu_2\approx1.06$) is a linear combination of $\sin(\theta) \, d\theta$ and $\cos(\theta)\,d\theta$, which both have eigenvalue $1$. Bottom Row: Computation is repeated using 500 points randomly sampled from the uniform distribution on the circle.  Notice the agreement between the initial part of the spectrum and shape of the smoothest eigenforms. However, the spectrum diverges from the true spectrum faster than for the uniform grid of data due to variance of the estimators.}
\end{figure}

We first consider the circle $S^1$, where there is a a nowhere-vanishing, harmonic 1-form $d\theta$ associated with a canonical angle coordinate $\theta$. Because $S^1 $ is one-dimensional and $d \theta$ is nowhere-vanishing, every smooth 1-form can be represented as $f\, d\theta$, with $f$ a smooth function. Moreover, $d(f\,d\theta) = df \wedge d\theta =0$, so that $ \Delta_1(fd\theta) = \delta d (f\, d\theta)+d\delta(f\, d\theta) = d\delta(f\, d\theta)$, and $\delta d\theta = \Delta \theta = 0$ on $S^1$. Therefore,
\begin{align*}
    \Delta_1(f\,d\theta) &= d\delta(f\, d\theta) = d(-g(df,d\theta) + f\, \delta d \theta) = -d(g(df,d\theta)) \\
    &= -d(d\theta(\nabla f)) = -d\left(\frac{df}{d\theta}\right) = -\frac{d^2f}{d\theta^2}d\theta, 
\end{align*}
meaning that $\Delta_1(fd\theta) = \Delta(f)d\theta$.  Thus, the eigenforms of the 1-Laplacian on $S^1$ are simply $\sin(k\theta)\,d\theta$ and $\cos(k\theta)\,d\theta$, $k\in\mathbb{N}_0$, and the eigenvalues are the same as those of $\Delta$ which are simply $\{0,1,1,4,4,...,k^2,k^2,...\}$.  

We apply the SEC by generating 101 uniformly spaced data points on the unit circle in $\mathbb{R}^2$, and using the method outlined at the beginning of the section.  In the leftmost plot in Fig.~\ref{S1plots}, we compare the analytic eigenvalues of $\Delta_1$ (gray, solid) to the eigenvalues estimated by the SEC (red, dashed).  We also show the first two eigenforms of the $1$-Laplacian in Fig.~\ref{S1plots}, and note that the first eigenform clearly approximates the harmonic form $d\theta$.  Notice that a closed integral curve of a harmonic form should correspond to a unique representative of a nontrivial $1$-homology class, as will be further demonstrated below. In order to demonstrate the effect of random sampling on the manifold (as opposed to the uniform grid) we repeated this experiment using $500$ independent uniformly distributed random points on $S^1$.  More points were required in this example due to the increased variance in the eigenvalues and eigenfunctions produced by the diffusion maps algorithm.  The increased error from diffusion maps causes increased error in the SEC results, as shown in the eigenvalues of the 1-Laplacian.  However, the SEC still obtained good approximations of the eigenforms at the level of having the coarse-grained structure (note the similarity of the first two eigenforms to those produced using the $101$ evenly spaced data points).  This demonstrates both the sensitivity of the SEC regarding eigenvalue precision, as well as the robustness of the SEC with respect to coarse-grained structure, which we believe is very desirable for applications.

\begin{figure}
\raggedleft
\includegraphics[width=0.32\textwidth]{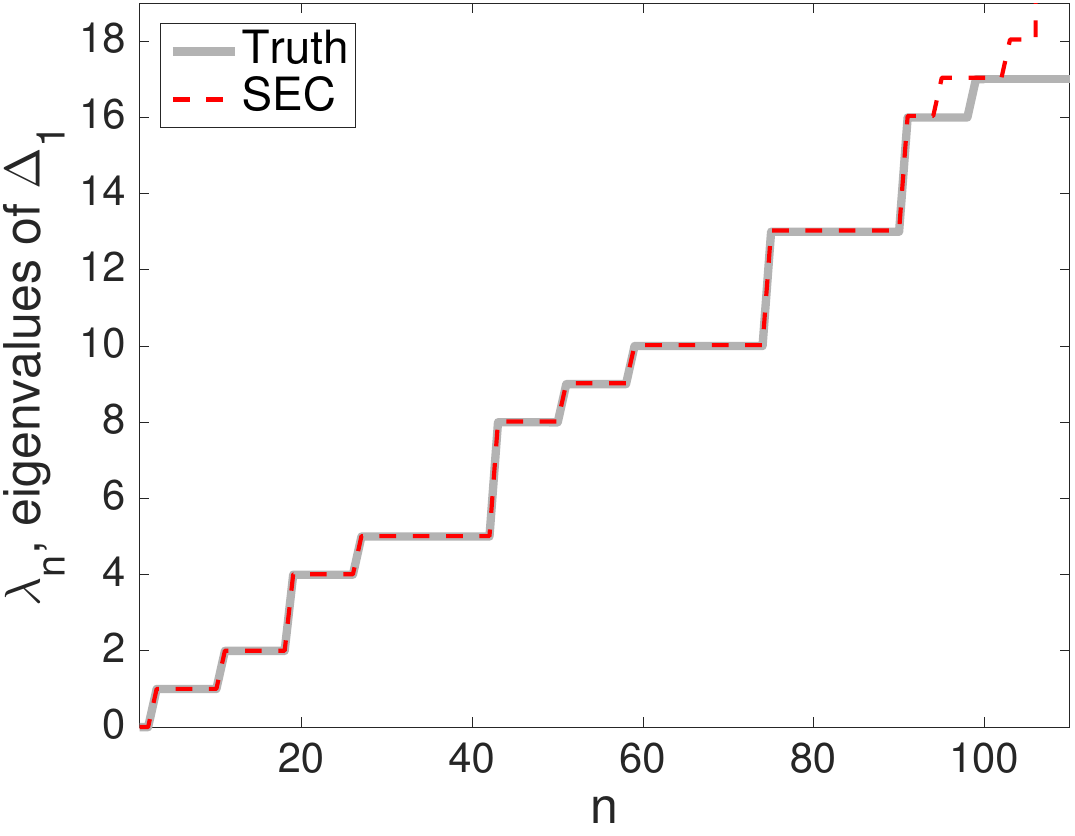}
\includegraphics[width=0.32\textwidth]{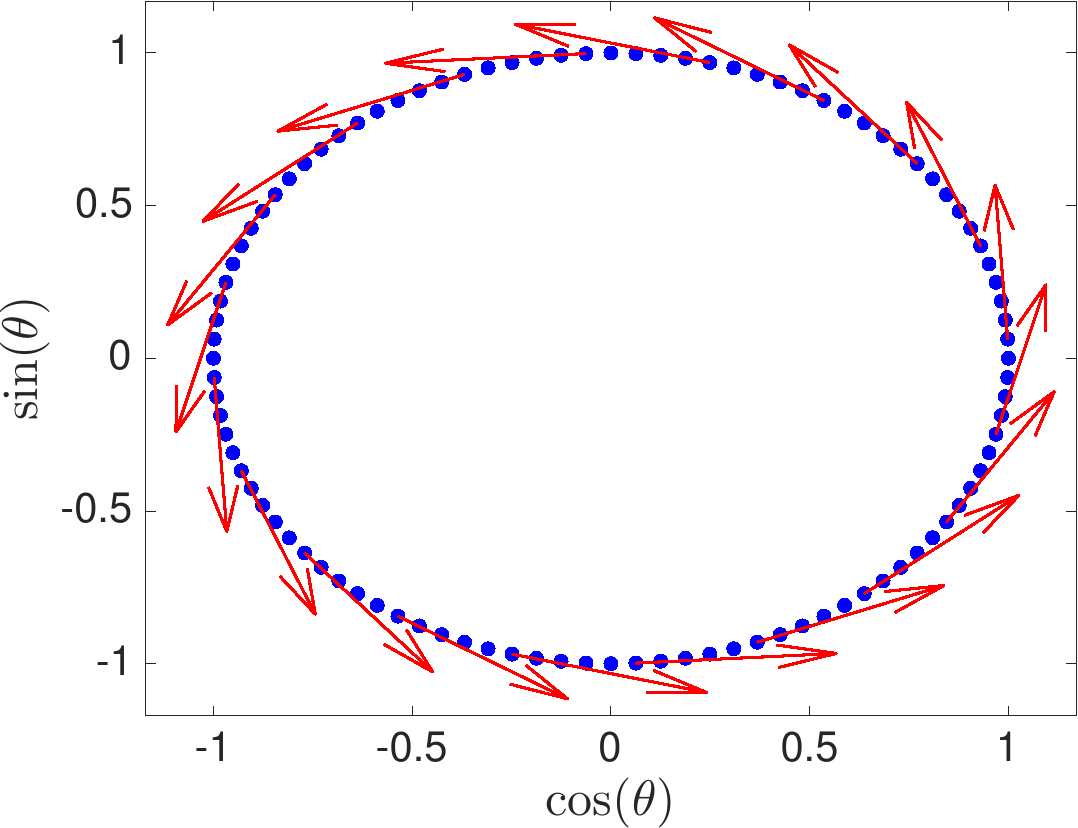}
\includegraphics[width=0.32\textwidth]{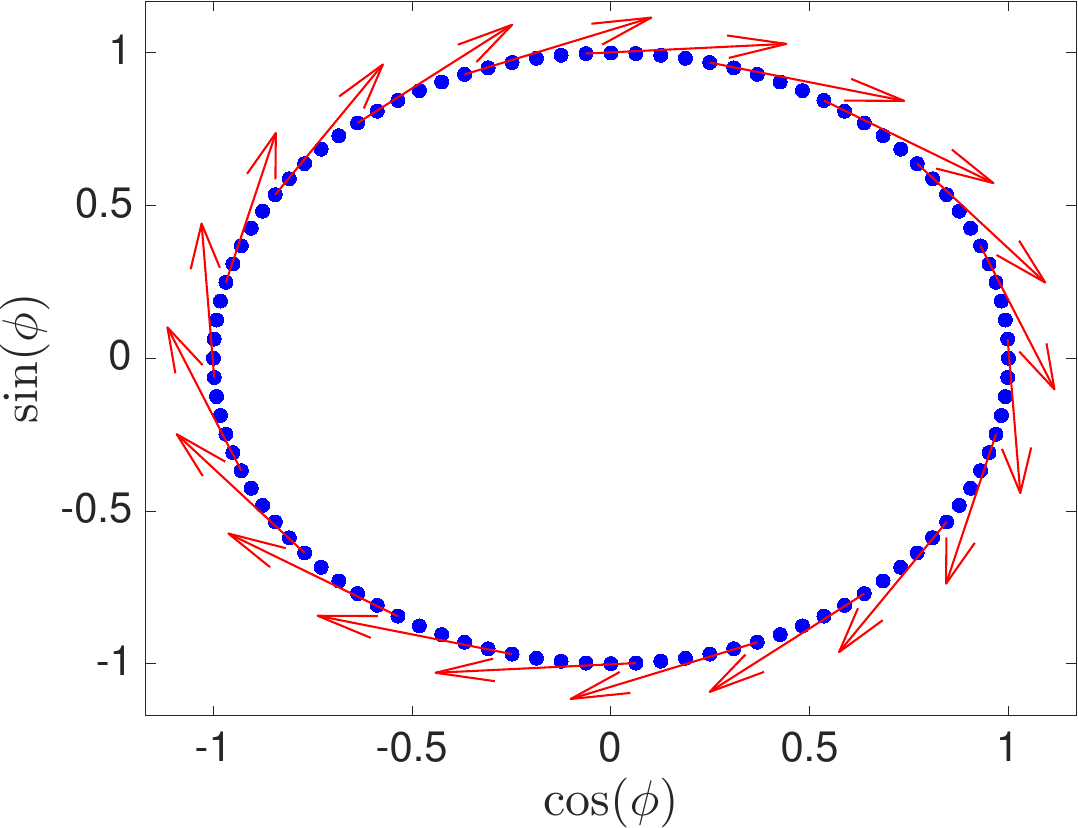} \\
\includegraphics[width=0.32\textwidth]{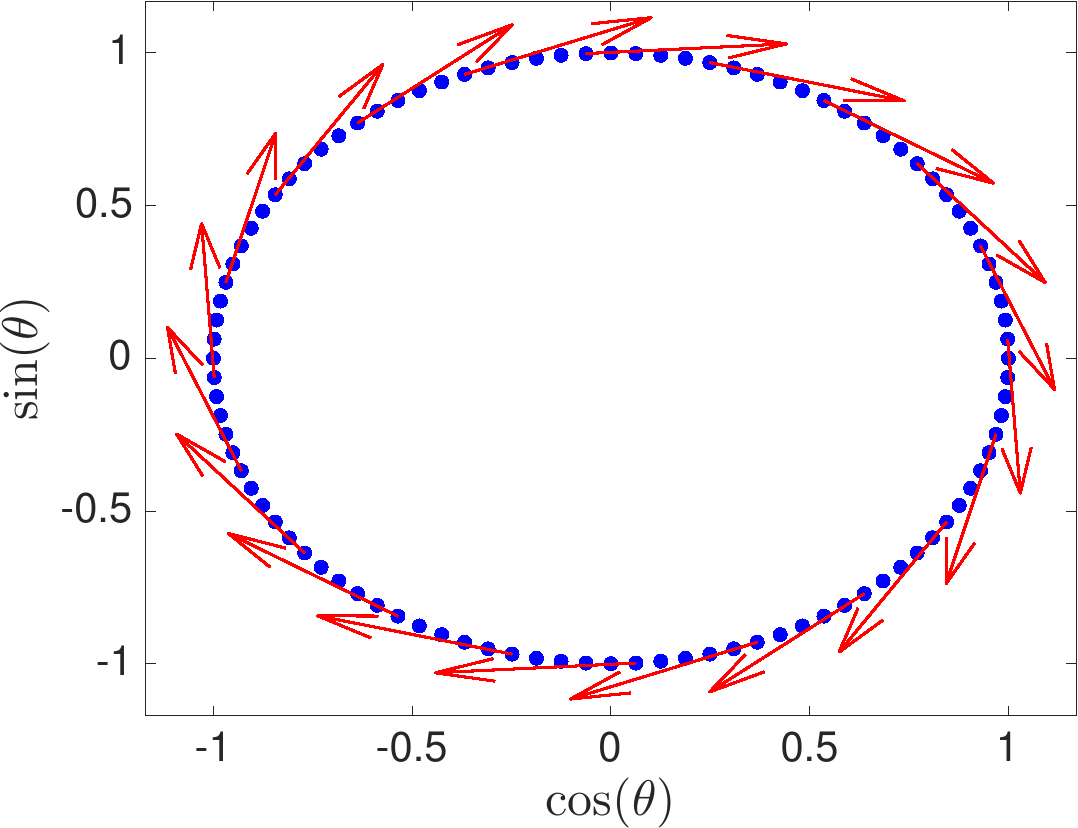}
\includegraphics[width=0.32\textwidth]{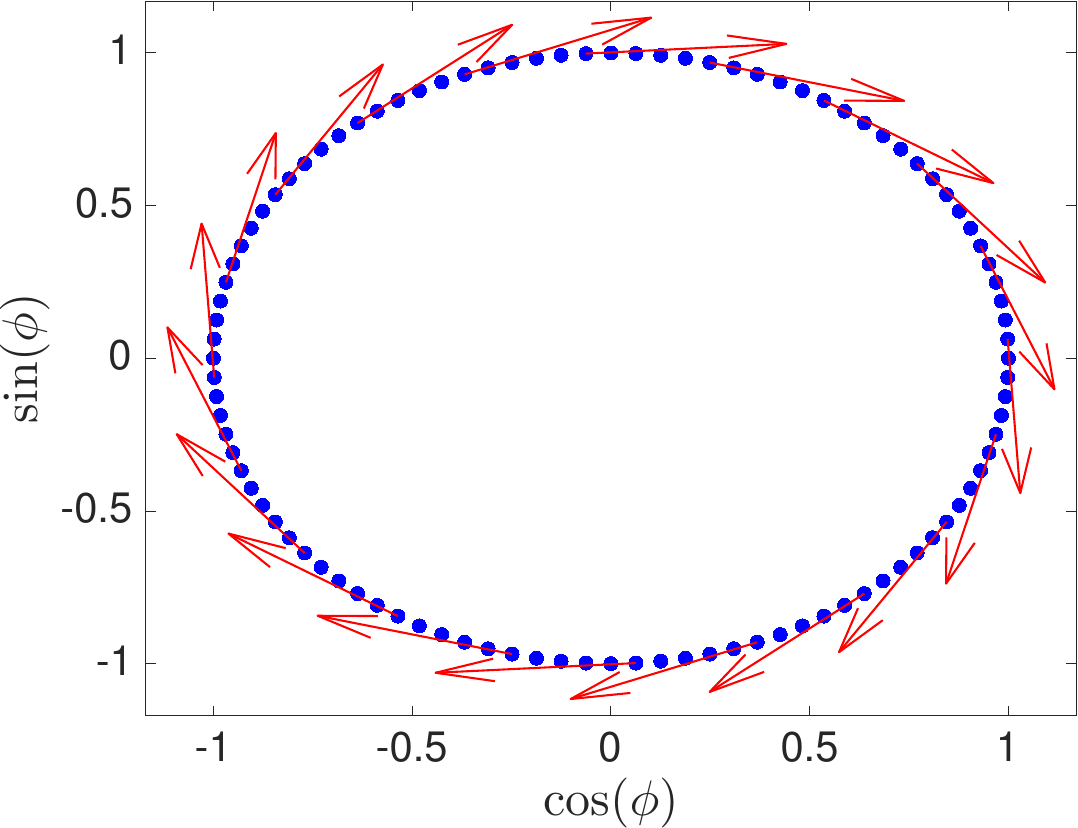}
\caption{\label{T2plots} Top, Left: Eigenvalues of the Laplacian on 1-forms estimated using the SEC (red, dashed) compared to the analytical spectrum (gray, solid) for a flat torus.  Top, Middle/Right: The vector field representation of the 1-form corresponding to the smallest eigenvalue ($\nu_1\approx1.20 \times 10^{-4}$) shown in the embedding coordinates $(\cos(\theta),\sin(\theta))$ (middle) and $(\cos(\phi),\sin(\phi))$ (right); this eigenform approximates $d\theta-d\phi$ (for clarity we only draw arrows every fifth data point). Bottom, Middle/Right: The eigenform of the second smallest eigenvalue ($\nu_2\approx 2.43\times 10^{-4}$) shown in the same coordinates as the first eigenform; this eigenform approximates $-d\theta-d\phi$.  Notice that the two eigenforms together form a basis for the harmonic 1-forms which are the span of $d\theta$ and $d\phi$.}
\end{figure}

We next consider the 1-Laplacian on the flat torus which has two nontrivial $1$-homology classes.  Every smooth 1-form on the flat torus can be written as $f\,d\theta+h\,d\phi$, where $ \theta, \phi $ are canonical angle coordinates and $ f, h $ smooth functions. Considering $f\,d\theta$, we compute
\begin{align*} 
    \delta d(f\, d\theta) &= \delta(df \wedge d\theta) = \delta \left( \frac{\partial f}{\partial \phi}d\phi \wedge d\theta \right) = \delta \left( - \frac{\partial f}{\partial \phi}d\theta \wedge d\phi \right) \\
    &= -\star d\star  \left( -\frac{\partial f}{\partial \phi}d\theta \wedge d\phi \right) \nonumber = \star d  \left( \frac{\partial f}{\partial \phi} \right) = \star \left( \frac{\partial ^2f}{\partial \phi^2}d\phi + \frac{\partial ^2f}{\partial \theta\, \partial \phi} d\theta \right) \nonumber \\ 
    & = - \frac{\partial ^2f}{\partial \phi^2}d\theta + \frac{\partial ^2f}{\partial \theta\, \partial \phi} d\phi, 
\end{align*}
where we choose the ordering $d\theta \wedge d\phi = -d\phi \wedge d\theta$ so that $\star\, d\theta = d\phi$ and $\star d\phi = -d\theta$.  Next,
\begin{align*} d\delta(f\,d\theta) &= -d(\star d\star (f\,d\theta)) = -d\star d(f\,d\phi) = -d\star (df\wedge d\phi) \\
    &= -d\star \left(\frac{\partial f}{\partial \theta}d\theta\wedge d\phi \right)  = -d\left(\frac{\partial f}{\partial \theta}\right) = -\frac{\partial ^2f}{\partial \theta^2}d\theta - \frac{\partial ^2f}{\partial \theta \, \partial \phi} d\phi, 
 \end{align*}
 so that 
 \[ \Delta_1(f\,d\theta) = \delta d(f\,d\theta) + d\delta(f\,d\theta) = \left(-\frac{\partial ^2f}{\partial \phi^2}-\frac{\partial ^2f}{\partial \theta^2}\right)d\theta = \Delta_0(f)\, d\theta,  \]
 and similarly $\Delta_1(h\,d\phi) = \Delta_0(h) d\phi$.  Thus, for an eigenform with eigenvalue $\nu$ we have
 \[ \nu(f\,d\theta+h\, d\phi) = \Delta_1(f\,d\theta+h\,d\phi) = \Delta_0(f)\,d\theta+\Delta_0(h)\,d\phi \]
 which implies that both $f$ and $h$ must be eigenfunctions of the $0$-Laplacian with eigenvalue $\nu$.  Thus, the non-zero eigenvalues of the $1$-Laplacian are the same as those of the $0$-Laplacian up to multiplicity.  Due to the two harmonic forms $d\theta$ and $d\phi$, each eigenvalue of the $1$-Laplacian has double the multiplicity of the same eigenvalue for the $0$-Laplacian.  
 
 To verify the SEC on this example, we generate $\text{10,000}$ uniformly spaced data points on a flat torus in $\mathbb{R}^4$ with the embedding $(\cos\theta,\sin\theta,\cos\phi,\sin\phi)^\top$.  In the top left of Fig.~\ref{T2plots}, we plot the analytic spectrum of the $1$-Laplacian (gray, solid curve) along with the SEC approximation of the spectrum (red, dashed curve).  We also show the vector fields corresponding to the first two SEC eigenforms. The latter approximate $d\theta-d\phi$ and $-d\theta-d\phi$, which span the space of harmonic forms which is the span of $d\theta$ and $d\phi$.  
 
 Notice that this example requires 100 times more data than the circle (data required grows exponentially in the intrinsic dimension) for the diffusion maps algorithm to yield the same accuracy for the $0$-Laplacian eigenfunctions and eigenvalues.  This also means that the diffusion maps algorithm takes significantly longer to run on this larger data set.  Crucially, the SEC still only uses $M=20$ (and $100$ eigenfunctions to compute the $c$ tensor) so the matrices in the SEC were the same size for the torus example as for the circle example.  Thus, following the initial diffusion maps step, the SEC algorithm runtime is the same in the torus example as for the circle example. This demonstrates how the SEC formulation allows us to decouple the representation of differential forms and their associated Laplacian operators from the amount of data. In particular, larger data sets are only needed in the initial diffusion maps step to obtain the best possible estimate of the $0$-Laplacian eigenfunctions and eigenvalues.  Since many of these eigenfunctions and eigenvalues are known to be poor estimates, we can  set $M$ much less than the number of data points to obtain high quality $1$-Laplacian representations.
 
 \subsection{Topological features and eigenforms via the SEC}
 \begin{table}
     \caption{\label{spectra} First eight eigenvalues of the Laplacian on 1-forms estimated using the SEC for a M\"obius band, torus (standard embedding in $\mathbb{R}^3$), a genus two surface (two-holed torus), a two-dimensional sphere in $\mathbb{R}^3$, and the Lorenz~63 attractor (L63).  Notice that the eigenvalues close to zero represent harmonic forms, and the number of the eigenvalues close to zero matches the first Betti numbers for the manifolds which are 1, 2, 4, and 0 respectively.  While the Lorenz~63 attractor is not a manifold, a coarse approximation as a manifold would suggest a Betti number of 2 due to the two holes.}
\begin{center}
    \small
{\renewcommand{\arraystretch}{1.1}
\begin{tabular*}{\linewidth}{@{\extracolsep{\fill}}ccccc}
\hline
M\"obius &	Torus 	& Genus 2 & Sphere & L63 \\
	\hline\hline
0.0242 &	0.0040 & 0.0021 & 1.9349 & 0.0011 \\
1.0415 &	0.0093 & 0.0026 & 1.9521 & 0.0017 \\
 1.0449 &   0.2574 & 0.0026 & 1.9781 & 0.0030 \\
  3.8684 &  0.2575 & 0.0041 & 1.9817 & 0.0072 \\
  3.8948 &  0.2575 & 0.0893 & 2.0042 & 0.0105 \\
  8.0352 &  0.2587 & 0.0901 & 2.0172 & 0.0109 \\
  8.1018 &  0.8061 & 0.2151 & 5.8001 & 0.0205 \\
  8.9369 &  0.8067 & 0.2175 & 5.8142 & 0.0262 \\
\hline
\end{tabular*}
}
\end{center}
\end{table}

\begin{figure}
\centering
\includegraphics[width=0.45\textwidth]{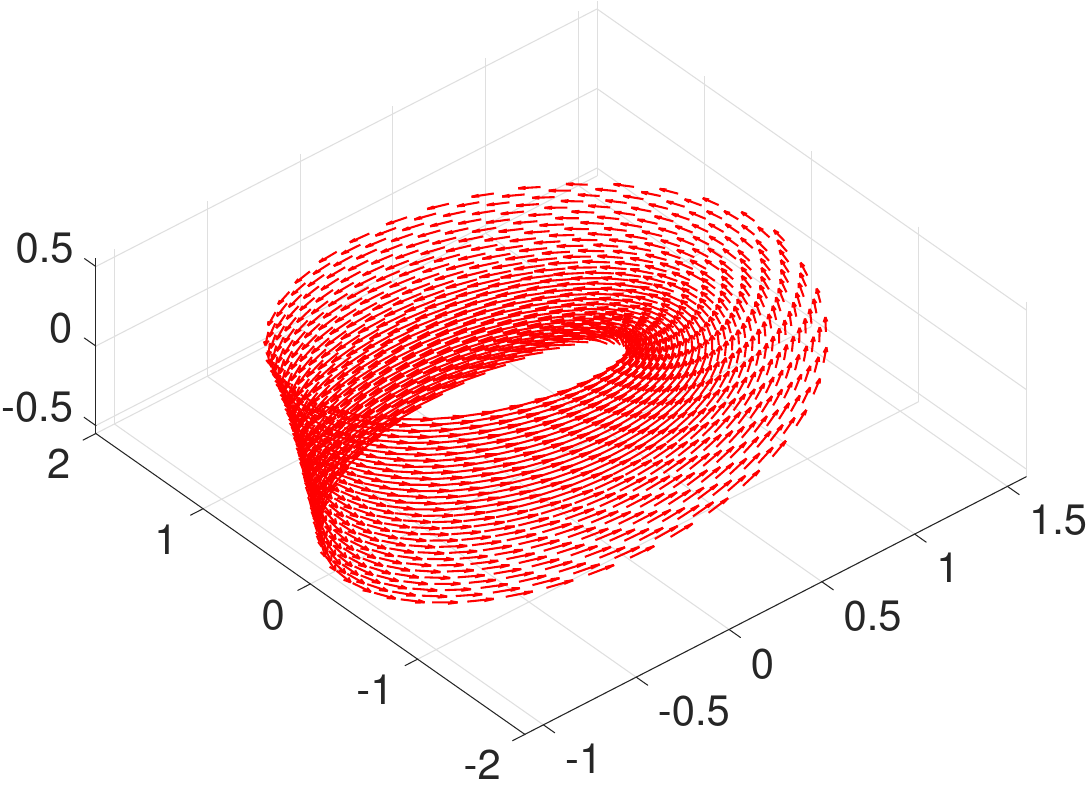}
\includegraphics[width=0.45\textwidth]{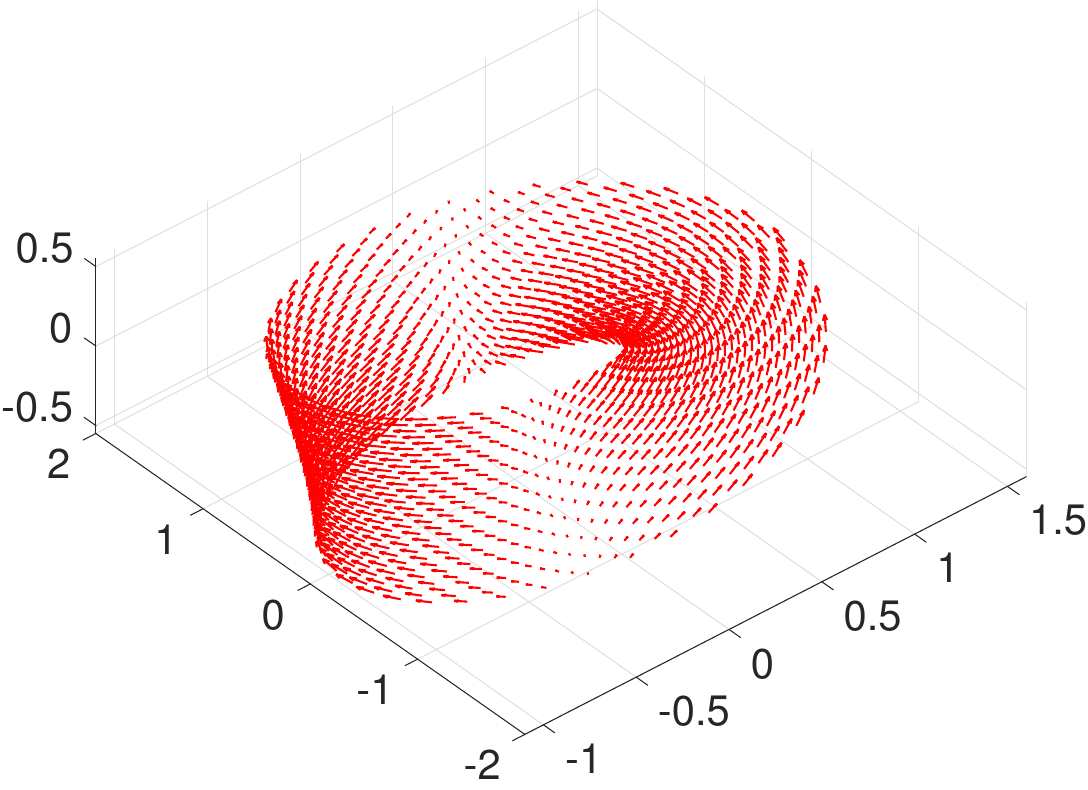}
\caption{\label{Meigenformplots} First two eigenforms on the M\"obius band. The first represents the one 1-homology class.}
\end{figure}

\begin{figure}
\centering
\includegraphics[width=0.45\textwidth]{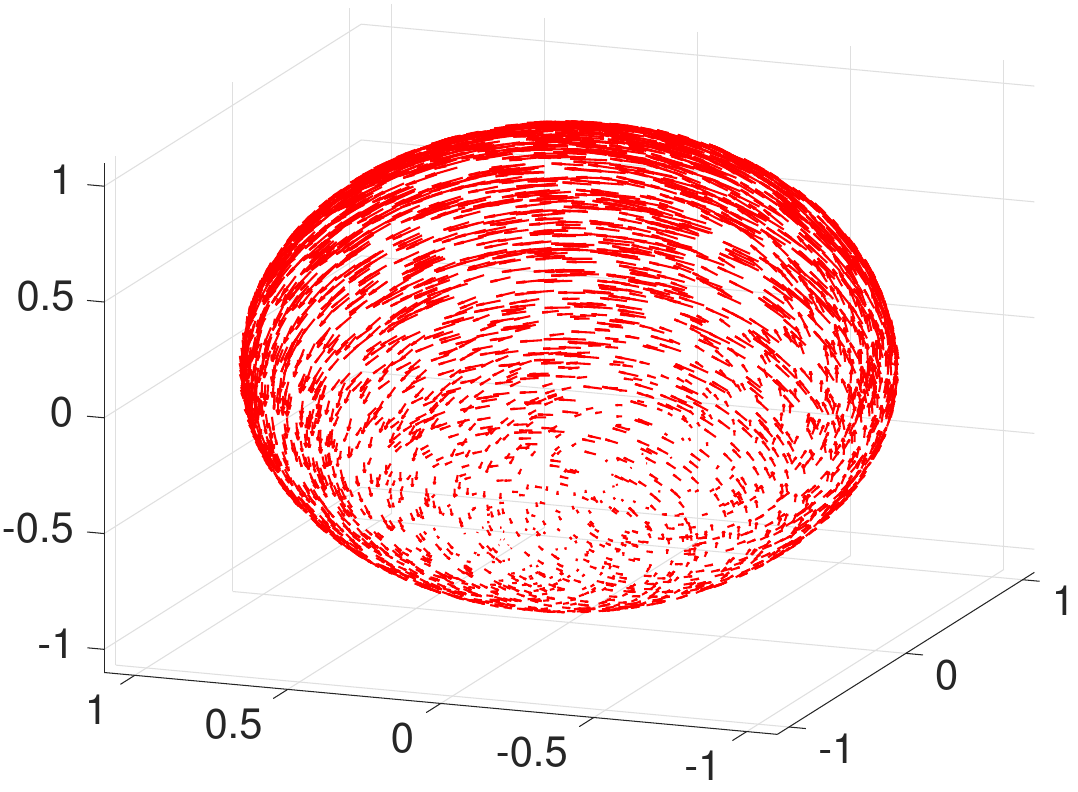}
\includegraphics[width=0.45\textwidth]{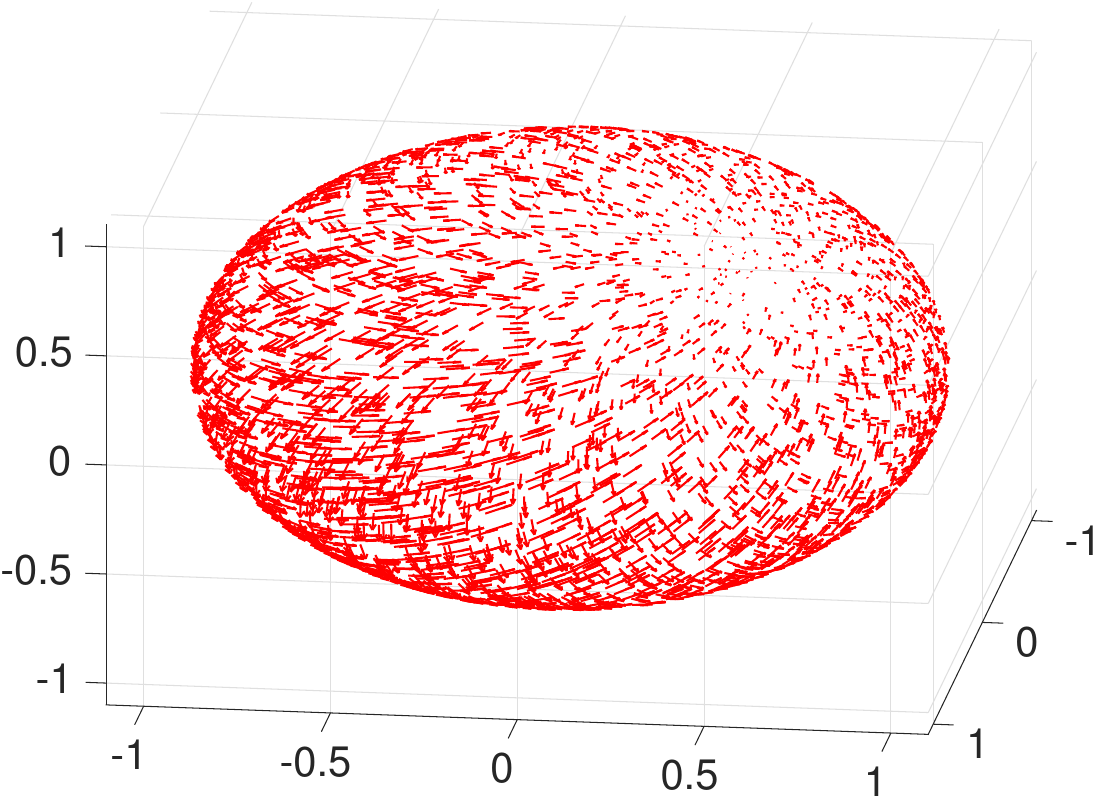}
\caption{\label{S2eigenformplots} First two eigenforms on the sphere $S^2$. Notice that the SEC constructs valid smooth vector fields which must vanish on the sphere.}
\end{figure}

\begin{figure}
\centering
\includegraphics[width=0.45\textwidth]{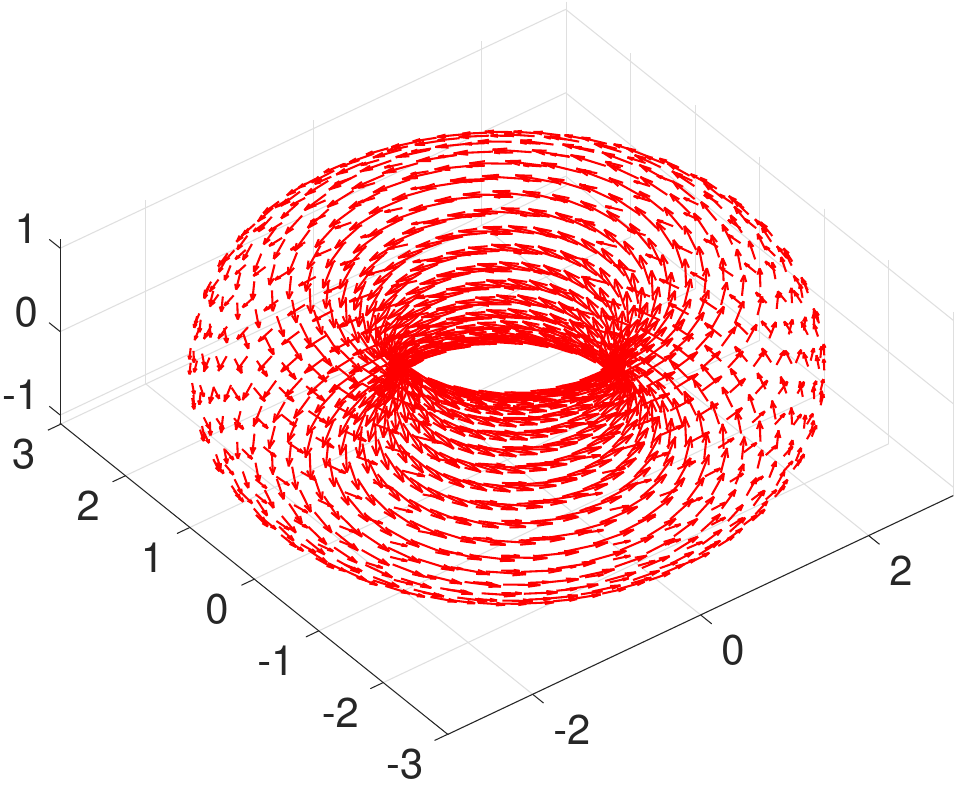}
\includegraphics[width=0.45\textwidth]{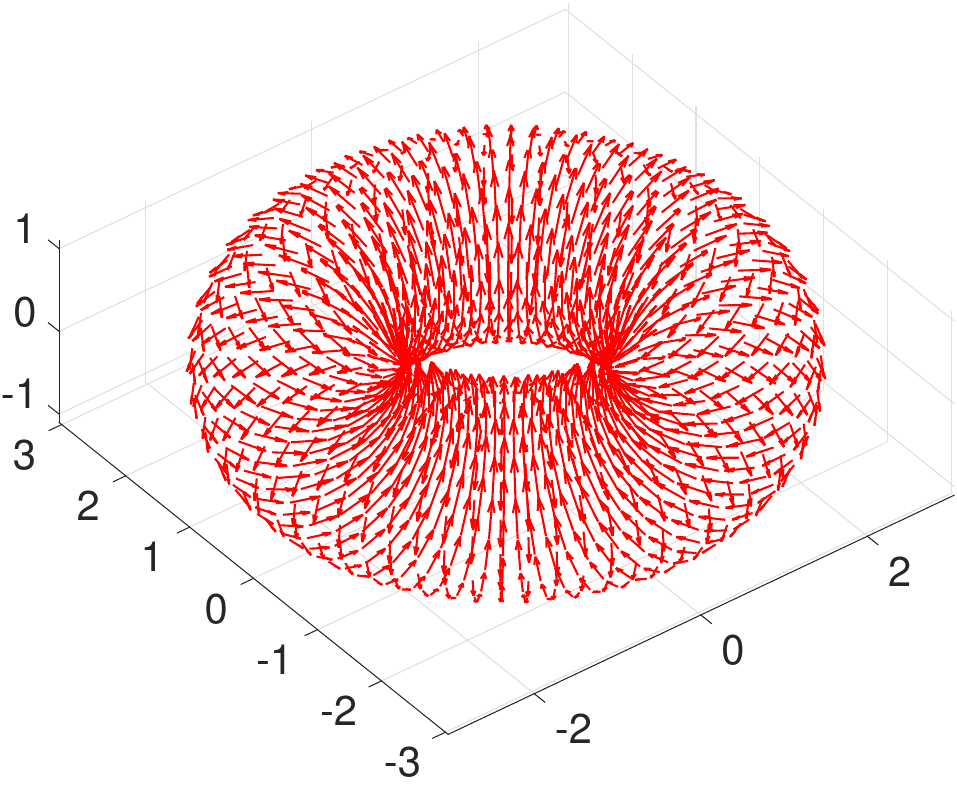}
\includegraphics[width=0.45\textwidth]{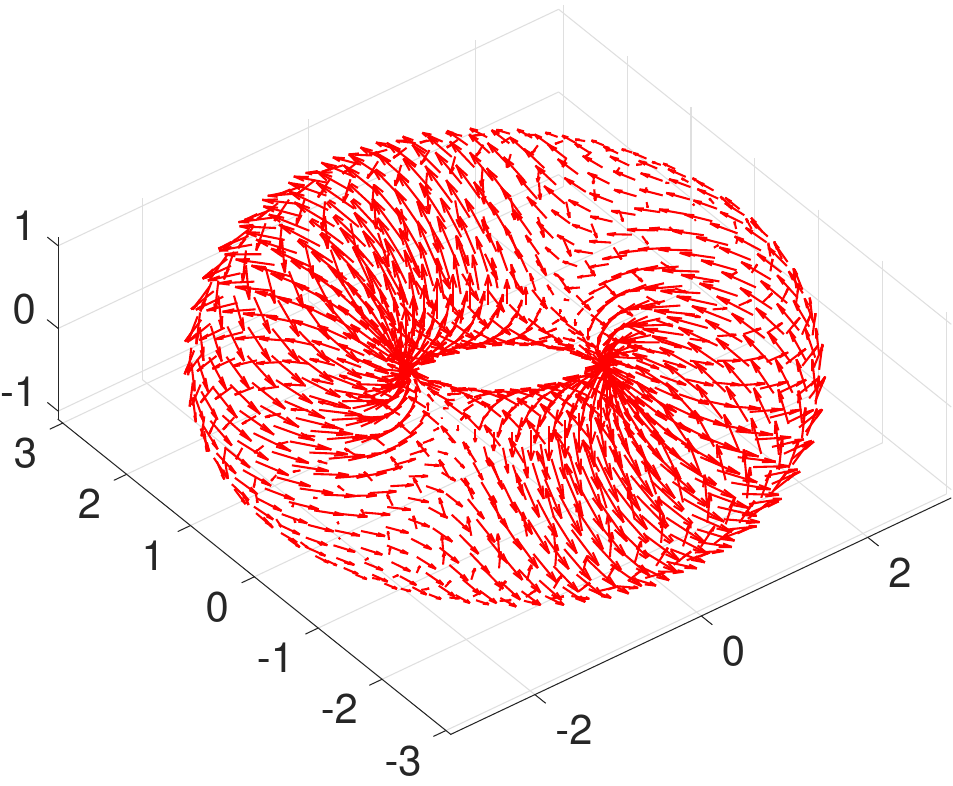}
\includegraphics[width=0.45\textwidth]{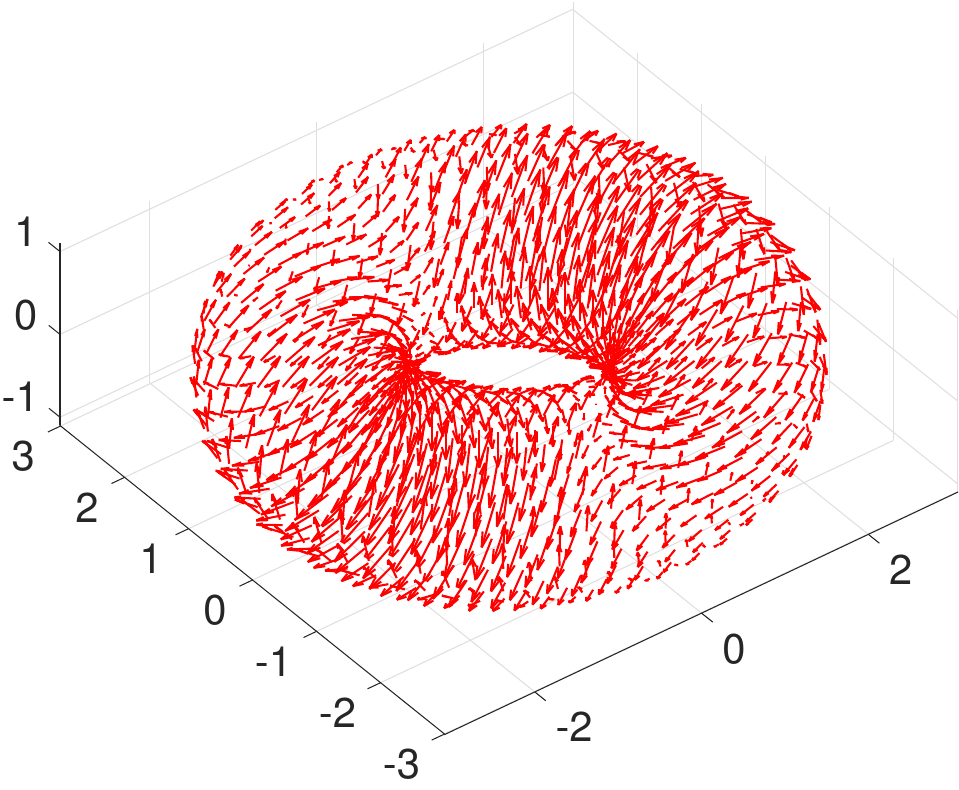}
\caption{\label{T2eigenformplots} First four eigenforms on the standard embedding of the torus in $\mathbb{R}^3$.  Notice that the first two represent the two 1-homology classes.}
\end{figure}

\begin{figure}
\centering
\includegraphics[width=0.45\textwidth]{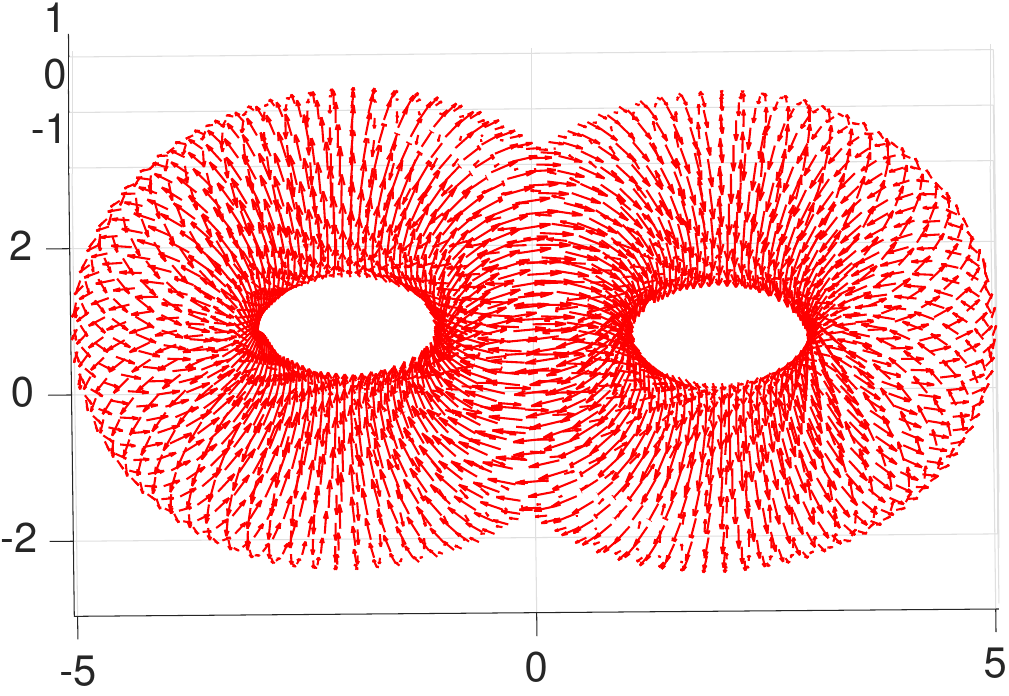}
\includegraphics[width=0.45\textwidth]{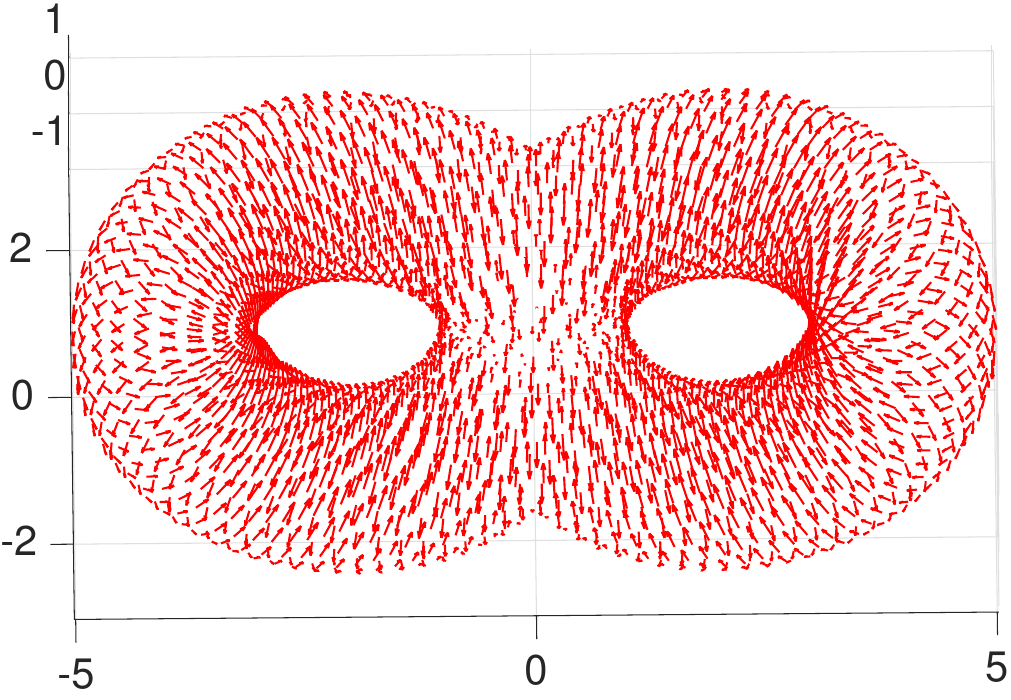}
\includegraphics[width=0.45\textwidth]{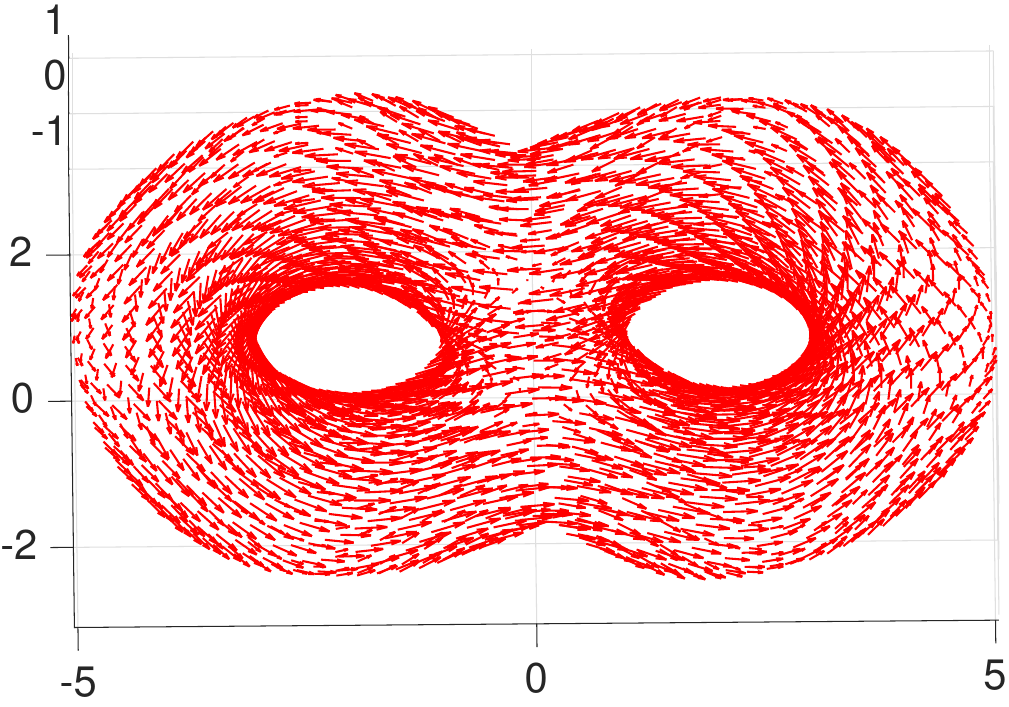}
\includegraphics[width=0.45\textwidth]{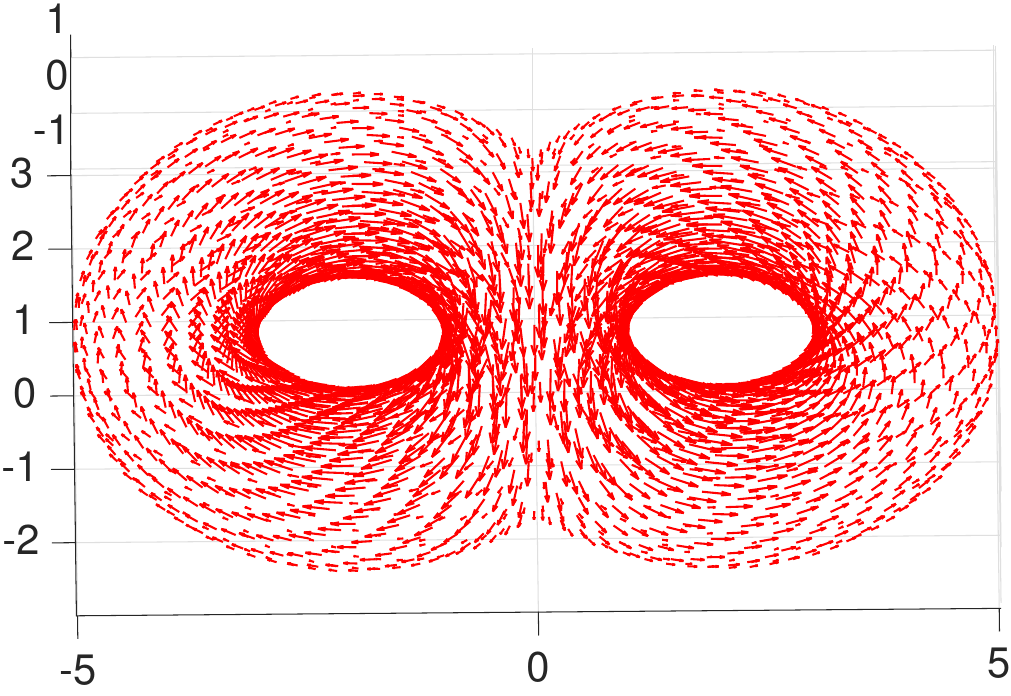}
\caption{\label{T22eigenformplots} First four eigenforms on the genus two surface, representing the four 1-homology classes.}
\end{figure}

We now apply the SEC on several surfaces embedded in $\mathbb{R}^3$ in order to demonstrate its connection to the manifold homology.  We also demonstrate the algorithm on a data set which is not sampled from a manifold to show that the SEC has potential applications even when the assumption of an underlying manifold does not hold.  While the results on the circle and flat torus above used large uniformly distributed data sets to validate the algorithm, in this case we work with much smaller data sets that may not be uniformly distributed with respect to the volume form. Details on the sampling procedure employed in each example can be found in the code in the online supplementary material.
 
We first consider four surfaces, namely a M\"obius band, sphere, torus, and a genus-2 surface.  Our goal for these examples is to show that the SEC correctly captures the coarse topological features of these smaller nonuniform data sets.  The $1$-homology of these manifolds corresponds to the kernel of the $1$-Laplacian so that the first Betti number should be equal to the multiplicity of the zero eigenvalue of $\Delta_1$. This is shown in Table~\ref{spectra} if we consider the eigenvalues closest to zero to represent $1$-homology.  We can also make this connection by visualizing the eigenforms via their corresponding vector fields as shown in Figs.~\ref{Meigenformplots}--\ref{T22eigenformplots}.  Following the vector field corresponding to a harmonic form (having eigenvalue zero) should generate a closed curve, which is a representative of a unique homology class.  We also show in Figs.~\ref{Meigenformplots}--\ref{T22eigenformplots} that the SEC approximations are smooth forms, demonstrated both by the smooth changes in the arrows, as well as the fact that the corresponding vector fields each vanish at some point on the sphere (since there are no smooth non-vanishing vector fields on a sphere). 

We should note that the M\"obius band is a manifold with boundary, which violates the assumption of manifold without boundary used in our theoretical derivations. Nevertheless, the algorithm produces reasonable results which suggest the theory may be able to be extended to manifolds with boundary.  A potential theoretical barrier is the requirement that the gradients of the eigenfunctions must span every tangent space (see Lemma~\ref{lemmaSpanning}), yet the estimator of the Laplacian from diffusion maps produces Neumann eigenfunctions \cite{CoifmanLafon06}, which will all have gradient orthogonal to the normal direction.  While this failure to span the tangent spaces may preclude the corresponding 1-forms $b^{ij}$ and $ \widetilde b^{ij}$ from providing frames for the $L^2$ space $H_1 $, it is still possible that they provide frames for higher-order Sobolev spaces associated with Neumann boundary conditions. In particular, it is possible that an analog of Theorem~\ref{thmFrameSob} holds for the Neumann $H_{1,1}$ Sobolev space, so that the corresponding Galerkin scheme from Definition~\ref{defnEigGalerkin} approximates the spectrum of the Neumann Laplacian on 1-forms.  While a rigorous study of the properties of SEC in the Neumann setting lies outside the scope of this work, the numerical results for the M\"obius band described above are consistent with this behavior. If approximation of the full $H_1 $ space, or Sobolev spaces associated with other boundary conditions is desired, a possible remedy would be to employ the normal direction estimator and/or the distance to boundary estimator which were recently developed in \cite{berry2017density}.  These estimators could be used to ensure that the full tangent space is spanned on the boundary (and near the boundary, which may be an issue for finite data sets). 
 
As a final example, we demonstrate the SEC on data sampled from the chaotic attractor of the Lorenz~63 dynamical system \cite{Lorenz63} (we refer to this set as ``L63''), a fractal set having no differential structure. While there is no exterior calculus defined on L63, it is a well-defined compact subset on $\mathbb{R}^3$ \cite{Tucker99} (with an induced metric topology), and exhibits certain coarse-grained topological features, most notably a hole in each of the two lobes of the attractor as shown in Fig.~\ref{l63}. Moreover, the diffusion maps and SEC algorithms can easily be applied to data sampled from this set (or indeed any data set in a metric space).  The SEC spectrum for the $1$-Laplacian is shown in Fig.~\ref{spectra}, and the first two eigenvalues are very close to zero, while the corresponding eigenforms, shown in Fig.~\ref{l63} seem to capture these two coarse topological features.  
 
In summary, the examples in this section show that the SEC can generate a collection of 1-forms (equivalently, vector fields) which can be used as a basis for vector fields defined on the data set. These vector fields are ordered by smoothness based on their corresponding eigenvalue (Dirichlet energy; see Section~\ref{laplacianOverview}), and higher eigenvalues correspond to more oscillatory vector fields.  Moreover, our theory shows that in the limit of large data and large $M$ (number of eigenfunctions of $\Delta_0$ used), the SEC basis corresponds to the natural basis for square integrable 1-forms.  However, even for small nonuniform data sets the SEC reflects coarse topological features of the underlying continuous space, which indicates that the SEC-derived approximations could be useful even outside of the large data limit.

  \begin{figure}
  \centering
\includegraphics[width=0.49\textwidth]{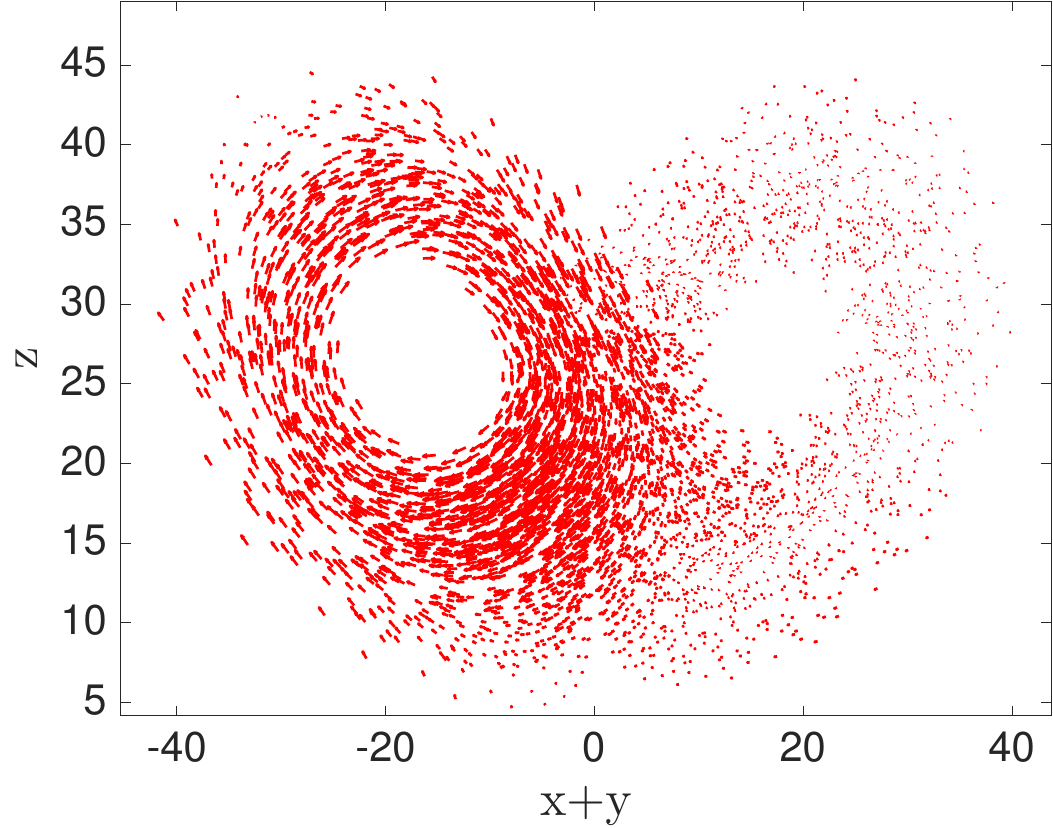}
\includegraphics[width=0.49\textwidth]{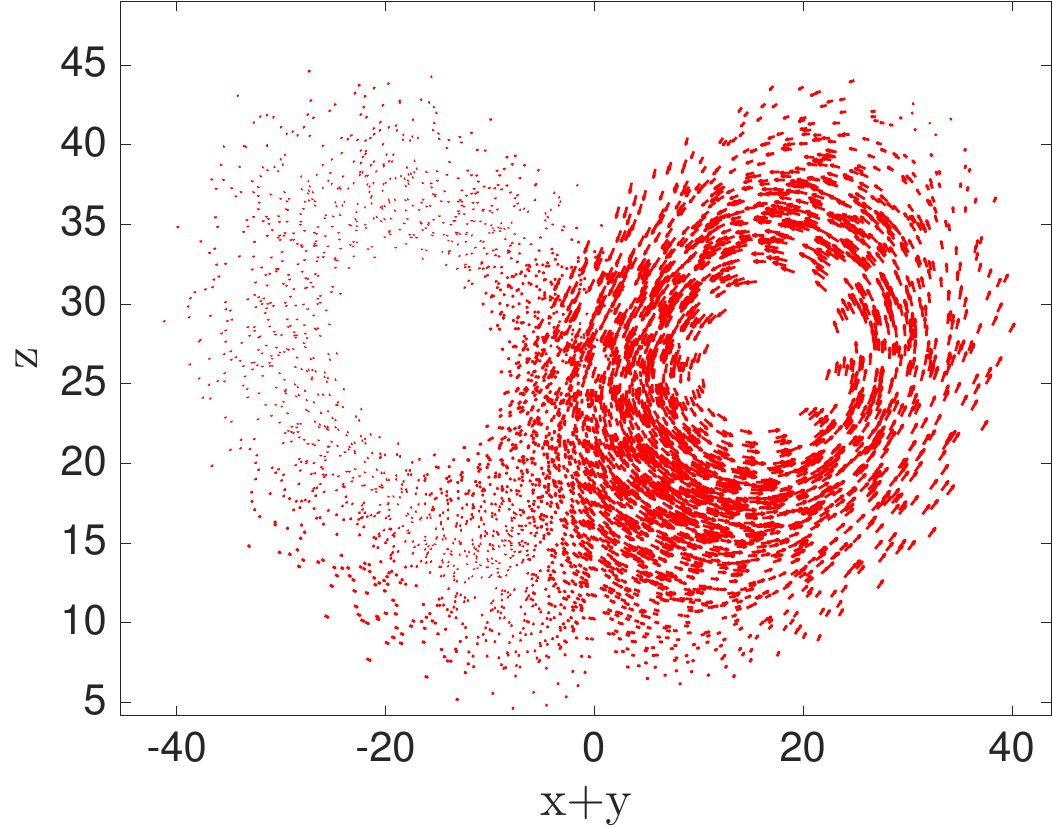}
\caption{\label{l63} First two eigenforms of the SEC $1$-Laplacian on 10000 data points sampled from the Lorenz~63 attractor.  Notice that closed integral curves of these vector fields correspond to independent representatives of the two ``holes'' in the attractor.}
\end{figure}

\section{\label{secDiscussion}Discussion}

In this paper, we have developed a spectral framework, called spectral exterior calculus (SEC), for the exterior calculus on Riemannian manifolds. A central underpinning of this approach is a family of frames for $L^2$ and higher-order Sobolev spaces of vector fields and differential forms, built entirely from the eigenvalues and eigenfunctions of the Laplacian on functions. By virtue of this construction, our framework lends itself well to data-driven approximation of the objects of interest in exterior calculus, such as vector fields and differential forms, as well as operators acting on these objects (e.g., the Laplacian on forms), requiring no additional information beyond point-clouds of data. In particular, SEC extends the applicability of the extensive array of graph-theoretic techniques for pointwise and spectral approximation of the Laplacian on functions \cite{BelkinNiyogi03,Singer06,CoifmanLafon06,VonLuxburgEtAl08,BerrySauer16b,HeinEtAl05,BerryHarlim16,belkin2007convergence,shi2015convergence,trillos2018error,trillos2018variational,BerrySauer16} to learning problems involving higher-order objects, with rigorous convergence guarantees in the asymptotic limit of large data. Crucially, by relying solely on approximations of the eigenvalues and eigenfunctions of the Laplacian on functions, SEC decouples the computational complexity of approximation of vector fields, forms, and related operators from the number of samples and ambient data space dimension.  

Another key aspect of SEC is its focus on $L^2$-convergent, as opposed to pointwise-convergent approximations, as the latter typically require additional structures such as simplicial complexes that are difficult to estimate from data alone. Here, we have shown that our frames for $H^1$ Sobolev spaces of 1-forms lead naturally to Galerkin approximation schemes for the eigenvalue problem of the Laplacian on 1-forms, which are provably well-posed by classical results on variational formulations of elliptic eigenvalue problems \cite{BabuskaOsborn91}. These techniques extend previously developed data-driven Galerkin approximation techniques for unbounded operators on functions \cite{GiannakisEtAl15,Giannakis19,DasGiannakis19} to the setting of 1-forms. These Galerkin methods for the Laplacian on one-forms, and more broadly approximation techniques for unbounded operators in exterior calculus, crucially depend on the availability of well-conditioned approximation spaces for $H^p$ spaces of higher regularity than $L^2$. In the framework of SEC, such spaces are naturally constructed through singular value decomposition of sparse Grammian matrices associated with $H^p$ frame elements, which are computable via closed-form expressions involving the eigenvalues of the Laplacian on functions, and inner-product relationships between products of the corresponding eigenfunctions. Once computed, the Galerkin-approximated Laplacian eigenforms can be visualized through their dual vector fields, reconstructed via a data-driven spectral approximation of the pushforward map. In addition to frame representations, SEC provides data-driven representations of vector fields as operators on functions, which is useful for tasks such as dynamical systems forecasting \cite{BerryEtAl15,Giannakis19}, among other applications.  

We have demonstrated the efficacy of SEC approximations to the eigenvalues and eigenforms of the 1-Laplacian, and their associated vector field duals, in a suite of examples involving orientable (circle, flat and curved 2-tori, 2-sphere, genus-2 surface) and non-orientable (M\"obius band) smooth manifolds, as well as a fractal set having no differentiable structure (the L63 attractor). In the circle and flat-torus examples, where analytical expressions for the 1-Laplacian eigenvalues and eigenforms are available, we found that SEC accurately approximates the leading 50 to 100 eigenvalues using a modest number (20) of 0-Laplacian eigenfunctions to build the SEC frames, and a moderate dimension ($40$ for circle and $107$ for the flat torus) of the corresponding Galerkin approximation spaces. In the curved torus, sphere, and M\"obius band examples (where analytical expressions for the eigenvalues and eigenforms of the 1-Laplacian are not readily available), we demonstrated that the SEC results are consistent with with the 1-homology of these manifolds. In particular, the number of eigenvalues of the SEC-approximated 1-Laplacian numerically close to zero was found to be equal to the first Betti number of the manifold under study, and the corresponding eigenforms were found to generate closed curves (through the integral curves of their vector field duals), representing 1-homology classes. It should be noted that due to the presence of a boundary, the M\"obius example lies outside the theoretical domain of applicability of the SEC formulation developed here, but the numerical results were found to be qualitatively consistent with the application of Neumann boundary conditions, implicitly enforced through diffusion maps. Moreover, even though our theory was developed in the smooth-manifold setting, the SEC-derived eigenforms for the L63 attractor were also found to be qualitatively consistent with the 1-homology of a coarse graining of the attractor. In this example, we obtained two eigenforms at small corresponding eigenvalue, generating closed curves around each of the ``holes'' in the lobes of the attractor (Fig.~\ref{l63}), which is what one would intuitively expect for the L63 topology.        

There are several avenues of future research stemming from this work. First, it would be of interest to develop SEC approximation schemes for other operators of interest in exterior calculus, such as the Hodge star operator and the Lie derivative. Among other applications, such approximation schemes are likely to be of interest in dynamical systems modeling. For instance, for a dynamical system on  a manifold generated by a vector field $ v $, there is an associated dynamical system acting on the tangent bundle, which can be thought of as a pointwise linearization of the system. In this setting, the Lie derivative $ \mathcal{ L }_v$  on vector fields generates the action of this system on sections of the tangent bundle, and is known to have useful spectral properties characterizing growth rates of perturbations \cite{ChiconeSwanson81}. Approximating these spectra from data would thus provide a computationally efficient empirical method for characterizing the principal modes of instability in datasets generated by dynamical systems; a topic of considerable current interest that has spurred the development of powerful geometrical approaches such as the theory of covariant Lyapunov vectors \cite{GinelliEtAl07}. 

Other research directions would involve extending SEC to to manifolds with boundary, or non-smooth topological spaces. In particular, our results for the L63 attractor suggest that a spectral formulation of exterior calculus, possibly with a very similar structure to the one presented here, may be possible provided one can construct an appropriate diffusion operator generalizing the notion of the Laplacian on Riemannian manifolds. To that end, it is worthwhile noting that the Markov integral operators employed here to approximate the heat semigroup and its associated Laplacian are, in fact, well defined as operators on $C^0 $ and $L^2 $ function spaces on Borel measure spaces without differentiable structure. Moreover, recent work \cite{DasGiannakis19} has shown that, at least in some scenarios, such operators can be consistently approximated from data under natural assumptions (e.g., data sampled from a dynamical system possessing a physical ergodic invariant measure, such as the L63 system studied here). Yet, a key step relevant to the SEC framework, namely the consistent approximation of the Laplacian from the heat semigroup, achieved by taking $ \epsilon \to 0 $ kernel bandwidth limits, has, to our knowledge, no known generalization to non-differentiable spaces. In such a setting, the challenge would be to approximate the generator of an appropriate $C^0$ diffusion semigroup based on kernel integral operators approximating that semigroup (akin to the operators $ \tilde P_\epsilon $ in Section~\ref{secKernelApprox} approximating the heat semigroup). Such a diffusion operator would lead to an analog of the product rule for the 0-Laplacian on smooth functions through its associated symmetric bilinear form \cite{BakryEtAl14}, providing one of the necessary ingredients to building the SEC framework. However, at present, the construction of this operator and its approximation remain elusive.   

Finally, while the present SEC formulation has focused heavily on $H^p$ Sobolev spaces and their associated notions of regularity based on Dirichlet energy, it should be noted that there exist analogs of the RKHSs associated with the heat kernel on functions in the setting of differential forms. In addition to providing well-defined notions of regularity through the corresponding RKHS norms, the fact that RKHS spaces have continuous pointwise evaluation functionals allows for pointwise- (as opposed to $L^2$-) convergent approximations when working in such spaces, with well-known applications in smoothing and interpolation. These properties motivate the construction of frames for RKHSs of forms, and their use in Galerkin approximation schemes for operators on forms, analogous to the schemes presented here utilizing Sobolev spaces.

        

\appendices

\section{Discrete incompatibility of product rule and Leibniz rule}\label{productRules}

Let $ f $, $ h $, and $ b $ be arbitrary smooth, real-valued functions on a Riemannian manifold. The product rule for the Laplacian operator can be written as 
\[ \Delta(fh) = f\Delta h + h\Delta f - 2 \grad f(h), \]
where $\grad f(h) = \grad f \cdot \grad h$.  The Leibniz rule for vector fields states that 
\[ v(fh) = fv(h) + hv(f), \]
and since $\grad f$ is a vector field, we can easily derive a triple product rule for the Laplacian
\begin{align} \Delta(fhb) &= f\Delta(hb) + hb\Delta f - 2 \grad f(hb) \nonumber \\ 
&= f\Delta(hb) + hb\Delta f - 2 h\grad f(b) - 2 b \grad f(h) \nonumber \\
&= f\Delta(hb) + hb\Delta f - h f\Delta b - h b\Delta f + h\Delta(fb) - bf\Delta h - bh\Delta f + b\Delta (hf). \nonumber
\end{align}
Rearranging the triple product rule, we have
\[ 0 = \Delta(fhb) -f\Delta(hb)-h\Delta(fb)-b\Delta(fh)+fh\Delta b +fb \Delta h + hb\Delta f. \]
Now assume that $L$ is a discrete Laplacian operator (meaning a matrix), without any assumption on the form of the representation of vector fields.  If we assume that gradient fields are represented in such a way that both the product rule for the Laplacian and the the Leibniz rule hold, then the same derivation will produce the triple product rule for the discrete Laplacian.  However, if the triple product rule holds on the standard basis vectors $\{e_i\}$, then we find that
\[ 0 = L(e_i e_j e_k) - e_i L (e_j e_k) - e_j L (e_i e_k) - e_k L(e_i e_j) + e_i e_j L e_k + e_i e_k L e_j + e_j e_k L e_i, \]
where vector-vector products are componentwise.  Setting $i=j=k$, we find
\[ 0 = L(e_i) - 3 e_i L (e_i e_i) + 3 e_i e_i L e_i = Le_i \]
and since $Le_i =0$ for all $i$, we conclude that the only matrix which satisfies the triple product rule is the zero matrix.  This shows that no discrete representation of vector fields can simultaneously satisfy both the Laplacian product rule and the Leibniz rule.

\section{Computations and derivations of formulas}\label{derivations}

In this appendix, we derive the formulas for the inner products and Dirichlet energies of the SEC frame elements for 1-forms listed in Table~\ref{fig3} (\ref{appLapl}). In addition, we discuss how these formulas can be extended to higher-order $k$-forms (\ref{appKForms}).  Since the SEC frame elements are real, in these derivations we will consider that all Hilbert spaces of functions, vector fields, and $ k$-forms are over the real numbers, i.e., there will be no complex conjugation in the corresponding inner products.  

\subsection{\label{appLapl}Inner products and Dirichlet forms for 1-forms}

 Recall that the Hodge inner product  $1$-forms $\omega,\nu$ is defined by 
 \[ \langle \omega,\nu\rangle_{H_1} =\int_{\mathcal{M}} \omega \wedge \star \nu=\langle 1,\eta(\omega,\nu)\rangle_{H}=\langle 1,\omega^\sharp \cdot \nu^\sharp\rangle_{H}, \]
 where we abbreviate the Riemannian inner product for vector fields $w,v$ by $w \cdot v \equiv g(w,v)$.  Similarly for 1-forms $\omega,\nu$ we will abbreviate the Riemannian inner product by $\omega \cdot \nu \equiv \eta(\omega,\nu) = g(\omega^\sharp,\nu^\sharp)$.  In particular, if $\omega=f\,dh$ and $\nu=\alpha\, d\beta$, then the Hodge inner product can be further simplified as
 \[ \langle f\,dh,\alpha\, d\beta \rangle_{H_1} = \langle d h \cdot d \beta,f\alpha \rangle_{H} = \frac{1}{2}\langle h\Delta \beta + \beta \Delta h - \Delta(h\beta),f \alpha \rangle_{H}, \]
 and substituting eigenfunctions $b^i$ of $\Delta$ into this formula, we define
 \begin{align} G_{ijkl} &\equiv \langle b^i\, db^j,b^k\, db^l \rangle_{H_1} = \frac{1}{2}\langle b^j \Delta b^l + b^l \Delta b^j - \Delta(b^jb^l),b^ib^k \rangle_{H} \nonumber \\ 
 &= \frac{1}{2}\langle \lambda_l b^jb^l + \lambda_j b^lb^j - \Delta\left(\sum_s c_{ljs}b^s \right),b^ib^k \rangle_{H} \nonumber \\ 
 &= \frac{1}{2}(\lambda_l+\lambda_j)c_{ijkl}-\sum_s \lambda_s c_{ljs}c_{iks}, \nonumber \\
 &= \frac{1}{2}\sum_s (\lambda_l+\lambda_j - \lambda_s) c_{ljs}c_{iks}, \end{align}
 where $c_{ijk}\equiv\langle b^ib^j,b^k\rangle_{H}$ and $c^0_{ijkl}\equiv\langle b^ib^j,b^kb^l\rangle_{H}=\sum_s c_{ijs}c_{kls}$ (note that these are invariant to permutations of indices).  Notice that the Gramm matrix $ \hat G_{ijkl} = \langle \widehat b^{ij}, \widehat b^{kl} \rangle_{H_1}$ of the antisymmetric elements is easy to compute from $ G_{ijkl}$ since
 \[ \hat G_{ijkl} = G_{ijkl}+G_{jilk}-G_{ijlk}-G_{jikl}, \]
 so next we will consider the computation of the values $ E_{ijkl} $ of the Dirichlet form.
  
Recalling the formula for the $1$-Laplacian $\Delta_1 = d \delta + \delta d$, for $1$-forms $\omega,\nu$ we can write
\begin{align}\label{del1sum} \langle \omega,\Delta_1\nu\rangle_{H_1} = \langle \delta \omega,\delta \nu\rangle_{H} + \langle d\omega,d\nu\rangle_{H_2} \end{align}
where $d$ is the exterior derivative, and $\delta$ its formal adjoint, the codifferential. For $1$-forms $\omega=f\,dh$ and $\nu = \alpha\, d\beta$, we have
\begin{align}\label{del1term1} \langle d(f\,dh),d(\alpha\, d\beta) \rangle_{H_2} &= \langle df \wedge dh,d\alpha \wedge d\beta \rangle_{H_2} = \langle 1,\eta(df \wedge dh,d\alpha \wedge d\beta)\rangle_{H_2} \nonumber \\
    &= \left\langle 1,\det\left[ \begin{array}{cc} d f \cdot d \alpha & d f \cdot d \beta \\ d h \cdot d \alpha & d h \cdot d \beta \end{array} \right]\right\rangle_{H}  \nonumber \\
&=  \langle d f \cdot d \alpha , d h \cdot d \beta \rangle_{H} - \langle  d f \cdot d \beta, d h \cdot d \alpha\rangle_{H}.
\end{align}

We recall that the codifferential acting on $k$-forms is $\delta = (-1)^{m(k-1)+1}\star d \, \star $, where $d$ is the exterior derivative acting on $m-k$ forms.  We use $m$ instead of $d$ for the dimension of the manifold in this section to avoid confusion with the exterior derivative. For a function $f$ and $1$-form $\omega$, we have
\begin{align} \nonumber \delta(f\omega) &= - \star  d \, \star  (f\omega) = - \star  d(f \star  \omega) \\
    \nonumber &= - \star  (df \wedge \star \omega + f d\star \omega) = -\star (df\wedge \star \omega) - f \star  d \, \star  \omega \nonumber \\ 
    &= -\eta(df,\omega) + f\, \delta \omega = -d f \cdot \omega + f\, \delta \omega, \label{eqAppCodiff}
\end{align}
so in particular if $\omega = dh$ we find (using $\Delta = \delta d$) 
\begin{align*}
    \delta(f\,dh) &= -d f \cdot d h + f \Delta h \\
    &= \frac{1}{2}(\Delta(fh) - f\Delta h - h\Delta f) + f\Delta h = \frac{1}{2}(\Delta(fh)+f\Delta h-h\Delta f). 
\end{align*}
Using this formula we can simplify the inner product
\begin{align}  \langle \delta (f\,dh),\delta (\alpha\, d\beta) \rangle_{H} &= \frac{1}{4}\langle \Delta(fh)+f\Delta h-h\Delta f,\Delta(\alpha \beta)+\alpha\Delta \beta-\beta\Delta \alpha\rangle_{H} \nonumber \\
    &= \langle -d f \cdot d h + f\Delta h,-d \alpha \cdot d \beta + \alpha\Delta \beta \rangle_{H} \nonumber \\
    \nonumber &= \langle d f \cdot d h,d \alpha \cdot d \beta \rangle - \langle d f \cdot d h,\alpha\Delta \beta\rangle_{H} - \langle d \alpha \cdot d \beta, f\Delta h\rangle_{H} \\
    \label{del1term2} & \quad + \langle f\Delta h,\alpha\Delta \beta\rangle_{H}.
  \end{align}
  Now that both summands in \eqref{del1sum} have been written in terms of $\Delta$, we can combine \eqref{del1term1} and \eqref{del1term2} and several cancellations yield,
  \begin{align*} 
  \langle f\,dh,\Delta_1(\alpha\, d\beta)\rangle_{H_1} &= \langle d f \cdot d \alpha , d h \cdot d \beta \rangle_{H} - \langle  d f \cdot d \beta, d h \cdot d \alpha\rangle_{H} \\
  & \quad + \langle d f \cdot d h,d \alpha \cdot d \beta \rangle_{H}  - \langle d f \cdot d h,\alpha\Delta \beta\rangle_{H} \\
  & \quad - \langle d \alpha \cdot d \beta, f\Delta h\rangle_{H} + \langle f\Delta h,\alpha\Delta \beta\rangle_{H}. \end{align*}
  When the functions are eigenfunctions of $\Delta$, we find
  \begin{align*} \nonumber d b^i \cdot d b^j &= \frac{1}{2}\left(b^i \Delta b^j + b^j\Delta b^i - \Delta(b^ib^j)\right) = \frac{1}{2}\left((\lambda_i+\lambda_j)b^ib^j -\Delta(b^ib^j)\right) \\
      &= \frac{1}{2}\sum_s (\lambda_i+\lambda_j-\lambda_s)c_{ijs}b^s, 
  \end{align*}
so that we can define
\begin{align*} F_{ijkl} &= \langle d b^i \cdot d b^j,d b^k \cdot d b^l\rangle_{H} = \frac{1}{4}\sum_s (\lambda_i+\lambda_j-\lambda_s)(\lambda_k +\lambda_l - \lambda_s)c_{ijs}c_{kls}  \nonumber \\
&= \frac{1}{4}\left[(\lambda_i+\lambda_j)(\lambda_k+\lambda_l)c_{ijkl} - (\lambda_i+\lambda_j+\lambda_k+\lambda_l)\sum_s \lambda_s c_{ijs}c_{kls} \right. \\
& \quad \left. + \sum_s \lambda_s^2 c_{ijs}c_{kls} \right].
\end{align*}
Moreover, recalling that
\[ G_{ijkl} = \frac{1}{2}\sum_s (\lambda_l+\lambda_j - \lambda_s) c_{ljs}c_{iks} = \frac{1}{2}(\lambda_l+\lambda_j)c^0_{ijkl} - \frac{1}{2}\sum_s \lambda_s c_{ljs}c_{iks}, \]
we simplify the above formulas as 
\[ \langle d(b^i db^j),d(b^k db^l) \rangle_{H_2} = F_{ikjl} - F_{iljk} \]
and 
\[ \langle \delta (b^idb^j),\delta (b^k db^l) \rangle_{H} = F_{ijkl} - \lambda_l G_{kilj} - \lambda_j G_{ikjl} + \lambda_j \lambda_l c^0_{ijkl}, \]
so that
\begin{align} E_{ijkl} &= \langle b^i db^j,\Delta_1 (b^k db^l) \rangle_{H_2} \\
    &= F_{ikjl} - F_{iljk} + F_{ijkl} - \lambda_l G_{kilj} - \lambda_j G_{ikjl} + \lambda_j \lambda_l c_{ijkl} \nonumber \\
&= c^0_{ijkl}\left(\frac{1}{4}(\lambda_i + \lambda_k)(\lambda_j +\lambda_l) - \frac{1}{4}(\lambda_i + \lambda_l)(\lambda_j +\lambda_k) + \frac{1}{4}(\lambda_i + \lambda_j)(\lambda_k +\lambda_l)\right) \nonumber \\
&\quad +c^0_{ijkl}\left(\lambda_j\lambda_l - \frac{1}{2}\lambda_l(\lambda_i+\lambda_j) - \frac{1}{2}\lambda_j(\lambda_k + \lambda_l) \right) \nonumber \\
&\quad + \frac{1}{4}(\lambda_i+\lambda_j+\lambda_k+\lambda_l)(c_{iljk}^{1} - c_{ikjl}^{1} - c_{ijkl}^{1}) + \frac{1}{2}(\lambda_j+\lambda_l)c_{ijkl}^{1} \nonumber \\
& \quad + \frac{1}{4}(c_{ikjl}^{2} + c_{ijkl}^{2} - c_{iljk}^{2}) \nonumber \\
&= \frac{1}{4}(\lambda_i+\lambda_j+\lambda_k+\lambda_l)(c_{iljk}^{1} - c_{ikjl}^{1} - c_{ijkl}^{1}) + \frac{1}{2}(\lambda_j+\lambda_l)c_{ijkl}^{1} \nonumber \\
& \quad + \frac{1}{4}(c_{ikjl}^{2} + c_{ijkl}^{2} - c_{iljk}^{2}) \nonumber \\
&= \frac{1}{4}\left[ (\lambda_i+\lambda_j+\lambda_k+\lambda_l)(c_{iljk}^{1} - c_{ikjl}^{1}) + (\lambda_j+\lambda_l-\lambda_i - \lambda_k)c_{ijkl}^{1} \right.\nonumber \\
& \quad + \left. (c_{ikjl}^{2} + c_{ijkl}^{2} - c_{iljk}^{2})\right] \nonumber
\end{align}
A straightforward computation then shows that the $c^0_{ijkl}$ coefficients exactly cancel, and we define 
\[ c_{ijkl}^{1} \equiv \sum_s \lambda_s c_{ijs}c_{skl} \hspace{20pt}\textup{and}\hspace{20pt} c_{ijkl}^{2} \equiv \sum_s \lambda_s^2 c_{ijs}c_{skl}, \]
which also allows us to write $G_{ijkl} = \frac{1}{2}((\lambda_l+\lambda_j)c_{ijkl} - c_{ikjl}^{1}).$
Finally, to compute the Dirichlet form for the antisymmetric elements, we note that 
\[ E_{ijkl} - E_{jikl} =  \frac{1}{2}\left[ (\lambda_i+\lambda_j+\lambda_k+\lambda_l)(c_{iljk}^{1} - c_{ikjl}^{1}) + (\lambda_j-\lambda_i)c_{ijkl}^{1}   + (c_{ikjl}^{2}- c_{iljk}^{2}) \right] \]
and
\begin{align*}
E_{ijlk} - E_{jilk} &= \frac{1}{2}\left[ -(\lambda_i+\lambda_j+\lambda_k+\lambda_l)(c_{iljk}^{1} - c_{ikjl}^{1}) + (\lambda_j-\lambda_i)c_{ijkl}^{1} \right.  \\
& \quad \left. - (c_{ikjl}^{\lambda^2}- c_{iljk}^{2}) \right],  
\end{align*}
leading to
\begin{align*} \hat E_{ijkl} &\equiv \langle b^i db^j - b^j db^i, \Delta_1(b^k db^l - b^l db^k )\rangle =  E_{ijkl}-E_{jikl} - (E_{ijlk} - E_{jilk}) \nonumber \\ &= (\lambda_i+\lambda_j+\lambda_k+\lambda_l)(c_{iljk}^{1} - c_{ikjl}^{1})  + (c_{ikjl}^{2}- c_{iljk}^{2}).  
\end{align*}
These formulas are summarized in Table~\ref{fig3} in Section \ref{laplacianOverview}.

\subsection{\label{appKForms}Extension to $k$-forms}

Next, we briefly summarize how we can extend the SEC formulas derived in \ref{appLapl} to higher-order forms.  Let  $b^I = b^{i_0} db^{\tilde I} \equiv b^{i_0} db^{i_1} \wedge \cdots \wedge db^{i_k}$ be a $k$-form frame element ($\tilde I = (i_1,...,i_k)$ and $I=(i_0,...,i_k)$).  As we will see below, the Hodge inner product of the exterior derivatives is easily computed as a determinant of inner products of 1-forms.  The more complex term is the Hodge inner product of the codifferential terms.  To understand this, we need to generalize the product rule for the codifferential as
\begin{align*} 
\delta (b^I) &= (-1)^{m(k-1)+1}\star d\star (b^{i_0} db^{\tilde I}) = (-1)^{m(k-1)+1}\star d(b^{i_0} \star db^{\tilde I})\\
 & = (-1)^{m(k-1)+1}\star (db^{i_0} \wedge \star db^{\tilde I} + b^{i_0} d \star  db^{\tilde I})  \nonumber \\ 
&=  (-1)^{m(k-1)+1}\star (db^{i_0} \wedge \star db^{\tilde I}) + b^{i_0} \delta(db^{\tilde I}). \nonumber \end{align*}
We can now reduce $\delta(db^{\tilde I})$ by rewriting $db^{\tilde I} = d(b^{i_1}db^{\hat I})$ ,where $\hat I = (i_2,...,i_k)$, so that
\[ \delta_k(db^{\tilde I}) = \delta_k d_{k-1} (b^{i_1}db^{\hat I}) = \Delta_{k-1}(b^{i_1}db^{\hat I}) - d_k\delta_{k-1}(b^{i_1}db^{\hat I}). \]
Notice that in the above formula we have reduced the problem of computing the $k$-codifferential to computing the $(k-1)$-Laplacian and the $(k-1)$-codifferential.  Since we have shown how to compute the codifferential on $1$-forms, this strategy can be used to lift the SEC formulas to higher-order forms, although the formulas become quite complicated.  We carried this out for $2$-forms, but the derivations are quite long and a closed formula for the general case remains elusive.

Thus, while in principle the iterative formula above allows us to lift our formulation to $k$-forms, we do not yet have a closed formula for the inner product
\[ \langle b^I, \Delta_k b^J \rangle_{H_{k}} = \langle d(b^I),d(b^J)\rangle_{H_{k+1}} + \langle \delta(b^I),\delta(b^J)\rangle_{H_{k-1}}. \]
However, the first term above is simply the integral of the determinant of the matrix of pairwise inner products $db^{i_s}\cdot db^{j_r}$ for $s,r \in \{0,...,k\}$.  The key to computing this term is the formula
\[ (db^i \cdot db^j)(db^k \cdot db^l) = \left(\sum_q g_{ijq}b^q\right)\left(\sum_n g_{kln}b^n\right) = \sum_{q,n} g_{ijq}g_{kln}b^q b^n, \]
and iterating the above we can expand the product as
\[ \prod_{s,r=0}^k db^{i_s}\cdot db^{j_r} = \sum_{n=0}^{k^2} \prod_{s,r=0}^k g_{i_s j_r q_n}b^{q_n}  . \]
The integral of these products can then be represented in terms of the integrals of products of eigenfunctions, which can be computed from the $c$ tensor. We briefly summarize these formulas for the SEC on general $k$-forms in Table~\ref{kformtable}.

\begin{table}
\caption{\label{kformtable} Table of formulas for the SEC on $k$-forms.  We abbreviate the multi-indices $I=(i_0,...,i_k)$ and $\tilde I = (i_1,...,i_k)$ and similarly $J=(j_0,...,j_k)$ and $\tilde J = (j_1,...,j_k)$. We use $[db^{i_q}\cdot db^{j_n}]$ to abbreviate the $k \times k$ matrix with $(q,n)$-th entry given by $db^{i_q}\cdot db^{j_n}$. Finally, note that $db^I = db^{i_0}\wedge \cdots \wedge db^{i_k}$.}
\begin{center}
    \footnotesize
{\renewcommand{\arraystretch}{1.5}
\begin{tabular*}{\linewidth}{@{\extracolsep{\fill}}l c c}
\hline
Object & Symbolic & Spectral  \\
\hline\hline
Multiple Product		& $c_{I}^0 = \langle b^{i_0}\cdots b^{i_k},1 \rangle _{H}$ & $c_I^0 = \sum_s c_{i_0 i_1s}c_{s i_2 \cdots i_k}^0$  \\
\hline
    Tensor Evaluation
    &
\begin{minipage}{.35\linewidth}
    \begin{displaymath}
        \begin{gathered}
            H^{\tilde I \tilde J} =(db^{i_1}\cdot db^{j_1})\cdots (db^{i_k}\cdot db^{j_k}) \\
             =\grad b^{i_1} \otimes \cdots \otimes \grad b^{i_k}(b^{j_1},...,b^{j_k})
        \end{gathered}
    \end{displaymath}
\end{minipage}
 & 
 \begin{minipage}{.33\linewidth}
     \begin{displaymath}
         \begin{gathered}
             \hat H^{\tilde I\tilde J}_l \equiv \langle H^{\tilde I\tilde J},b^l\rangle_{H} \\ 
              = \mbox{$\sum_{n=1}^{k^2} \prod_{s,r=1}^k$} g_{i_s j_r m_n}c_{l m_1\cdots m_{k^2}} 
         \end{gathered}
     \end{displaymath}
 \end{minipage}\\
\hline
Tensor Product & $b_{J} = b^{j_0} \grad b^{j_1} \otimes \cdots \otimes \grad b^{j_k}$ & $\langle b_J(b^{i_1},...,b^{i_k}),b^l\rangle  =\sum_s \hat H^{\tilde J\tilde I}_s c_{s j_0 l}$ \\
\hline
Frame Elements & $b^I = b^{i_0}db^{i_1} \wedge \cdots \wedge db^{i_k}$ & $\langle b^I(b_J),b^l\rangle _{H} = \langle b^I \cdot b^J,b^l\rangle _{H}$ \\
\hline
Riemannian Metric & $b^I \cdot b^J =b^{i_0}b^{j_0}\textup{det}([db^{i_q}\cdot db^{j_n}]) $ &
\begin{minipage}{.33\linewidth}
    \begin{displaymath}
        \begin{gathered}
            \langle b^I \cdot b^J,b_l\rangle_{H}  =  \\
             \mbox{$ \sum_s \sum_{\sigma \in S_k}$} \textup{sgn}(\sigma) c_{s i_0 j_0 l}\hat H_s^{\tilde I\sigma(\tilde J)} 
        \end{gathered}
    \end{displaymath}
\end{minipage}\\
\hline
Hodge Grammian & $G_{IJ} = \langle b^I,b^J\rangle _{H_k}$ & 
\begin{minipage}{.33\linewidth}
    \begin{displaymath}
        \begin{gathered}
            \langle b^I\cdot b^J,1\rangle_{H} = \\
            \sum_{s}\sum_{\sigma \in S_k}\textup{sgn}(\sigma) c_{s i_0 j_0} \hat H^{\tilde I \sigma(\tilde J)}_s  
        \end{gathered}
    \end{displaymath}
\end{minipage}\\
\hline
$d$-Energy & $E^d_{IJ} = \langle db^I,db^J\rangle _{H_{k+1}} $ & $\langle db^I \cdot db^J,1\rangle _{H_{k+1}}=\hat H^{IJ}_0$  \\
\hline
\end{tabular*}
}
\end{center}
\end{table}

%



 \ack 


Tyrus Berry acknowledges support by NSF grant DMS-1723175. Dimitrios Giannakis acknowledges support by ONR grant N00014-14-0150, YIP grant N00014-16-1-2649, NSF grant DMS-1521775, and DARPA grant HR0011-16-C-0116. We would like to thank Jeff Cheeger for stimulating conversations.

\frenchspacing
\bibliographystyle{cpam.bst}
\bibliography{bibliography}
\end{document}